  \let\original@@tocwrite=\@tocwrite
  \newif\ifAHVflag
  \def\jlreq@uniqtoken{\jlreq@uniqtoken}
  \def\jlreq@endmark{\jlreq@endmark}
  \long\def\jlreq@getfirsttoken#1#{\jlreq@getfirsttoken@#1\bgroup\jlreq@endmark}
  \long\def\jlreq@getfirsttoken@#1#2\jlreq@endmark#3\jlreq@endmark{#1}
  \renewcommand{\@tocwrite}[2]{%
    \begingroup
      \AHVflagfalse 
      \@ifempty{#2}{}{%
        \expandafter\expandafter\expandafter\ifx\jlreq@getfirsttoken#2\jlreq@uniqtoken{}\jlreq@endmark\Sectionformat\expandafter\@firstoftwo\else\expandafter\@secondoftwo\fi
        {%
          \def\Sectionformat##1##2{\@ifempty{##1}{}{\AHVflagtrue}}%
          #2
        }{\AHVflagtrue}%
       }%
      \def\@tempa{}%
      \ifAHVflag\def\@tempa{\original@@tocwrite{#1}{#2}}\fi
    \expandafter\endgroup
    \@tempa
  }%
\newcommand{\im}{\operatorname{Im}}
\newcommand{\charf}{\operatorname{char}}
\newcommand{\Hom}{\operatorname{Hom}}
\newcommand{\Ind}{\operatorname{Ind}}
\newcommand{\ind}{\operatorname{ind}}
\newcommand{\Ker}{\operatorname{Ker}}
\newcommand{\End}{\operatorname{End}}
\newcommand{\Gal}{\operatorname{Gal}}
\newcommand{\Lie}{\operatorname{Lie}}
\newcommand{\val}{\operatorname{val}}
\newcommand{\St}{\operatorname{St}}
\DeclareMathOperator{\SL}{SL}
\DeclareMathOperator{\diag}{diag}
 \DeclareMathOperator{\Ad}{Ad}
\DeclareMathOperator{\tr}{tr}
\DeclareMathOperator{\vol}{vol}
\DeclareMathOperator{\Fr}{Fr}
 \DeclareMathOperator{\Irr}{Irr}
\DeclareMathOperator{\red}{red}
    \DeclareMathOperator{\Gr}{Gr}
     \DeclareMathOperator{\st}{st}
\theoremstyle{plain} 
\newtheorem{theorem}{Theorem}[section]
\newtheorem{corollary}[theorem]{Corollary}
\newtheorem{lemma}[theorem]{Lemma}
\newtheorem{proposition}[theorem]{Proposition}
\newtheorem{definition-proposition}[theorem]{Definition-Proposition}
\theoremstyle{definition}
\newtheorem{definition}[theorem]{Definition}
\theoremstyle{remark}
\newtheorem{remark}[theorem]{Remark}
\newtheorem{notation}[theorem]{Notation}
\numberwithin{equation}{section}
\title{Representations of $SL_2(F)$}
\author{Guy Henniart and  Marie-France Vign\'eras  
}
\address
[Guy Henniart]{Facult\'e des Sciences d'Orsay, Universit\'e Paris-Saclay 
F-91405 Orsay Cedex}
\email{guy.henniart@math.u-psud.fr}
\address[M.-F. Vign\'eras]{Institut de Math\'ematiques de Jussieu,  place Jussieu, Paris 75005 France}
\email{marie-france.vigneras@imj-prg.fr}
\keywords{Modular irreducible representations, L-packets, Whittaker spaces,  Local  Langlands correspondence}
\subjclass[2010]{primary 22E50, secondary  11F70}
\date{\today}
\begin{document} 

 \begin{abstract}Let $p$ be a prime number, $F $ a  non-archimedean local field with residue field $k_F$ of characteristic $p$, and $R$  an algebraically closed field of characteristic  different from $ p$. We thoroughly investigate the irreducible smooth $R$-representations of   $SL_2(F)$. The components of an  irreducible smooth $R$-representation $\Pi$ of   $GL_2(F)$  restricted to  $SL_2(F)$  form an $L$-packet $L(\Pi)$. We use the classification  of such $\Pi$ to determine  the cardinality of $L(\Pi)$, which is $1,2$ or $4$.  When $p=2$ we have to use the Langlands correspondence for $GL_2(F)$. When $\ell$  is a prime number distinct from $p$ and $R=\mathbb Q_\ell^{ac}$, we establish the behaviour of an integral $L$-packet under reduction modulo $\ell$. We  prove a  Langlands correspondence for $SL_2(F)$, and even an enhanced one when the characteristic of $R$ is not $2$. Finally, pursuing a theme of \cite{HV23}, which studied the case of inner forms of $GL_n(F)$, we show that near identity a non-trivial irreducible smooth R-representation $\pi$
 of $SL_2(F)$ is, up to  a finite dimensional representation, isomorphic to a sum of $1,2$ or $4$ representations in an $L$-packet of size $4$ (when $p$ is odd there is only one such $L$-packet).  We show that for  $\pi$ in an $L$-packet of size $r_\pi$ and a  sufficiently large integer $j$,
 the dimension of the invariants of $\pi$   by the $j$-th congruence subgroup of an Iwahori or a pro-$p$ Iwahori subgroup of $SL_2(F)$ is  equal to $a_\pi+2 r_\pi^{-1} |k_F|^j$, with $a_\pi=-1/2$ if  $p$ is odd and $r_\pi=4$, otherwise  $a_\pi$  is an integer. We also study the fixed points by the $j$-th congruence subgroups of the maximal compact subgroups of $SL_2(F)$ where the answer depends on the parity of $j$.

  \end{abstract}
  \maketitle
 
\setcounter{tocdepth}{2}  
\tableofcontents
  \section{Introduction}
\subsection{}  Let  $F $ be a locally compact non-archimedean field  with residue characteristic $p$ and $R$ an algebraically closed field of characteristic $\charf_R$ different from $ p$.  We thoroughly investigate the irreducible smooth $R$-representations of   $SL_2(F)$.  
Although when $R=\mathbb C$ and $p$ odd the  first investigations appeared in the 1960's, in work of Gelfand-Graev and Shalika, the study of the modular case (i.e. when $\charf_R>0$) started only recently \cite{Cui23}, \cite{CLL23} when $\charf_F\neq 2$ and $ \charf_R\neq 2$. Here we give a complete treatment and we make no assumption on $p,
\charf_F, \charf_R$, apart from $\charf_R \neq  p$. 

As Labesse and Langlands did in the 1970's when $R=\mathbb C$ and $\charf_F=0$, we use the restriction of
smooth $R$-representations from $G=GL_2(F)$ to $G'=SL_2(F)$.  We prove that an irreducible smooth $R$-representation
of $G'$ extends to a smooth representation of an open subgroup $H$ of $G$ containing $ZG'$ where $Z$ is the
centre of $G$, and appears in the restriction to $G'$ of a smooth irreducible $R$-representation of $G$,
unique up to isomorphim and twist by smooth $R$-characters of $G/G'$.   
 When $\charf_F\neq 2$ we can simply take  $H=ZG'$, but not    when  $\charf_F=2$  because the compact quotient $G/ZG'$ is infinite.
  Those results follow from general facts about $R$-representations, which appear in
Section \ref{S1}. They apply to more general reductive groups over $F$, as we show in Section \ref{S:2.3}.

In Section \ref{SL2},  using Whittaker models, we show that the restriction to $G'$ of an irreducible smooth $R$-representation
$\Pi$  of $G$  is semi-simple and has finite length and multiplicity one. Its irreducible components form an $L$-Lacket $L(\Pi)$. An $L$-packet $L(\Pi)$ is called cuspidal when $\Pi$  is cuspidal, supercuspidal when $\Pi$  is supercuspidal, of level $0$  if $\Pi$  can be chosen to have level $0$ (that is, having
non-zero fixed vectors under $1+M_2(P_F)$), and of positive level otherwise.  
 
\begin{theorem}   \label{lpi} The size of an $L$-packet is $1,2$ or $4$.
 \end{theorem}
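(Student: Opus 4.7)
The plan is to identify the size of $L(\Pi)$ with the order of the twisting stabilizer
\[
X(\Pi) = \{\chi : F^\times \to R^\times \text{ smooth} \, : \, \Pi \otimes (\chi \circ \det) \cong \Pi\},
\]
and then to bound $|X(\Pi)|$ by $4$ by case analysis on $\Pi$.

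For the first reduction I would apply Clifford theory. Since $G' \triangleleft G$ with $G/G' \cong F^\times$ via the determinant, and Section \ref{SL2} provides that $\Pi|_{G'}$ is semisimple, of finite length, and multiplicity-free, a computation of $\dim_R \End_{RG'}(\Pi|_{G'})$ via Frobenius reciprocity gives
\[
|L(\Pi)| \;=\; \dim_R \End_{RG'}(\Pi|_{G'}) \;=\; \sum_\chi \dim_R \Hom_{RG}(\Pi, \Pi \otimes (\chi\circ\det)) \;=\; |X(\Pi)|,
\]
the middle sum being over smooth characters of $F^\times$. If $\omega$ denotes the central character of $\Pi$, then $\Pi \otimes (\chi\circ\det)$ has central character $\omega\chi^2$, so every $\chi \in X(\Pi)$ satisfies $\chi^2 = 1$; in particular $|X(\Pi)|$ is a power of $2$ and $X(\Pi)$ consists of quadratic characters of $F^\times$.

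With that in hand, I would split on the type of $\Pi$. For $\Pi$ a subquotient of a principal series $\Ind_B^G(\chi_1 \otimes \chi_2)$, a direct comparison yields $\Pi \otimes (\chi\circ\det) \cong \Pi$ iff $\{\chi_1 \chi, \chi_2 \chi\} = \{\chi_1, \chi_2\}$, so $\chi \in \{1, \chi_1 \chi_2^{-1}\}$ and $|X(\Pi)| \leq 2$. For cuspidal $\Pi$ with $p$ odd, the whole group of quadratic characters of $F^\times$ already has order $4$, so the bound $|X(\Pi)| \leq 4$ is immediate from the previous paragraph.

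The remaining and delicate case is cuspidal $\Pi$ with $p = 2$: there the group of quadratic characters of $F^\times$ is much larger (infinite when $\charf_F = 2$), so counting characters is not enough. Here I would transfer the question to the Galois side via the local Langlands correspondence for $GL_2(F)$: writing $\rho$ for the $2$-dimensional Weil--Deligne parameter attached to $\Pi$, the correspondence (together with its compatibility with character twists) identifies $X(\Pi)$ with $\{\chi : \rho \otimes \chi \cong \rho\}$. Any nontrivial such $\chi$ is the sign character of a quadratic separable extension $E/F$ from which $\rho$ is induced, and a standard Galois-theoretic argument bounds the number of such extensions by $3$, their sign characters together with $1$ forming a subgroup of order $4$ in the quadratic characters of $F^\times$. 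This yields $|X(\Pi)| \in \{1,2,4\}$.

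The main obstacle is precisely this supercuspidal $p = 2$ case: the elementary bound on quadratic characters fails, and one must rely on the modular local Langlands correspondence for $GL_2(F)$ together with its (delicate) compatibility with quadratic twists, plus the Galois-theoretic fact that an irreducible $2$-dimensional representation is induced from at most three distinct quadratic extensions. Once Clifford theory and the semisimplicity of $\Pi|_{G'}$ are in place, everything else is routine.
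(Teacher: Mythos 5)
Your opening reduction contains a genuine error: the identity $|L(\Pi)|=|X(\Pi)|$ is \emph{false} when $\charf_R=2$. In that case every $\chi\in X(\Pi)$ satisfies $\chi^2=1$, hence $\chi=1$ (as $x^2=1$ forces $x=1$ in a field of characteristic $2$), so $X(\Pi)$ is always trivial; yet the paper shows (Theorem \ref{prop:LLSL2}) that for $\charf_R=2$ a supercuspidal $\Pi$ has $|L(\Pi)|=2$ and the cuspidal non-supercuspidal $\Pi_0$ has $|L(\Pi_0)|=4$. The Clifford/Frobenius computation breaks down because $\Ind_{G'}^{G}1$ is then far from multiplicity-free as an extension of characters: the relevant quotient $R[G/G_\Pi]$ is the group algebra of a $2$-group over a field of characteristic $2$, which is local, so summing $\dim\Hom_{RG}(\Pi,\Pi\otimes(\chi\circ\det))$ over \emph{distinct} characters undercounts $\dim\End_{RG'}(\Pi|_{G'})$. (This is exactly the content of Lemma \ref{le:2.4}(4): $|X_V|=[G:G_\pi]$ only in characteristic $0$, and equals $[G:G_{\pi,\ell}]$ in characteristic $\ell$.) As written, your argument would in particular force every cuspidal $\Pi$ in characteristic $2$ to restrict irreducibly to $G'$, and would put the subquotient $\Pi_0$ of $\ind_B^G1$ into a packet of size at most $2$ — both false.

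The gap is confined to $\charf_R=2$, and the repair is the one the paper uses: since $\charf_R\neq p$, the case $\charf_R=2$ forces $p$ odd, and for $p$ odd one should not go through $X(\Pi)$ at all. Clifford theory makes $L(\Pi)$ a principal homogeneous space under $G/G_\Pi$, a quotient of $G/ZG'\simeq F^*/(F^*)^2\simeq(\mathbb Z/2\mathbb Z)^2$, so $|L(\Pi)|\in\{1,2,4\}$ directly (Proposition \ref{LPip}). Your remaining case, cuspidal $\Pi$ with $p=2$, automatically has $\charf_R\neq 2$, and there your route — $|L(\Pi)|=|X(\Pi)|=|X_{\sigma_\Pi}|$ via the twist-compatibility of the modular Langlands correspondence, followed by the dihedral/Galois count showing that an irreducible two-dimensional $\sigma$ is induced from at most three quadratic separable extensions whose associated quadratic characters together with $1$ form a Klein four-group — is exactly the paper's (formulas \eqref{eq:LLr}, \eqref{eq:LLr2} and Proposition \ref{le:Hs}). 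One further small point: for $p=2$ and $q+1=0$ in $R$ there is a cuspidal non-supercuspidal $\Pi$ whose parameter $\sigma_\Pi$ is \emph{reducible}, so your "induced from a quadratic extension" dichotomy must be supplemented by the reducible-parameter computation (Remark \ref{Hs}), which gives $|X_\sigma|\le 2$ there.
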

 When $p$ is odd that follows rather easily
from $|G/ZG'|=4$, but it is also true when $p=2$, in which case the proof is completed only in  Proposition \ref{cor:bq1}, and  
uses  the Langlands $R$-correspondence  for $G$, which we recall in \S \ref{pas3}.

 \begin{proposition}\label{cor:bq} (Corollary  \ref{cor:tout}, Proposition \ref{cor:bq1})  The $L$-packets of size $4$  are  cuspidal  and in bijection the  biquadratic separable extensions of $F$.    \end{proposition}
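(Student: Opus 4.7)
The plan is to study, for each irreducible smooth $R$-representation $\Pi$ of $G=GL_2(F)$, the twist stabilizer
$$X(\Pi)=\{\chi:F^\times\to R^\times\mid \Pi\otimes(\chi\circ\det)\cong\Pi\},$$
and to exploit the identity $|L(\Pi)|=|X(\Pi)|$ coming from the Clifford-Mackey machinery developed in Section~\ref{S1}. Since twisting by $\chi\circ\det$ is trivial after restriction to $G'=SL_2(F)$, one has $X(\Pi)\subset\widehat{F^\times/(F^\times)^2}$, which via local class field theory is canonically identified with the group of separable quadratic characters of $F$.

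First I would rule out the non-cuspidal case. If $\Pi$ is a subquotient of a principal series $\ind(\chi_1\boxtimes\chi_2)$, the identity $\Pi\otimes(\chi\circ\det)\cong\Pi$ amounts to the equality of unordered pairs $\{\chi_1\chi,\chi_2\chi\}=\{\chi_1,\chi_2\}$, which forces $\chi\in\{1,\chi_1^{-1}\chi_2\}$ and the parallel check handles Steinberg and its twists. This gives $|X(\Pi)|\le 2$ whenever $\Pi$ is not cuspidal, so every $L$-packet of size $4$ is cuspidal.

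Next I would build the bijection. For cuspidal $\Pi$ with $|X(\Pi)|=4$, local class field theory turns $X(\Pi)$ (an order-$4$ subgroup of the $2$-torsion of $\widehat{F^\times}$) into $\Gal(L_\Pi/F)$ for a unique biquadratic separable extension $L_\Pi/F$. Because $X(\Pi\otimes\chi)=X(\Pi)$ and the restriction to $G'$ is unchanged by twisting $\Pi$ by a character of $F^\times$, this assignment descends to a well-defined map from size-$4$ $L$-packets to biquadratic extensions of $F$. For injectivity, I would check that cuspidals $\Pi$ with fixed $L_\Pi=L$ are unique up to twist by a character of $F^\times$: having $\Pi\otimes\chi\cong\Pi$ with $\chi$ the quadratic character cutting out $E\subset L$ forces (via the LLC on the Galois side, or via Kutzko-type structure theory on the $GL_2$ side) $\Pi=\pi(\theta)$ for some character $\theta$ of $E^\times$; the three quadratic subfields $E_1,E_2,E_3\subset L$ then pin $\theta$ down up to Galois and twist. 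For surjectivity, given $L/F$ biquadratic with subfields $E_i$ and quadratic characters $\chi_i$, I would construct $\theta:E_1^\times\to R^\times$ with $\theta^{\sigma_1}/\theta=\chi_{L/E_1}$. Such $\theta$ exists by Pontryagin duality applied to $F^\times\hookrightarrow E_1^\times\xrightarrow{\sigma_1-1}E_1^\times$: the image of $\theta\mapsto\theta^{\sigma_1-1}$ on characters consists exactly of characters trivial on $F^\times=(E_1^\times)^{\sigma_1}$, and $\chi_{L/E_1}|_{F^\times}=(\chi_2\circ N_{E_1/F})|_{F^\times}=\chi_2(x^2)=1$. The resulting $\Pi=\pi(\theta)$ then has $X(\Pi)\supseteq\{1,\chi_1,\chi_2,\chi_3\}$, hence equality, and $L_\Pi=L$.

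The main obstacle will be the case $p=2$: there exist primitive $2$-dimensional Weil-Deligne representations that are not monomial, and in the modular setting ($\charf_R=\ell>0$) the $GL_2$-classification is more delicate. Here the proof must route through the Langlands correspondence for $GL_2(F)$ recalled in \S\ref{pas3}, converting the stabilizer computation into the equivalent statement on $2$-dimensional $W_F$-representations, where $|X(\rho)|=4$ is incompatible with primitivity and forces $\rho$ to be imprimitive along three independent quadratic extensions, hence induced from a character of the common biquadratic extension $L$. That reduction, together with the construction of $\theta$ above carried out uniformly in $R$, is what is encapsulated in Corollary~\ref{cor:tout} and Proposition~\ref{cor:bq1}.
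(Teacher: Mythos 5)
Your strategy for $\charf_R\neq 2$ is essentially the paper's: translate $X_\Pi$ to the Galois side via the Langlands correspondence, observe that $X_\sigma\neq\{1\}$ forces $\sigma\simeq\ind_{W_E}^{W_F}\xi$ by Clifford theory, compute $X_{\sigma(E,\xi)}$ in terms of $\xi^\tau/\xi$, and realize a prescribed biquadratic extension by solving $\xi^\tau/\xi=\eta_{E'}\circ N_{E/F}$ (the paper quotes \cite[\S 41]{BH06} for that solvability where you invoke Pontryagin duality for $\sigma_1-1$, but the content is the same). This is exactly Proposition \ref{le:Hs} and Proposition \ref{cor:bq1}, and it correctly covers $p=2$ as well.

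There is, however, a genuine gap: the case $\charf_R=2$. Your entire argument rests on the identity $|L(\Pi)|=|X_\Pi|$, and that identity is \emph{false} in characteristic $2$. The elements of $X_\Pi$ are smooth characters of $F^*$ of trivial square (Lemma \ref{le:2.4}), and when $\charf_R=2$ the only such character is trivial, so $X_\Pi=\{1\}$ for every $\Pi$; nevertheless supercuspidal packets have size $2$ and the cuspidal non-supercuspidal packet $L(\Pi_0)$ has size $4$ (Theorem \ref{prop:LLSL2}). More precisely, Lemma \ref{le:2.4}(4) gives $|X_\Pi|=[G:G_{\Pi,\ell}]$ when $\charf_R=\ell$, and since $G/G_\Pi$ is a $2$-group this collapses to $1$ when $\ell=2$. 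So both halves of your argument break in that case: the bound $|X_\Pi|\le 2$ for subquotients of principal series no longer shows that size-$4$ packets are cuspidal, and the bijection with biquadratic extensions cannot be read off from $X_\Pi$. The paper closes this case by a completely different route: $\charf_R=2$ forces $p$ odd, and the type-theoretic analysis (level-$0$ types restricted to $SL_2(k_F)$ in Proposition \ref{prop:ty1}, irreducibility of $\ind_{J'}^{G'}(\lambda|_{J'})$ at positive level in Proposition \ref{prop:3.12}, and the explicit principal-series computation of Proposition \ref{v}) shows directly that there is exactly one $L$-packet of size $4$, which is cuspidal and is matched with the unique biquadratic extension of $F$. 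You would need to add an argument of this kind (or some other substitute for the failed counting identity) to make the proof complete.
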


The bijection is described in the proof.
 When $p\neq 2$  there is just one  $L$-packet of size $4$ and it has level $0$. When $p=2$ the $L$-packets of size $4$ have positive level,  their 
number is finite if $\charf_F=0$, but there are infinitely many 
if $\charf_F=2$. 
     \begin{proposition} \label{prop:ty} (Proposition \ref{prop:ty1})       When  $p$ is odd, the   cuspidal $L$-packets  are not singletons.
When $p=2$, the cuspidal $L$-packets  of level $0$ have size $2$.   \end{proposition}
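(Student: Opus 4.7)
The plan is to establish $|L(\Pi)| \geq 2$ for the cuspidal $\Pi$ in both scenarios; for the $p=2$ level $0$ case I would then rule out size $4$ to conclude that the size is exactly $2$.

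For cuspidal $\Pi$ of level $0$ (whether $p$ is odd or $p=2$), write $\Pi = \ind_J^G \widetilde\sigma$ with $J = F^\times GL_2(\cO_F)$ and $\widetilde\sigma|_{GL_2(\cO_F)}$ the inflation of a cuspidal $R$-representation $\sigma$ of $GL_2(k_F)$. Since $\det(J) = F^{\times 2}\cO_F^\times$ has index $2$ in $F^\times$, the element $g_0 := \diag(\varpi,1)$ lies in $G \setminus JG'$, so $J\backslash G/G'$ has two double cosets and Mackey decomposes $\Pi|_{G'}$ into two summands: one based on $\widetilde\sigma|_{SL_2(\cO_F)}$ and one on its $g_0$-conjugate. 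I would argue that these summands share no irreducible constituent, giving $|L(\Pi)| \geq 2$. For $p$ odd, this reduces at the residue field level to showing $\sigma|_{SL_2(k_F)}$ is not $g_0$-invariant, which holds because the unique nontrivial quadratic character of $k_F^\times$ twists a generic cuspidal $\sigma$ non-trivially (the exceptional $\sigma$'s on which this fails instead contribute to the size-$4$ packet of Proposition~\ref{cor:bq}, still giving $|L(\Pi)| \geq 2$). For $p=2$ the residual twist is absent ($k_F^\times$ has odd order, so $|k_F^\times/(k_F^\times)^2|=1$), and the asymmetry lives instead in the extension data of $\widetilde\sigma$ on the central $F^\times$, to be analysed via the Langlands correspondence for $GL_2(F)$ recalled in \S \ref{pas3}.

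For cuspidal $\Pi$ of positive level with $p$ odd, the classification of cuspidal $R$-representations of $GL_2(F)$ realises $\Pi$ as a dihedral induction attached to a regular character $\theta$ of a quadratic extension $E/F$; the quadratic character $\eta_{E/F}$ of $F^\times$ then satisfies $\Pi \otimes (\eta_{E/F}\circ\det) \cong \Pi$, which via Clifford theory forces $|L(\Pi)| \geq 2$ provided $\charf R \neq 2$. When $\charf R = 2$ (possible only for $p$ odd), I would lift $\Pi$ to an integral characteristic-$0$ representation over $\overline{\Q}_2$, apply the result there, and transport the lower bound through reduction modulo $2$ using the paper's preservation of $L$-packet sizes under reduction (promised in the introduction).

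Finally, for the $p=2$ level $0$ case the first paragraph gives $|L(\Pi)| \geq 2$, and Proposition~\ref{cor:bq} together with the succeeding discussion (for $p=2$, size-$4$ packets all have positive level) rules out $|L(\Pi)| = 4$, so $|L(\Pi)| = 2$ exactly. The main obstacle throughout is the residual analysis for $p=2$ at level $0$, where the classical quadratic-character argument at $k_F^\times$ collapses and one must work directly with extension data via the Langlands correspondence; handling $\charf R = 2$ in the positive-level $p$-odd case by reduction mod $2$ is likewise delicate.
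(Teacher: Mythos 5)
Your starting point for level $0$ --- Mackey decomposition of $\Pi|_{G'}$ over the two cosets of $JG'$ in $G$ --- is exactly the paper's (formulas \eqref{eq:JG'}--\eqref{eq:case1}), but two steps are genuinely problematic.

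First, the disjointness of the two Mackey summands does not reduce to $g_0$-invariance of $\sigma|_{SL_2(k_F)}$ (the conjugate summand is induced from the \emph{other} vertex, not from a twist at the same vertex), and your justification via the quadratic character of $k_F^\times$ is the wrong test. The clean argument, which the paper uses, is that $\Pi|_{G'}$ is multiplicity-free by uniqueness of Whittaker models \eqref{ssfl1}, so the two summands cannot share a constituent. For the lower bound $|L(\Pi)|\geq 2$ this is enough in all level-$0$ cases.

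Second, and more seriously, the $p=2$ level-$0$ statement is an \emph{exact} count, and your route to the upper bound $|L(\Pi)|\leq 2$ is circular: the assertion ``for $p=2$ the size-$4$ packets have positive level'' is, in the paper, a \emph{consequence} of the proposition you are proving (and Proposition \ref{cor:bq1} is established afterwards, via the Langlands correspondence, without saying anything about levels on its own). The direct argument you are missing is the one the paper gives: $|L(\Pi)|=2\,\lg\bigl(\ind_{J'}^{G'}(\lambda|_{J'})\bigr)$, the compact induction $\ind_{J'}^{G'}(\lambda')$ of each irreducible component $\lambda'$ is irreducible (\cite{HV22}, Corollary 4.29 --- a point you never address, yet it is indispensable for any upper bound), and for $p=2$ the inflation $\lambda|_{J'}$ of $\sigma|_{SL_2(k_F)}$ is already irreducible because $k_F^\times$ has odd order, so $GL_2(k_F)=Z(GL_2(k_F))\,SL_2(k_F)$. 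You record the fact $|k_F^\times/(k_F^\times)^2|=1$ but then defer to ``extension data via the Langlands correspondence'' instead of drawing the conclusion $|L(\Pi)|=2\cdot 1=2$.

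For positive level with $p$ odd, your Clifford-theory argument from $\eta_{E/F}\in X_\Pi$ is correct when $\charf_R\neq 2$ (using $|L(\Pi)|=|X_\Pi|$), but it imports the dihedral classification/exhaustion theorem, whereas the paper's Proposition \ref{prop:3.12} is a self-contained intertwining computation with the type $(J,\Lambda)$ that works uniformly in $\charf_R$, including $\charf_R=2$. Your patch for $\charf_R=2$ via ``preservation of $L$-packet sizes under reduction'' is again circular as cited: for $\ell=2$ and positive level that preservation is \emph{deduced} in the paper from Proposition \ref{prop:3.12}. It can be repaired, since you only need a lower bound: formula \eqref{eq:case2} gives $|L(\Pi)|=|L(\tilde\Pi)|\cdot\lg(r_2(\tilde\pi))\geq|L(\tilde\Pi)|=2$, which uses only that restriction to $G'$ commutes with reduction modulo $2$ --- but you should say that, not appeal to equality of sizes.
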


\begin{proposition} \label{prop:ncl} (Proposition \ref{pro:cnsc}) There is a cuspidal non-supercuspidal $L$-packet if and only if  $q+1=0$ in $R$. It is unique  of level $0$, and  size  $4$ when  $\charf_R=2$, and  size $2$ when $\charf_R\neq 2$.
 \end{proposition}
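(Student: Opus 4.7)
The plan is to reduce to computing the size of the $L$-packet containing the Steinberg representation $\St$ of $G=GL_2(F)$, then to handle the cases $\charf_R\neq 2$ and $\charf_R=2$ separately.

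The classification of irreducible smooth $R$-representations of $G$ from \S\ref{pas3} tells us that a cuspidal non-supercuspidal irreducible $R$-representation of $G$ exists iff $q+1=0$ in $R$, and in that case every such representation is a twist $\St\otimes(\eta\circ\det)$ of the Steinberg by a character $\eta$ of $F^\times$. Since these twists all share the same restriction to $G'$, they form a single $L$-packet, of level $0$ because $\St$ itself is. This handles the existence, uniqueness, and level-$0$ assertions, and reduces the size computation to that of $|L(\St)|$.

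When $\charf_R\neq 2$, I compute the self-twist group $X(\St)=\{\chi\in\widehat{G/G'}:\St\otimes\chi\cong\St\}$ directly from the presentation of $\St$ as the non-trivial irreducible subquotient of $\mathrm{Ind}_B^G(|\cdot|^{1/2}\otimes|\cdot|^{-1/2})$ (the square root exists in $R$ because $\charf_R\neq 2$). A character $\chi=\eta\circ\det$ lies in $X(\St)$ iff the unordered pair $\{\eta|\cdot|^{1/2},\eta|\cdot|^{-1/2}\}$ equals $\{|\cdot|^{1/2},|\cdot|^{-1/2}\}$, forcing $\eta\in\{1,|\cdot|^{-1}\}$. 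The hypothesis $q\equiv-1\pmod{\charf_R}$ with $\charf_R\neq 2$ makes $|\cdot|^{-1}$ nontrivial of order $2$, so $|X(\St)|=2$ and the Clifford-theoretic analysis of Section \ref{SL2} gives $|L(\St)|=2$.

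When $\charf_R=2$, necessarily $p$ is odd, and the previous direct computation degenerates (the square root $|\cdot|^{1/2}$ is trivial in $R^\times$, and there are no non-trivial order-$2$ $R$-valued characters of $F^\times$). My strategy is to identify $L(\St)$ with the unique biquadratic $L$-packet of size $4$ furnished by Proposition \ref{cor:bq}. I lift $\St$ to an integral irreducible representation $\widetilde\Pi$ of $G$ over $\overline{\mathbb Q}_2^{\mathrm{ac}}$ of level $0$ whose reduction modulo $2$ is (a twist of) $\St$: by Proposition \ref{cor:bq} combined with the uniqueness of the biquadratic extension of $F$ when $p$ is odd, $\widetilde\Pi$ must be (a twist of) the supercuspidal attached to this unique biquadratic extension, and $|L(\widetilde\Pi)|=4$. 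The preservation of $L$-packet cardinality under integral reduction, established earlier in the paper, then gives $|L(\St)|=|L(\widetilde\Pi)|=4$. The main obstacle is precisely this last case: producing a suitable integral lift of $\St$ and ensuring that the reduction procedure transfers the cardinality $4$ faithfully across characteristics.
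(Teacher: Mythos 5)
Your identification of the relevant representation is off, and this propagates through the whole argument. When $q+1=0$ in $R$ the paper's Steinberg $\St=(\ind_B^G 1)/1$ is \emph{not} irreducible: $\ind_B^G(1)$ then has length $3$, with the trivial character as a subobject, the character $(-1)^{\val}\circ\det$ as a quotient, and a cuspidal non-supercuspidal subquotient $\Pi_0=\ind_{ZGL_2(O_F)}^G\tilde\sigma_0$ in between (Proposition \ref{prop:spG}). The cuspidal non-supercuspidal representations of $G$ are the twists of $\Pi_0$, not of $\St$, and ``the non-trivial irreducible subquotient'' of this principal series is ambiguous (one of them is a character, hence not cuspidal). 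Granting that you mean $\Pi_0$, your $\charf_R\neq 2$ computation still has a gap at exactly the delicate point: the inclusion $X_{\Pi_0}\supseteq\{1,(-1)^{\val}\circ\det\}$ does follow from the Grothendieck-group symmetry you describe, but the reverse inclusion --- that $\eta\circ\det\in X_{\Pi_0}$ forces the unordered pair condition --- requires knowing that the cuspidal $\Pi_0$ occurs in no principal series other than $[\ind_B^G 1]$. Jacquet-module arguments only control the non-cuspidal subquotients, so this is not automatic; it is precisely the incomplete point of \cite{V89} that the paper flags after Proposition \ref{prop:spG} and settles by the finite-group computation in the appendix.

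The $\charf_R=2$ case contains the more serious error. You invoke ``the preservation of $L$-packet cardinality under integral reduction, established earlier in the paper''; no such statement holds. The correct relation is $|L(r_\ell(\tilde\Pi))|=|L(\tilde\Pi)|\cdot\lg(r_\ell(\tilde\pi))$, and the second factor can equal $2$ --- Proposition \ref{prop:redu} is devoted to determining exactly when it does. So even after producing a level-$0$ supercuspidal lift $\tilde\Pi$ with $|L(\tilde\Pi)|=4$ (itself asserted rather than proved: a priori the lift could have packet size $2$, with the members of $L(\tilde\Pi)$ reducing reducibly), you must still show that each $\tilde\pi\in L(\tilde\Pi)$ reduces irreducibly modulo $2$; that is the entire mathematical content of this case, and it is a genuine finite-group computation (Lemma \ref{le:53}). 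The paper avoids all of this by a single argument uniform in $\charf_R$: $\Pi_0$ is compactly induced from the level-$0$ type $(ZGL_2(O_F),\tilde\sigma_0)$, the general formula \eqref{eq:case1} gives $|L(\Pi_0)|=2\,\lg(\sigma_0|_{SL_2(k_F)})$, and the appendix (Remark \ref{re:redf}) shows this last length is $1$ if $\charf_R\neq 2$ and $2$ if $\charf_R=2$. Your treatment of existence, uniqueness and level $0$ is fine once $\St$ is replaced by $\Pi_0$, but both size computations need to be reworked along these lines.
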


\bigskip From the Langlands $R$-correspondence for $GL(2,F)$, we get a bijection  from the set of $L$-packets to the set of conjugacy classes of Deligne morphisms of  $W_F$ into $PGL_2(R)$, the dual group of $ SL_2$ over $R$.  When $\charf_R\neq 2$, we even get an enhanced Langlands correspondence, in that we parametrize 
the elements in an $L$-packet  $L(\Pi)$  by the characters of the group $S_\Pi$ of connected components of
the centralizer $C_\Pi$ of  the image of the corresponding  Deligne morphism in $PGL_2(R)$.
When $\charf_R= 2$,  $C_\Pi$  is always connected and  the  supercuspidal $L$-packets  are not singletons.
 We will determine explicitely $C_\Pi$ for each $\Pi$.

\begin{theorem}\label{thm:LLSL2} (Theorem \ref{prop:LLSL2})\footnote{When $R=\mathbb C$  this was already
established by Gelbart and Knapp \cite[\S 4]{GeKn82} assuming that it
could be done for $GL_n(F)$}. Let $\Pi$ be  an irreducible smooth $R$-representation of $GL_2(F)$.

 When $\charf_R\neq 2$,  the component group $S_\Pi $ of  $C_\Pi$  is isomorphic to $\{1\}, \mathbb Z/ 2 \mathbb Z$ or $ \mathbb Z/ 2 \mathbb Z\times \mathbb Z/ 2 \mathbb Z$.
 
  When $\charf_R=2$, $C_\Pi$  is connected for each $\Pi$,  but the cardinality of the $L$-packet $L(\Pi)$ is 

$1$ if $\Pi$ is not cuspidal,

$2$ if $\Pi$ is supercuspidal,

$4$ if $\Pi$ is   cuspidal not supercuspidal. \end{theorem}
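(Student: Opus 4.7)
The plan is to use the Langlands $R$-correspondence for $GL_2(F)$ recalled in \S\ref{pas3} to replace $\Pi$ by its Deligne parameter $\varphi_\Pi\colon W_F\times SL_2(R)\to GL_2(R)$, and to compute the centralizer $C_\Pi\subset PGL_2(R)$ of the image of the projection $\bar\varphi_\Pi$ case by case. The L-packet cardinalities themselves are supplied by Theorem \ref{lpi} together with Propositions \ref{cor:bq}, \ref{prop:ty} and \ref{prop:ncl}; the remaining tasks are, when $\charf_R\neq 2$, to identify $C_\Pi$ and match $S_\Pi=\pi_0(C_\Pi)$ with these cardinalities, and, when $\charf_R=2$, to show that $C_\Pi$ is always connected.

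When $\charf_R\neq 2$, one proceeds by cases on the type of $\Pi$. If $\Pi$ is a character, $\bar\varphi_\Pi$ is trivial and $C_\Pi=PGL_2(R)$. If $\Pi=\Ind(\chi_1\otimes\chi_2)$ is an irreducible principal series, $\bar\varphi_\Pi(W_F)$ lies in a maximal torus $T$ via $\chi_1/\chi_2$: when $(\chi_1/\chi_2)^2\neq 1$ we get $C_\Pi=T$ and $S_\Pi=1$, while when $\chi_1/\chi_2$ is a non-trivial quadratic character the image is $\{\pm 1\}\subset T$ and the identity $w\,\diag(-1,1)\,w^{-1}=\diag(1,-1)=-\diag(-1,1)$ in $GL_2$ shows that $w$ commutes with $\{\pm 1\}$ in $PGL_2(R)$, giving $C_\Pi=N(T)$ and $S_\Pi\cong\mathbb Z/2\mathbb Z$ (this also covers the cuspidal non-supercuspidal packet of Proposition \ref{prop:ncl}, whose parameter is precisely of this shape with $\chi_1/\chi_2$ the unramified quadratic character attached to $q+1=0$ in $R$). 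If $\Pi$ is a twist of Steinberg, $\varphi_\Pi|_{SL_2(R)}$ surjects onto $PSL_2(R)=PGL_2(R)$, so $C_\Pi=\{1\}$. If $\Pi$ is a supercuspidal monomial $\Ind_{W_E}^{W_F}\theta$ not invariant under any further quadratic twist, $\bar\varphi_\Pi(W_F)$ is generated by a Zariski-dense subgroup of $T$ (the image of $\theta/\theta^\sigma$) together with a single reflection, and the calculation $C_\Pi\cap T=\{\pm 1\}$ yields $C_\Pi=\{\pm 1\}$ and $S_\Pi\cong\mathbb Z/2\mathbb Z$. Finally, if $\Pi$ is biquadratic—that is, $\Pi\otimes\eta\cong\Pi$ for the three non-trivial quadratic characters $\eta$ attached to a biquadratic extension $L/F$—then $\bar\varphi_\Pi$ factors through $\Gal(L/F)$ and its image is the Klein four-group $V_4\subset PGL_2(R)$ formed by the three commuting involutions $\diag(-1,1)$, $w$ and $w\,\diag(-1,1)$; a direct computation gives $Z_{PGL_2(R)}(V_4)=V_4$, so $S_\Pi\cong\mathbb Z/2\mathbb Z\times\mathbb Z/2\mathbb Z$.

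When $\charf_R=2$, the decisive new input is that $\mu_2(R)=\{1\}$: there are no non-trivial quadratic characters of $W_F$ with values in $R^\times$, and every element of order $2$ in $PGL_2(R)$ is unipotent (the diagonal involution collapses since $\diag(-1,1)=I$, and the Weyl element $w$ satisfies $(w-I)^2=0$). For non-cuspidal $\Pi$ the previous analysis applies with the quadratic sub-case of the principal series vacuously excluded, so $C_\Pi\in\{PGL_2(R),T,\{1\}\}$ is always connected. For a supercuspidal $\Pi$, the would-be reflection in the image is unipotent with centralizer a unipotent subgroup $U$, while the centralizer of the torus part is $T$, and $T\cap U=\{1\}$ in $PGL_2$, so $C_\Pi=\{1\}$ is connected. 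For the unique cuspidal non-supercuspidal packet—which exists in characteristic $2$ because $\charf_R\neq p$ forces $p$ odd and hence $q+1=0$ in $R$—any Klein-four-group of unipotent involutions in $PGL_2(R)$ is contained in a common abelian one-parameter unipotent subgroup $U\cong(R,+)$, and the centralizer of such a $V_4\subset U$ is exactly $U$, so $C_\Pi=U$ is connected. The L-packet sizes are then read off from Theorem \ref{lpi} and Propositions \ref{cor:bq}, \ref{prop:ncl} and \ref{prop:ty}.

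The main difficulty is the biquadratic case and its characteristic-$2$ counterpart. In characteristic not $2$ one must verify carefully that the three involutions $\diag(-1,1)$, $w$ and $w\,\diag(-1,1)$ of $V_4$ really commute in $PGL_2(R)$ even though their $GL_2(R)$-lifts do not commute on the nose, and then show that $V_4$ is its own centralizer in $PGL_2(R)$. In characteristic $2$ the obstacle has the opposite flavor: the non-existence of semisimple involutions forces the biquadratic-type image into a unipotent subgroup of $PGL_2(R)$, which in turn forces a whole one-parameter connected group into $C_\Pi$; this is the mechanism by which $S_\Pi$ collapses to the trivial group even though $L(\Pi)$ still has size $4$.
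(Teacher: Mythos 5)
Your overall strategy is the same as the paper's: pass to the $L$-parameter via the Langlands correspondence of \S\ref{pas3} and compute the centralizer in $PGL_2(R)$ case by case, quoting the earlier sections for the packet sizes. Your treatment of the irreducible principal series, of the char~$\neq 2$ supercuspidals (dihedral image, $C_\Pi\cap T=\{\pm1\}$, Klein group in the biquadratic case), and of the char~$2$ supercuspidals (odd-order torus part, unipotent reflection, trivial intersection) matches the paper's computations and is correct.

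There is, however, a genuine error in your characteristic~$2$ treatment of the cuspidal non-supercuspidal packet. You assign to it a ``biquadratic-type'' parameter whose image is a Klein four-group of unipotent involutions, and you present the resulting one-parameter unipotent centralizer as the mechanism behind the collapse of $S_\Pi$. But $\Pi_0$ is cuspidal and \emph{not} supercuspidal, so by the defining property of the correspondence its parameter is reducible: by Proposition~\ref{ex:1} it is $\sigma_{\Pi_0}=((q^{1/2})^{-\val}\oplus(q^{1/2})^{\val})\circ\alpha_F$ with $N_{\Pi_0}=0$, and in characteristic $2$ one has $q=1$ in $R$, so the image in $PGL_2(R)$ is trivial and $C_{\Pi_0}=PGL_2(R)$. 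No Klein four-group occurs in any parameter in characteristic $2$ (there are no non-trivial quadratic characters, as you yourself note for the principal series), so the group whose centralizer you compute does not exist; the conclusion (connectedness) is true, but for the opposite reason — the centralizer is as large as possible, not a one-parameter unipotent group. A secondary, less consequential point: your claim that a character of $G$ has trivial parameter image (hence $C_\Pi=PGL_2(R)$) holds only when $q=1$ in $R$; in general the semisimple part has image generated by $\diag(q,1)$ and $N_\Pi\neq 0$, and with the paper's Weil--Deligne convention $C_\Pi$ is a torus or a Borel rather than all of $PGL_2(R)$ (similarly your $C_{\St}=\{1\}$ reflects the Arthur-$SL_2$ normalization rather than the paper's $(\sigma,N)$ one). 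These discrepancies do not change $\pi_0(C_\Pi)$ in any case, but the $\Pi_0$ case should be rewritten with the correct parameter.
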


When $L(\Pi)$ is not a singleton, we take as a
base point the element having a non-zero Whittaker model with respect to a non-trivial
smooth $R$-character of $F$.  When $\charf_R\neq 2$, the theorem gives a bijection $$\iota:L(\Pi)\to \Irr_R(S_\Pi)$$ 
respecting the base points (the trivial representation in $ \Irr_R(S_\Pi)$).
It is unique when $|L(\Pi)|=2$. There are  partial results   on the unicity of $\iota$  when $|L(\Pi)|=4$. Under the restriction $p=2, \charf_F=0$, for  the complex $L$-packet of size $4$ (unique, of level $0$), there is a unique bijection  compatible with the endoscopic character identities \cite{AP24}.

\medskip When $\charf_R=2$, Treuman and Venkatesh introduced a  ``linkage'' between irreducible smooth $R$-representations of $G$ and $G'$. In \S  \ref{S:464} we interpret this notion in terms of dual groups, thus proving their conjectures in a special case.

\medskip   Let $\ell\neq p$ be a prime number, and $\mathbb Q_\ell^{ac}$  an algebraic closure of $\mathbb Q_\ell$ with residue field  $\mathbb F_\ell^{ac}$.  
Each irreducible  smooth  $\mathbb F_\ell ^{ac}$-representation  of $GL_2(F)$ lifts to a smooth $\mathbb Q_\ell^{ac}$-representation. We show that this remains true for $SL_2(F)$. 

    \begin{proposition}  \label{prop:nc} (Corollary \ref{prop:redu1},  Proposition \ref{prop:nc1})  
   
     Each irreducible  smooth  $\mathbb F_\ell ^{ac}$-representation $\pi$ of $SL_2(F)$  is the reduction modulo $\ell$ of  an integral  irreducible  smooth   $\mathbb Q_\ell^{ac}$-representation $\tilde \pi$ of $SL_2(F)$. 
     \end{proposition}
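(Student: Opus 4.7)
The plan is to reduce the $SL_2$ lifting question to the known $GL_2$ case (Corollary \ref{prop:redu1}) via the restriction machinery from $GL_2(F)$ to $SL_2(F)$ developed in the preceding sections, and then to extract the required irreducible lift from the $L$-packet attached to the $GL_2$-lift. First I would invoke the general fact from Section \ref{S1} that the given $\pi$ appears as a direct summand of $\Pi|_{G'}$ for some irreducible smooth $\mathbb F_\ell^{ac}$-representation $\Pi$ of $G = GL_2(F)$. Then Corollary \ref{prop:redu1} produces an integral irreducible smooth $\mathbb Q_\ell^{ac}$-representation $\tilde\Pi$ of $G$ together with a $G$-stable $\mathbb Z_\ell^{ac}$-lattice $L \subset \tilde\Pi$ whose reduction $L/\ell L$ is isomorphic to $\Pi$.

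Next, I would apply Theorem \ref{lpi} on the characteristic zero side to decompose
\[
\tilde\Pi|_{G'} \;=\; \bigoplus_{i=1}^{r} \tilde\pi_i,
\]
each $\tilde\pi_i$ irreducible, and automatically integral as a direct summand of the integral representation $\tilde\Pi|_{G'}$. Letting $p_i \colon \tilde\Pi \onto \tilde\pi_i$ be the $G'$-equivariant projection, $M_i := p_i(L)$ is a $G'$-stable lattice in $\tilde\pi_i$, and $L$ sits inside $\bigoplus_i M_i$ as commensurable lattices in $\tilde\Pi$.

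The core step is then a Brauer--Nesbitt argument applied to $H$-fixed vectors for any compact open $H \subset G'$ (after taking invariants the modules are finitely generated over $\mathbb Z_\ell^{ac}$): commensurability of $L^H$ and $(\bigoplus_i M_i)^H$ forces the identity
\[
[\Pi|_{G'}] \;=\; \sum_{i=1}^{r}\, [M_i/\ell M_i]
\]
in the Grothendieck group of smooth finite length $\mathbb F_\ell^{ac}$-representations of $G'$. Since $\Pi|_{G'}$ is semisimple and multiplicity free of length $|L(\Pi)|$ by Theorem \ref{lpi}, each $M_i/\ell M_i$ must itself be irreducible and $i \mapsto M_i/\ell M_i$ must be a bijection onto $L(\Pi)$. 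Consequently $\pi \simeq M_{i_0}/\ell M_{i_0}$ for some index $i_0$, and $\tilde\pi := \tilde\pi_{i_0}$ is the desired integral irreducible lift.

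The point I expect to be delicate is ensuring irreducibility of each individual reduction $M_i/\ell M_i$, not merely of their sum. This is forced by multiplicity one of $\Pi|_{G'}$, itself a nontrivial output of the Whittaker-model analysis of Section \ref{SL2}; once that input is granted, the commensurable-lattice Brauer--Nesbitt mechanism combines cleanly with the $L$-packet structure to deliver the lift, and as a byproduct $r=|L(\Pi)|$.
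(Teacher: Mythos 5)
There is a genuine gap, and it sits exactly where the paper has to work hardest. Your Brauer--Nesbitt set-up is fine: for any integral lift $\tilde\Pi$ of $\Pi$ one does get $[\Pi|_{G'}]=\sum_{i=1}^{r}[M_i/\ell M_i]$ with $r=|L(\tilde\Pi)|$, and multiplicity one of $\Pi|_{G'}$ then gives $\sum_i \lg(M_i/\ell M_i)=|L(\Pi)|$ with all constituents distinct. But this does \emph{not} force each $M_i/\ell M_i$ to be irreducible: if $r=|L(\tilde\Pi)|<|L(\Pi)|$, some $M_i/\ell M_i$ necessarily has length $\ge 2$, and multiplicity one is no obstruction to that. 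The paper proves (Proposition \ref{prop:redu}, Corollary \ref{prop:redu1}, via \eqref{eq:red'} which reads $\lg(r_\ell(\tilde\pi))=|X_\Pi/X_{\tilde\Pi}|$) that this really happens: for $p=2$ there are integral supercuspidal $\tilde\pi$ of $G'$ whose reduction has length $2$, corresponding to lifts $\tilde\Pi$ with $|L(\tilde\Pi)|=|L(\Pi)|/2$. For such a $\tilde\Pi$ your argument only yields that $\pi$ is one of the two constituents of $r_\ell(\tilde\pi_{i_0})$, not that it is all of it. (Your claim that $r=|L(\Pi)|$ comes out ``as a byproduct'' is circular: it is an input you need, not an output.)

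The missing idea, which is the actual content of the proposition, is the existence of a \emph{well-chosen} lift $\tilde\Pi$ with $|L(\tilde\Pi)|=|L(\Pi)|$ (the equivalent reformulation stated right after the proposition in the introduction). The paper establishes this by splitting into cases: for non-cuspidal $\pi$ it uses the explicit classification (Corollary \ref{cor:v}, Proposition \ref{prop:nc1}), e.g.\ lifting $\eta_E$ to the order-$2$ character $\tilde\eta_E$ rather than to an arbitrary lift; for cuspidal $\pi$ it uses type theory and the appendix on $SL_2(k_F)$ when possible (Proposition \ref{prop:liftc}), and in the remaining case ($p=2$, positive level, $E_\Pi/F$ unramified) it transports the problem through the Langlands correspondence and proves Theorem \ref{th:liftw}~2): the Galois parameter $\sigma$ admits a lift $\tilde\sigma$ with $|X_{\tilde\sigma}|=|X_\sigma|$, obtained by choosing the lifted character $\tilde\xi$ so that $\tilde\xi(b)=1$ on the order-$(q+1)$ root of unity $b$. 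None of this is recoverable from the restriction/lattice formalism alone, so your proof as written does not close. Also, a small reference slip: the lift of $\Pi$ itself is Proposition \ref{pr:ilft}, not Corollary \ref{prop:redu1}, which concerns representations of $G'$.
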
 

An equivalent formulation is that each  irreducible  smooth  $\mathbb F_\ell ^{ac}$-representation $\Pi$ of $GL_2(F)$ is the reduction modulo $\ell$ of an integral irreducible smooth  $\mathbb Q_\ell ^{ac}$-representation $\tilde \Pi$ of $GL_2(F)$  such that 
$$|L(\Pi)|=  |L(\tilde \Pi)|.$$
     
The reduction modulo $\ell$ of each  integral supercuspidal    $\mathbb Q_\ell^{ac}$-representation  of $GL_2(F)$  is  irreducible, but this is not true for $SL_2(F)$.  Each supercuspidal $\mathbb Q_\ell^{ac}$-representation   $\tilde \pi$ of $SL_2(F)$ is integral and we determine all the cases of reducibility. We choose a supercuspidal    $\mathbb Q_\ell^{ac}$-representation  $\tilde \Pi$  of $GL_2(F)$  such that  $\tilde \pi\in L(\tilde \Pi)$ and  denote by  $\sigma_{\tilde \Pi}$ the  irreducible $2$-dimensional $\mathbb Q_\ell^{ac}$-representation   of $W_F$  image of $\tilde \Pi$  by the local Langlands correspondence.

\begin{proposition} \label{prop:redu}   (Corollary  \ref{prop:redu1})  The  reduction modulo $\ell$  of   $\tilde \pi$  has length $\leq 2$. The length is $2$ if and only if
$$p=2, \  \sigma_{\tilde \Pi}=\ind_{W_E}^{W_F} \tilde \xi,  \  \tilde \xi (b)\neq 1, \  \tilde \xi (b)^{\ell^{s}}=1, \  \ell^s  \   \text{divides $ q+1$, the order of
  $(\tilde \xi ^\tau/ \tilde \xi )|_{1+P_E}$ is $2$}, $$     where $b $ is 
  a root of unity  of order $q+1$ in a quadratic unramified extension $E/F$, $\tilde \xi $ is a smooth $\mathbb Q_\ell^{ac}$-character of  $E^*$ (of $W_E$  via class field theory), and  $\tau\in \Gal(E/F)$ is not trivial.\end{proposition}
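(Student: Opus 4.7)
The plan is to translate the length of $r_\ell \tilde\pi$ into a ratio of $L$-packet sizes on the $GL_2(F)$ side, and then to unpack when that ratio equals $2$ using the Langlands parameter $\sigma_{\tilde\Pi} = \ind_{W_E}^{W_F} \tilde\xi$. Since $\tilde\pi$ is integral (as stated in the text) and $r_\ell \tilde\Pi$ is irreducible, and since reduction modulo $\ell$ commutes with restriction to $SL_2(F)$, one obtains in the Grothendieck group
\[
\sum_{\tilde\pi' \in L(\tilde\Pi)} [r_\ell \tilde\pi'] \;=\; \bigl[(r_\ell \tilde\Pi)|_{SL_2(F)}\bigr] \;=\; \sum_{\pi \in L(r_\ell \tilde\Pi)} [\pi].
\]
The elements of $L(\tilde\Pi)$ are $GL_2(F)$-conjugate, so their reductions have a common length $n$, giving $n \cdot |L(\tilde\Pi)| = |L(r_\ell \tilde\Pi)|$.

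Next I would bound $n \leq 2$. Since $L$-packet sizes lie in $\{1,2,4\}$, the only way to have $n > 2$ is $|L(\tilde\Pi)| = 1$ and $|L(r_\ell \tilde\Pi)| = 4$. For an induced parameter $\sigma_{\tilde\Pi} = \ind_{W_E}^{W_F} \tilde\xi$ the self-twist by $\chi_{E/F}$ forces $|L(\tilde\Pi)| \geq 2$, so $|L(\tilde\Pi)| = 1$ requires $\sigma_{\tilde\Pi}$ primitive; this is possible only when $p = 2$, which makes $\ell$ odd. Using the standard classification of primitive $2$-dimensional parameters of $W_F$ (projective image $A_4$ or $S_4$), together with the fact that these images persist under reduction modulo an odd $\ell$, one sees that $r_\ell \sigma_{\tilde\Pi}$ cannot acquire the Klein-four self-twist group needed for $|L(r_\ell \tilde\Pi)| = 4$, so $n \leq 2$. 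For the criterion $n = 2$, I would pass to the Langlands parameter side: the self-twist group of $\sigma_\Pi = \ind_{W_E}^{W_F} \tilde\xi$ inside $\Hom(F^*/(F^*)^2, R^\times)$ has order $2$ or $4$, and equals $4$ precisely when $\tilde\xi^\tau/\tilde\xi$ factors through the norm $N_{E/F}$. Thus $n = 2$ is equivalent to $\tilde\xi^\tau/\tilde\xi$ being not a norm while $r_\ell(\tilde\xi^\tau/\tilde\xi)$ is.

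The main obstacle is the explicit dictionary between the condition ``$r_\ell(\tilde\xi^\tau/\tilde\xi)$ is a norm but $\tilde\xi^\tau/\tilde\xi$ is not'' and the elementary hypotheses in the statement. I would filter $E^\times \cong \varpi_E^{\mathbb{Z}} \times \mu_{q+1} \times (1+P_E)$ and test the norm-compatibility of $\tilde\xi^\tau/\tilde\xi$ on each graded piece. The extension $E/F$ must be unramified (the only way $\mu_{q+1}$ appears), and $b^\tau = b^{-1}$ for $b \in \mu_{q+1}$, yielding $(\tilde\xi^\tau/\tilde\xi)(b) = \tilde\xi(b)^{-2}$; the conditions $\tilde\xi(b) \neq 1$ and $\tilde\xi(b)^{\ell^s} = 1$ with $\ell^s \mid q+1$ then encode exactly that $\tilde\xi^\tau/\tilde\xi$ is non-trivial on $\mu_{q+1}$ (so not a norm) but becomes trivial there after reduction (so that its reduction is a norm on the tame part). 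The order-$2$ condition on $(\tilde\xi^\tau/\tilde\xi)|_{1+P_E}$ controls the wild part and forces $p = 2$: only then does $N_{E/F}(1+P_E)$ have index $2$ in $1+P_F$, which is exactly what is needed for $\tilde\xi^\tau/\tilde\xi$ and $r_\ell(\tilde\xi^\tau/\tilde\xi)$ to differ precisely by the missing quadratic character. This tame-and-wild bookkeeping is the technical heart of the argument.
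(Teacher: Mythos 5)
For odd $\ell$ your route is essentially the paper's: the identity $\lg(r_\ell\tilde\pi)\cdot|L(\tilde\Pi)|=|L(r_\ell\tilde\Pi)|$, its translation into the ratio $|X_{\sigma_\Pi}|/|X_{\sigma_{\tilde\Pi}}|$ via \eqref{eq:red'} and \eqref{eq:LLr2}, and the evaluation of $\tilde\xi^\tau/\tilde\xi$ on $\mu_{q+1}$ (where $b^\tau=b^{-1}$ gives $(\tilde\xi^\tau/\tilde\xi)(b)=\tilde\xi(b)^{-2}$) and on the pro-$p$ group $1+P_E$ (where reduction modulo $\ell\neq p$ is injective on characters) is exactly the computation of Theorem \ref{th:liftw}. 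Three local points. (i) The decomposition of $E^*$ should involve $\mu_{q^2-1}$, not $\mu_{q+1}$; this is harmless since $(\tilde\xi^\tau/\tilde\xi)|_{\mu_{q^2-1}}$ is governed by $\tilde\xi(b)$. (ii) Your explanation of why the order of $(\tilde\xi^\tau/\tilde\xi)|_{1+P_E}$ is exactly $2$ is wrong: for $E/F$ unramified $N_{E/F}(1+P_E)=1+P_F$ for every $p$, so there is no index-$2$ phenomenon there. The order is a $p$-power because $1+P_E$ is pro-$p$ (whence $p=2$), and it must equal $2$ rather than $1$ because otherwise $\xi=r_\ell\tilde\xi$ satisfies $\xi=\xi^\tau$ once $\xi(b)=1$, the reduced parameter becomes reducible, $|X_{\sigma_\Pi}|\le2$, and the ratio drops to $1$. (iii) In the ``only if'' direction you tacitly assume $\sigma_{\tilde\Pi}$ induced: besides excluding length $4$ from a primitive parameter you must exclude length $2$ arising from a primitive $\tilde\sigma$ with dihedral reduction ($|X_{\tilde\sigma}|=1$, $|X_{\sigma}|=2$), since the stated criterion asserts $\sigma_{\tilde\Pi}=\ind_{W_E}^{W_F}\tilde\xi$. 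Your projective-image argument covers this in principle, but it must be said, and the persistence of an $A_4$ or $S_4$ projective image under reduction requires a separate argument at $\ell=3$, where order-$3$ elements do not survive for free. (The paper gets the bound instead from type theory and the appendix, via Proposition \ref{prop:liftc}.)

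The genuine gap is $\ell=2$. Everything in your proposal after the first display runs through $|L(\Pi)|=|X_{\sigma_\Pi}|$, which holds only for $\charf_R\neq2$: over $\mathbb F_2^{ac}$ all self-twist groups are trivial while supercuspidal packets have size $2$ and the cuspidal non-supercuspidal packet has size $4$, so the ratio of self-twist groups computes nothing. The paper treats $\ell=2$ by a separate argument (Proposition \ref{prop:liftc}: type theory in positive level, and the characteristic-$2$ representation theory of $SL_2(k_F)$ in level $0$), and this case is substantive rather than formal: for a level-$0$ supercuspidal $\tilde\Pi$ with $\tilde\xi(b)$ of order $4$ (possible whenever $4$ divides $q+1$), the same packet-size bookkeeping gives $|L(\tilde\Pi)|=2$ while $r_2\tilde\Pi$ is a twist of $\Pi_0$ whose packet has size $4$, i.e.\ an apparent length-$2$ reduction with $p$ odd. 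So the $\ell=2$ case cannot be omitted; it is exactly where the delicate level-$0$ comparison lives, and your proposal offers no argument for it.
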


\medskip 
Finally we study for $G'$ the problem that we treated in \cite{HV23}  for inner forms of $GL_n(F)$. An infinite dimensional  irreducible smooth $R$-representation $\Pi$ of $G=GL_2(F)$ is isomorphic near the identity to  $ a_\Pi 1+\ind_B^G1$ where $a_\Pi$ is an integer (its value is given in Proposition \ref{pro7.5}) and $\ind_B^G1$ is the usual principal series.
For an infinite dimensional irreducible smooth $R$-representation $\pi$   of $G'$, we show that up to finitely many trivial $R$-characters,
$\pi$    is isomorphic 
 near the identity to the sum of $1, 2$ or $4$ elements of an $L$-packet of size $4$.

 \begin{theorem} \label{th:anyp} (Theorem \ref{th:anyp1})  Let $\pi$ be an infinite dimensional irreducible smooth  $R$-representation of $G'$.
There   are  irreducible smooth  $R$-representations  $ \{\tau_1, \tau_2, \tau_3, \tau_4 \}$  of $G'$ forming an $L$-packet, and an   integer $a_0 $,  such that on a small enough compact open subgroup $K $ of $G'$ we have 
$$\pi \simeq a_0 1 + \sum_{ i=1} ^{4/r} \tau_i,$$
where $r$ is the  size of the $L$-packet containing $\pi$.
\end{theorem}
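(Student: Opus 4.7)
The plan is to reduce to Proposition~\ref{pro7.5}, the analogous statement for $G=GL_2(F)$. Choose an irreducible smooth $R$-representation $\Pi$ of $G$ in which $\pi$ appears upon restriction to $G'$; since $\pi$ is infinite dimensional so is $\Pi$. Write $L(\Pi)=\{\pi_1,\dots,\pi_r\}$ with $\pi=\pi_1$ and $r\in\{1,2,4\}$. Proposition~\ref{pro7.5} provides a compact open $K_G\subset G$ for which $\Pi|_{K_G}\simeq a_\Pi\cdot\trivrep+(\ind_B^G\trivrep)|_{K_G}$. Since $G=BG'$, Mackey gives $(\ind_B^G\trivrep)|_{G'}=\ind_{B'}^{G'}\trivrep$ with $B'=B\cap G'$; taking $K=K_G\cap G'$ and restricting yields, as $K$-modules,
\[
\bigoplus_{i=1}^{r}\pi_i\big|_{K}\;\simeq\;a_\Pi\cdot\trivrep\,+\,\bigl(\ind_{B'}^{G'}\trivrep\bigr)\big|_{K}.
\]

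To bring a size-$4$ $L$-packet into the picture, I will apply the same argument to a reference representation $\Pi_0$ with $|L(\Pi_0)|=4$, for instance a biquadratic-type cuspidal packet furnished by Proposition~\ref{cor:bq}. Writing $L(\Pi_0)=\{\tau_1,\tau_2,\tau_3,\tau_4\}$ and shrinking $K$ so that the analogous identity holds for $\Pi_0$, subtracting the two displays eliminates the principal series term and leaves the Grothendieck-group identity
\[
\sum_{i=1}^{r}\pi_i\big|_{K}\;\simeq\;(a_\Pi-a_{\Pi_0})\cdot\trivrep\,+\,\sum_{j=1}^{4}\tau_j\big|_{K}.
\]

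To conclude, I will establish two near-identity properties by further shrinking $K$: (i) all members of $L(\Pi)$ have isomorphic restrictions to $K$, so the left-hand side equals $r\cdot\pi|_K$; and (ii) the four members of $L(\Pi_0)$ split, upon restriction to $K$, into $r$ isomorphism classes each of size $4/r$, the partition being governed by the action on $L(\Pi_0)$ of the stabiliser in $G/ZG'$ of the isomorphism class of $\Pi$. Both facts rest on the observation that elements of an $L$-packet are $G$-conjugate, combined with the fact that for $K$ small enough conjugation by the (finitely many) relevant coset representatives of $G/ZG'$ preserves the $K$-isomorphism class of each of the finitely many representations in play; only finitely many classes act non-trivially on the finite sets $L(\Pi),L(\Pi_0)$, so such a $K$ exists even when $G/ZG'$ is infinite. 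Dividing by $r$ then rewrites the above as $\pi|_K\simeq a_0\cdot\trivrep+\sum_{k=1}^{4/r}\tau_{j_k}|_K$ with $a_0=(a_\Pi-a_{\Pi_0})/r$, whose integrality one reads off from the explicit values of $a_\Pi,a_{\Pi_0}$ furnished by Proposition~\ref{pro7.5}. The main obstacle is precisely this uniform-restriction and partition statement within an $L$-packet, together with the accompanying divisibility: the argument is cleanest for $p$ odd -- where there is a unique size-$4$ $L$-packet and it has level $0$ -- and most delicate in the case $\charf_F=2$, where $G/ZG'$ itself is no longer finite.
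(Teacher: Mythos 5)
Your opening moves are sound and coincide with the paper's treatment of the case $r=1$: restricting the $GL_2$ identity $\Pi\simeq a_\Pi 1+\ind_B^G 1$ to $G'$, doing the same for a reference packet $\{\tau_1,\dots,\tau_4\}$ of size $4$, and subtracting does give the Grothendieck-group identity $\sum_{i=1}^{r}\pi_i|_K\simeq(a_\Pi-a_{\Pi_0})1+\sum_{j=1}^{4}\tau_j|_K$ over any $R$; for $r=1$ this already is the theorem, and for $r=4$ the theorem is trivial with $a_0=0$. The gap is in the case $r=2$, exactly at your claims (i) and (ii), both of which are false. Distinct members of an $L$-packet of size $\geq 2$ never have isomorphic restrictions to a compact open subgroup, however small: by Theorem \ref{Ro} the Whittaker part of the germ of $\tr_{\pi_i}$ at $1$ is supported on the set $A_{\pi_i}$, these supports are disjoint for distinct members (the Whittaker decomposition \eqref{eq:W}), and the nilpotent orbital integrals stay linearly independent on every neighbourhood of the identity, so the characters of $\pi_1|_K$ and $\pi_2|_K$ already differ on $K$. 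Concretely, Corollary \ref{cor.pol} produces packet members with $\dim\pi_1^{K'_j}\neq\dim\pi_2^{K'_j}$ for all large $j$, which excludes $\pi_1|_K\simeq\pi_2|_K$ for every open $K$ (any such $K$ contains $K'_j$ for $j$ large). The underlying error is the assertion that, for $K$ small, conjugation by coset representatives of $G/ZG'$ preserves $K$-isomorphism classes: conjugation by $\diag(t,1)$ with $t$ a non-square acts nontrivially on every neighbourhood of $1$ and genuinely permutes the Whittaker data, so no shrinking of $K$ helps.

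Consequently you cannot replace $\pi_1|_K+\pi_2|_K$ by $2\,\pi|_K$ and divide by $r$. What does the work in the paper is a matching of Whittaker supports rather than an averaging: for $r=2$ one has $\det(G_\pi)=N_{E/F}(E^*)$, one takes the size-$4$ packet attached to a biquadratic extension containing $E$ (Proposition \ref{cor:bq1}), and then $A_\pi$ is the disjoint union of $A_{\tau_{j_1}}$ and $A_{\tau_{j_2}}$ for exactly two members of that packet; Theorem \ref{Ro} together with the linear independence of the germs $\tilde{\mathcal F}_{\mathfrak O}$ yields $\tr_\pi-\tr_{\tau_{j_1}}-\tr_{\tau_{j_2}}=a_0\,\tr_1$ near $1$, i.e. formula \eqref{eq:2L}. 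This Whittaker bookkeeping identifies \emph{which} $4/r$ members of the size-$4$ packet appear, information your symmetry argument cannot supply. (Your sketch also leaves untouched the passage from $\mathbb C$ to general $R$, which in the paper goes through $\mathbb Q^{ac}$, $\mathbb Q_\ell^{ac}$ and reduction modulo $\ell$ using Proposition \ref{prop:nc}.)
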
 
  For $R=\mathbb C$ and $p$ odd, Monica Nevins has similar  results which are more precise in that the subgroup $K$ is large.  We show that her results carry over to any $R$ (\S \ref{S:Nevins}). 
  
  \medskip As in \cite{HV23} we first deal with the case where $R=\mathbb C$, using a germ expansion near the identity  \`a la Harish-Chandra,
in terms of nilpotent orbital integrals. However, when $\charf_F=2$, such an expansion is not available,
so we work instead with a complex representation $\pi$  of  an open subgroup $H$ of $G$ containing $ZG'$.
For such a group a germ expansion has been obtained by Lemaire \cite{L04}. Adapting \cite{MW87}
and \cite{Va14} (who assumed $\charf_F=0$) we compute the germ expansion in terms of the dimensions
of the different Whittaker models of $\pi$, and express it in terms of $L$-packets of size $4$.  Theorem \ref{th:anyp} easily transfers to any $R$ with $\charf_R=0$, in particular $R=\mathbb Q_\ell^{ac}$. From  our complete classification of irreducible smooth $R$-representations of $G'$, and in particular that the $\mathbb F_\ell^{ac}$-representations of $G'$ lift to characteristic $0$ when $\ell\neq p$  (Proposition \ref{prop:nc}),  we get Theorem \ref{th:anyp}  for $R=\mathbb F_\ell^{ac}$ and transfer it to any $R$ with $\charf_R=\ell$.

  We think that Theorem \ref{th:anyp}  will extend in the same way to inner forms of $SL_n$, using the work of \cite{HS12}. We expect that if $\charf_F=0$ and $R=\mathbb C$, a variant of  the theorem is true for any connected reductive $F$-group $\underline H$, because of the Harish-Chandra germ expansion and of the work of Moeglin-Waldspurger and Varma. But  when $\ell\neq p$, it is not known  in general if   virtual finite length  $\mathbb F_\ell^{ac}$-representations lift to characteristic $0$ and it is unlikely that cuspidal irreducible $\mathbb F_\ell^{ac}$-representations lift. The reason is that the  first  point has a positive answer when $G$ is a finite group and the answer to the second is negative in general for finite reductive groups. Moreover when  $ \charf_F= p$  and $R=\mathbb C$, we have to face the problem that a germ expansion in terms of nilpotent orbital integrals might not exist. It is not clear how to define such integrals for bad primes, and sometimes the number of unipotent orbits in $H$ and of nilpotent orbits in $\Lie(H)$ are not the same, even over an algebraic closure of $F$. Given our investigation of the case $SL_2(F)$, which uses $L$-indistinguishability, one may wonder about the role of endoscopy and stability in analogous results for a general $H$.

\bigskip  The dimension of the invariants by the $j$-th congruence subgroup of a Moy-Prasad group of  an infinite dimensional irreducible smooth $R$-representation  of $G $  for $j$  large,  is  the value at $q^j$ of a polynomial of degree  $1$ and   integral coefficients.   We will prove a similar result  for  $G'$  but the coefficients of the polynomial are  not always  integral and the polynomial may depend on the parity of $j$.

Let $\Pi$ be an infinite dimensional  irreducible smooth $R$-representation  of $G $ and $\pi$ be an  element of $L(\Pi)$.  Around the identity, $$ \Pi\simeq a_\Pi 1+\ind_B^G 1$$ for  an integer $a_\Pi$ and the usual principal series $\ind_B^G 1$. 
Let  $O_F$ denote the ring of integers of $F$, $K'=SL_2(O_F)$, $I'$ its Iwahori subgroup, $I'_{1/2}$ its pro-$p$ Iwahori, and $K'_j, I'_j, I'_{1/2+j}$ their $j$-th congruence subgroups. 
  
\begin{theorem} (Theorem \ref{th.pol}) For a sufficiently large $j$,
$$\dim_R \pi^{I'_j}=\dim_R \pi^{I'_1/2+j}=|L(\Pi)|^{-1} (a_\Pi + 2 q^j).$$
$$\dim_R \pi^{K'_j}= |L(\Pi)|^{-1} (a_\Pi + (q+1) q^{j-1}) \ \ \ \text{if} \ \Pi|_{ZKG'} \ \text{is irreducible}.$$
When $p$ is odd and $|L(\Pi)|=4$, we have $|L(\Pi)|^{-1} a_\Pi=-1/2$.
\end{theorem}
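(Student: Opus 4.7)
The approach is to transfer the germ expansion of $\Pi$ to $G'$ via restriction, reduce to a concrete orbit count on $\mathbb{P}^1(F)$, and exploit the $G$-action on $L(\Pi)$ to pass from a sum over $L(\Pi)$ to a single term. From Proposition \ref{pro7.5}, the statement ``$\Pi\simeq a_\Pi 1+\ind_B^G 1$ around the identity'' unwraps to an isomorphism of $U$-modules for some compact open subgroup $U\subset G$; combining this with the multiplicity-one decomposition $\Pi|_{G'}=\bigoplus_{\pi'\in L(\Pi)} \pi'$ proved in Section \ref{SL2}, for any compact open $K'\subset U\cap G'$ — in particular $K'=I'_j,\,I'_{1/2+j},\,K'_j$ for $j$ large — one obtains
$$\sum_{\pi'\in L(\Pi)}\dim_R (\pi')^{K'}\;=\; a_\Pi+\dim_R(\ind_B^G 1)^{K'}.$$

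A symmetry argument turns this into an equality with a single $\dim_R \pi^{K'}$. The action of $G$ on $\Pi$ permutes the summands $\pi'$ transitively, so $\dim(\pi')^{K'}$ depends only on the orbit of $\pi'$ under the normalizer $N_G(K')$. For $K'=I'_j$ or $I'_{1/2+j}$, the normalizer of the Iwahori in $G$ contains the diagonal torus together with the affine Weyl element $\left(\begin{smallmatrix}0 & 1\\ \varpi & 0\end{smallmatrix}\right)$, whose determinants generate $F^*/F^{*2}$; hence $N_G(I')$ surjects onto $G/ZG'$, and \emph{a fortiori} onto the finite quotient acting faithfully on $L(\Pi)$, while preserving every Moy--Prasad subgroup attached to the same alcove. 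For $K'=K'_j$, the hypothesis $\Pi|_{ZKG'}$ irreducible (with $K=GL_2(O_F)$) means by Clifford theory that $ZKG'$ acts transitively on $L(\Pi)$, and $ZK$ normalizes each $K'_j$. In either case $\dim(\pi')^{K'}$ is independent of $\pi'\in L(\Pi)$, yielding
$$|L(\Pi)|\,\dim_R \pi^{K'}\;=\;a_\Pi+\dim_R(\ind_B^G 1)^{K'}.$$

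The remaining task is to evaluate $\dim_R(\ind_B^G 1)^{K'}$ as a count of $K'$-orbits on $\mathbb{P}^1(F)=G/B$. By Iwasawa, $\mathbb{P}^1(F)=SL_2(O_F)/(B'\cap SL_2(O_F))$; normality of $K'_j$ in $SL_2(O_F)$ identifies the $K'_j$-orbit set with $\mathbb{P}^1(O_F/P_F^j)$, of cardinality $(q+1)q^{j-1}$. The Iwahori $I'$ has exactly two orbits on $\mathbb{P}^1(F)$, namely $O_F$ and its complement, and a direct computation in the coordinates $[a:c]$ shows that each refines into $q^j$ orbits under either $I'_j$ or $I'_{1/2+j}$ (both filtration steps produce the same orbit decomposition of $\mathbb{P}^1(F)$), giving the common value $2q^j$ and establishing in passing that $\dim \pi^{I'_j}=\dim \pi^{I'_{1/2+j}}$.

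Finally, when $p$ is odd and $|L(\Pi)|=4$, Proposition \ref{cor:bq} identifies $L(\Pi)$ as the unique size-$4$ $L$-packet, which is of level $0$ and cuspidal; Proposition \ref{pro7.5} then supplies $a_\Pi=-2$, so $|L(\Pi)|^{-1}a_\Pi=-1/2$. The main obstacle is the symmetry step for $K'_j$: one must convert the algebraic hypothesis $\Pi|_{ZKG'}$ irreducible into concrete transitivity on $L(\Pi)$ implemented by elements that normalize $K'_j$, a point requiring care when $\charf_F=2$ and $G/ZG'$ is infinite, so that the finite symmetry actually arises from $ZK$-conjugation.
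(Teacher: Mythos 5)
Your argument is correct and follows the paper's strategy in all essentials: restrict the virtual isomorphism $\Pi\simeq a_\Pi 1+\ind_B^G 1$ of \eqref{cPi} to a small congruence subgroup of $G'$ (taking invariants is additive since these groups are pro-$p$ and $\charf_R\neq p$), use a transitive normalizer action on $L(\Pi)$ to replace the sum over the packet by $|L(\Pi)|$ times one term, and count double cosets. The one place you genuinely diverge is the count itself: the paper obtains $|B'\backslash G'/I'_j|=|B'\backslash G'/I'_{1/2+j}|=2q^j$ and $|B'\backslash G'/K'_j|=(q+1)q^{j-1}$ by proving that $B'\backslash G'/H'_j\to B\backslash G/H_j$ is bijective (Proposition \ref{pro:BK0}, the main technical work of the section) and quoting the known counts for $G$, whereas you count directly in $G'$. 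Your direct count for $K'_j$ is complete and clean (normality of $K'_j$ in $SL_2(O_F)$ plus Iwasawa reduces it to $|\mathbb P^1(O_F/P_F^j)|=(q+1)q^{j-1}$), and this buys independence from Proposition \ref{pro:BK0} for that case; but for $I'_j$ and $I'_{1/2+j}$ you merely assert that each of the two Iwahori orbits refines into exactly $q^j$ orbits. That is true (conjugation by $\beta'$ shows the two refinements have equal cardinality, and the value $q^j$ can be computed in the coordinate $[a:c]$), but it is precisely the content supplied by Proposition \ref{pro:BK0} together with Corollary \ref{cor8.4}, so this step is asserted rather than proved. Two small slips elsewhere: the full diagonal torus does \emph{not} normalize the Iwahori (conjugation by $\diag(p_F,1)$ moves $I_0$); what you need, and what holds, is $\det(N_G(I))=F^*$, already given by $I_0$ itself (determinant $O_F^*$) together with $\bigl(\begin{smallmatrix}0&1\\ p_F&0\end{smallmatrix}\bigr)$, so that $N_G(I)$ surjects onto $G/G_\Pi$ while normalizing every $I'_j$ and $I'_{1/2+j}$. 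Your Clifford-theoretic reading of ``$\Pi|_{ZK_0G'}$ irreducible'' as transitivity of $ZK_0G'$ on $L(\Pi)$, implemented by $ZK_0$-conjugation which normalizes each $K'_j$, is exactly the paper's argument for Theorem \ref{th.pol1}, and your treatment of the final assertion via Proposition \ref{pro7.5} is the paper's.
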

 
When $\Pi|_{ZKG'} $ is reducible, it has length $2$. The  two irreducible components $\Pi^+$ and $\Pi^-$ are distinguished by their Whittaker models.
\begin{theorem} (Corollary \ref{cor.pol})  If $\Pi|_{ZKG'} $ is reducible, for a sufficiently large $j$,
$$\dim_R \pi^{K'_j}=
 \begin{cases} |L(\Pi)|^{-1} (a_\Pi + 2 q^j)  &  \text{ for $j$ odd and $\pi \subset \Pi^+|_{G'}$, or $j$ even and $\pi \subset \Pi^-|_{G'}$}\\
L(\Pi)|^{-1} (a_\Pi + 2 q^{j-1}) &  \text{ otherwise}
 \end{cases}.$$
\end{theorem}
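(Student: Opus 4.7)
My plan is to sandwich $\pi^{K'_j}$ between the Iwahori-level invariants $\pi^{I'_{j-1}}$ and $\pi^{I'_j}$, whose dimensions are already known from Theorem \ref{th.pol}, and then to separate the two endpoints via a parity argument using the conjugation action of an element of $G \setminus ZKG'$.

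For the sandwich, the Moy-Prasad inclusions $I'_j \subset K'_j \subset I'_{j-1}$ hold for $j$ large: indeed $I'_j = G'_{x_0, j}$ at the alcove barycenter has lower off-diagonal in $\varpi^{j+1}$ and is thus contained in $K'_j$ (whose off-diagonals lie in $\varpi^j$), while $K'_j$ is itself contained in $I'_{j-1} = G'_{x_0, j-1}$ (whose weaker off-diagonal conditions $\varpi^{j-1}$, $\varpi^j$ are both satisfied by $K'_j$). Passing to invariants and applying Theorem \ref{th.pol} gives
\[
|L(\Pi)|^{-1}(a_\Pi + 2q^{j-1}) \leq \dim_R \pi^{K'_j} \leq |L(\Pi)|^{-1}(a_\Pi + 2q^j),
\]
so the corollary's two formulas correspond exactly to the two endpoints of this sandwich.

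For the parity dichotomy, let $t = \diag(\varpi, 1) \in G$ represent the non-trivial class of $G/ZKG' \simeq \Z/2\Z$, and put $K'' = tK't^{-1}$, $K''_j = tK'_jt^{-1}$. Conjugation by $t$ exchanges $\Pi^+$ and $\Pi^-$, so $\dim_R \pi^{K''_j} = \dim_R (t\pi)^{K'_j}$ with $t\pi \subset \Pi^\mp|_{G'}$ whenever $\pi \subset \Pi^\pm|_{G'}$. If exactly half of $L(\Pi)$ attains the upper endpoint and the other half the lower, the sum over the packet gives $a_\Pi + (q+1)q^{j-1} = \dim_R \Pi^{K'_j}$, which matches Theorem \ref{th.pol} and furnishes the consistency check.

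The main obstacle is showing that $\dim_R \pi^{K'_j}$ always hits an endpoint (no intermediate value can appear) and identifying which. I would appeal to the germ expansion $\Pi \simeq a_\Pi \cdot 1 + \ind_B^G 1$ of Theorem \ref{th:anyp} to reduce, near the identity, to the principal series $\ind_B^{G'} 1$, whose $K'_j$-invariants are computed via the Iwasawa decomposition $G' = K'B'$. The extra quotient $K'_j/I'_j \simeq \varpi^j O_F/\varpi^{j+1} O_F \simeq k_F$ acts on the $I'_j$-invariants through a character whose value on the distinguished Whittaker line alternates in sign with the parity of $j$. Matching this sign against the Whittaker-model labels defining $\Pi^+$ and $\Pi^-$ forces the upper endpoint to be attained precisely when $\pi \subset \Pi^+|_{G'}$ with $j$ odd, or $\pi \subset \Pi^-|_{G'}$ with $j$ even, and the lower endpoint otherwise.
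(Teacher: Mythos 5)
Your sandwich $I'_j\subset K'_j\subset I'_{j-1}$ is correct (it is the filtration $K_j\supset I_j\supset I_{1/2+j}\supset K_{j+1}$ of \S\ref{7.1}), and combined with Theorem \ref{th.pol} and the packet sum $\sum_{\pi\in L(\Pi)}\dim_R\pi^{K'_j}=\dim_R\Pi^{K_j}=a_\Pi+(q+1)q^{j-1}$ it does show that $d^++d^-$ equals exactly the sum of the two endpoints, so that if one component attains the upper endpoint the other attains the lower. But this is where the real content of the statement begins, and your argument for it does not work. Nothing in the sandwich forces either $d^\pm$ to \emph{be} an endpoint (they could sit symmetrically anywhere in the interval), and the mechanism you propose to decide the dichotomy is not viable: $K'_j/I'_j\simeq k_F$ does not act on $\pi^{I'_j}$ through a single character (that space has dimension of order $q^j$ and the action is far from scalar), and the gap between the two endpoints is $2|L(\Pi)|^{-1}(q^j-q^{j-1})$, which cannot be produced by a sign flip on a one-dimensional "Whittaker line." Moreover, reducing to $\ind_{B'}^{G'}1$ via Theorem \ref{th:anyp} loses exactly the information you need: the splitting $\Pi|_H=\Pi^+\oplus\Pi^-$ is a property of the $H$-module structure, and the germ expansion of $\Pi$ as a $G$- or $G'$-representation does not distinguish $\Pi^+$ from $\Pi^-$.

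What the paper does instead (Theorem \ref{th.pol2}) is: first, using the germ expansion \emph{for $H$-representations} (Theorems \ref{th:1'} and \ref{Ro}), the quantity $\dim_R(\Pi^\pm)^{K'_j}-a_\Pi/2$ depends only on the set of characters $\psi_a$ for which $\Pi^\pm$ has a Whittaker model, namely $a\in\det(H)$ for $\Pi^+$; this set is independent of $\Pi$, so one may replace $\Pi$ by a single level-zero cuspidal $\Pi=\ind_{ZK_0}^G\lambda$. For that $\Pi$ one has $\Pi^+=\ind_{ZK_0}^H\lambda$, and an explicit Mackey computation over the double cosets $H=\sqcup_{i\ge0}ZK_0t^iK'_0$ (with the cuspidality of $\lambda_0$ killing the contributions with $j\le 2i$) yields $\dim_R(\Pi^+)^{K'_j}=-1+q^{2m+1}$ for $j=2m+1,2m+2$; the parity in $j$ comes from the cutoff $i<j/2$, not from a sign. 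The corollary then follows by dividing by the number of constituents in each $H$-orbit. If you want to salvage your approach, you would need to carry out an analogous explicit computation of the $K'_j/I'_j$-invariants inside $\pi^{I'_j}$, which is essentially the same amount of work.
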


By $G$-conjugation, we have similar asymptotics for all Moy-Prasad subgroups de $G'$.

\medskip The study of $R$-representations of $G'$  has a long history, especially when $R=\mathbb C$. Even for odd $p$ and $R=\mathbb C$, there is current research on $GL_2$ and $SL_2$ \cite{Ngo24}. Inevitably some of our proofs
are adapted from previous papers. However, because we make only the assumption that $\charf_R\neq  p$, we have usually
preferred to give complete proofs in that general setting. We refer essentially only to papers that we are using.

\medskip We thank  Anne-Marie Aubert, Don Blasius, C\'edric Bonnaf\'e,  Jean-Francois Dat, Jean-Pierre Labesse, Bertrand Lemaire, Monica Nevins,  Dipendra Prasad, and Akshay Venkatesh,  for helpful communications.
Special thanks are due to Peiyi Cui, for  conversations and correspondences. Some arguments using types in \S \ref{S:331} are due to her,
 she called our attention to the study of the quotient in \eqref{eq:red'},  and shared her  results  on principal series. She also kindly 
gave us  references to her works \cite{Cui23}, \cite{CLL23}.

  \section{Generalities} \label{S1}

\subsection{}Let $R$ be a  field,  $G$  a group, $H$ a subgroup of   $G$,  $V$ an  $R$-representation of $G$. We denote $\charf_R$ the characteristic of $R$, and  $V|_H$ the restriction of $V$ to $H$.

\subsubsection{}  When $H$ has {\sl finite index} in $G$,  any irreducible $R$-representation  of $H$  is contained in the restriction  to $H$ of an irreducible $R$-representation  of $G$   \cite[Proposition 2.2]{H01}.

\subsubsection{}\label{2.1.2}  If $H$ is {\sl normal of finite index} in $G$ and  $V$ is irreducible, then  $V|_H$  is   semisimple   of finite length  (loc.cit. Proposition 2.1).

\subsubsection{}   If  $H$ is {\sl normal} in $G$,  $V$ is irreducible and      $V|_H$   contains  an irreducible subrepresentation, then $V|_H$ is semisimple and its isotypic components are $G$-conjugate with the same multiplicity.

  \begin{proof}   Let  $W$ be an irreducible subrepresentation of $V|_H$. Since $H$ is normal in $G$, 
  for  $g\in G$,  $H$ acts irreducibly on $gW$  by $(h,gw)\to hgh^{-1} hw$.  The  subspace $\sum_{g\in G}gW$  is a non-zero subrepresentation of $V$. Since $V$ is irreducible, it is equal to $V$.  Since a representation which is a sum of irreducible subrepresentations is semi-simple \cite[\S 4.1 Corollary 1]{BkiA8}, $V|_H$ is semisimple. The last assertion follows in the same way. \end{proof}

\subsubsection{} \label{2.1.4} Assume $H $ {\sl normal of finite index} in $G$ and let $\pi$ be an irreducible $R$-representation of $H$. We saw that there is an irreducible $R$-representation $\Pi$ of $G$ whose restriction to $H$ (which is semisimple of finite length) contains $\pi$. Clearly if $\chi$ is a $R$-character of $G$ trivial on $H$ then  the restriction of $\Pi \otimes \chi$ to $H$ contains $\pi$.

\begin{lemma}\label{le:2.1}Assume $R$ algebraically closed and  $G/H $  {\sl abelian}.  Any  irreducible $R$-representation $\Pi'$ of $G$ containing $\pi$ is isomorphic to $\Pi \otimes \chi$ for some $R$-character $\chi$  of $G$ trivial on $H$.
\end{lemma}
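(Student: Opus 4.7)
The plan is to study the space $\Hom_H(\Pi,\Pi')$: I will show it is finite-dimensional, endow it with a natural action of $G/H$, extract a common eigenvector, and recognise the resulting eigen-character as the desired $\chi$.

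First, by \S\ref{2.1.2} both $\Pi|_H$ and $\Pi'|_H$ are semisimple of finite length. Since each contains $\pi$, composing the projection of $\Pi|_H$ onto a single copy of $\pi$ in its $\pi$-isotypic summand with the inclusion $\pi \hookrightarrow \Pi'|_H$ yields a nonzero $H$-equivariant map $\Pi\to\Pi'$; semisimplicity of finite length further guarantees that $\Hom_H(\Pi,\Pi')$ is finite-dimensional over $R$.

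Next, I define a $G$-action on $\Hom_H(\Pi,\Pi')$ by $(g\cdot f)(v)=\Pi'(g)\,f(\Pi(g)^{-1}v)$. Normality of $H$ ensures $g\cdot f$ remains $H$-equivariant, and $H$-equivariance of $f$ forces $h\cdot f = f$ for $h\in H$, so the action factors through the finite abelian quotient $G/H$. Over the algebraically closed field $R$, any finite commuting family of linear operators on a finite-dimensional vector space has a common eigenvector; this produces $0\neq f_0\in\Hom_H(\Pi,\Pi')$ and a character $\chi\colon G/H\to R^\times$ with $g\cdot f_0=\chi(g) f_0$ for all $g$. Viewing $\chi$ as a character of $G$ trivial on $H$, this relation rewrites as $f_0\circ\Pi(g)=\chi(g)^{-1}\Pi'(g)\circ f_0$, so $f_0$ is a nonzero $G$-equivariant map $\Pi\to\Pi'\otimes\chi^{-1}$. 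Both source and target are irreducible $R$-representations of $G$, so by Schur's lemma $f_0$ is an isomorphism, giving $\Pi'\simeq\Pi\otimes\chi$ as required.

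The only delicate step is the extraction of the common eigenvector, since when $\charf_R$ divides $|G/H|$ the $G/H$-action on $\Hom_H(\Pi,\Pi')$ is generally not semisimple. However, the standard inductive argument over an algebraically closed field — pick an eigenspace of one operator, on which the remaining commuting operators still act, and iterate on a finite set of generators of $G/H$ — yields a common eigenvector with no semisimplicity hypothesis, so this potential obstacle dissolves and the rest of the proof is formal.
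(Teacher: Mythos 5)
Your argument is correct, but it takes a genuinely different route from the paper's. The paper proves the lemma in one line of category theory: by Frobenius reciprocity $\Pi'$ embeds into $\Ind_H^G(\Pi|_H)\simeq(\Ind_H^G 1)\otimes\Pi$, and since $G/H$ is finite abelian and $R$ algebraically closed, $\Ind_H^G 1$ has a filtration whose graded pieces are characters $\chi$ trivial on $H$, so every irreducible subquotient of the tensor product is some $\Pi\otimes\chi$. You instead run the classical Clifford-theoretic argument: let $G/H$ act on the intertwining space $\Hom_H(\Pi,\Pi')$ and extract a common eigenvector, whose eigencharacter is the desired $\chi$. Your route is more hands-on and avoids induction functors altogether; the paper's route is the one that transfers verbatim to the locally profinite setting of Lemma \ref{le:2.4}(2b), where $G/H$ is only compact and one replaces $\Ind_H^G$ by smooth induction, so the adjunction argument is the better investment for this paper.

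One small caveat: your claim that $\Hom_H(\Pi,\Pi')$ is finite-dimensional silently uses that $\End_H(\pi)$ is finite-dimensional for an irreducible $R$-representation $\pi$ of $H$, i.e.\ a strong form of Schur's lemma. In the purely abstract setting of \S\ref{2.1.2} (arbitrary group, arbitrary irreducible representation) this is not automatic. Fortunately you do not actually need finite-dimensionality: each generator of the finite group $G/H$ acts on $\Hom_H(\Pi,\Pi')$ by an operator annihilated by $X^{N}-1$ with $N=|G/H|$, and a polynomial identity that splits over the algebraically closed $R$ forces the operator to have an eigenvector even on an infinite-dimensional space; the eigenspaces are stable under the commuting operators, so the inductive extraction of a common eigenvector goes through unchanged. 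With that substitution your proof is complete.
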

\begin{proof}\footnote{This proof was suggested by Peyi Cui \cite[Proposion 2.6]{Cui23}, and replaces a more complicated argument of ours.} We have $\Hom _H (\Pi'|_H, \Pi|_H)\neq 0$. The  right adjoint of the restriction  from $G$ to $H$ is the induction $\Ind _H^G$ from $H$ to $G$, therefore 
$ \Pi'$ is isomorphic to an irreducible subrepresentation of $ \Ind_H^G (\Pi|_H)$.
We have $\Ind_H^G (\Pi|_H))\simeq (\Ind_H^G 1) \otimes \Pi$ because $G/H$ is finite, and  the irreducible subquotients of $ \Ind_H^G 1$ are the characters $\chi$ of $G$ trivial on $H$ because $R$ is algebraically closed.  
Therefore, there exists $\chi$ such that $\Pi'\simeq \Pi \otimes \chi$.
\end{proof}

 \subsection{}\label{lc} We suppose    that  $H$ is a {\sl closed} subgroup of   a {\sl locally profinite} group $G$ and $V$ is  an $R$-representation of $G$. 
 
 If the index of $H$ in $G$ is finite, then $H$ is open. Conversely, if $H$ is   open  cocompact in $G$, then the  index of $H$ in $G$ is finite.
 If $V$ is smooth (i.e. the $G$-stabilizer of any vector is open) then $V|_H$ is  smooth.  Conversely, if $H$ is open in $G$ and  $V|_H$  is    smooth (resp. admissible i.e. smooth and the dimension of the space  $V^K$ of $K$-fixed vectors of $V$ is finite, for any open compact subgroup $K\subset H$), then $V$  is  smooth (resp. admissible).

\medskip    We suppose also  from now  on that  $H$ is  {\sl normal  in  $G$ with a   compact quotient $G/H$ and that $V$ is  smooth} (so $V|_H$ is smooth). 
   
   \subsubsection{} If  $V$ is {\sl finitely generated}  then $V|_H$  is finitely generated    \cite[Lemma 4.1]{H01}. 
   
 \subsubsection{}  \label{ss:2.2.4} If   $V$ is   {\sl  irreducible}, any irreducible  subrepresentation  of  $V|_H$  (when there exists one)  extends to a  (smooth and irreducible) representation of an open subgroup of $G$ of finite index which is admissible if $V$ is (loc.cit.Proposition 4.4).
 
 \subsubsection{} \label{ss:2.2.3} If   $V$ is  {\sl  irreducible and  $V|_H$ contains an irreducible subrepresentation or is  noetherian } (any subrepresentation is finitely generated), then $V|_H$ is semisimple of finite length  (loc.cit. Th\'eor\`eme 4.2). 
 
\bigskip  We introduce the two properties : 
 \begin{align} \label{i}  \text{Any finitely generated admissible $R$-representation of $G$ has finite length} \\
 \label{ii}  \text{ Any finitely generated smooth $R$-representation of $H$ is noetherian}
  \end{align} 
 

 \subsubsection{} \label{qu} Let  $W$ be  an admissible irreducible $R$-representation of $H$. 
 
 \medskip  
 1)  If {\sl  (\ref{i})  and  (\ref{ii})   are true},  then $W$ is contained in   some irreducible admissible $R$-representation   of $G$ restricted to $H$ (loc.cit. Corollaire  4.6).

\medskip 
 2) If {\sl  (\ref{i}) is true}, then $W$ is a quotient of  some irreducible admissible $R$-representation  of $G$ restricted to $H$ (loc.cit. Th\'eor\`eme 4.5). 
 
We give a simple proof of 2) adapted from \cite[Proposition 2.2]{T92}. The smooth induction $\Ind_H^G W$ of $W$ to $G$ is admissible as $W$ is  and $G/H$ is compact \cite[I.5.6]{V96}. A finitely generated subrepresentation of   $\Ind_H^G W$ is admissible, hence   of finite length by \eqref{i}. So  $\Ind_H^G W$ contains an irreducible admissible representation $U$. The restriction to $H$ is the left adjoint  of the induction $\Ind_H^G$ hence $W$ is a quotient of  $U|_H$.

 \subsubsection{} \label{2.2.9}   
We denote by  
$$\text{  $X_V$   the group of  $R$-characters $\chi$  of $G$  trivial on $H$ such that $ V\otimes \chi \simeq V$.}$$
The  characters in  $ X_V$ are smooth by the following lemma.
  
 \begin{lemma} \label{le}  $V\otimes \chi$ is smooth if and only if $\chi$ is smooth.
\end{lemma}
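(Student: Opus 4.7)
The plan is to handle the two directions separately, noting first that we should tacitly assume $V \neq 0$ (otherwise $V \otimes \chi = 0$ is smooth for any $\chi$, and the ``only if'' direction can fail).

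For the easy direction, suppose $\chi$ is smooth, meaning $\ker \chi$ is open in $G$. Pick any $v \in V$. Since $V$ is smooth, the stabilizer $K = \Stab_G(v)$ is open. In $V \otimes \chi$, an element $g \in G$ fixes $v$ iff $\chi(g) (g \cdot v) = v$, and every $g \in K \cap \ker \chi$ clearly satisfies this. So the stabilizer of $v$ in $V \otimes \chi$ contains the open subgroup $K \cap \ker \chi$, proving $V \otimes \chi$ is smooth.

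For the converse, assume $V \otimes \chi$ is smooth and pick a nonzero $v \in V$, with stabilizer $K = \Stab_G(v)$ open. Let $S$ denote the stabilizer of $v$ in $V \otimes \chi$, which is open by hypothesis. For $g \in K$, the action of $g$ on $v$ in $V \otimes \chi$ is $\chi(g) v$, so $g \in S$ iff $\chi(g) = 1$. Hence $S \cap K = K \cap \ker \chi$, which is open in $K$ and therefore in $G$. Since this open subgroup is contained in $\ker \chi$, the kernel of $\chi$ itself is open, so $\chi$ is smooth.

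The only subtle point is that $\chi$ being a character means its kernel is a subgroup, so openness of a subset of $\ker \chi$ at the identity forces openness of $\ker \chi$ globally — a standard move in the locally profinite setting. No serious obstacle is expected; the argument is essentially a direct unwinding of the definitions.
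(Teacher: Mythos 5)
Your proof is correct and follows essentially the same route as the paper: both rest on the observation that an element $g$ stabilizing $v$ in $V$ stabilizes it in $V\otimes\chi$ exactly when $\chi(g)=1$, combined with the smoothness of $V$ to produce the open subgroup $K$. Your version just spells out the two directions and the (harmless, implicitly assumed) nondegeneracy $V\neq 0$ more explicitly than the paper does.
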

\begin{proof}  
Let $v\in V $ a non-zero element. An open subgroup $K\subset G$ fixing $v$ in $V$,  fixes $v$ in $V\otimes \chi$ if and only if $\chi$ is trivial on $K$.  The lemma follows because $V$ is smooth.  \end{proof}

 \subsubsection{}\label{S:226} Assume also  that $V$ is {\sl  irreducible and $V|_H$ has finite length} (semi-simple by \eqref{ss:2.2.3} and its isotypic components are $G$-conjugate)\footnote{This subsection generalises \cite[Corollary 3.8.3]{Cui23}, \cite[Corollary 2.5]{T92}, \cite[1.6(iii)]{BK94}}.
 
   Let  $W$ be an irreducible component of  $V|_H$,  $\pi$ its isomorphism class,  $G_\pi$ the $G$-stabilizer of $\pi$.  Let 
   $V_\pi$  be the   $\pi$-isotypic component of $V|_H$. The $G$-stabilizer  of $V_\pi$ is $G_\pi$. The $G$-stabilizer  of $W$   is open in $G$ (because it contains the $G$-stabilizer of $v\in W$ non $0$ and $V$ is smooth) and is contained in  $G_\pi$. Both  have  finite index in $G$ ($G/H$ is compact) and $$V=\Ind_{G_\pi}^G( V_\pi)$$ 
 by Clifford's theory.   
   The representation of $G_\pi$ on $ V_\pi$  is irreducible and the length of $V|_H$ is  
   $$\lg (V|_H)= [G:G_\pi] \, \lg (V_\pi|_H).$$
 
 \begin{lemma}\label{le:2.4} Assume  that $G/H$ is abelian. Then:
 
1) $G_\pi$ is normal in $G$ and does not depend on the choice of $\pi$ in $V_H$.  
  The  smooth $R$-characters of $G$ trivial on $G_\pi$ are in $ X_V$.

2) Assume $R$ algebraically closed.

a) Any irreducible subquotient of the smooth induction $\Ind_H^G 1$ is a smooth $R$-character $\chi$  of $G$ trivial on $H$.
  
  b) Any  irreducible $R$-representation   of $G$ containing $\pi$ is a twist  $V \otimes \chi $ of $V$  by   some smooth $R$-character $\chi$  of $G$ trivial on $H$.

3) When $V|_H $ has multiplicity $1$, then   $W = V_\pi$,  for a smooth  $R$-character $\chi$  of $G$ trivial on $H$,
$V \otimes \chi \simeq V$ if and only if $\chi$ is trivial on $G_\pi$, and $G_\pi$ is the largest subgroup $I$ of $G$ containing $H$ such that $\lg(V|_I) =\lg(V|_H)$.  

4) When $R$ is algebraically closed  and $V|_H$ has  multiplicity $1$, then
 $$|X_V|=\begin{cases} [G:G_\pi] \ &\text{ if  } \charf_R=0\cr
[G:G_{\pi,\ell}]\  &\text{ if  } \charf_R=\ell>0
\end{cases}
$$ 
where $G_{\pi,\ell}$ is the smallest subgroup of $G$ containing $G_\pi$ such that  
   $[G:G_{\pi,\ell}]$ is prime to $\ell$.
    \end{lemma}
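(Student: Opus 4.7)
I would prove the four parts in sequence, using Clifford theory, Frobenius reciprocity, and character counting for finite abelian groups.

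For (1), since $G/H$ is abelian every subgroup of $G/H$ is normal, so $G_\pi$ is normal in $G$; as $G$ acts transitively by conjugation on the isomorphism classes of the irreducible $H$-constituents of $V|_H$ and all stabilizers are $G$-conjugate, normality forces $G_\pi$ to be independent of the choice of $\pi$. That characters $\chi$ trivial on $G_\pi$ lie in $X_V$ follows from the projection formula $V \otimes \chi \simeq \Ind_{G_\pi}^G(V_\pi \otimes \chi|_{G_\pi})$, which reduces to $\Ind_{G_\pi}^G V_\pi = V$ when $\chi|_{G_\pi} = 1$.

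For (2)(a), the smooth representation $\Ind_H^G 1 = C^\infty(G/H, R)$ factors through the compact abelian locally profinite group $G/H$, and any irreducible smooth subquotient factors through a finite quotient; over an algebraically closed $R$, such an irreducible representation of a finite abelian group is one-dimensional. For (2)(b), Frobenius reciprocity together with the projection formula (available because $G/H$ is compact, so smooth and compact induction coincide) yields $\Pi' \hookrightarrow \Ind_H^G(\Pi|_H) \simeq \Pi \otimes \Ind_H^G 1$. A non-zero vector of $\Pi'$ already lies in $\Pi \otimes \Ind_{H_0}^G 1$ for some open $H_0 \supseteq H$ of finite index, because its coordinate functions on $G/H$ are locally constant. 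For such $H_0$, $\Ind_{H_0}^G 1$ is finite-dimensional with character subquotients by (a), so $\Pi \otimes \Ind_{H_0}^G 1$ has a finite Jordan--Hölder filtration with subquotients $\Pi \otimes \chi$; the irreducible $\Pi'$ must be isomorphic to one of them.

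For (3), multiplicity one immediately gives $W = V_\pi$. The \emph{if} direction of the character equivalence is (1). For the converse, any isomorphism $V \simeq V \otimes \chi$ preserves the unique $\pi$-isotypic component $W$ and restricts to a non-zero $G_\pi$-equivariant map $W \to W \otimes \chi|_{G_\pi}$; since $W|_H = \pi$ is irreducible, Schur's lemma identifies extensions of $\pi$ from $H$ to $G_\pi$ as a torsor under smooth characters of $G_\pi/H$, so $W \simeq W \otimes \chi|_{G_\pi}$ forces $\chi|_{G_\pi} = 1$. For the largest-subgroup characterization, $V|_I$ is $I$-semisimple (Clifford applied to $H \triangleleft I$ of finite index), and decomposing $V|_H = (V|_I)|_H$ gives $\lg(V|_I) \leq \lg(V|_H) = [G:G_\pi]$, with equality exactly when every irreducible $I$-constituent of $V|_I$ is already $H$-irreducible, i.e.\ every $I$-conjugate of $\pi$ equals $\pi$, i.e.\ $I \subseteq G_\pi$.

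Finally, for (4), by (3) $X_V$ identifies with the group of smooth $R$-characters of the finite abelian group $G/G_\pi$ (finite because $|G/G_\pi| = \lg(V|_H)$). Over algebraically closed $R$, the smooth character group of a finite abelian group $A$ has order $|A|$ when $\charf_R = 0$ and $|A_{\ell'}|$ when $\charf_R = \ell > 0$, since $R$ contains no non-trivial $\ell$-power roots of unity; by definition $G_{\pi,\ell}/G_\pi$ is the $\ell$-Sylow subgroup of $G/G_\pi$, so $[G:G_{\pi,\ell}] = |(G/G_\pi)_{\ell'}|$, giving the formula. The main obstacle will be the \emph{only if} direction of (3), where Schur's lemma on the irreducible $H$-representation $\pi$ together with multiplicity one is essential to propagate the triviality of $\chi$ from $H$ up to $G_\pi$.
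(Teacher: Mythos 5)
Your proof is correct. Parts (1), (2) and (4) follow essentially the paper's own route: conjugate stabilizers that are normal because $G/H$ is abelian, reduction of $\Ind_H^G 1$ to a finite abelian quotient (your remark that an irreducible smooth subquotient factors through a finite quotient is exactly the paper's reduction, justified because a generator fixed by an open subgroup of an abelian quotient forces that subgroup to act trivially on the whole cyclic module), the projection formula $\Ind_H^G(\Pi|_H)\simeq \Pi\otimes\Ind_H^G 1$, and counting characters of the finite abelian group $G/G_\pi$ with values in $R$ of characteristic $0$ or $\ell$. The genuine divergence is the converse direction of (3). The paper sets $J=\Ker(\chi)\cap G_\pi$ and uses Frobenius reciprocity to show that $\dim\End_J(V_\pi|_J)\geq 2$ whenever $\chi|_{G_\pi}\neq 1$, contradicting the irreducibility of $V_\pi|_H$. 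You instead view $V_\pi$ and $V_\pi\otimes\chi|_{G_\pi}$ as two extensions of the irreducible $H$-representation $\pi$ to $G_\pi$ and invoke the freeness of the character-torsor of extensions: any $G_\pi$-isomorphism between them is an $H$-endomorphism of $\pi$, hence a scalar by Schur's lemma, which immediately forces $\chi|_{G_\pi}=1$. This is shorter and avoids the auxiliary induction $\ind_J^{G_\pi}(1)$; note that both arguments rest on the same input, namely $\End_H(\pi)=R\cdot\mathrm{id}$, which the paper also uses tacitly although part (3) is not formally stated under the hypothesis that $R$ is algebraically closed (harmless in all applications). Your treatment of the ``largest subgroup'' claim by the length count $\lg(V|_I)\leq\lg(V|_H)$, with equality exactly when each irreducible $I$-constituent stays irreducible over $H$, matches the paper's argument.
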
 
 
\begin{proof} 1)  The isotypic  components of $\Pi|_H$ are $G$-conjugate, their $G$-stabilizers are $G$-conjugate and  contain $H$  hence they are equal because $G/H$ is abelian.

The  smooth $R$-characters of $G$ trivial on $G(\pi)$ are in $ X_V$ because $V \otimes \chi  \simeq \Ind_{G_\pi}^G( \chi|_{G_\pi} \otimes V_\pi)$ for any smooth $R$-character $\chi$ of $G$.

2)  a) For any closed subgroup $Q$ of $G$ and a smooth $R$-representation $X$ of $Q$,  the representation $\Ind_Q^GX$ is  the space of functions $f:G\to X$ satisfying $f(qgk)=q f(g)$ for $q\in Q, g\in G, k\in K_f$ for some open subgroup $K_f$ of $G$ ,  with the action of $G$ by right translation, and  $\ind_Q^G 1$ is the  subrepresentation on the subspace  of  functions of compact support modulo $Q$.  When $G/Q$ is compact, $\Ind_Q^GX=\ind_Q^G X$. 
 
 Let $V\supset U$ be $G$-stable subspaces  with $V/U$ irreducible.  We can suppose $V$ generated by an element $f$ (indeed $V'/U'\simeq V/U$ for the  $G$-stable space $V'$ generated by a $f\in V\setminus U$  and the kernel $U'$ of $V'\to V/U$). There is an open subgroup $K$ of $G$ which fixes $f$. 
We have  $U\subset V\subset \ind_K^G 1$ and one is reduced to the  case where $G/H$ is finite.

b) The proof  of Lemma \ref{le:2.1} remains valid with the  smooth induction $\Ind_H^G$ which  is the smooth compact induction $\ind_H^G 1$ because $G/H$ is compact, so that $\ind_H^G(\Pi|_H)=\Pi \otimes \ind_H^G 1$.

 3)  Any smooth character  $\chi $ of $G$ trivial on $H$ such that 
  $\ind_{G_\pi}^G( V_\pi)\simeq \ind_{G_\pi}^G(V_\pi \otimes \chi|_{G_\pi} )$ is trivial on $G_\pi$. Indeed, restricting to $G_\pi$ we see that
  $V_\pi \otimes \chi|_{G_\pi} $ is  conjugate   to $V_\pi $  by some   $g\in G$. 
Restricting to $H$ gives that $\pi \simeq \pi^g$ so $g\in G_\pi$ hence $V_\pi \otimes \chi|_{G_\pi}  \simeq V_\pi $. As $\Ker(\chi)$ is open in $G$ and $G/H$ is compact, $J= \Ker(\chi) \cap G_\pi$ has finite index in $G_\pi$. 
 If $\chi$ is not trivial on $G_\pi $  then the  action of  $ J$  on $V_\pi$ is reducible.  Indeed, $ \ind_J^{G_\pi}(1)$ contains $1$ and $ \chi|_{G_\pi}$ as subrepresentations 
 and by Frobenius reciprocity
 $\End_{J}(V_\pi|_J)$ is equal to $\Hom_{G_\pi}(V_\pi, \ind_J^{G_\pi}(V_\pi|_J))= \Hom_{G_\pi}(V_\pi, V_\pi \otimes \ind_J^{G_\pi}(1))$. Hence $\dim( \End_{J_\pi}(V_\pi|_J))\geq 2$ and $V_\pi|_J$ is reducible. 
  But by hypothesis of multiplicity $1$, $V_\pi|_H$  is irreducible hence $V_\pi|_J$ is reducible as $H\subset J$.   So $\chi $ is trivial on $ G_\pi$.
  
  The group $G_\pi$   is a subgroup $I$ of $G$ containing $H$ with  $ \lg(V|_I)=\lg(V|_H)$.  
If $I$ has this property, the restriction to $H$ of any irreducible component on $V|_I$ is irreducible hence $I$ is contained in $G_\pi$.

 4) follows from 3).
\end{proof}
\begin{remark}\label{re:2.4} Assume that $V|_H$ has multiplicity $1$. The  $G$-stabilizer  of any irreducible component  of $V$ is $G_\pi$. Denote $G_\pi =G_V$.
Let $I$ be a subgroup  of $G$ containing $H$. The number of orbits of $I$ in the irreducible components of $V|_{G_V}$ is  $\lg(V|_I)$. This number is the same for $I$ and $IG_V$ hence $\lg(V|_I)=\lg(V|_{IG_V})$.  We deduce that $G_V\subset I$ if $V|_I$ is reducible  and  $|G/I|$ is a prime number.
\end{remark}
Let $\theta$ be  a  smooth  $R$-representation  of a closed subgroup   $U \subset H$. 
We consider the property:
\begin{equation}\label{eq:whi}\text{
The functor $\Hom_U(-,\theta)$ is exact on smooth $R$-representations of $H$}.
\end{equation}

 \begin{lemma} \label{le:U}   If \eqref{eq:whi} is true and  $\dim \Hom_U(V,\theta)=1$,  then $V|_H$ has multiplicity $1$.\end{lemma}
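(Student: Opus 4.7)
The plan is to combine the $G$-conjugacy of the $H$-isotypic components of $V|_H$ with additivity of $\dim\Hom_U(-,\theta)$, using the exactness hypothesis \eqref{eq:whi} to first reduce to the semisimple, finite-length case. Specifically, since $\dim\Hom_U(V,\theta)=1\neq 0$, I would pick a non-zero $f\in\Hom_U(V,\theta)$ and a vector $v\in V$ with $f(v)\neq 0$; the $H$-subrepresentation $W\subset V|_H$ generated by $v$ then satisfies $\Hom_U(W,\theta)\neq 0$. Exactness \eqref{eq:whi} applied along a composition series of $W$ produces an irreducible $H$-subquotient $\sigma$ with $\Hom_U(\sigma,\theta)\neq 0$; since $V$ is $G$-irreducible and $H$ is normal in $G$, a suitable $G$-translate of $\sigma$ embeds as an irreducible $H$-subrepresentation of $V|_H$. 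Invoking the third subsubsection of \S2.1 together with \S\ref{ss:2.2.3}, $V|_H$ is then semisimple of finite length, and by \S\ref{S:226} its isotypic components are $G$-conjugate with a common multiplicity $m\geq 1$.

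For the second step, write $V|_H \simeq m\cdot\bigl(\bigoplus_i V_i\bigr)$ with the $V_i$ pairwise non-isomorphic irreducibles. Additivity of $\Hom_U(-,\theta)$ on direct sums gives
$$
1 \;=\; \dim\Hom_U(V,\theta) \;=\; m\sum_i \dim\Hom_U(V_i,\theta).
$$
Since $m$ is a positive integer and the sum on the right is a non-negative integer, the product equaling $1$ forces $m=1$, and moreover exactly one $V_i$ carries a $(U,\theta)$-Whittaker model. This is precisely the claim that $V|_H$ has multiplicity one.

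The main obstacle is the first step: extracting an irreducible $H$-subrepresentation of $V|_H$ from a single Whittaker functional. The role of \eqref{eq:whi} is to make $\dim\Hom_U(-,\theta)$ additive on short exact sequences, so that it is bounded by $1$ on every $H$-subrepresentation of $V|_H$ and decomposes as a sum over composition factors; this is what licenses the descent to an irreducible $H$-subquotient. In the applications in \S\ref{SL2}, $V|_H$ is already known to be admissible and this extraction is automatic, so the lemma in practice reduces to the short additivity computation of the second step.
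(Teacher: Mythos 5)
Your second step is precisely the paper's proof: the exactness hypothesis \eqref{eq:whi} makes $\dim\Hom_U(-,\theta)$ additive over the decomposition of $V|_H$, and $1=m\sum_i\dim\Hom_U(V_i,\theta)$ with $m$ and the sum positive integers forces $m=1$. The first step, which you present as the main obstacle, is not needed: Lemma \ref{le:U} is stated inside \S\ref{S:226}, whose standing hypotheses already say that $V$ is irreducible and $V|_H$ is semisimple of finite length with $G$-conjugate isotypic components of a common multiplicity (via \ref{2.1.2}/\ref{ss:2.2.3}), so there is nothing to extract from the Whittaker functional. Moreover, as written that step has a gap you should be aware of: knowing that $V|_H$ has an irreducible $H$-\emph{subquotient} $\sigma$ with $\Hom_U(\sigma,\theta)\neq 0$ does not produce an irreducible $H$-\emph{subrepresentation}, and applying a $G$-translate only yields another subquotient, not a submodule; the passage from "contains an irreducible subquotient" to "semisimple" genuinely requires the noetherian or admissibility input of \S\ref{ss:2.2.3}, not the Whittaker functional. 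Since that input is part of the standing hypotheses, your argument is correct once the superfluous first step is replaced by a citation of them, and the remaining computation coincides with the paper's.
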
 
 \begin{proof} We denote by $m_V(\pi)$ the multiplicity of any  irreducible smooth $R$-representation  $\pi$ of $H$ in  $V|_H$. By  \eqref{eq:whi},
  $$
\sum_{\pi}m_V(\pi)\dim \Hom_U(\pi, \theta) =\dim \Hom_U(V,\theta)=1.$$ 
There is a  single  $\pi $ with $m_V(\pi)= \dim \Hom_U(V, \theta)=1$.  
  \end{proof}

  \section{$p$-adic reductive group} \label{S:2.3}     We suppose now  that $G$ is a $p$-adic reductive group, that is, the group of rational points $\underline G(F)$ of reductive connected $F$-group  $\underline G$,   where $F$
is  a  local non archimedean field of residual characteristic $p$, of ring of integers $O_F$, uniformizer $p_F$, maximal ideal $P_F$,  residue field $k_F=O_F/P_F $ with $q$ elements, and absolute value  $| x|_F= q^{-val (x)}$, $| p_F|_F=q^{-1}$   (we do not  suppose that the characteristic of $F$ is $0$).

  For an algebraic group $\underline X$ over $F$, we denote by the corresponding lightfacee letter $X=\underline X(F)$ the group  its $F$-points. 

  Let $R$ be a field of characteristic $\charf_R\neq p$.
 Any irreducible smooth $R$-representation of $G$ is admissible \cite{HV19}, and   the properties (\ref{i})  and (\ref{ii}) hold for $G$. 
  For (\ref{i})  see 
 \cite[II.5.10]{V96}, \cite[\S 5]{V22},  and for (\ref{ii})  see \cite{D09}, 
 
 \noindent \cite{DHKM23}.   
 \begin{lemma}\label{le:clo} Let  $f :\underline H\to \underline G$ be an $F$-morphism of reductive connected $F$-groups. Then the subgroup $f(H)$ of $G$ is closed.
 \end{lemma}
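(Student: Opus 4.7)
The plan is to first reduce to the case where $f$ is surjective as a morphism of algebraic groups, then handle the surjective case by separating smooth and purely inseparable phenomena on $F$-points. I would begin by setting $\underline K := f(\underline H)$. By Chevalley the image of a morphism of algebraic groups is constructible, and any constructible subgroup $C$ of an algebraic group is Zariski closed: $C$ contains a dense open subset of its Zariski closure $\overline{C}$, and translates of this open cover $\overline{C}$ since $C$ is a subgroup. Hence $\underline K$ is a closed connected reductive $F$-subgroup of $\underline G$, so $\underline K(F)$ is closed in $G$, and it suffices to show $f(H)$ is closed in $\underline K(F)$. Replacing $\underline G$ by $\underline K$, I may assume $f$ is surjective with kernel $\underline N$ a closed normal $F$-subgroup scheme of $\underline H$.

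When $\underline N$ is smooth --- automatic if $\charf F = 0$ --- the morphism $f$ is itself smooth, so by the implicit function theorem for $F$-analytic manifolds the induced map $f \colon H \to \underline K(F)$ is open. The image $f(H)$ is then an open subgroup of $\underline K(F)$, and any open subgroup of a topological group is closed.

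For the remaining case, where $\charf F = p > 0$ and $\underline N$ may fail to be smooth, I would factor $f$ as $\underline H \twoheadrightarrow \underline H/\underline N^{\mathrm{sm}} \twoheadrightarrow \underline K$, where $\underline N^{\mathrm{sm}} \subseteq \underline N$ is a maximal smooth closed $F$-subgroup scheme normal in $\underline H$ (taken as $(\underline N^{\circ})_{\mathrm{red}}$ when $F$ is perfect, and produced in general via Raynaud's smoothening). The first arrow is a smooth surjection, so its image on $F$-points is open by the smooth case; the second arrow has infinitesimal kernel and is therefore a purely inseparable isogeny, which reduces to iterated Frobenius morphisms.

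The hard part will be showing that a Frobenius morphism induces a closed embedding on $F$-points. For this I would use the facts that $F^p$ is a closed subfield of $F$ (itself complete under the restricted valuation) and that $x \mapsto x^p$ is a topological embedding of $F$ thanks to the characteristic-$p$ identity $(x-y)^p = x^p - y^p$, giving $|x-y|_F^p = |x^p - y^p|_F$. On coordinates for an affine embedding of $\underline G$, the Frobenius acts componentwise as $x \mapsto x^p$, so it is a closed embedding on $F$-points. Composing the open surjection from the smooth step with this closed embedding then yields that $f(H)$ is closed in $\underline K(F)$, as desired.
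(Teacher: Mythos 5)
Your route is genuinely different from the paper's, which simply invokes Bernstein--Zelevinsky's theorem that orbits of an algebraic action are locally closed in the $F$-topology and then uses that a locally closed subgroup of a topological group is closed. Your reduction to surjective $f$, the smooth-kernel case (which by Cartier's theorem covers all of $\charf_F=0$ and recovers the paper's remark that $f(H)$ is then open), and the key observation that the relative Frobenius is a closed topological embedding on $F$-points are all correct. But the remaining case $\charf_F=p$, which is the real content of the lemma, has two gaps.

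First, the existence of a smooth closed $F$-subgroup $\underline N^{\mathrm{sm}}\subseteq \underline N=\ker f$, normal in $\underline H$, with $\underline N/\underline N^{\mathrm{sm}}$ \emph{infinitesimal} is not established. Here $F=\mathbb F_q((t))$ is imperfect, so $(\underline N^{\circ})_{\mathrm{red}}$ need not be a subgroup scheme, and neither Raynaud's smoothening nor the maximal smooth $F$-subgroup produces in general a smooth subgroup with infinitesimal quotient: the reduced, non-smooth subgroup $\{y^p=ax^p\}\subseteq \mathbb G_a^2$ (for $a\notin F^p$) has trivial maximal smooth subgroup yet is one-dimensional. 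To repair this step you must use that $\underline N$ is normal in the connected reductive $\underline H$: the reduced identity component of $\underline N_{F^{ac}}$ is an almost-direct factor of $\underline H_{F^{ac}}$, hence is defined over $F^{\mathrm{sep}}$ and descends to a smooth connected normal $F$-subgroup $\underline M\subseteq\underline N$ with $\underline N/\underline M$ finite; one must then still split that finite kernel by its connected--\'etale sequence and check separately that an \'etale isogeny carries closed subgroups of $F$-points to closed subsets (true, because on $F$-points it is a local homeomorphism with finite kernel, but this case is absent from your decomposition). Second, a purely inseparable isogeny $g$ is not an iterate of Frobenius ($\SL_p\to\PGL_p$ in characteristic $p$ already is not); what is true is that it divides a power of the relative Frobenius, $F^n=h\circ g$ with $\ker h$ infinitesimal, and you then need the short extra argument that, $h$ being injective on $F$-points, $g(C)=h^{-1}(F^n(C))$ is closed for every closed $C$. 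Both points are repairable, but as written the characteristic-$p$ argument is incomplete.
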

    \begin{proof} The morphism $f$  induces  a constructible  action of $H$ on $G$
  \cite[6.15 Theorem A]{BZ77},   in particular the group $f(H)$, which is the  $H$-orbit of the unit of $G$,  is locally  closed (loc.cit. (6.8) Proposition), $f(H)$ is equal to its closure in $G$  (the closure of $f(H)$ in $G$ is a subgroup containing $f(H)$ as an open hence closed, subgroup).
   Note that $f(H)$ is  open  in $G$ when  $\charf_F=0$ \cite[\S 3.1 Corollary1]{PR91}. 
 \end{proof}
 
 \begin{theorem} \label{th:2.4} Let  $f :\underline H\to \underline G$ be an $F$-morphism of reductive connected $F$-groups such that $f(H)$ is a normal subgroup of $G$ of  compact quotient $G/f(H)$.
 Then, the restriction to $f(H)$ of any irreducible admissible $R$-representation of $G$ is semisimple of finite length. Any irreducible admissible $R$-representation of $f(H)$ is contained in some irreducible admissible $R$-representation of $G$ restricted to $f(H)$, and extends to an irreducible admissible representation of some open subgroup of $G$ of finite index.
 \end{theorem}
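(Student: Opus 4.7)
The plan is to deduce the theorem from the abstract machinery of Section \ref{lc} applied to the pair $(G, f(H))$. First, Lemma \ref{le:clo} shows that $f(H)$ is closed in $G$, and combined with the hypotheses that $f(H)$ is normal in $G$ with compact quotient $G/f(H)$, this places us exactly in the framework of Section \ref{lc}. The essential input is that properties \eqref{i} and \eqref{ii} hold not only for $G$ (by the references at the opening of Section \ref{S:2.3}) but also for $f(H)$; this should be established by relating $f(H)$ to the group of $F$-points of the connected reductive algebraic image $f(\underline H)$ and transferring the finite length and noetherianity statements.

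Granting these inputs, the first assertion is immediate. Let $V$ be an irreducible admissible $R$-representation of $G$. Since open compact subgroups of $f(H)$ are open compact in $G$, the restriction $V|_{f(H)}$ is admissible, and by the finiteness statement recalled at the start of Section \ref{lc} it is finitely generated. Property \eqref{ii} for $f(H)$ then makes $V|_{f(H)}$ noetherian, so Subsection \ref{ss:2.2.3} gives that $V|_{f(H)}$ is semisimple of finite length.

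For the second and third assertions, start with an irreducible admissible $R$-representation $\pi$ of $f(H)$. Subsection \ref{qu}(1), which is precisely conditional on both \eqref{i} and \eqref{ii} holding for $f(H)$, produces an irreducible admissible $R$-representation $V$ of $G$ whose restriction to $f(H)$ contains $\pi$ as an irreducible subrepresentation (by the first assertion, $V|_{f(H)}$ is semisimple, so any subrepresentation isomorphic to $\pi$ is a direct summand). Then Subsection \ref{ss:2.2.4} applied to this $V$ and the irreducible subrepresentation $\pi\subset V|_{f(H)}$ extends $\pi$ to an irreducible representation of an open subgroup of $G$ of finite index, admissible because $V$ is.

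The main obstacle I anticipate is the verification that \eqref{i} and \eqref{ii} hold for $f(H)$ itself, since in positive residue characteristic or in characteristic $\charf_F>0$ the subgroup $f(H)\subset G$ may be strictly smaller than $f(\underline H)(F)$, the discrepancy being controlled by a Galois cohomology group of $\ker f$. Once this gap is bridged using the cited results on $p$-adic reductive groups, the remainder of the proof is a direct and essentially formal invocation of the abstract framework already developed in Section \ref{lc}.
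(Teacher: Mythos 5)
Your overall strategy is the paper's: reduce everything to the abstract results of \S\ref{lc} applied to the closed normal cocompact subgroup $f(H)\subset G$ (closedness by Lemma \ref{le:clo}), then invoke \ref{ss:2.2.3}, \ref{qu}(1) and \ref{ss:2.2.4}. However, the one step that is not purely formal — verifying property \eqref{ii} for $f(H)$ — is exactly the step you leave open, and the route you sketch for it is the wrong one. There is no need to compare $f(H)$ with $f(\underline H)(F)$ or to control the discrepancy by Galois cohomology of $\ker f$. Since $f:H\to f(H)$ is a surjective continuous group homomorphism, inflation along $f$ identifies smooth $R$-representations of $f(H)$ with smooth $R$-representations of $H$ trivial on $\ker(f|_H)$; this identification preserves finite generation and induces a bijection on subrepresentations. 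Hence \eqref{ii} for $H$ (which holds because $H=\underline H(F)$ is a $p$-adic reductive group, by the references at the start of \S\ref{S:2.3}) immediately gives \eqref{ii} for $f(H)$. This is the entire content of the paper's proof ("$f(H)$ satisfies \eqref{ii} because $H$ does"), and without it your argument is incomplete at its only substantive point. Note also that only \eqref{i} for $G$ and \eqref{ii} for $f(H)$ are needed; you do not need \eqref{i} for $f(H)$.

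A secondary error: you assert that open compact subgroups of $f(H)$ are open compact in $G$, to conclude that $V|_{f(H)}$ is admissible. This is false in general, because $f(H)$ need not be open in $G$ when $\charf_F>0$ (Lemma \ref{le:clo} only gives closedness; e.g.\ $ZG'\subset GL_2(F)$ is not open when $\charf_F=2$). Fortunately admissibility of the restriction is not needed anywhere: irreducible implies finitely generated, \ref{lc} gives that $V|_{f(H)}$ is finitely generated, \eqref{ii} for $f(H)$ makes it noetherian, and \ref{ss:2.2.3} then yields semisimplicity and finite length. You should delete the admissibility claim rather than try to repair it.
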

 
 \begin{proof} $G$ satisfies  (\ref{i}) and $f(H)$, satisfies  the property (\ref{ii}) because $H$ does.  Apply  the results of \S \ref{lc}.
  \end{proof}

 We now give two examples where we can apply Theorem \ref{th:2.4}.

 \begin{proposition}\label{pro:sc}  Let $f:\underline H\to \underline G$ be a surjective central $F$-morphism of connected reductive $F$-groups. Then, the subgroup $f(H)$  of $G$ is  normal  of abelian compact quotient $G/f(H)$.
  \end{proposition}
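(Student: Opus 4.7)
The plan is to apply Galois cohomology to the central short exact sequence of $F$-groups
$$1 \to \underline Z \to \underline H \xrightarrow{f} \underline G \to 1, \qquad \underline Z := \ker f,$$
where $\underline Z$ is a closed central $F$-subgroup scheme of $\underline H$. Since $\underline Z$ is contained in the centre of the connected reductive group $\underline H$, it is of multiplicative type.

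Because $\underline Z$ is central in $\underline H$, the connecting map $\delta : G \to H^1(F, \underline Z)$ in the long exact (flat) Galois cohomology sequence is a group homomorphism, not merely a map of pointed sets, and its kernel equals $f(H)$ by exactness. Hence $f(H)$ is normal in $G$ and $\delta$ induces an injection $G/f(H) \hookrightarrow H^1(F, \underline Z)$. Since $\underline Z$ is commutative, $H^1(F, \underline Z)$ is abelian, and so is $G/f(H)$. This settles normality and abelianness simultaneously, with no case-splitting on $\charf F$. Alternatively, one can unwind $\delta$ by hand: lift $g \in G$ to $\tilde g \in \underline H(\overline F)$; for $h \in H$ and $\sigma \in \Gal(\overline F/F)$, the two lifts $\sigma(\tilde g)$ and $\tilde g$ of $g$ differ by some $z_\sigma \in \underline Z(\overline F)$, which is central and hence drops out of $\sigma(\tilde g h \tilde g^{-1}) = \sigma(\tilde g) h \sigma(\tilde g)^{-1}$, showing $\tilde g h \tilde g^{-1} \in H$ with image $gf(h)g^{-1} \in f(H)$.

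For compactness of $G/f(H)$, I would use that $H^1(F, \underline Z)$ is a compact topological group when $\underline Z$ is of multiplicative type over the local field $F$. Decompose $\underline Z$ via its identity component $\underline Z^\circ$, a torus with $H^1(F, \underline Z^\circ)$ finite by Tate's local duality, and the component group $\underline Z/\underline Z^\circ$, a finite commutative group of multiplicative type whose (flat) $H^1$ is compact; the resulting long exact sequence then yields compactness of $H^1(F, \underline Z)$. Since $\delta$ is continuous with closed image, $G/f(H) \cong \delta(G)$ is a closed subgroup of a compact Hausdorff group, hence compact.

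The main obstacle I expect is the topological verification in positive residue characteristic: when $\charf F = p$ and $\underline Z$ is not smooth (for example if $\underline Z$ contains a factor isomorphic to $\mu_p$), the flat cohomology $H^1(F, \underline Z)$ can be infinite, so one must be careful to install the correct compact topology and to confirm that $\delta$ is continuous with closed image. Once these topological points are in place, normality and abelianness are formal consequences of centrality of $\underline Z$.
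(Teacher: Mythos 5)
Your argument for normality and for the abelian quotient is correct and genuinely different from the paper's: the paper invokes the Borel--Tits commutator map $\kappa:\underline G\times\underline G\to\underline H$ with $f(\kappa(f(x),f(y)))=xyx^{-1}y^{-1}$, which shows directly that every commutator of $G$ lies in $f(H)$, whereas your connecting homomorphism $\delta:G\to H^1(F,\ker f)$ reaches the same conclusion and is arguably cleaner --- provided you work with fppf cohomology throughout. (Your ``by hand'' variant breaks down when $\charf_F=p$ and $\ker f$ is not smooth: Galois-invariance of $\tilde g h\tilde g^{-1}\in\underline H(\overline F)$ only places it in $\underline H$ of the perfect closure of $F$, not in $H$, so the lifting argument must be replaced by the fppf exact sequence.)

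The compactness step is where there is a genuine gap. First, the assertion that $H^1_{\mathrm{fppf}}(F,\underline Z)$ is a compact topological group and that $\delta$ is continuous with closed image is precisely the hard point when $\charf_F=p$: there $H^1$ is typically infinite (e.g.\ $H^1(F,\mu_2)=F^*/(F^*)^2$ when $\charf_F=2$, which is exactly the quotient $G/ZG'$ relevant in this paper), and installing the topology and proving continuity and closedness requires the local duality and topology results of Milne and \v{C}esnavi\v{c}ius; you flag this obstacle but do not resolve it. Second, even granting all of that, the inference ``$G/f(H)\cong\delta(G)$ is a closed subgroup of a compact group, hence compact'' conflates the algebraic isomorphism with a topological one: a continuous bijective homomorphism from the locally compact group $G/f(H)$ onto $\delta(G)$ is not automatically a homeomorphism, and one must add an open-mapping-theorem step (using that $G/f(H)$ is $\sigma$-compact and that $\delta(G)$, being closed in a compact group, is locally compact) to transfer compactness back to the quotient topology on $G/f(H)$. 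The paper avoids all of this machinery with an elementary structural argument: the Iwasawa decomposition $G=KB$ reduces compactness of $G/f(H)$ to that of $M/f(M')$ for the minimal Levi $M$, which is compact modulo its maximal split torus $S$, and $S/f(S')$ is finite by comparing the lattices $\underline S(p_F)$ and $\underline S'(p_F)$. If you wish to keep the cohomological route, either restrict it to $\charf_F=0$ (where $f(H)$ is open and $H^1$ is finite, so $G/f(H)$ is finite discrete) or supply the fppf topology input and the open-mapping step explicitly.
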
 
  \begin{proof}   
   There is an $F$-morphism $\kappa:\underline G\times \underline G\to \underline H$ such that $\kappa(f(x), f(y))=xhx^{-1}y^{-1}$ for all $x,y\in \underline H$ \cite[2.2]{BT72}. So for all $u,v\in G$ we have $uvu^{-1}v^{-1}= f \circ \kappa (u,v)\in  f(H)$. The subgroup $f(H)$ of $H$  is closed (Lemma \ref{le:clo}), normal  with abelian quotient $G/f( H)$ (loc.cit. Proposition (2.7)). 
   
    The compacity $G/H$ is stated in  \cite{Sil79}  without proof and in  \cite[Proposition A.2.1]{L19} with indications for the proof. The idea is to reduce to  a  connected reductive $F$-anisotropic modulo the center $F$-group.

     Let $\underline S $ be  a maximal $F$-split subtorus  of  $\underline  G$,  and 
     $\underline B$   a parabolic $F$-subgroup of  $\underline  G$  containing  $\underline S $.
      The  $\underline  G$-centralizer  $\underline M$ of  $\underline S $ is compact modulo its center and is a Levi  component of   $\underline B $. Let  $\underline U$
     the unipotent  radical of  $\underline B$. By   \cite[22.6]{B91}. the inverse image $\underline S'$ of $\underline S$ in $\underline H$ is a maximal $F$-split torus in $\underline H$, and the inverse image $\underline B'$ of $ \underline B$ is a  parabolic $F$-subgroup of $\underline H$ Put $\underline M' $ for the $\underline H$-centralizer of $\underline S'$ and  $\underline U' $ for the unipotent radical of $\underline B'$. From loc.cit., $f $ induces a surjective central $F$-morphism $\underline M ' \to \underline  M$ and an $F$-isomorphism  $\underline U '  \to  \underline U$.   On the other hand, we have the Iwasawa decomposition $G= K B$  for an open compact subgroup $K$ of  $G$.
The product map $K \times  B\to G$ gives a surjective map
$K  \times B /f( B') \to  G /f(H)$.  We have
$  B /f( B') = M /f(M')$, 
so we just need to prove the compactness  of  $M /f(M')$.

   Let    $X^*(\underline S)$ denote the group of algebraic characters of  $\underline S$ and 
$\underline S  (p_F)=\Hom (X^*(\underline S),p_F^{\mathbb Z})$. The subgroup $\underline S  (p_F)$
 of $S$  is free abelian of finite rank with a compact quotient  $S/ \underline S  (p_F)$. 
 On the other hand, $f$ induces a surjective $F$-morphism $\underline S'\to \underline S$ sending  $\underline S'(p_F)$ onto a sub-lattice of $\underline S(p_F)$.    Hence $ S/ f( S')$ is finite.  So $M/  f(S')$ is compact as  $M/  S$ is compact, 
 a fortiori $M/ f (M')$ is compact.
 \end{proof}
 \begin{remark} The condition that $f$ is central in Proposition \ref{pro:sc} is necessary.   Indeed, assume   $\charf_F =2$ and  
$f:\underline {GL}_2\to \underline{ SL}_2, \ f(g)= \varphi(g)/\det (g)$ where 
 $\varphi(x)=x^2$ for $x\in F$  is the Frobenius\footnote{the map $f$ will also appear  in \S \ref{S:464}}.  The $F$-morphism  $f$  is surjective but not central.  Let $G=GL_2(F), G'=SL(2,F)$,    $T'$   the diagonal torus of $G'$, $U$ the group of unipotent  upper triangular matrices in $G'$. Then $f(G) = T' \varphi (G')$   is closed but not normal and not cocompact in $G'$  (as $\varphi(U)= U\cap T' \varphi (G')$  and  $U/\varphi(U)$   homeomorphic  to  $F/F^2$   is not compact).
 \end{remark}
\begin{corollary} \label{cor:2.9} Assume $R$ algebraically closed.
   Let $f:\underline H\to \underline G$ be an  $F$-morphism of connected reductive $F$-groups which induces a central $F$-isogeny  $\underline H^{der} \to \underline G^{der}$ between the derived groups.
Then the conclusions of Theorem \ref{th:2.4} apply to  $f(H)$.
\end{corollary}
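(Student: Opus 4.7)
The plan is to verify the two hypotheses of Theorem~\ref{th:2.4}: that $f(H)$ is closed and normal in $G$ with compact quotient. Closedness is Lemma~\ref{le:clo}. For normality and cocompactness, my approach is to reduce to Proposition~\ref{pro:sc} by an auxiliary construction.

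First I would introduce the morphism
\[
\tilde f:\underline H\times\underline Z(\underline G)^\circ\longrightarrow\underline G,\qquad (h,z)\longmapsto f(h)\,z.
\]
Because $f|_{\underline H^{der}}$ is a surjective isogeny onto $\underline G^{der}$, we have $f(\underline H)\supseteq\underline G^{der}$; combining this with the standard decomposition $\underline G=\underline G^{der}\cdot\underline Z(\underline G)^\circ$, we see $\tilde f$ is surjective. Next I would check that $\tilde f$ is central: if $(h,z)\in\ker\tilde f$ then $f(h)=z^{-1}\in\underline Z(\underline G)^\circ$ centralizes $\underline G^{der}=f(\underline H^{der})$, so by the central-kernel hypothesis on $f^{der}$ the element $h$ centralizes $\underline H^{der}$; together with the automatic centralization of $\underline Z(\underline H)^\circ$, this gives $h\in\underline Z(\underline H)$, so $\ker\tilde f\subseteq\underline Z(\underline H)\times\underline Z(\underline G)^\circ$ is central. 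Proposition~\ref{pro:sc} applied to $\tilde f$ then yields that
\[
f(H)\cdot \underline Z(\underline G)^\circ(F)=\tilde f\bigl(H\times\underline Z(\underline G)^\circ(F)\bigr)
\]
is normal in $G$ with compact abelian quotient.

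The remaining step is to transfer these properties from $f(H)\cdot\underline Z(\underline G)^\circ(F)$ to $f(H)$ itself. For normality, for $g\in G$ and $h\in H$ one has $gf(h)g^{-1}=f(h)\cdot[f(h)^{-1},g]^{-1}$ with the commutator in $\underline G^{der}(F)$; Proposition~\ref{pro:sc} applied to $f^{der}$ identifies $\underline G^{der}(F)/f(H^{der})$ as compact abelian, and conjugation fixes $\underline Z(\underline G)^\circ(F)$ pointwise, which combined with the normality of $f(H)\cdot\underline Z(\underline G)^\circ(F)$ lets one conclude $gf(H)g^{-1}\subseteq f(H)$. For cocompactness of $f(H)$ in $G$, it suffices to show that the quotient $\underline Z(\underline G)^\circ(F)/(\underline Z(\underline G)^\circ(F)\cap f(H))$ is compact, after which composition with the compact quotient $G/(f(H)\cdot\underline Z(\underline G)^\circ(F))$ gives the conclusion. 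For this last point I would use that $f$ induces an $F$-morphism of tori $\underline Z(\underline H)^\circ\to\underline Z(\underline G)^\circ$ (well-defined because $f(\underline Z(\underline H)^\circ)$ centralizes $f(\underline H^{der})=\underline G^{der}$ and is connected) and invoke the lattice argument at the end of the proof of Proposition~\ref{pro:sc}: the induced map on cocharacter lattices has image of finite index in its target, whence compactness.

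The main obstacle will be this final compactness reduction, since the hypothesis controls only the map on derived groups; one must extract enough information about the induced map on connected centers to invoke the torus-lattice argument of Proposition~\ref{pro:sc}. The centrality check for $\tilde f$ in the first step is the other place where the central-isogeny hypothesis on derived groups is used in an essential way.
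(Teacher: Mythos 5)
Your opening plan---to verify that $f(H)$ itself is normal and cocompact in $G$ and then quote Theorem \ref{th:2.4}---cannot succeed, because $f(H)$ need not be cocompact. The basic example the corollary is designed for is $f:\underline{SL}_2\hookrightarrow \underline{GL}_2$ (the induced map on derived groups is the identity, hence a central isogeny), and there $G/f(H)=GL_2(F)/SL_2(F)\simeq F^*$ is not compact. Your final compactness reduction fails at exactly the point you flagged: the hypothesis gives no control on the induced map of connected centres. In this example $\underline Z(\underline H)^\circ$ is trivial while $\underline Z(\underline G)^\circ=\mathbb G_m$, so the image in the cocharacter lattice is $\{0\}$, not of finite index, and correspondingly $\underline Z(\underline G)^\circ(F)/\bigl(\underline Z(\underline G)^\circ(F)\cap f(H)\bigr)=F^*/\{\pm1\}$ is not compact. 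The ``obstacle'' you identified is therefore not a technical difficulty but a false statement, and the normality-plus-cocompactness route is a dead end.

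The first half of your argument is essentially the paper's: the map $\underline Z(\underline G)^\circ\times\underline H\to\underline G$ is surjective and central, so Proposition \ref{pro:sc} shows that $Zf(H)$ is normal in $G$ with compact abelian quotient, and Theorem \ref{th:2.4} applies to $Zf(H)$. What is missing is the passage from $Zf(H)$ to $f(H)$, and it is here that the hypothesis ``$R$ algebraically closed''---which your proof never uses, a warning sign---enters: by Schur's lemma the centre $Z$ acts by a character in any irreducible admissible $R$-representation of $G$, so restriction from $Zf(H)$ to $f(H)$ preserves irreducibility, semisimplicity and length, and containment and extension statements transfer accordingly. This representation-theoretic descent gives the conclusions of Theorem \ref{th:2.4} for $f(H)$ without $f(H)$ being cocompact, and it is what should replace your transfer step.
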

\begin{proof} The  $F$-isogeny $\underline H^{der} \to \underline G^{der}$ is  surjective with finite kernel contained in the center of  $\underline H^{der}$ \cite[12.2.6]{Spr98}.  If $\underline Z$  is the connected centre of $\underline G$, the natural map 
$\underline Z \times \underline G^{der}\to \underline G $ is surjective \cite[8.1.6 Corollary]{Spr98} Hence
 the obvious map $\underline Z \times \underline H \to \underline G$  is surjective and central.
 Proposition \ref{pro:sc} applies to $Z f(H)$.  But $R$ being algebraically closed,  $Z$ acts by a character in any irreducible smooth $R$-representations of
$G$, and we get the corollary.
\end{proof}
\begin{remark}   There is a more elementary proof  that the restriction to $f(H)$ of any irreducible admissible $R$-representation of $G$ is semisimple of finite length  in \cite{Sil79}.  
\end{remark}

\section{Restriction to $SL_2(F)$ of  representations of $GL_2(F)$ }\label{SL2} 
 Let  $F$ be a  local non archimedean field  of  residue field  $k_F$ of characteristic $p$ as in \S\ref{S:2.3}, and  $R$  an algebraically closed field of characteristic different from $ p$.

   Let $G=GL_2(F)$, and let   $B $ (resp.  $B^-$)  denote the subgroup of upper (resp. lower) triangular matrices,       $T = $ the subgroup of diagonal matrices,  $U $ (resp.  $U^-$)  the   subgroup  of upper (resp. lower) triangular unipotent matrices, and $Z$  the center of   $G$.

 Let  $G'=SL_2(F)$. The  subgroup $H=ZG'  $ of $G$    is  closed normal of compact  abelian quotient $G/ZG'$ isomorphic via the determinant to
  $ F^*/(F^*)^2$, which is   a
  $\mathbb F_2$-vector space of dimension
\cite[Corollary 5.8]{Neu99}
 \begin{equation}\label{eq:car}\dim_{\mathbb F_2} F^*/(F^*)^2= \begin{cases} 2+e \ \text{if} \charf_F\neq 2\\
 \infty \ \text{if} \charf_F\neq 2  
 \end{cases}, \  \text{where } 2O_F=P_F^e.
 \end{equation}
Note that $ZG' $ is open in $G$ if and only if  $\charf_F\neq 2$.
 
  For a subset $X\subset G$, put $X'=X\cap G'$.   Write $x=(x_{i,j})$ a matrix in $G$ or $\Lie G=M_2(F)$.

We fix a separable closure $F^{sc}$ of $F$ and 
 will consider only extensions  of $F$  contained in $F^{sc}$. We write $W_F$ for the Weil group of $F^{sc}/F$ and $\Gal_F$ for the Galois group of $F^{sc}/F$. For a  field $k$, we denote by $k^{ac}$ an algebraic closure of $k$, and if $k\subset R$ we suppose   $k^{ac}\subset R$. 

  We fix an additive  $R$-character $\psi$ of $F$  trivial on $O_F$ but not on $P_F^{-1}$.

\subsection{Whittaker spaces} \label{ss:Wh}

 The smooth $R$-characters   of $U$  have the form \begin{equation}\label{eq:the}\theta_Y(u)=\psi\circ  \tr(Y(u-1))= \psi(Y_{2,1}u_{1,2}), \ \  u \in U,\end{equation} for  a lower triangular nilpotent matrix  $Y$ in $M_2(F)$.  The case $Y=0 $ gives the trivial character of $U$, the cases with $Y\neq 0$ give the {\sl non-degenerate} characters of $U$. 
 \begin{notation} \label{not:theta}When $Y_{2,1}=1$ we denote $\theta_Y=\theta $.
 \end{notation}
  The normalizer of $U$ in $G$ is $TU$. By conjugation,  $U$ acts trivially on $U$ and its characters, and a diagonal matrix $t=diag(t_1 ,t_2) $ acts on $u \in U$  by $(tut^{- 1})_{1,2} = (t_1/t_2) u_{1,2}$. Also, $t $ acts on a lower triangular nilpotent matrix $Y $ by  $(tYt^{-1})_{2,1}=(t_2/t_1)Y_{2,1}$. It follows that $T $ acts transitively on the non-degenerate characters of $U$, the quotient $T/Z$ acting simply transitively. By the same formulas, two non-trivial characters $\theta_Y$ and $\theta_{Y'}$ of $U$ are conjugate in $G'$  if and only if they are conjugate by an element of $T'$ if and only if $Y_{1,2}$ and $Y'_{1,2}$ differ by a square in $F^*$.

The   $T$-normalizer of $\theta_Y  $  is  equal to $Z$ if $Y\neq 0$ and to $T$ if $Y=0$.  The $\theta_Y $-coinvariants  functor $\tau \mapsto W_Y(\tau)$  from the smooth $R$-representations $\tau$ of $U$ to the smooth  $R$-representations  of the $T$-normalizer of $\theta_Y  $   is exact.
 A smooth $R$-representation $\tau$ of $U$  is called {\sl degenerate} when $W_Y(\tau)= 0$ for all $Y\neq 0$, and {\sl non-degenerate} otherwise. A smooth $R$-representation of $G$ or of $G'$ is called degenerate (or non-degenerate) if its restriction to $U$ is.
 
 The  finite  dimensional  irreducible smooth $R$-representations of $G$ are  of the form $\chi \circ \det$ for a smooth $R$-character $\chi$ of $F^*$ and are degenerate.  
 If $\Pi$ is  an  infinite  dimensional irreducible smooth $R$-representation of $G$, then 
 $\dim W_Y(\Pi)=1$  for all  $ Y\neq 0$ by the uniqueness of Whittaker models (\cite[III.5.10]{V96} when $\charf_R>0$)
 
 \subsection{$L$-packets}\label{S4.2}  We will classify  the irreducible smooth $R$-representations of  $G' $  by restricting to $G'$ the irreducible smooth $R$-representations  $\Pi$ of  $G $. The set $L(\Pi)$ of (isomorphism classes of) irreducible components of $\Pi|_{G'}$ is called an $L$-packet.
 A parametrization  along these lines was obtained when $\charf_F=0$ and $\charf_R=\mathbb C$  in \cite{LL79}.  When $\charf_F\neq 2 $ and $\charf_R\neq 2 $,  this question is  studied  for supercuspidal representations in the recent  work  \cite[\S 6.2 and \S 6.3]{CLL23}.
 
\medskip Applying  
      Lemma \ref{le:2.4}, Remark \ref{re:2.4}, Lemma \ref{le:U},  Theorem \ref{th:2.4},   Corollary \ref{cor:2.9}, we have: 
  \begin{align}   \text{Any  irreducible smooth $R$-representation of $G'$ belongs to a unique $L$-packet.}  \end{align} 
For two irreducible smooth $R$-representations $\Pi_1, \Pi_2$ of $G$,
\begin{align}  \label{re:scL}  L(\Pi_1)= L(\Pi_2)  \ \Leftrightarrow \Pi_1=(\chi \circ \det)\otimes \Pi _2\ \text{for some $R$-character $\chi\circ \det $ of $G$. } 
\end{align}

The  trivial character of $G'$ is the unique   finite  dimensional  irreducible smooth $R$-representation of $G'$, it  is degenerate and  forms an {\sl $L$-packet } $L(1)=L(\chi \circ \det)$ for any smooth $R$-character $\chi$ of $F^*$.

 If $\Pi$ is  an   irreducible smooth $R$-representation of $G$\footnote{  For cuspidal representations this is proved in \cite[Proposition 2.37, Corollary 2.38]{Cui23}.}, 
  \begin{align}
 \label{ssfl1} \text{The restriction   of $\Pi$ to $G'$
 is semi-simple of finite length and multiplicity $1$.  }
\end{align}
   The irreducible constituents   of  $\Pi |_{ G'}$ are $G$-conjugate  (even  $B$-conjugate  as  $G=BG'$), have a and  form an {\sl $L$-packet} $L(\Pi)$    of cardinality  the length  of $\Pi|_{ G'}$. 
   The    $G$-stabilizer of  $\pi\in L(\Pi)$ does  not depend on the choice of $\pi$ in $L(\Pi)$ and  is denoted $G_\Pi$. By \S \ref{S:226}, $G_\Pi$ is an open normal subgroup  of $G$  containing $G'Z$,  the subgroup $\det (G_\Pi)$ of $F^*$ is open and contains $(F^*)^2$.  The order of the quotient $G/G_\Pi\simeq F^*/\det(G_\Pi)$  is a power of $2$. When  $\charf_F\neq 2$,   $|G/G_\Pi|$ divides $ |F^*/(F^*)^2|=2^{2+e}$ with $e$ defined in \eqref{eq:car}.
    \begin{align}  G_\Pi \ \text{ is the largest subgroup $I$ of $G$   such that} \     lg(\Pi|_I ) = \lg(\Pi |_{G'} ), \\
       \Pi= \ind_{G_\Pi}^G V_\pi \ \text{where $V_\pi$ is the space of $\pi$,}  \ \ \ \ \ \ \ \ \ \ \ \ \ \ \ \ \ \ \ \ \ \ \ \ \ \  \ \ \\ 
\label{eq:LPi1}
  \text{$L(\Pi)$ is a principal homogeneous space for $G/G_\Pi$.}     \ \ \ \ \ \ \ \ \ \ \ \ \ \ \ \ \  \ \ \ \ \ \ \ \ \ 
 \\
   \text{ $|L(\Pi)|$ is a power of $2$, and $|L(\Pi)|$ divides $2^{2+e}$ when $\charf_F\neq 2$. \ \ \ \ \ \ \ \ \ }    
\end{align}
When $p$ is odd since  $ |F^*/(F^*)^2|=4 $ we deduce:
  \begin{proposition} \label{LPip}
 \text{When $p$ is odd, the cardinality of an $L$-packet is $1,2$ or $4$.  }
\end{proposition}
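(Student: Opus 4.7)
The plan is to extract the proposition as an immediate arithmetic corollary of the list of properties of $|L(\Pi)|$ already assembled in \S\ref{S4.2}. The two key ingredients I would use are: (a) $|L(\Pi)|$ is a power of $2$ (since $L(\Pi)$ is a principal homogeneous space for $G/G_\Pi$, $G_\Pi$ contains $ZG'$, and $G/ZG'$ injects via $\det$ into the $2$-torsion group $F^*/(F^*)^2$), and (b) when $\charf_F \neq 2$, that power of $2$ actually divides $|F^*/(F^*)^2| = 2^{2+e}$, where $e$ is the integer defined by $2 O_F = P_F^e$ in \eqref{eq:car}.

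First I would dispose of the hypothesis: since $F$ has residue characteristic $p$, its characteristic is either $0$ or $p$, and the assumption $p$ odd forces $\charf_F \neq 2$ in both cases. This makes the divisibility bound $|L(\Pi)| \mid 2^{2+e}$ of the preceding bullet list applicable.

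Next I would compute $e$. For $p$ odd, $2$ has nonzero image in the residue field $k_F$, so $2 \in O_F^\times$ and $2 O_F = O_F = P_F^0$, i.e.\ $e = 0$. Hence $|F^*/(F^*)^2| = 4$, and $|L(\Pi)|$ is a power of $2$ dividing $4$. The only possibilities are $1$, $2$, or $4$, which is exactly the claim.

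The main obstacle is not in the proposition itself but in the preparatory steps of \S\ref{S4.2}: establishing that $\Pi|_{G'}$ is semisimple of finite length with multiplicity one rests on the uniqueness of Whittaker models (\S\ref{ss:Wh}), which via Lemma \ref{le:U} identifies $G_\Pi$ with the stabilizer of the isotypic component $V_\pi$ and forces $|L(\Pi)| = |G/G_\Pi|$. With those facts already in hand, Proposition \ref{LPip} reduces to the arithmetic observation that $F^*/(F^*)^2$ has order $4$ when the residue characteristic is odd, and there is no remaining difficulty.
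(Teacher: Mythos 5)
Your proof is correct and follows exactly the paper's route: the paper deduces the proposition directly from the facts listed in \S\ref{S4.2} that $|L(\Pi)|=|G/G_\Pi|$ is a power of $2$ dividing $|F^*/(F^*)^2|=2^{2+e}$ when $\charf_F\neq 2$, together with $|F^*/(F^*)^2|=4$ for $p$ odd. Your explicit verification that $e=0$ (i.e.\ $2\in O_F^\times$) when $p$ is odd is the only detail the paper leaves implicit.
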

When $p=2$ we will prove that  this remains true  using the local Langlands 
correspondence.

\medskip
  By class field theory,
any  open subgroup of $F^*$ of index $2$ is equal to $N_{E/F}(E^*)$ for 
    a unique quadratic separable extension $E/F$ of relative  norm $N_{E/F}:E^*\to F^*$, and conversely. 
    Any  open subgroup of $F^*$ of index $4$  containing $(F^*)^2$ is equal to $N_{K/F}(K^*)$ for 
    a unique biquadratic separable extension $K/F$ of relative  norm $N_{K/F}:K^*\to F^*$, and conversely. 
      
      When $p $ is odd,  each quadratic extension of $F$ is separable and tamely ramified,  and there is a  unique biquadratic separable extension of $F$.

When $p=2$,  if   $ \charf_F=0$, there are finitely many quadratic  separable extensions of $F$  and 
   finitely many   biquadratic  separable extensions of $F$ (formula \eqref{eq:car}).
   If $\charf_F=2$, there are infinitely many quadratic, resp. biquadratic, separable extensions of $F$.

\begin{definition}\label{def43} When $\Pi$ is an irreducible smooth $R$-representation of $G$, we denote by $E_\Pi$ the separable extension of $F$ such that $N_{E_\Pi/F}(E_\Pi^*)=\det (G_\Pi)$.  
\end{definition}       
 
We denote by 
\begin{equation}\label{def:XPi} \text{$X_\Pi$ the group of characters $\chi \circ \det $ of $G$ such that $\Pi\otimes (\chi \circ \det) \simeq \Pi$. }
\end{equation} 
A character  of $X_\Pi$ is  smooth (Lemma \ref{le})  of trivial square. So $X_\Pi=\{1\}$ if $\charf_R=2$.
 \begin{notation}    \label{not:etaE}  When $\charf_R\neq 2$, the non-trivial smooth $R$-characters of $F^*$ of trivial square are      the  $R$-characters $\eta_E$ of $F^*$ of kernel $N_{E/F}(E^*)$, for   quadratic separable extensions $E/F$.
The modulus $ q^{\pm val}$ of $F^*$  is equal to $\eta_E$ if and only if  $E/F$ is unramified and  $q+1= 0$ in $R$.
 \end{notation}

By Lemma \ref{le:2.4} and the formula \eqref{eq:LPi1},
\begin{align} \label{eq:LPi}
 \text{ $X_\Pi $ is the group of $R$-characters of $G$ trivial on $G_\Pi$,} \ \ \ \ \ \ \\
\text{When $\charf_R\neq 2$, the cardinality of $L(\Pi) $  is   $|X_\Pi|$. \ \ \ \ \ \   \ \ }
  \end{align}
It is known that  $|X_\Pi| =1,2$ or $4$ 
 when 
 
 a) $R=\mathbb C$ and $\charf_F=0 $  \cite{LL79}\cite{Sh79},  
 
  b) $\charf_F\neq 2,  \charf_R\neq 2$  \cite[Proposition 6.6]{CLL23}. 
  
 \noindent   
  When $\charf_R\neq 2$ we will prove that   $|X_\Pi| =1,2$ or $4$  using the local Langlands correspondence, therefore   $|L_\Pi| =1,2$ or $4$ when $p=2$.

\medskip      For a lower triangular matrix $Y\neq 0$, we have 
 $$\sum_{\pi\in L(\Pi)}\dim_R W_Y(\pi)= \dim_R W_Y(\Pi) .$$  
 As $ \dim_R W_Y(\Pi) =1$, we have  
  $ \dim_R W_Y(\pi)=0$ or $1$, and 
there is a single $\pi\in L(\Pi)$ with $W_Y(\pi)\neq 0$.  
 
 \subsection{Representations} \label{ss:rep} 
  We denote by $\Gr_R^\infty(G)$  the Grothendieck group    of finite length  smooth $R$-representations of $G$ and by 
$[\tau]$    the image in $\Gr_R^\infty(G)$  of a finite length  smooth $R$-representation $\tau$ of $G$. Similarly for $G'$.

\subsubsection{Parabolic induction}\label{S:iBG}
\ 

The smooth {\sl parabolic induction}   $\ind_B^ G (\sigma)$ of    a smooth $R$-representation $(\sigma, V)$ of $T$   is  the space of  functions $f:G\to  V $ such that $f(tugk)=\sigma(t) f(g)$ for $t\in T, u\in U, g\in G$ and an open compact subgroup $K_f\subset G$, with the action of $G$ by right translation. 
The functor $\ind_B^G $  is exact with  the $U$-coinvariant functor $(-)_U$  as left adjoint, and   $(-)_{\overline U}\otimes  \delta$  as rignt adjoint where $\delta$
  is the homomorphism of $T$:
  $$\delta(\diag(a,d))=q^{-\val (a/ d)}: T\to q^{\mathbb Z}   \ \ \ (a,d\in F^*)$$ \cite[Corollary 1.3]{DHKM23}.    
 The    modulus $| \ |_F$ of $F^*$ is  $q^{-\val}$ and  the modulus of $B$ is the inflation of $\delta$. 
{\it We choose a square root} $q^{1/2}$ of $q$ in $R^*$ to define the square root of $\delta$, \begin{equation}\label{eq:nu}\nu(\diag(a,d))=(q^{1/2})^{-\val(a/d)}: T\to (q^{1/2})^{\mathbb Z}   \ \ \ (a,d\in F^*), 
   \end{equation}
  and  the {\sl normalized parabolic induction}
   $i_B^ G(\sigma)=\ind_B^G (\sigma \nu) $. For a smooth $R$-character $\chi \circ \det$ of $G$ we have
   $$(\ind_B^G\sigma )\otimes (\chi \circ \det) \simeq \ind_B^G( \sigma \otimes (\chi\circ \det)), \ \ \ 
( i_B^G\sigma)  \otimes  (\chi \circ \det)\simeq i_B^G( \sigma  \otimes(\chi \circ \det) ).$$
 Similarly   for $G'$,  we define the  parabolic induction $\ind_{B'}^{G'}$ from the    smooth $R$-representation $\sigma$  of $T'$ to those of $G'$ and the   normalized parabolic induction  $i_{B'}^{G'}$
 $$ i_{B'}^{G'}(\sigma) =\ind_{B'}^{G'}( \nu' \sigma),  \ \ \nu'(diag(a,a^{-1}))=q^{-\val(a)}:T'\to q^{\mathbb Z}    \ \  (a\in F^*). $$
        As $G=BG'$ and $G/B$ is compact, the  restriction map $f\mapsto f|_{G'}$ gives  isomorphisms  
\begin{equation}\label{eq:i}(\ind_B^ G (\sigma))|_{G'}\to\ind_{B'}^{G'} (\sigma|_{T'}), \ \   (i_B^ G (\sigma))|_{G'}\to  i_{B'}^{G'} (\sigma|_{T'}).
\end{equation} 

 \subsubsection{Cuspidal representations of $GL_2(F)$} \label{S:331} 
 \
 
 When $\chi$ is a smooth $R$-character of $T$,  $\ind_B^G(\chi)$ is called a {\sl principal series}  of $G$.      An irreducible   smooth $R$-representation  of $G$  which  is not a subquotient of a principal series, is called {\sl supercuspidal}.   It is called 
   {\sl cuspidal} when its $U$-coinvariants are $0$.  A supercuspidal representation is cuspidal (the converse is true only when $q+1\neq 0$  in $R$). 
      The principal series and the  cuspidal $R$-representations are infinite dimensional. Similarly for  $G'$.
  
  Let $\Pi$ be an irreducible smooth $R$-representation of $G$ and $\pi\in L(\Pi)$. Then,\begin{equation}\label{eq:ss}\text{$\Pi$ is cuspidal if and only if $\pi$ is cuspidal (similarly for supercuspidal).}
\end{equation}
 Indeed, $L(\Pi)$ is the $B$-orbit of $\pi$, the $U$-coinvariant functor  is exact and commutes with the restriction to $G'$. We say that $L(\Pi)$ is  cuspidal if $\Pi$ is.
 Similarly for supercuspidal using  the formula \eqref{eq:i}.

    Let $\Pi$ be a  cuspidal $R$-representation of $G$. 
 It  is the compact induction of an extended maximal simple type $(J,\Lambda)$    $$\Pi=\ind_J^G(\Lambda)$$
 (\cite{BK94}  \cite{BH02} when $R=\mathbb C$, \cite[III.3-4]{V96}  for general $R$).
The group $J$ contains $Z$ and a  unique maximal open compact subgroup  $J^0$.     Let  $J^1$ be the pro-$p$ radical of $J^0$. The representation  $\Lambda|_{J^0}$   is  irreducible, equal to $\lambda= \kappa \otimes \overline \sigma$ where   $\kappa|_{J^1}$ is irreducible   and $\overline \sigma$ is inflated from an irreducible $R$-representation $\sigma$ of $J^0/J^1$. The  type  $(J,\Lambda)$ is unique modulo $G$-conjugacy (\cite[15.5 Induction theorem]{BH06} when $R=\mathbb C$,   \cite[III.5.3]{V96} for general $R$ \footnote{It is proved only that $(J^0,\lambda)$ is unique modulo $G$-conjugacy, but $J$ is the normalizer of $(J^0,
\lambda)$ and $\Lambda$ is $\lambda$-isotypic part of $\Pi$}. 

The open normal subgroup $JG'$ of $G$ has index $| F^*/\det (J)|$, and by Mackey theory
 \begin{equation}\label{eq:JG'}\Pi|_{JG'} = \oplus _{g\in  G/JG'} \ind_{J^g}^{JG'}\lambda ^g .\end{equation}
 Denote $J', {J^0}', {J^1}'$ the intersections of $J, J^0, J^1$ with $G'$. We have $J'=(J^0)'$ and  the length of
$$( \ind_{J^g}^{JG'}\lambda ^g  )|_{G'}\simeq \ind_{J'^g}^{G'} (\lambda^g|_{J'^g})$$
 is independent of $g$. By transitivity of the restriction
$\Pi|_{G'} =\oplus_{g\in  G/JG'} \ind_{J'^g}^{G'} (\lambda^g|_{J'^g})  $, 
and 
     \begin{equation}\label{eq:cases} |L(\Pi)|= | F^*/\det (J)| \,  lg(\ind_{J'}^{G'} (\lambda|_{J'})).
\end{equation}
 It follows from Lemma \ref{le:2.4} 3), remark \ref{re:2.4} and the formula \eqref{eq:JG'} that:
\begin{lemma}\label{le:GL} If $| F^*/\det (J)| =2$ then  $ \det(G_\Pi)\subset  \det(J)$.
\end{lemma}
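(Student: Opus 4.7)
The plan is to apply Remark \ref{re:2.4} with the subgroup $I = JG'$. The hypothesis $|F^*/\det(J)| = 2$ immediately gives $[G : JG'] = |F^*/\det(J)| = 2$, which is prime, since the determinant induces an isomorphism $G/JG' \congto F^*/\det(J)$ (using $\det(G') = 1$ and that $JG'/G' \subset G/G' \congto F^*$ corresponds to the image $\det(J) \subset F^*$).

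Next I would show that $\Pi|_{JG'}$ is reducible. This is built into the decomposition \eqref{eq:JG'}:
\[
\Pi|_{JG'} = \bigoplus_{g \in G/JG'} \ind_{J^g}^{JG'} \lambda^g.
\]
Since $|G/JG'|=2$, this is a direct sum of two summands. Each summand is a nonzero smooth induction from the open subgroup $J^g$ of the open normal subgroup $JG'$ of a nonzero irreducible representation $\lambda^g$, hence is itself nonzero. Therefore $\Pi|_{JG'}$ has length at least $2$, and in particular is reducible.

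Finally, recall from \eqref{ssfl1} that $\Pi|_{G'}$ is semisimple of finite length with multiplicity $1$, so the hypotheses of Lemma \ref{le:2.4} (3) are satisfied, and Remark \ref{re:2.4} applies: since $\Pi|_{JG'}$ is reducible and $[G:JG']$ is prime, we obtain $G_\Pi \subset JG'$. Taking determinants yields $\det(G_\Pi) \subset \det(JG') = \det(J) \cdot \det(G') = \det(J)$, which is the desired conclusion. There is no substantial obstacle here; the only minor care needed is to check that each summand in \eqref{eq:JG'} is genuinely nonzero, which is automatic for compact/smooth induction from an open subgroup of a nontrivial representation.
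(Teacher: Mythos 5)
Your proof is correct and follows exactly the route the paper intends: the paper derives the lemma from Lemma \ref{le:2.4}~3), Remark \ref{re:2.4} and the decomposition \eqref{eq:JG'}, which is precisely your argument (index $2$ of $JG'$ via the determinant, reducibility of $\Pi|_{JG'}$ from the two nonzero summands, hence $G_\Pi\subset JG'$ and $\det(G_\Pi)\subset\det(J)$). No gaps.
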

\begin{remark}We have $ \det(G_\Pi)= \det(J) \Leftrightarrow G_\Pi=JG'$. If $| F^*/\det (J)| =2$, the group $J$ determines a quadratic separable extension $E/F$ such that $\det (J)=N_{E/F}(E^*)$. The representation  $\ind_{J'}^{G'} (\lambda|_{J'})$ is irreducible
if and only if
$|L(\Pi)|=| F^*/\det (J)| $. 
\end{remark} 
 If there is a smooth $R$-character $\chi$ of $F^*$ such that $\Lambda \simeq \Lambda_0\otimes (\chi \circ \det)$ and   $(J,\Lambda_0)$ is of level $0$, we say that the $L$-packet $L(\Pi)$ and its elements are of  level $0$. Otherwise we say that  $L(\Pi)$ and its elements are of  positive level.

\medskip  {\bf Level $0$.}  $J =  Z GL_2(O_F)$,  $J^0= GL_2(O_F)$, $J^0/J^1\simeq 
GL_2(k_F)$, $\kappa=1$,   $\sigma$ is a  cuspidal $R$-representation of  $ GL_2(k_F)$, $\lambda = \overline \sigma$. 
We have $\det J=\val^{-1}(2\mathbb Z)$, and by \eqref{eq:cases}: \begin{equation} \label{eq:case1} |L(\Pi)|=  2\, lg(\lambda|_{J' })=2\,lg(\sigma|_{SL_2( k_F)}),
\end{equation}
because  
 $\lambda|_{J' } $ is semisimple with   length $ lg(\sigma|_{SL_2( \mathbb F_q)})$, 
and for any irreducible component $\lambda'\subset \lambda|_{J'}$, the compact induction $\ind_{J'}^{G'} (\lambda')$ is irreducible (\cite{HV22} Corollary 4.29).  

The cardinality of the cuspidal  $L$-packet  $L(\Pi)$ of level $0$ can be computed  via   \eqref{eq:cases}, \eqref{eq:case1}, and  Remark \ref{re:redf} b) given in the appendix on 
 the classification of the irreducible $R$-representations of $GL_2(k)$ and of $SL_2(k)$ for a finite field $k$ with $\charf_k\neq \charf_R$.  We have two cases:
 \begin{enumerate} 
\item  $|F^*/\det (G_\Pi)|=2$ and $E_\Pi/F$ is the unramified quadratic extension. 
 
\item $p$ is odd, $\det (G_\Pi)=(F^*)^2$  and $E_\Pi/F$ is the unique biquadratic extension.   This case occurs for a unique packet $L(\Pi)$.
  \end{enumerate}
We deduce:  \begin{proposition} \label{prop:ty1}  When $p=2$,  each  level $0$ cuspidal $L$-packet  has size $2$.

  When  $p$ is odd, there is a unique  level $0$ cuspidal $L$-packet  of size $4$,  the other  level $0$ cuspidal $L$-packets have size $2$.
 \end{proposition}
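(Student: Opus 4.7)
The plan is to start from the formula $|L(\Pi)| = 2\lg(\sigma|_{SL_2(k_F)})$ of \eqref{eq:case1}, reducing the statement to determining, for a cuspidal irreducible $R$-representation $\sigma$ of $GL_2(k_F)$, whether $\sigma|_{SL_2(k_F)}$ has length $1$ or $2$. By Clifford theory for the normal subgroup $SL_2(k_F)$ of $GL_2(k_F)$ with abelian quotient $k_F^*$ (via $\det$), this length equals $|X(\sigma)|$ where $X(\sigma) = \{\chi \in \Hom(k_F^*,R^*) : \sigma \otimes (\chi\circ\det) \simeq \sigma\}$. Comparing central characters on the scalar matrices $Z(k_F) \simeq k_F^*$, where $\det$ is the squaring map, every $\chi \in X(\sigma)$ satisfies $\chi^2 = 1$.

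When $p=2$, squaring is bijective on $k_F^*$, so $\chi = 1$ is forced; hence $\sigma|_{SL_2(k_F)}$ is irreducible and every level-zero cuspidal $L$-packet has size $2$. (Equivalently, $Z(k_F) \cap SL_2(k_F) = \{I\}$ and $GL_2(k_F) = Z(k_F) \cdot SL_2(k_F)$ is a direct product.) When $p$ is odd, $k_F^*$ has a unique character $\eta$ of order $2$, so $|X(\sigma)| \in \{1,2\}$ and the length is $2$ exactly when $\sigma \otimes (\eta\circ\det) \simeq \sigma$. Using the Deligne--Lusztig parametrisation recalled in Remark~\ref{re:redf}~b), we write $\sigma = \sigma(\theta)$ for a regular $R$-character $\theta$ of ${k'}^*$, with $k'$ the quadratic extension of $k_F$, modulo the Galois involution $\theta \mapsto \theta^q$. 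The twist by $\chi \circ \det$ then corresponds to $\theta \mapsto \theta \cdot (\chi \circ N)$ with $N \colon {k'}^* \to k_F^*$ the norm, and the stabilisation condition $\sigma(\theta) \otimes (\eta\circ\det) \simeq \sigma(\theta)$ reduces to $\theta^{q-1} = \eta \circ N$ (the alternative $\eta\circ N = 1$ being excluded, as $N$ is surjective and $\eta$ is non-trivial).

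The main step is then to show that the solution set $S = \{\theta : \theta^{q-1} = \eta \circ N\}$ is non-empty and maps to a single level-zero cuspidal $L$-packet of $G$. Non-emptiness follows from $(\eta\circ N)^{q+1} = 1$ (since $q+1$ is even), which places $\eta\circ N$ in the image of the $(q-1)$-power map on $\Hom({k'}^*, R^*)$; regularity of each $\theta \in S$ is automatic from $\theta^{q-1} \neq 1$. Any two elements of $S$ differ by a character annihilated by $(q-1)$, hence factoring through $N$ and thus of the form $\chi \circ N$ with $\chi \in \Hom(k_F^*, R^*)$, so $S$ is a single twist orbit, and a fortiori a single orbit under the combined Galois and twist action. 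This yields a unique level-zero cuspidal $L$-packet of size $4$, corresponding to case~(2) listed before the proposition with $\det(G_\Pi) = (F^*)^2$ and $E_\Pi/F$ the unique biquadratic extension. The main obstacle is this final orbit-count and the bookkeeping reconciling the Deligne--Lusztig parametrisation of $\sigma$ on $GL_2(k_F)$ up to character twist with the $L$-packet equivalence on $GL_2(F)$ via the compact induction $\Pi = \ind_J^G \Lambda$.
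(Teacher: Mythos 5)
Your reduction via \eqref{eq:case1} to computing $\lg(\sigma|_{SL_2(k_F)})$ for a cuspidal $R$-representation $\sigma$ of $GL_2(k_F)$ is exactly the paper's starting point, and for $\charf_R\neq 2$ your twist-stabilizer computation (including the existence and uniqueness of the orbit $S=\{\theta:\theta^{q-1}=\eta\circ N\}$) matches the content of Remark \ref{re:redf} b). However, the identity you invoke, $\lg(\sigma|_{SL_2(k_F)})=|X(\sigma)|$, is false when $\charf_R=2$. In that case $p$ is odd, the index $[GL_2(k_F):k_F^*\,SL_2(k_F)]=2$ is divisible by $\charf_R$, and Clifford theory only gives $\lg(\sigma|_{SL_2(k_F)})=[GL_2(k_F):G_\sigma]$ for the stabilizer $G_\sigma$ of a constituent; this index need not equal the number of $R$-characters of the quotient when $\charf_R$ divides it (compare Lemma \ref{le:2.4}(4)). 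Concretely, in characteristic $2$ every character $\chi$ with $\chi^2=1$ is trivial, so your method gives $X(\sigma)=\{1\}$ for every $\sigma$, your set $S$ is empty, and you would conclude that \emph{all} level-$0$ cuspidal packets have size $2$ when $p$ is odd and $\charf_R=2$ --- contradicting the statement you are proving.

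The missing case is tied to a second omission: your list of cuspidal $\sigma$ contains only the Deligne--Lusztig representations $\sigma(\theta)$, whereas when $q+1=0$ in $R$ there is also the cuspidal non-supercuspidal representation $\sigma_0$ (a subquotient of $\ind_{B(k_F)}^{GL_2(k_F)}1$) and its twists. For $\charf_R\neq 2$ this is harmless, since $\sigma_0|_{SL_2(k_F)}$ is then irreducible; but for $\charf_R=2$ the unique size-$4$ level-$0$ cuspidal packet is precisely $L(\Pi_0)$ with $\Pi_0=\ind_{ZGL_2(O_F)}^{G}\tilde\sigma_0$, and $\sigma_0|_{SL_2(k_F)}$ has length $2$ even though its twist-stabilizer is trivial. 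The paper establishes this by a lifting argument that your approach cannot see: one takes a $\mathbb Q_2^{ac}$-character $\theta$ of $k_{F,2}^*$ of order $2^{s+1}$ (with $2^s$ the $2$-part of $q-1$), notes that $\sigma(\theta)$ reduces mod $2$ to $\sigma_0$ while $\lg(\sigma(\theta)|_{SL_2(k_F)})=2$ in characteristic $0$, and deduces $\lg(\sigma_0|_{SL_2(k_F)})\ge 2$, with equality by the Kirillov-model bound (Remark \ref{re:62}, Lemma \ref{le:53}). To repair your proof you must either restrict the Clifford-theoretic count to $\charf_R\neq 2$ and treat $\charf_R=2$ by such a reduction-mod-$\ell$ argument, or replace $|X(\sigma)|$ throughout by the stabilizer index.
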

 
 These results can be deduced from \cite[\S 2]{KP91} and the size 4 depth zero $L$-packet has been obtained in \cite[Method 2 Example 3.11]{Cui23}.
 
\bigskip   {\bf Positive Level.}   $J=E^*J^0$ for a quadratic separable\footnote{When $\charf_F=2$  the quadratic extension appearing in the construction \cite{BH06}  is not necessarily
separable. It is generated by an element $x\in G$, determined up to some open subgroup of $G$, so that modifying  $x$ slightly  yields a separable extension.} extension $E/F$,  $J^0=O_E^*J^1$,  
$J^0/J^1 \simeq  k_E^*$, $\sigma$ is an $R$-character of $k_E^*$, $\lambda= \kappa\otimes \sigma$ and  $\lambda|_{J'}$ is irreducible. The representation $\lambda_1=\lambda|_{J^1}$ is irreducible of $G$-intertwining 
 equal to $J$, because $J$ normalizes $\lambda_1$ and the $G$-intertwining of $\sigma$ is already $J$ \cite[15.1]{BH06}.  
 We have $N_{E/F}(E^*)\subset \det(J)$. 
  If the quadratic extension $E/F$ is  tamely ramified, then  $ \det(J) =N_{E/F}(E^*) $, because $J=E^*J^1$, $J^1=(1+P_F)J'^1$ and $ 1+P_F \subset \det (E^*)=N_{E/F}(E^*)$.
  
 If $p=2$ a tamely ramified  quadratic extension of $F$ is unramified, and  $E/F$ is unramified
if and only if $\det(J)= Ker((-1)^{val})$.

  If $p$ is odd, each quadratic extension of $F$  is tamely ramified.

\begin{proposition}\label{prop:3.12} If $p$ is odd,  each positive level cuspidal $L$-packet $L(\Pi) $ has size $2$ and  $E=E_\Pi$ (Definition \ref{def43}).     \end{proposition}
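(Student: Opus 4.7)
The plan is to sandwich the size $|L(\Pi)|$ between $2$ and $4$ and then rule out the case $|L(\Pi)|=4$ by appealing to Proposition~\ref{cor:bq}. First, since $p$ is odd, the quadratic separable extension $E/F$ attached to $(J,\Lambda)$ is automatically tamely ramified; the discussion in the paragraph preceding the proposition then yields $\det(J)=N_{E/F}(E^*)$, which has index $2$ in $F^*$. Applying Lemma~\ref{le:GL} with this $J$ gives the containment $\det(G_\Pi)\subseteq N_{E/F}(E^*)$. Combining this with the standing containment $(F^*)^2\subseteq\det(G_\Pi)$ and with $|F^*/(F^*)^2|=4$ (valid because $p$ is odd), one concludes that $|L(\Pi)|=|F^*/\det(G_\Pi)|\in\{2,4\}$.

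To exclude the value $4$, I invoke Proposition~\ref{cor:bq}: when $p\neq 2$ there is a unique $L$-packet of size $4$, and it is of level $0$. Since $L(\Pi)$ is assumed to be of positive level, this rules out $|L(\Pi)|=4$, so $|L(\Pi)|=2$.

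For the second assertion, once $|L(\Pi)|=2$ is established, both $\det(G_\Pi)$ and $N_{E/F}(E^*)$ are index-$2$ subgroups of $F^*$ with $\det(G_\Pi)\subseteq N_{E/F}(E^*)$, hence they coincide. By local class field theory, an open index-$2$ subgroup of $F^*$ is the norm group of a unique quadratic separable extension; comparing with Definition~\ref{def43}, this forces $E_\Pi=E$.

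The main obstacle in this plan is its reliance on Proposition~\ref{cor:bq}, so one must ensure that proposition is proved independently of Proposition~\ref{prop:3.12}. A self-contained alternative would be to show directly, via Mackey's intertwining criterion, that $\ind_{J'}^{G'}(\lambda|_{J'})$ is irreducible; one would start from the known identity $I_G(\lambda_1)=J$, restrict it to $G'$, and exploit the fact that, for $p$ odd, $1+P_F\subseteq (F^*)^2$ (by Hensel) to control how the intertwining interacts with the passage from $J^1$ to $J^1\cap G'$. Then the identity in \eqref{eq:cases} would give $|L(\Pi)|=|F^*/\det(J)|\cdot 1 = 2$ directly. I expect the bookkeeping of intertwining across the split between $G$ and $G'$ to be the tedious part of that alternative route.
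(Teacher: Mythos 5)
Your preliminary reductions are fine: for $p$ odd the extension $E/F$ attached to the type is tame, so $\det(J)=N_{E/F}(E^*)$ has index $2$, Lemma~\ref{le:GL} gives $\det(G_\Pi)\subseteq N_{E/F}(E^*)$, and together with $(F^*)^2\subseteq\det(G_\Pi)$ this pins $|L(\Pi)|$ to $2$ or $4$; the deduction $E=E_\Pi$ from $|L(\Pi)|=2$ is also correct. The problem is exactly the one you flagged and then set aside: the exclusion of size $4$ via Proposition~\ref{cor:bq} is circular. In the paper, the statement that for $p$ odd there is a unique $L$-packet of size $4$ and that it has level $0$ is Corollary~\ref{cor:2}, which is explicitly deduced from Propositions~\ref{prop:ty1} \emph{and}~\ref{prop:3.12}; Proposition~\ref{cor:bq1} in turn cites Corollary~\ref{cor:2} for the $p$ odd case. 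So the fact you invoke to rule out $|L(\Pi)|=4$ is downstream of the very proposition you are proving. (One could imagine rescuing the argument when $\charf_R\neq 2$ by going through the Langlands correspondence and Proposition~\ref{le:Hs}, since the unique $\sigma$ with $|X_\sigma|=4$ is induced from a tame character of the unramified quadratic extension, hence corresponds to level $0$; but when $\charf_R=2$ the group $X_\Pi$ is trivial and the paper's only handle on cuspidal packet sizes is precisely Propositions~\ref{prop:ty1} and~\ref{prop:3.12}, so no such detour is available there.)

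The ``self-contained alternative'' you sketch at the end is in fact the paper's actual proof: one shows that $\lambda'_1=\lambda|_{J'^1}$ is still irreducible with $G$-intertwining $J$ (using that $1+P_F$ is central in $J^1=(1+P_F)J'^1$ and acts by scalars), hence $G'$-intertwining $J'$, that the $\lambda'_1$-isotypic component of $\Pi|_{J'^1}$ is still the space of $\lambda$, and then, as in Corollary~4.29 of \cite{HV22}, that $\ind_{J'}^{G'}(\lambda|_{J'})$ is irreducible; formula~\eqref{eq:cases} then gives $|L(\Pi)|=|F^*/\det(J)|=2$ directly, and Lemma~\ref{le:GL} gives $E=E_\Pi$. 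Since you only announce this route without carrying out the intertwining computation, your submission as it stands does not contain a complete, non-circular proof.
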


 \begin{proof}\footnote{This can also be obtained using  \cite{Cui23}}  The central subgroup $1+P_F$ of $J^1=(1+P_F)J'^1$ acts by scalars, the representation  $ \lambda'_1=\lambda|_{J'^1}$ is still irreducible of $G$-intertwining $J$, so its $G'$-intertwining is $J'$.
The isotypic component of $\Pi|_{J^1}$ of type $\lambda_1$ is the space of $\lambda$, so the  isotypic component of $\Pi|_{J'^1}$ of type $\lambda'_1$  is still the space of $\lambda$. As in the proof of  \cite[Corollary 4.29]{HV22}, we deduce that $\ind_{J'}^{G'}(\lambda|_{J'})$ is irreducible.  Apply Lemma \ref{le:GL}.
\end{proof}

  \begin{remark}\label{prop:3.13}      When   $p=2$ and $E/F$ is ramified,  then $J^0\cap G'$ is a pro-$2$-group.
     Indeed, the determinant induces a morphism $J^0/J^1\to k_F^*$ equal  via the natural isomorphism $J^0/J^1\to k_E= k_F^*$ to the automorphism  $x\to x^2$ on $k_F^*$. Hence $(J^0)'= (J^1)' $ is a pro-$2$-group.  Note also that $\Lambda$ is a character \cite[\S 15]{BH02}.
 \end{remark}

\begin{corollary}\label{cor:2} (Propositions \ref{prop:ty1}, \ref{prop:3.12}) When $p$ is odd, there is a unique cuspidal $L$-packet  of size $4$, and  it is of level $0$. The other  cuspidal $L$-packets  have size $2$.
\end{corollary}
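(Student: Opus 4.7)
The plan is a clean case split on the level of the cuspidal $L$-packet, combining the two propositions already stated. By the classification recalled in \S\ref{S:331}, every cuspidal irreducible smooth $R$-representation $\Pi$ of $G = GL_2(F)$ is compactly induced from an extended maximal simple type $(J,\Lambda)$, and $L(\Pi)$ is either of level $0$ (i.e.\ $\Lambda \simeq \Lambda_0 \otimes (\chi \circ \det)$ for some level $0$ type) or of positive level, these two possibilities being disjoint and exhaustive. So I would handle these two cases separately and then assemble.

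For the positive level case, I would invoke Proposition \ref{prop:3.12} directly: since $p$ is odd, every quadratic separable extension $E/F$ is tame, hence $\det(J) = N_{E/F}(E^*)$ has index $2$ in $F^*$, and the proposition yields $|L(\Pi)| = 2$ together with $E = E_\Pi$. No positive level cuspidal $L$-packet can therefore have size $4$.

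For the level $0$ case, I would invoke Proposition \ref{prop:ty1}, which relies on the computation \eqref{eq:case1} together with the classification of cuspidal $R$-representations of $GL_2(k_F)$ restricted to $SL_2(k_F)$ recalled in the appendix (Remark~\ref{re:redf}). That analysis shows that when $p$ is odd there is precisely one level $0$ cuspidal $L$-packet of size $4$, namely the one for which $\det(G_\Pi) = (F^*)^2$ and $E_\Pi$ is the unique biquadratic separable extension of $F$; all other level $0$ cuspidal $L$-packets have size $2$.

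Combining the two cases: the unique cuspidal $L$-packet of size $4$ comes from the level $0$ case of Proposition \ref{prop:ty1}, and every other cuspidal $L$-packet (whether of level $0$ or of positive level) has size $2$. Since the genuine work is already contained in Propositions \ref{prop:ty1} and \ref{prop:3.12}, I do not anticipate any obstacle in assembling the corollary; the only thing to check is that the two cases are genuinely disjoint and exhaustive, which is clear from the definitions.
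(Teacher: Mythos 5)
Your proposal is correct and is exactly the argument the paper intends: the corollary is stated as an immediate consequence of Propositions \ref{prop:ty1} (level $0$) and \ref{prop:3.12} (positive level), with the case split on level being disjoint and exhaustive by definition. Nothing further is needed.
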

 
 \subsubsection{Principal series of $GL_2(F)$}  \label{spG} We recall the description of the normalized principal series  $i_{B}^{G}(\chi)$ of $G$ for   a  smooth $R$-character $\chi$ of $T$.

Denote by  $\chi_1,\chi_2$   the smooth $R$-characters of $F^*$ such that 
\begin{equation}\label{eq:chi}
\chi(\diag(a,d)) = \chi_1 (a) \chi_2(d),   \ \ \ (a,d\in F^*),\end{equation}
 and by   $\chi^w$ the character  
 $ \chi^w(\diag(a,d)) = \chi(\diag(d,a))$ of $T$. 
In particular in \eqref{eq:nu}, $\nu^w= \nu^{-1}$ and $\nu/\nu^w=\delta$.

\begin{proposition} \label{prop:spG} 
\begin{enumerate}\item  For 
  two smooth $R$-characters  $\chi, \chi'$   of $T$,
 $[i_{B}^{G}(\chi )] $  and  $[i_{B}^{G}(\chi ')] $  are disjoint or equal, with equality if and only if $ \chi '=\chi$ or $\chi^w$.

\item  The smooth dual of   $i_{B'}^{G'}(\chi)$  is   $i_{B'}^{G'}(\chi^{-1})$.

\item    $(i_{B}^{G}(\chi ))_U$ has dimension $2$, contains $\chi^w$ and has quotient $\chi$.
 
\item    $\dim W_Y(i_{B}^{G}(\chi ))=1$  when  $ Y\neq 0$ (\cite{V96} III.5.10).

\item    $i_{B}^{G}(\chi )$  is reducible if and only if $\chi_1 \chi_2^{-1}= q^{\pm \val}$.   
\item    $\ind_B^G( 1)= i_B^G(\nu^{-1})$ contains the trivial representation $1$ and 

$\bullet$  if $q+1\neq 0$ in $R$,  $\lg(\ind_B^G( 1))= 2$, in particular  $\St=(\ind_B^G 1)/1$ is   irreducible (the Steinberg $R$-representation). The representation 
$\ind_B^G 1$ is semi-simple if and only if $q=1 $ in $R$  (and $\charf_R \neq 2$).

$\bullet$  if $q+1= 0$ in $R$,  $\lg(\ind_B^G( 1))= 3$,   $\ind_B^G( 1)$ is indecomposable of quotient $(-1)^{\val}\circ \det$, and   $\ind_B^G( 1)/1$   contains a cuspidal   representation  
$$\Pi_0=\ind_{Z GL_2(O_F)}^G \tilde \sigma_0$$ 
 where $\tilde \sigma_0$ is the inflation  to $Z GL(2,O_F)$ of the  cuspidal  subquotient  $\sigma_0$ of $\ind_{B(k_F)}^{GL(2,k_F)} 1$ (appendix).
\end{enumerate}
\end{proposition}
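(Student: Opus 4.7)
My plan is to deduce all six items from a single geometric-lemma analysis of the Jacquet functor $(-)_U$, together with standard duality and Whittaker arguments. Item (3) is the technical core and I would establish it first. The Bruhat decomposition $G = B \sqcup BwB$, applied via the geometric lemma to the exact functor $(-)_U$, yields a two-step filtration of $(i_B^G(\chi))_U$. With the normalization $i_B^G(\chi) = \ind_B^G(\chi\nu)$, the modular twists on the induction and Jacquet sides cancel symmetrically, so the closed cell produces the subrepresentation $\chi^w$ and the open cell produces the quotient $\chi$.

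Item (1) is then immediate: any irreducible constituent $\tau$ of $i_B^G(\chi)$ has a nonzero Jacquet module $\tau_U$ whose simple factors lie in $\{\chi,\chi^w\}$, so sharing a constituent with $i_B^G(\chi')$ forces $\{\chi,\chi^w\}\cap\{\chi',\chi'^w\}\neq \emptyset$; conversely $[i_B^G(\chi)] = [i_B^G(\chi^w)]$ in $\Gr_R^\infty(G)$ because the standard intertwining operator relates them in the Grothendieck group. Item (2) is the standard self-duality of normalized parabolic induction applied to $\chi^\vee = \chi^{-1}$, for which I would cite \cite{V96}. Item (4) is a direct Mackey computation of $\Hom_U(i_B^G(\chi),\theta_Y)$ via $G = BU \sqcup BwU$: the closed cell contributes zero because $\theta_Y$ is non-degenerate on $U$, and the open cell yields exactly one dimension by Frobenius reciprocity with trivial stabilizer $U\cap w^{-1}Uw$. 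For item (5), the standard intertwiner $i_B^G(\chi) \to i_B^G(\chi^w)$ fails to be an isomorphism precisely when its normalizing factor degenerates, which for $GL_2$ translates to $\chi_1\chi_2^{-1} = q^{\pm \val}$.

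For item (6) the condition of (5) is satisfied by $\chi = \nu^{-1}$. Frobenius reciprocity places the trivial representation inside $\ind_B^G 1$ with quotient $\St$, which is irreducible when $q + 1 \neq 0$ in $R$; the further splitting question reduces to a computation of $\Ext^1(\St,1)$ by a coinvariant argument, yielding semisimplicity exactly when $q = 1$ in $R$ and $\charf_R \neq 2$. When $q + 1 = 0$ in $R$, the classification of cuspidal $R$-representations of $GL_2(k_F)$ from the appendix identifies the additional cuspidal subquotient as $\Pi_0 = \ind_{Z GL_2(O_F)}^G \tilde\sigma_0$, with $(-1)^{\val}\circ\det$ appearing as the remaining quotient by matching its restriction to $T$ against the Jacquet-module factors provided by (3).

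The main obstacle I anticipate is tracking the precise direction of the filtration in (3) (which character is sub and which is quotient depends on the normalization conventions for $\nu$ and $\delta$), and carefully handling the modular case $q + 1 = 0$ in (6), where the complex picture of the Steinberg must be enriched by an additional cuspidal constituent arising from the reducibility of the inflated principal series of $GL_2(k_F)$.
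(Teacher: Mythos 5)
The paper does not reprove this proposition from first principles: it cites \cite[Theorem 3]{V89} and only supplies the one step missing there, namely that when $q+1=0$ in $R$ the cuspidal constituent $\Pi_0$ of $\ind_B^G(1)$ occurs in no principal series other than $i_B^G(\nu^{\pm1})$; this is rephrased as $X_{\Pi_0}=\{1,(-1)^{\val}\circ\det\}$ and deduced from the finite-group computation in the appendix (Remark \ref{re:redf} a)). Your proposal reconstructs the standard proof, which is acceptable in outline for most items, but your argument for item (1) has a genuine gap at exactly this point. You assert that every irreducible constituent $\tau$ of $i_B^G(\chi)$ has $\tau_U\neq 0$ with simple factors among $\{\chi,\chi^w\}$. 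This fails when $q+1=0$ in $R$: the constituent $\Pi_0$ is cuspidal, so $(\Pi_0)_U=0$, and the Jacquet-module argument says nothing about where else $\Pi_0$ might occur. Since $(\chi\circ\det)\otimes\Pi_0$ is a constituent of $\ind_B^G(\chi\circ\det|_T)$ for every smooth $R$-character $\chi$ of $F^*$, disjointness in (1) requires determining precisely for which $\chi$ one has $(\chi\circ\det)\otimes\Pi_0\simeq\Pi_0$, i.e.\ computing $X_{\Pi_0}$; without that, two principal series $i_B^G(\chi)$ and $i_B^G(\chi')$ with $\chi'\neq\chi,\chi^w$ could a priori share the cuspidal constituent. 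This is the very gap in \cite{V89} that the paper's proof exists to close, and your proposal reproduces it rather than filling it.

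Two secondary points. First, your justifications for $[i_B^G(\chi)]=[i_B^G(\chi^w)]$ and for the reducibility criterion in (5) lean on the standard intertwining operator and its normalizing factor; these are complex-analytic devices that do not transfer as stated to $\charf_R=\ell>0$ (no meromorphic continuation, and the ``normalizing factor'' need not make sense). In the modular setting these statements are theorems of Vign\'eras proved by other means (Jacquet-module and Whittaker computations together with the finite-group analysis of $GL_2(k_F)$), so you must either cite them or give characteristic-free arguments. Second, in item (3) the usual convention is that the functions supported on the \emph{open} cell $BwB$ form the subrepresentation of $\ind_B^G(\chi\nu)|_B$ (yielding the sub $\chi^w$ of the Jacquet module) and the closed cell gives the quotient $\chi$; you have the cells the other way around, which, as you note yourself, needs to be pinned down for the statement to come out with the asserted sub and quotient.
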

This is   \cite[Theorem3]{V89} but the proof of (i) is incomplete. What is missing is the proof that $\Pi_0$ occurs only in $i_B^G(\nu)$ and $i_B^G(\nu^{-1})$ when $q+1= 0$ in $R$. This is equivalent to 
 $X_{\Pi_0} =\{1, (-1)^{\val} \circ \det\}$ with the notation  \eqref{def:XPi}. This follows from  Remark \ref{re:redf} a) given in the appendix.

\begin{remark}\label{re:etaE}  

1) The   Steinberg representation $\St$ is infinite dimensional and not cuspidal.

2) When $\charf_R\neq 2$,  the principal series $[i_B^G(\chi)]$ are multiplicity free.

When  $\charf_R=2$, then  $q$ is  odd,     $\ind_B^G (1) $ has length $3$, of subquotients $\Pi_0$ and the trivial representation $1$  as  a subrepresentation and a quotient. 
 \end{remark}

 \begin{corollary}\label{cor:spG} The non-supercuspidal irreducible smooth $R$-representations of $G$ are:

- the  characters $\chi \circ \det$   for the  smooth $R$-characters $\chi$ of $F^*$,

- the principal series $i_B^G(\chi)$ for the  smooth $R$-characters $\chi$ of $T$ with $\chi_1 \chi_2^{-1}\neq q^{\pm \val}$.

- if $q+1\neq 0$ in $R$, the twists $(\chi \circ \det)\otimes \St$ of the Steinberg representation    by  the smooth  $R$-characters $\chi$ of $F^*$.

- if  $q+1=0$ in $R$, the  twists $(\chi \circ \det)\otimes \Pi_0$ of the cuspidal non-supercuspidal representation $\Pi_0$ by the smooth $R$-characters $\chi$ of $F^*$.

The only isomorphisms between those representations are $i_B^G(\chi) \simeq i_B^G(\chi^w)$ for the irreducible principal series and $(\chi \circ \det)\otimes \Pi_0 \simeq ((-1)^{val}\chi \circ \det)\otimes \Pi_0$.
\end{corollary}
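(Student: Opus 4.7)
The plan is to read off both the existence and isomorphism parts directly from Proposition \ref{prop:spG}. By definition any non-supercuspidal irreducible smooth $R$-representation $\Pi$ of $G$ arises as an irreducible subquotient of some normalized principal series $i_B^G(\chi)$. Proposition \ref{prop:spG}(v) splits the analysis into two cases: either $\chi_1\chi_2^{-1}\neq q^{\pm\val}$, in which case $i_B^G(\chi)$ is already irreducible and $\Pi \simeq i_B^G(\chi)$ lies in the second family; or $\chi_1\chi_2^{-1}=q^{\pm\val}$, and the $w$-symmetry of part (i) lets me reduce to $\chi_1\chi_2^{-1}=q^{\val}$. A short verification using this relation produces a unique smooth $R$-character $\mu$ of $F^*$ with $\chi\nu=(\mu,\mu)$, hence
\[
i_B^G(\chi)=\ind_B^G(\chi\nu)=\ind_B^G((\mu,\mu))\simeq(\mu\circ\det)\otimes\ind_B^G(1).
\]
Proposition \ref{prop:spG}(vi) then reads off the irreducible subquotients of the right-hand side: the characters $\mu\circ\det$ (together with $((-1)^\val\mu)\circ\det$ when $q+1=0$), plus $(\mu\circ\det)\otimes\St$ if $q+1\neq 0$, or $(\mu\circ\det)\otimes\Pi_0$ if $q+1=0$. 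As $\mu$ ranges over smooth $R$-characters of $F^*$, this exhausts the four families, and each listed representation is irreducible and non-supercuspidal, being an irreducible subquotient of a principal series.

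For the isomorphism claim I first separate the four families by standard invariants. Characters form the unique one-dimensional class, and the remaining infinite-dimensional classes are separated by the dimension of the $U$-coinvariants: irreducible principal series have $U$-coinvariants of dimension $2$ by Proposition \ref{prop:spG}(iii); the Steinberg twists have $U$-coinvariants of dimension $1$, obtained by applying the exact (since $\charf_R\neq p$) $U$-coinvariants functor to $0\to 1 \to \ind_B^G(1) \to \St \to 0$; and the $\Pi_0$-twists are cuspidal so have $U$-coinvariants of dimension $0$. Thus no two families can overlap.

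Within each family the identifications are read off directly. Equality of characters is obvious; the principal-series identification $i_B^G(\chi)\simeq i_B^G(\chi')$ iff $\chi'\in\{\chi,\chi^w\}$ is precisely Proposition \ref{prop:spG}(i); and $(\chi\circ\det)\otimes\Pi_0 \simeq ((-1)^\val\chi\circ\det)\otimes\Pi_0$ follows from the identification $X_{\Pi_0}=\{1,(-1)^{\val}\circ\det\}$ already used to complete the proof of Proposition \ref{prop:spG}(i). The main delicacy is showing $X_\St=\{1\}$: an isomorphism $(\chi\circ\det)\otimes\St\simeq\St$ makes $\St$ a subquotient of both $\ind_B^G(1)=i_B^G(\nu^{-1})$ and $\ind_B^G((\chi,\chi))=i_B^G((\chi,\chi)\nu^{-1})$, and the disjoint-or-equal dichotomy of Proposition \ref{prop:spG}(i) forces $(\chi,\chi)\nu^{-1}\in\{\nu^{-1},\nu\}$. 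The first option yields $\chi=1$ immediately; the second yields $(\chi,\chi)=\nu^2$, which forces $q^{2\val}=1$ in $R$, and combined with $q+1\neq 0$ (so $q\neq -1$ in $R$) gives $q=1$ in $R$, whence again $\chi=1$.
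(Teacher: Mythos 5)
Your proposal is correct and follows the same route the paper intends: the paper states this corollary without a separate proof, treating it as a direct consequence of Proposition \ref{prop:spG}, and your argument simply fills in those details. The extra verifications you supply (separating the families by $\dim(-)_U$, and the check that $X_{\St}=\{1\}$ via the disjoint-or-equal dichotomy of Proposition \ref{prop:spG}(i)) are exactly the points left implicit in the text, and they are carried out correctly.
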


  \subsubsection{}  Let $\ell$ be a prime number different from $p$. 
 An  irreducible  smooth $\mathbb Q_\ell ^{ac}$-representation $\tau$ of $G$ or $G'$   is integral if it preserves a lattice. It then gives by reduction modulo $\ell$ and semi-simplification a  finite length semi-simple smooth $\mathbb F_\ell ^{ac}$-representation, of isomorphism class (not depending of the lattice) which we write $r_\ell (\tau)$.
  The restriction from $G$ to $G'$ from  irreducible  smooth $\mathbb Q_\ell ^{ac}$-representations $\tilde \Pi$ of $G$ to  finite length semi-simple smooth $\mathbb Q_\ell ^{ac}$-representations of $G'$ respects integrality and commutes with the reduction modulo $\ell$.  When  $\tilde \Pi$  is integral, then any irreducible representation $\tilde \pi \subset  \tilde \Pi|_{G'}$ is integral,  the length of the reduction $r_\ell (\tilde \pi)$ modulo $\ell$ of  $\tilde \pi $ does not depend on the choice of $\tilde \pi$. If  $\Pi=r_\ell(\tilde \Pi)$   is irreducible, we have
   \begin{equation}\label{eq:case2} |L( \Pi) | =| L((\tilde \Pi))| \, \lg (r_\ell (\tilde \pi)) 
   \end{equation} 
   and by formula \eqref{eq:LPi}:
  \begin{equation} \label{eq:red'} 
 \lg (r_\ell(\tilde \pi ))= |X_\Pi/ X_{\tilde \Pi}|  \ \text{when}  \ \charf_R\neq 2.
   \end{equation}

  \begin{proposition}\label{pr:ilft}Each  irreducible smooth $\mathbb F_\ell ^{ac}$-representation $\Pi$ of $G$ is  the reduction modulo $\ell$ of some integral  irreducible  smooth $\mathbb Q_\ell ^{ac}$-representation $\tilde \Pi$ of $G$.
    \end{proposition}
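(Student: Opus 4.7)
The plan is to proceed by case analysis, using the classification of irreducible smooth $\mathbb F_\ell^{ac}$-representations of $G$ in Corollary \ref{cor:spG} together with the description of cuspidal representations via types from \S\ref{S:331}. In each case I will exhibit an integral irreducible smooth $\mathbb Q_\ell^{ac}$-representation $\tilde\Pi$ of $G$ with $r_\ell(\tilde\Pi) = \Pi$.

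The non-cuspidal classes will be lifted essentially formally. A smooth $\mathbb F_\ell^{ac}$-character of $F^*$ lifts to $\mathbb Q_\ell^{ac}$ via a Teichm\"uller choice on its torsion values, so $\chi \circ \det$ is immediate. For an irreducible principal series $i_B^G(\chi)$ I will lift the pair $(\chi_1,\chi_2)$ to $(\tilde\chi_1,\tilde\chi_2)$, after possibly adjusting one lift by an unramified root-of-unity twist so that $\tilde\chi_1/\tilde\chi_2 \neq q^{\pm\val}$, and set $\tilde\Pi = i_B^G(\tilde\chi)$; exactness of parabolic induction and its compatibility with reduction of characters give $r_\ell(\tilde\Pi) = \Pi$. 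When $q+1 \neq 0$ in $\mathbb F_\ell^{ac}$, the Steinberg $\St$ and its twists lift to the characteristic-zero Steinberg twisted by $\tilde\chi$ by the same description as the unique non-trivial subquotient of $\ind_B^G \trivrep$.

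For cuspidal $\Pi$ with type $(J,\Lambda)$ as in \S\ref{S:331}, I will lift $\Lambda$. The Heisenberg-type piece $\kappa$ on the pro-$p$ radical $J^1$ lifts uniquely up to isomorphism because $\ell \neq p$ makes the projective representation theory of $J^1$ unchanged in passage from $\mathbb F_\ell^{ac}$ to $\mathbb Q_\ell^{ac}$. The quotient $J^0/J^1$ is $k_E^*$ in the positive-level case, on which $\sigma$ is a smooth character and lifts trivially. In the depth-zero \emph{supercuspidal} case $\sigma$ is a cuspidal representation of $GL_2(k_F)$ associated to a regular character $\theta$ of $k_E^*$; any lift $\tilde\theta$ of $\theta$ automatically remains regular because regularity is the condition $\tilde\theta^q \neq \tilde\theta$ on a cyclic group of characters, and this passes to any lift. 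Assembling these data gives $\tilde\Lambda$, and $\tilde\Pi = \ind_J^G \tilde\Lambda$ is the required lift.

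The main obstacle will be the cuspidal non-supercuspidal representation $\Pi_0$ (and its twists by $X_{\Pi_0}$) occurring only when $q + 1 = 0$ in $\mathbb F_\ell^{ac}$, because its inducing datum $\sigma_0$ is not itself the reduction modulo $\ell$ of a cuspidal representation of $GL_2(k_F)$ on $\mathbb Q_\ell^{ac}$. I will instead produce $\tilde\Pi$ as a \emph{depth-zero supercuspidal} of $G$ over $\mathbb Q_\ell^{ac}$. Let $E/F$ be the unramified quadratic extension and pick a smooth character $\tilde\theta$ of $k_E^*$ of $\ell$-power order with $\tilde\theta^q \neq \tilde\theta$; this is possible because $\ell \mid q + 1$ divides $[k_E^*:k_F^*]$, so the $\ell$-part of $|k_E^*|$ strictly exceeds the $\ell$-part of $|k_F^*|$. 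Such $\tilde\theta$ is regular in characteristic zero and trivial on the $\ell'$-part of $k_E^*$. The associated Deligne--Lusztig cuspidal $\tilde\sigma_{\tilde\theta}$ of $GL_2(k_F)$ has dimension $q - 1$; since $\tilde\theta$ takes value $1$ on every $\ell'$-semisimple element of $k_E^*$, the Deligne--Lusztig character formula shows that the Brauer character of $r_\ell(\tilde\sigma_{\tilde\theta})$ coincides with that of the virtual representation $\St - \trivrep$, which is exactly the Brauer character of $\sigma_0$. Hence $r_\ell(\tilde\sigma_{\tilde\theta}) = \sigma_0$, and $\tilde\Pi = \ind_{Z GL_2(O_F)}^G \tilde\Lambda$, where $\tilde\Lambda$ is the inflation of $\tilde\sigma_{\tilde\theta}$ to $Z GL_2(O_F)$ with the appropriate central character, is an integral irreducible supercuspidal with $r_\ell(\tilde\Pi) = \Pi_0$; twisting by lifts of characters in $X_{\Pi_0}$ handles the rest of the orbit. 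The Brauer-character identification is the only genuinely non-formal step of the argument.
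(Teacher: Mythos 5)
Your argument is correct and follows the same route as the paper: the non-cuspidal cases are lifted formally from the classification in Corollary \ref{cor:spG}, and the cuspidal cases are lifted through their extended maximal simple types, which is exactly the content of the reference \cite{V01} that the paper invokes. Your Deligne--Lusztig construction of a depth-zero supercuspidal lift of $\Pi_0$ (via a regular character of $\ell$-power order of $k_E^*$ when $\ell \mid q+1$) reproduces the computation the paper carries out in its appendix, so nothing in your proposal diverges from the paper's method.
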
 
 \begin{proof}  Corollary \ref{cor:spG} when $\Pi$ is not-cuspidal, \cite{V01} when $\Pi$ is cuspidal. \end{proof}

A   supercuspidal $\mathbb Q_\ell ^{ac}$-representation $\tilde \Pi=\ind_J^G \tilde \Lambda $  of $G$ is integral if and only if $\tilde \Lambda$ is integral. Then, 
  its reduction modulo $\ell$   is  irreducible \cite{V89}, equal to $\Pi=\ind_J^G   \Lambda $  where $\Lambda =r_\ell(\tilde \Lambda )$. The reduction modulo $\ell$ of the   $L$-packet $ L(\tilde \Pi)$ is  $L(\Pi)$. The reduction modulo $\ell$ respects level $0$ and positive level.
 Conversely,    any   cuspidal $\mathbb F_\ell ^{ac}$-representation $\Pi=\ind_J^G   \Lambda $  of $G$  is the reduction modulo $\ell$ of   an integral   cuspidal     $\mathbb Q_\ell^{ac}$-representation $\tilde \Pi=\ind_J^G \tilde \Lambda$  of $G$  where $\Lambda= r_\ell (\tilde \Lambda)$
    \cite{V01}. 
    By the unicity of the extended maximal simple type $(J,\Lambda)$ modulo $G$ (see \S \ref{S:331}), two   supercuspidal integral $\mathbb Q_\ell ^{ac}$-representations have isomorphic reduction modulo $\ell$ if and  only if the reduction modulo $\ell$ of their  extended maximal simple types are $G$-conjugate.
    
     Any supercuspidal  $\mathbb Q_\ell^{ac}$-representation $\tilde \pi
 $ of $G'$ is integral, as $\tilde \pi \in L(\tilde \Pi)$  where $\tilde \Pi$  is a supercuspidal  $\mathbb Q_\ell^{ac}$-representation of $G$,  and some twist of $\tilde \Pi$ by a character is integral. Suppose  that $\tilde \Pi$ has level $0$. With the notations of the formula  \eqref{eq:case1}, the  formula \eqref{eq:red'} implies:
  \begin{equation} \label{eq:case2}  
 \lg (r_\ell(\tilde \pi ))=  lg(\sigma|_{SL_2( k_F)})/lg(\tilde \sigma|_{SL_2( k_F)}).
 \end{equation}

 \begin{proposition}  \label{prop:liftc}  
When $\tilde \pi$ is   supercuspidal  of  level $0$, the  length of  $r_\ell(\tilde \pi)$ is $\leq 2$.

 When  $\tilde \pi$ is  supercuspidal and $p$ is odd,  then  $r_\ell(\tilde \pi)$  is irreducible if  $\tilde \pi$ is minimal of positive level or if $\ell=2$.

Any  cuspidal   $\mathbb F_\ell^{ac}$-representation  $\pi$ of $G'$ is the reduction modulo $\ell$ of a  supercuspidal   $\mathbb Q_\ell^{ac}$-representation of $G'$, except may be  when $p=2$ and   $\pi$ is  in an $L$-packet $L(\Pi)$  with $\Pi$ minimal of  positive level with $E_\Pi/F$  unramified  (Definition  \ref{def43}).
  \end{proposition}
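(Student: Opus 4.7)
The plan is to combine the multiplicativity $|L(\Pi)| = |L(\tilde\Pi)|\cdot\lg(r_\ell(\tilde\pi))$ and the level-zero ratio formula $\lg(r_\ell(\tilde\pi)) = \lg(\sigma|_{SL_2(k_F)})/\lg(\tilde\sigma|_{SL_2(k_F)})$ from~\eqref{eq:case2}, together with the type-theoretic analysis of \S\ref{S:331}, the lifting Proposition~\ref{pr:ilft} for $G$, and the classification of $L$-packets from Theorem~\ref{thm:LLSL2}, Corollary~\ref{cor:2} and Proposition~\ref{prop:ncl}.

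For the length-$\leq 2$ bound, write $\tilde\pi \in L(\tilde\Pi)$ with $\tilde\Pi = \ind_J^G \tilde\Lambda$, $J = Z\GL_2(O_F)$, and $\tilde\Lambda$ inflated from a cuspidal $\tilde\sigma$ on $\GL_2(k_F)$. The ratio formula identifies $\lg(r_\ell(\tilde\pi))$ with $\lg(\sigma|_{SL_2(k_F)})/\lg(\tilde\sigma|_{SL_2(k_F)})$ for $\sigma = r_\ell(\tilde\sigma)$, and the classification of cuspidal representations of $\GL_2(k_F)$ recalled in the appendix shows that each such restriction to $SL_2(k_F)$ has length $1$ or $2$, so the ratio is at most $2$.

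For the irreducibility claims under $p$ odd, the strategy is to prove $|L(\Pi)| = |L(\tilde\Pi)|$, which by multiplicativity forces $\lg(r_\ell(\tilde\pi)) = 1$. When $\tilde\pi$ is minimal of positive level, Proposition~\ref{prop:3.12} yields $|L(\tilde\Pi)| = 2$; the reduction of an extended maximal simple type preserves $J$, the Heisenberg component $\kappa$, and the separable quadratic extension $E = E_{\tilde\Pi}$ (only the tame character on $k_E^*$ is reduced), so Proposition~\ref{prop:3.12} applies also to $\Pi = r_\ell(\tilde\Pi)$ and gives $|L(\Pi)| = 2$. When $\ell = 2$, Corollary~\ref{cor:2} gives $|L(\tilde\Pi)| \in \{2,4\}$: the case $|L(\tilde\Pi)| = 2$ is handled as above, combined with Theorem~\ref{thm:LLSL2} in characteristic $2$. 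In the case $|L(\tilde\Pi)| = 4$, $\tilde\Pi$ is the unique level-$0$ size-$4$ packet of Corollary~\ref{cor:2}, and since $p$ odd makes $q$ odd so $q + 1 = 0$ in $\mathbb F_2^{ac}$, the mod-$2$ reduction of its cuspidal type $\tilde\sigma$ becomes the cuspidal subquotient of $\ind_{B(k_F)}^{\GL_2(k_F)}1$, which places $\Pi$ in the unique cuspidal non-supercuspidal $L$-packet of Proposition~\ref{prop:ncl}, of size $4$ in $\charf_R = 2$. In every case $|L(\Pi)| = |L(\tilde\Pi)|$.

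For the lifting assertion, given a cuspidal $\pi \in L(\Pi)$ over $\mathbb F_\ell^{ac}$, apply Proposition~\ref{pr:ilft} to the cuspidal $\Pi$ to obtain a cuspidal integral $\mathbb Q_\ell^{ac}$-representation $\tilde\Pi$ of $G$ with $r_\ell(\tilde\Pi) = \Pi$. Since $\mathbb Q_\ell^{ac}$ has characteristic zero, $\tilde\Pi$ is automatically supercuspidal, whence every $\tilde\pi \in L(\tilde\Pi)$ is a supercuspidal $\mathbb Q_\ell^{ac}$-representation of $G'$, and the identity $r_\ell(\tilde\Pi|_{G'}) = \Pi|_{G'}$ exhibits $\pi$ as an irreducible component of $r_\ell(\tilde\pi)$ for some $\tilde\pi \in L(\tilde\Pi)$ as soon as the matching $|L(\tilde\Pi)| = |L(\Pi)|$ holds. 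This matching reduces to preservation of the quadratic extension $E_\Pi$ under lifting of extended maximal simple types, which is automatic for $p$ odd, for $p = 2$ at level $0$, or for $p = 2$ with $E_\Pi/F$ ramified. The remaining case, $p = 2$ with $\Pi$ minimal of positive level and $E_\Pi/F$ unramified, is precisely the stated exception and constitutes the main technical obstacle, as it requires tracking $k_E^*$-characters through $\ell$-adic lifts of a wildly ramified type.
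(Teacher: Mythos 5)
Your architecture is the one the paper uses --- the ratio formula \eqref{eq:case2} plus the appendix for the length bound at level $0$, Proposition \ref{prop:3.12} applied to both $\tilde\Pi$ and $\Pi=r_\ell(\tilde\Pi)$ for $p$ odd and positive level, and Proposition \ref{pr:ilft} plus a matching of packet sizes for the lifting --- and those parts are sound. The gap is the case $\ell=2$, level $0$, $|L(\tilde\Pi)|=2$, which you dispose of with ``handled as above, combined with Theorem \ref{thm:LLSL2}''. The positive-level argument does not apply at level $0$, and Theorem \ref{thm:LLSL2} gives $|L(\Pi)|=2$ only when $\Pi$ is supercuspidal; it gives $|L(\Pi)|=4$ when $\Pi$ is a twist of $\Pi_0$. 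You must therefore decide which alternative occurs, and that is where the content lies. Write $\tilde\sigma=\sigma(\tilde\theta)$ for the level-$0$ type of $\tilde\Pi$ and let $b$ be an element of order $q+1$ in the multiplicative group of the quadratic extension of $k_F$. By \eqref{eq:case1} and Remark \ref{re:redf}, $|L(\tilde\Pi)|=2$ means $\tilde\theta(b)^2\neq 1$, while $r_2(\tilde\sigma)$ is a twist of $\sigma_0$ exactly when $\tilde\theta(b)$ has $2$-power order. Both hold simultaneously whenever $4$ divides $q+1$ and $\tilde\theta(b)$ has order $4$ (for instance $q=3$ with $\tilde\theta$ a faithful character of $\mathbb F_9^*$): there $r_2(\tilde\Pi)$ is a twist of $\Pi_0$, so $|L(\Pi)|=4$ and the ratio is $2$, not $1$. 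Your argument does not close this case; the paper's proof routes it through the second assertion of Lemma \ref{le:53}, which you never invoke and which is precisely what is at stake in this subcase.

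Secondly, in the lifting part you call the matching $|L(\tilde\Pi)|=|L(\Pi)|$ ``automatic'' outside the stated exception, which overstates things twice. At level $0$ an arbitrary lift produced by Proposition \ref{pr:ilft} need not preserve the packet size (lifting $\bar\theta$ to a $\tilde\theta$ with $\tilde\theta(b)$ of even order can change $\lg(\tilde\sigma|_{SL_2(k_F)})$); one must choose the lift of the finite type of the same order, which is exactly the appendix's statement that some $\tilde\sigma$ with ratio $1$ exists. And for $p=2$, positive level, $E_\Pi/F$ ramified, the equality $|X_{\tilde\Pi}|=|X_\Pi|$ is a genuine assertion: the paper proves that the reduction map $X_{\tilde\Pi}\to X_\Pi$ is bijective, using that $\tilde\Lambda$ is a character and that $ZJ^1$ has index $2$ in $J$ (Remark \ref{prop:3.13}). ``Preservation of $E_\Pi$'' is what has to be proved there, not an input.
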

        
\begin{proof} 
For $\tilde \Pi$ of level $0$,   one computes  in the appendix the integer $\lg(\sigma|_{SL_2(k_F))}/\lg(\tilde \sigma|_{SL_2(k_F)})$, and one sees  that it is    equal to  $1$ or $2$ and  that there exists $\tilde \sigma$ such that  it is $1$.

For  $p$ odd,  if the level of $\tilde \pi$   is positive we have $\lg (\Pi|_{G'})=\lg (\tilde \Pi|_{G'})$  by Proposition \ref{prop:3.12}, hence  $r_\ell(\tilde \pi)$  is irreducible.

 For $\ell =2$ (so $p$ is odd), if the level of $\tilde \pi$   is $0$, then $r_\ell(\tilde \pi)$  is also irreducible by the formula \eqref{eq:case2} and   Lemma \ref{le:53}  in the appendix.

 For $p=2$ (so $\ell $ is odd),   $\pi$ is in a  cuspidal $L$-packet $L( \Pi)$ with $\Pi$ minimal   of positive level with $E_\Pi/F$ ramified. Let $\tilde \Pi$ a  $\mathbb Q_\ell^{ac}$-lift of $\Pi$. The reduction modulo $\ell$  from $X_{\tilde \Pi}$ onto $X_\Pi$ is injective.
 The proposition follows from the next lemma.\end{proof}

\begin{lemma} The reduction modulo $\ell$  from $X_{\tilde \Pi}$ onto $X_\Pi$ is  a bijection.
\end{lemma}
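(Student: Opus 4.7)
Since the reduction map $r_\ell\colon X_{\tilde\Pi}\to X_\Pi$ is already injective, the task is surjectivity. By \eqref{eq:LPi} and \eqref{eq:LPi1}, surjectivity is equivalent to $|L(\tilde\Pi)|=|L(\Pi)|$. Applying the length formula \eqref{eq:cases} to $\tilde\Pi=\ind_J^G\tilde\Lambda$ and $\Pi=\ind_J^G\Lambda$ with $\Lambda=r_\ell(\tilde\Lambda)$, and cancelling the common factor $[F^*:\det J]$, the desired equality reduces to
\[
\lg\bigl(\ind_{J'}^{G'}(\tilde\Lambda|_{J'})\bigr)=\lg\bigl(\ind_{J'}^{G'}(\Lambda|_{J'})\bigr).
\]

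Under the running hypotheses ($p=2$, $\Pi$ minimal of positive level, $E_\Pi/F$ ramified), Remark~\ref{prop:3.13} tells us that $\Lambda$ is a character of $J$ and that $J'=(J^0)'=(J^1)'$ is a pro-$2$-group. Since $\ell$ is odd, the subgroups of elements of $2$-power order in $(\mathbb F_\ell^{ac})^*$ and $(\mathbb Q_\ell^{ac})^*$ are canonically isomorphic, so every smooth character of a pro-$2$-group with values in $(\mathbb F_\ell^{ac})^*$ admits a unique smooth lift to a $(\mathbb Q_\ell^{ac})^*$-valued character. In particular, $\tilde\Lambda|_{J'}$ is the canonical lift of $\Lambda|_{J'}$, and the same holds for the restriction $(\tilde\chi\circ\det)|_{J'}$ of any $\tilde\chi$ lifting a $\chi\in X_\Pi$.

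The length of $\ind_{J'}^{G'}(\theta)$ for a character $\theta$ of the compact open pro-$p$ subgroup $J'$ is governed, via Mackey decomposition and Frobenius reciprocity, by the Hecke algebra $\mathcal H(G',J',\theta)$: its basis is indexed by the intertwining set $\{g\in J'\backslash G'/J':\theta^g|_{J'\cap gJ'g^{-1}}=\theta|_{J'\cap gJ'g^{-1}}\}$, and its multiplication is governed by a $2$-cocycle with values in roots of unity of $2$-power order. Since $\Lambda|_{J'}$ and $\tilde\Lambda|_{J'}$ correspond under the canonical lift, both the intertwining set in $G'$ and the associated cocycle are preserved, so the two Hecke algebras are abstractly isomorphic and the two compact inductions have the same decomposition into irreducibles. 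This yields the required length equality.

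The main obstacle is the last step: establishing rigorously that the intertwining data for $\Lambda|_{J'}$ in $G'$ is unchanged under the canonical lift. For $p=2$ with ramified $E_\Pi/F$, the minimality hypothesis on $\Pi$ controls the structure of $\Lambda$ on both $E^*$ and $J^1$, and the relevant Hecke-algebra description is entirely in terms of pro-$p$ characters and the geometry of $J'\backslash G'/J'$, both invariant under the canonical lift. This compatibility is analogous to, but more delicate than, the intertwining computation used in the proof of Proposition~\ref{prop:3.12} for $p$ odd, and it is the technical crux of the argument.
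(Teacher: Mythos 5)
Your reduction of the lemma to the equality $\lg\bigl(\ind_{J'}^{G'}(\tilde\Lambda|_{J'})\bigr)=\lg\bigl(\ind_{J'}^{G'}(\Lambda|_{J'})\bigr)$ is sound, and so is the observation that, $J'$ being pro-$2$ and $\ell$ odd, $\tilde\Lambda|_{J'}$ is the canonical lift of $\Lambda|_{J'}$, so that the intertwining sets of the two characters in $G'$ literally coincide (reduction modulo $\ell$ is injective on $2$-power roots of unity, so $\tilde\theta^g=\tilde\theta$ on $J'\cap gJ'g^{-1}$ if and only if $\theta^g=\theta$ there). That part is not the delicate point you make it out to be. The genuine gap is the bridge from intertwining data to length. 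First, in the modular setting an abstract isomorphism of Hecke algebras $\mathcal H(G',J',\theta)$ does not determine the decomposition of $\ind_{J'}^{G'}\theta$ into irreducibles, so "the two Hecke algebras are abstractly isomorphic, hence the two inductions have the same length" is not a valid inference. Second, the claim that the multiplication is "governed by a $2$-cocycle with values in $2$-power roots of unity" is unjustified: structure constants of such Hecke algebras are sums of character values over coset representatives, and reducing such sums modulo $\ell$ is exactly the kind of operation that can change the module category. What rescues your route is an input you never invoke: by \eqref{ssfl1} and the decomposition following \eqref{eq:JG'}, $\ind_{J'}^{G'}(\Lambda|_{J'})$ is a direct summand of $\Pi|_{G'}$, hence semisimple and multiplicity free, and likewise over $\mathbb Q_\ell^{ac}$; for such a representation the length equals $\dim\End_{G'}$, which by Mackey theory and Frobenius reciprocity is the number of intertwining double cosets (each intertwining space being at most one-dimensional for a character). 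With that in place the cocycle discussion is unnecessary and the equality of intertwining sets finishes the argument. As written, the step you yourself flag as the "technical crux" is left open.

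For comparison, the proof in the text never leaves $G$: given a nontrivial $\chi\in X_\Pi$, it takes the order-$2$ lift $\tilde\chi$, uses uniqueness of the extended type to write $\chi\Lambda\simeq{}^g\Lambda$, deduces $\tilde\chi\tilde\Lambda=\epsilon\,{}^g\tilde\Lambda$ for a character $\epsilon$ of $J$ of $\ell$-power order, and kills $\epsilon$ because it is trivial on $ZJ^1$, whose index in $J$ is $2$ precisely because $E_\Pi/F$ is ramified. That argument is shorter and makes the ramification hypothesis do visible work; your route, once repaired as above, uses it only through Remark \ref{prop:3.13} (to know that $\Lambda$ is a character and $J'$ is pro-$2$).
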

\begin{proof}  
 Let $\chi\in X_\Pi$, $\chi\neq 1$, and $\tilde \chi $ the unique $\mathbb Q_\ell^{ac}$ lift of $\chi$ of
order $2$. We have $\tilde \Pi =\ind_J^G \tilde \Lambda$ where $\tilde \Lambda$ is a character (Remark \ref{prop:3.13}). We have $ \Pi =\ind_J^G \Lambda$ where $\Lambda=r_\ell (\tilde \Lambda)$ and $(J,\chi \Lambda)=(J,{}^g\Lambda)$ for $g\in G$ normalizing $J$. So $\tilde \chi \tilde \Lambda= \epsilon \,{}^g \tilde \Lambda $ for a $\mathbb Q_\ell^{ac}$-character $\epsilon$ of $J$ of order  a power of $\ell$. So, $\epsilon|_{J_1}=1$ and $\epsilon|_Z=1$. As $E_\Pi/F$ is ramified, the index of  $ZJ^1$ in $J$ is $2$ hence $\epsilon =1$ and $\tilde \chi \in  X_{\tilde \Pi}$.
\end{proof}
When $\charf_F\neq 2$ and $\charf_R\neq 2$, compare with   \cite[Proposition 6.7]{CLL23}.   When $p=2$,  we shall complete the proposition in Corollary \ref{prop:redu1}: if  $\tilde \pi$ has positive level then  $r_\ell(\tilde \pi)$ has length   $\leq 2$,  if  $\pi$ is  in an $L$-packet $L(\Pi)$ of positive level with $E_\Pi/F$  unramified then  $\pi$  lifts to $\mathbb Q_\ell^{ac}$.

\subsection{Local Langlands $R$-correspondence for $GL_2(F)$}\label{pas3}
  
\subsubsection{}\label{S:2d}By local class field theory,  the  smooth $R$-characters  $\chi$ of $F^*$ identify with the smooth $R$-characters $\chi \circ \alpha_F$ of $W_F$ where  $\alpha_F: W_F\to F^*$  is the Artin reciprocity map   sending a arithmetic Frobenius  $\Fr$ to $p_F^{-1}$ (\cite{BH02} \S 29). 
 This is the  local Langlands $R$-correspondence for $ GL_1(F)$.
 
A two-dimensional  Deligne $R$-representation  of the Weil group $W_F$ is a pair $(\sigma, N)$  where $\sigma$ is a two dimensional  semi-simple smooth $R$-representation of the Weil group $W_F$ and $N$ a nilpotent $R$-endomorphism of the space  of $\sigma$ with the usual requirement: 
\begin{equation}\label{requi}\sigma(w) \, N =N |\alpha_F(w)|_F   \, \sigma(w)  \ \text{for} \ w\in W_F.
\end{equation}
 Two two-dimensional  Deligne $R$-representations $(\sigma,N)$  and  $(\sigma', N')$  of $W_F$ are  isomorphic  if there exists a linear isomorphism $f:V\to V'$ from the space $V$ of $\sigma$ to the space $V'$ of $\sigma'$ such that $\sigma'(w)\circ f= f \circ \sigma(w)$ for $w\in W_F$ and $N'\circ f=f\circ N$. 
 
   For  a smooth $R$-character $\chi$ of $F^*$, the twist $(\sigma, N)\otimes (\chi\circ \alpha_F)$ of   $(\sigma, N)$  by   $\chi\circ \alpha_F$  is  $( \sigma\otimes (\chi \circ \alpha_F),  N)$. 

When $R=\mathbb Q_\ell^{ac}$, $(\sigma, N)$   is called integral if $\sigma$ is integral.  

\begin{remark}\label{re:requi} When $\sigma$ is irreducible we have $N=0$.  

When  $\sigma=(\chi_1\oplus \chi_2)\circ \alpha_F$, if   $\chi_1\chi_2^{-1}\neq q^{\pm \val}$  then $N=0$, When $N\neq 0$,  we have $\{\chi_1,\chi_2 \}= \{\chi_i, q^{-\val} \chi_i \}$ for some $i$
  and $N$ sends the $\chi_i\circ \alpha_F$-eigenspace to the $q^{-\val} \chi_i \circ \alpha_F$-eigenspace or $0$.
 Therefore when  $\chi_1\chi_2^{-1}=q^{\val}$,

 If $q-1\neq 0$  and $q+1\neq 0$ in $R$, then $N=0$ or the kernel of $N$   is the $(\chi_2\circ \alpha_F)$-eigenline. 
 
 If $q-1\neq 0$  and $q+1=0 $ in $R$,  then $N=0$, or the kernel of $N$   is the $(\chi_2\circ \alpha_F)$-eigenline,  or the kernel of $N$   is the $(\chi_1\circ \alpha_F)$-eigenline.
 
 If $q-1=0 $,  then $N$ is any nilpotent.
 \end{remark}

 The  local Langlands  $R$-correspondence for $G=GL_2(F)$ is a  canonical bijection  \begin{equation}\label{eq:LL} LL_R:  \Pi \mapsto (\sigma_\Pi) , N_\Pi)
\end{equation}  from the isomorphism classes of the irreducible smooth $R$-representations $\Pi$ of $G$  onto the equivalence classes  of the  
 two-dimensional  Weil-Deligne $R$-representations of $W_F$\footnote{$(\sigma_\Pi , N_\Pi)$  is called the $L$-parameter of $\Pi$}. It  identifies   supercuspidal  $R$-representations  of $G$  and   irreducible 
 two-dimensional    $R$-representations of $W_F$,
 commutes with the automorphisms of $R$ respecting a chosen square root of $ q$,   with  the twist by   smooth $R$-characters  $\chi$ of $F^*$:
 \begin{equation}\label{eq:LLR} LL_{R}(\Pi \otimes (\chi \circ \det))=LL_R(\Pi )\otimes  (\chi \circ \alpha_F).  \end{equation}
   
 The  local Langlands  complex correspondence was proved  by  Kutzko \cite[\S 33]{BH02}.  An  isomorphism  $\mathbb C\simeq \mathbb Q_\ell^{ac}$  and the choice of a square root of $q$ in $ \mathbb Q_\ell^{ac}$  transfers $LL_{\mathbb C}$ to  a local Langlands $\mathbb Q_\ell^{ac}$-correspondence $ LL_{\mathbb Q_\ell^{ac}} $ respecting integrality. Any
irreducible smooth $\mathbb F_\ell^{ac}$-representation $\Pi$ of $G$ lifts to a $\mathbb Q_\ell^{ac}$-representation $\tilde \Pi$ of $G$ (Proposition \ref{pr:ilft}) and 
$ LL_{\mathbb Q_\ell^{ac}} $  descends to a  local Langlands  $\mathbb F_\ell^{ac}$-correspondence  $ LL_{\mathbb F_\ell^{ac}} $  compatible with reduction  modulo $\ell$ in the sense  of \cite[\S 1.8.5]{V01}.   The nilpotent part $N_\Pi$ is  subtle but the semi-simple part $\sigma_\Pi $ is simply the reduction modulo $\ell$ of $\sigma _{\tilde \Pi}$,
 \begin{equation}\label{eq:LLrell} \sigma _{\Pi}= r_\ell (\sigma _{\tilde \Pi}).
 \end{equation}
 The  local Langlands correspondence $LL_R$ of $G$ over $R$ is  deduced from 
 $LL_{\mathbb Q_\ell^{ac}}$ when $\charf_R=0$ and from $LL_{\mathbb F_\ell^{ac}}$ when $\charf_R=\ell$     \cite[\S 3.3]{V97}, \cite[\S I.7-8]{V01}.
  We recall from loc.cit.  a representative $(\sigma_\Pi , N_\Pi)$ of  $LL_R(\Pi)$ for an irreducible smooth $R$-representation $\Pi$ of $G$.
  
   \begin{proposition} \label{ex:1} A) Let $\Pi$ be an irreducible    subquotient    of 
 the un-normalized  $R$-principal series   $\ind_B^G(1)$ of $G $.  
  Then, $\sigma_\Pi =((q^{1/2})^{-\val } \oplus (q^{1/2})^{\val} )\circ \alpha_F$. 
 We have $N_\Pi =0$  if  
 
 $\Pi = 1$   the trivial character  if $q+1\neq 0$ in $R$, $\Pi = \Pi_0$  cuspidal   if $q+1=0$ in $R$.

\noindent Otherwise $N_\Pi \neq 0$.  When $q-1 \neq 0$ in $R$, the kernel of $N_\Pi$ is the 

 $(q^{1/2})^{-\val }\circ \alpha_F$-eigenline if $q+1= 0$ in $R$ and  $\Pi=1$,

$(q^{1/2})^{\val }\circ \alpha_F$-eigenline if $q+1= 0$ in $R$ and  $\Pi=q^{val} \circ \det$, 

$(q^{1/2})^{-\val }\circ \alpha_F$-eigenline if $q+1\neq 0$ in $R$ and $\Pi=\St$ the Steinberg representation.

\medskip B) Let $\Pi$ be the irreducible normalized principal series  $i_B^G(\eta)$, i.e.  $\eta\neq q^{\pm \val} $, with the notation of \eqref{eq:434}. Then   $ \sigma_\Pi =(\eta \oplus 1)\circ \alpha_F $ and $N_\Pi=0$. 
 
\medskip C)  Let  $\Pi$  be a  supercuspidal  $R$-representation of $G$.  Then  $\sigma_{\Pi}  $  is irreducible and $N_\Pi=0$.

\end{proposition}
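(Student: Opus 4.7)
The plan is to derive each part by descending from the complex case (Kutzko's theorem) via an isomorphism $\mathbb{C}\simeq\mathbb{Q}_\ell^{ac}$ respecting the chosen square root of $q$, and then for characteristic $\ell$ via equation \eqref{eq:LLrell}, the compatibility of $LL_R$ with reduction modulo $\ell$ for the semisimple part; integral lifts are furnished by Proposition \ref{pr:ilft}.

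Part (C) is immediate from the construction of $LL_R$: supercuspidals pair with irreducible $2$-dimensional smooth $R$-representations of $W_F$, and Remark \ref{re:requi} then forces $N_\Pi=0$. Part (B) follows similarly: Kutzko's theorem gives an irreducible normalized principal series $i_B^G(\chi_1\otimes\chi_2)$ the parameter $((\chi_1\oplus\chi_2)\circ\alpha_F,0)$ over $\mathbb{C}$, hence over $\mathbb{Q}_\ell^{ac}$; for $R=\mathbb{F}_\ell^{ac}$ one lifts $\eta$ to a smooth $\mathbb{Q}_\ell^{ac}$-character $\tilde\eta\neq q^{\pm\val}$ so that $i_B^G(\tilde\eta)$ is an irreducible integral lift of $\Pi$, and applies \eqref{eq:LLrell} together with the fact that a zero nilpotent reduces to a zero nilpotent.

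For Part (A), the complex case is classical: $\ind_B^G 1=i_B^G(\nu^{-1})$ has length two with common semisimple parameter $((q^{1/2})^{-\val}\oplus(q^{1/2})^{\val})\circ\alpha_F$, the trivial character carrying $N=0$ and the Steinberg representation the canonical $N\neq 0$ whose kernel is the $(q^{1/2})^{-\val}\circ\alpha_F$-eigenline. This handles $R$ of characteristic zero directly, and by lifting $1$ and $\St$ to their characteristic-zero namesakes and descending the nilpotent data via \eqref{eq:LLrell} it also handles $R=\mathbb{F}_\ell^{ac}$ when $\ell\nmid q+1$.

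The genuinely delicate case is $\ell\mid q+1$, when $\ind_B^G 1$ has length three by Proposition \ref{prop:spG}(vi) with the extra cuspidal constituent $\Pi_0$. For $\Pi_0$ the strategy is clean: by Proposition \ref{pr:ilft} it lifts to a supercuspidal $\mathbb{Q}_\ell^{ac}$-representation $\tilde\Pi_0$ whose parameter has irreducible $\sigma_{\tilde\Pi_0}$ (hence $N_{\tilde\Pi_0}=0$) with the required semisimple reduction, yielding $\sigma_{\Pi_0}$ and $N_{\Pi_0}=0$ via \eqref{eq:LLrell}. For $\Pi=1$ and $\Pi=q^{\val}\circ\det$, however, the naive characteristic-zero lifts (to the characters of the same name) both carry $N=0$, so the non-triviality of $N_\Pi$ and the location of its kernel cannot be read off from a naive lift-and-reduce; they must be extracted directly from the construction of $LL_{\mathbb{F}_\ell^{ac}}$ in \cite[\S I.7-8]{V01}, combined with the twist compatibility \eqref{eq:LLR}, which, since tensoring the trivial character by $q^{\val}\circ\det$ produces $q^{\val}\circ\det$, pairs the two kernel assignments and interchanges the two eigenlines on the Galois side. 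The main obstacle is precisely this last step: pinpointing the nilpotent and its kernel in the $\ell\mid q+1$ regime, where the obvious lifts to characteristic zero do not see the nilpotent structure visible after reduction.
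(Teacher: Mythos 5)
The paper gives no proof of Proposition \ref{ex:1}: it is explicitly a recollection of the construction of $LL_R$ from \cite{V97}, \cite[\S I.7--8]{V01}, so your reconstruction is, if anything, more detailed than what the paper offers, and your overall route (descend from Kutzko via $\mathbb C\simeq\mathbb Q_\ell^{ac}$, use \eqref{eq:LLrell} for the semisimple part, and go into Vign\'eras's construction for the nilpotent part in the regime $\ell\mid q+1$) is the intended one. Two logical slips should be repaired, both stemming from the same misuse of \eqref{eq:LLrell}: that formula governs \emph{only} $\sigma_\Pi$, never $N_\Pi$, so the principle ``a zero nilpotent reduces to a zero nilpotent,'' which you invoke in part (B) and again to get $N_{\Pi_0}=0$, is not valid --- indeed it is precisely the principle that fails for $\Pi=1$ when $\ell\mid q+1$, as you yourself observe. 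In part (B) the correct justification is Remark \ref{re:requi}: since $\eta\neq q^{\pm\val}$, the constraint \eqref{requi} admits no nonzero $N$, so $N_\Pi=0$ for any correspondence. For $\Pi_0$ (with $\charf_R\neq 2$, the only case where the kernel assertions are made) you can close the argument with the twist symmetry you already use: the three pairwise non-isomorphic subquotients $1$, $q^{\val}\circ\det$, $\Pi_0$ must receive, bijectively, the three Deligne representations with semisimple part $((q^{1/2})^{-\val}\oplus(q^{1/2})^{\val})\circ\alpha_F$ (namely $N=0$ and the two nonzero $N$ whose kernels are the two eigenlines); since $q^2=1$, twisting by $q^{\val}\circ\alpha_F$ fixes the $N=0$ parameter and exchanges the other two, while on the group side it exchanges the two distinct characters and fixes $\Pi_0$; hence both characters carry $N\neq 0$ and $\Pi_0$ carries $N=0$.

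What this leaves undetermined is only which of the two eigenlines is $\Ker N_\Pi$ for $\Pi=1$ (the assignment for $q^{\val}\circ\det$ then being forced by the twist), and likewise the precise kernel for $\St$ when $q-1=0$; these are normalizations fixed by the explicit construction in \cite[\S I.8]{V01}, and deferring to that reference is exactly what the paper does. So there is no gap relative to the paper, but you should present that final step as a citation of the construction rather than as an extraction still to be carried out.
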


\subsubsection{} For  a two dimensional semi-simple  smooth $R$-representation $\sigma$ of $W_F$, put 
$$X_{\sigma}=\{ \text{smooth $R$-characters $\chi$ of $F^*$ such that } \ 
 (\chi \circ \alpha_F )\otimes \sigma \simeq \sigma\}.$$  
The square of each  $\chi\in X_{\sigma}$ is trivial because $\dim_R\sigma=2$. We shall compute $X_\sigma$ when $\charf_R\neq 2$. 
When $\charf_R= 2$, $X_\sigma=\{1\}$.

\medskip  To a pair $(E, \xi)$ where $E$  is a quadratic separable extension of $F$ and $\xi$  is a smooth $R$-character  of $E^*$  different from its conjugate $\xi^\tau$ by a generator $\tau$ of  $\Gal(E/F)$
(i.e. $\xi$  is not trivial on   $\Ker N_{E/F} =\{x/x^\tau \ | \ x\in E^*\}$), is associated 
a $2$-dimensional irreducible smooth $R$-representation of $W_F$
$$\sigma(E,\xi)=\ind_{W_E}^{W_F}( \xi \circ \alpha_E).$$ 
 The character $\xi$ is unique modulo $\Gal(E/F)$-conjugation.

When $\charf_R\neq 2$, let $\sigma$ be a two dimensional irreducible  smooth $R$-representation of $W_F$ and $E/F$   a quadratic separable extension. By Clifford's theory  \cite[\S 10, 41.3 Lemma]{BH06}, with the notation \ref{not:etaE}) 
$$ \eta_E  \in X_\sigma \Leftrightarrow \sigma \simeq \sigma(E,\xi) \ \ \text{for some} \ \xi.$$

 \begin{proposition} \label{le:Hs} Assume  $\charf_R\neq 2$.  For a pair $(E, \xi)$ as above,
   $$X_{\sigma(E,\xi)}=\begin{cases} \{1, \eta_E\} \ & \text{ if } (\xi /\xi^\tau)^2 \neq 1\\ 
    \{1, \eta_E, \eta_{E'}, \eta_E \eta_{E'}\} \ & \text{ if } (\xi /\xi^\tau )^2=1, \ \xi /\xi^\tau =\eta_{E'}\circ N_{E/F}\\
    \end{cases}.$$
 For each  biquadratic  separable extension $K /F$, there exists 
a  two dimensional  irreducible smooth $R$-representation $\sigma$ of $W_F$, unique modulo twist by a character,  with 

\noindent $X_\sigma=  \{1, \eta_E, \eta_{E'}, \eta_{E''}\}$ for  the  three quadratic  extensions $E,E',E''$ of $F$ contained in $K$.
\end{proposition}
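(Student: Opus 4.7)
To determine $X_{\sigma(E,\xi)}$, the plan is to use the projection formula $\sigma(E,\xi)\otimes(\eta\circ\alpha_F)=\ind_{W_E}^{W_F}(\xi\cdot(\eta\circ\alpha_F)|_{W_E})$ together with the Mackey criterion that two irreducible inductions $\ind_{W_E}^{W_F}(\xi_1)$ and $\ind_{W_E}^{W_F}(\xi_2)$ are isomorphic iff $\xi_2\in\{\xi_1,\xi_1^\tau\}$. By the Clifford statement recalled just above the proposition, $\eta_E$ always lies in $X_{\sigma(E,\xi)}$. For a different quadratic character $\eta_{E'}$, the restriction $\eta_{E'}|_{W_E}$ is nontrivial (since $E'\neq E$ forces $W_E\not\subset W_{E'}$), so $\sigma(E,\xi)\otimes\eta_{E'}\simeq\sigma(E,\xi)$ forces $\eta_{E'}|_{W_E}=\xi^\tau/\xi$. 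Now $\eta_{E'}|_{W_E}$ is $\tau$-invariant while $(\xi^\tau/\xi)^\tau=\xi/\xi^\tau=(\xi^\tau/\xi)^{-1}$, hence the condition forces $(\xi/\xi^\tau)^2=1$. Thus if $(\xi/\xi^\tau)^2\neq1$ no such $\eta_{E'}$ exists and $X_{\sigma(E,\xi)}=\{1,\eta_E\}$. Otherwise $\xi/\xi^\tau$ is a $\tau$-invariant nontrivial quadratic character of $W_E$; writing $\xi/\xi^\tau=\eta_{E'}\circ N_{E/F}$ determines $\eta_{E'}$ up to multiplication by $\eta_E$, and the same analysis shows that the quadratic characters $\eta_{E''}$ of $F^*$ whose restriction to $W_E$ equals $\xi/\xi^\tau$ are exactly $\eta_{E'}$ and $\eta_E\eta_{E'}$. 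Combined with $1$ and $\eta_E$ this gives the claimed four-element group.

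For the second assertion, given a biquadratic separable $K/F$ with quadratic subextensions $E,E',E''$, class field theory identifies $\{\eta_E,\eta_{E'},\eta_{E''}\}$ with the three nontrivial characters of $\Gal(K/F)\simeq(\mathbb Z/2\mathbb Z)^2$, so in particular $\eta_E\eta_{E'}=\eta_{E''}$. By the first part it suffices to construct a smooth $R$-character $\xi$ of $E^*$ with $\xi^\tau/\xi=\eta_{E'}\circ N_{E/F}$, for then $\sigma(E,\xi)$ is irreducible (as $\xi\neq\xi^\tau$) and has $X_{\sigma(E,\xi)}=\{1,\eta_E,\eta_{E'},\eta_{E''}\}$. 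The construction proceeds as follows: set $\eta=\eta_{E'}\circ N_{E/F}$, a smooth quadratic character of $E^*$; since $N_{E/F}(x)=x^2$ on $F^*$ and $\eta_{E'}^2=1$, we have $\eta|_{F^*}=1$. By Hilbert 90, $\Ker N_{E/F}=\{x^\tau x^{-1}:x\in E^*\}$, so I define $\xi$ on $\Ker N_{E/F}$ by $\xi(x^\tau x^{-1}):=\eta(x)$. This is well defined because two preimages of the same element of $\Ker N_{E/F}$ differ by an element of $F^*$, on which $\eta$ vanishes, and a routine check gives that $\xi$ is a smooth group homomorphism. Extend $\xi$ to a smooth character of $E^*$ using that $R^*$ is divisible (hence injective as an abelian group), handling smoothness by choosing an open subgroup $U$ of $E^*$ with $U\cap\Ker N_{E/F}$ contained in the kernel of $\xi|_{\Ker N_{E/F}}$. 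A direct computation then yields $\xi^\tau/\xi=\eta$ on all of $E^*$.

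Uniqueness up to twist is immediate: if $\sigma_j=\sigma(E,\xi_j)$ for $j=1,2$ both satisfy $\xi_j^\tau/\xi_j=\eta$, then $\xi_1/\xi_2$ is $\tau$-invariant, hence extends to a smooth character $\chi$ of $W_F$, giving $\sigma_1\simeq\sigma_2\otimes\chi$ by the projection formula. The main obstacle in the proof is the existence step of the construction, which combines Hilbert 90 to parametrize $\Ker N_{E/F}$ with the extension of smooth characters from the closed subgroup $\Ker N_{E/F}$ to $E^*$ via the divisibility of $R^*$; the necessary compatibility $\eta|_{F^*}=1$ is automatic from $\eta=\eta_{E'}\circ N_{E/F}$, and once this is arranged the rest follows from elementary diagram chasing on characters of $W_E$.
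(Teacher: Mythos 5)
Your computation of $X_{\sigma(E,\xi)}$ is correct and follows essentially the same route as the paper: Mackey/Clifford reduces membership of $\chi$ in $X_{\sigma(E,\xi)}$ to $\xi\cdot(\chi\circ N_{E/F})\in\{\xi,\xi^\tau\}$, the first option gives $\{1,\eta_E\}$, and the second forces $\xi^\tau/\xi$ to be a $\tau$-invariant character, hence of order $2$ since $(\xi^\tau/\xi)^\tau=(\xi^\tau/\xi)^{-1}$, with $\chi$ determined up to multiplication by $\eta_E$. The uniqueness argument (a $\tau$-invariant ratio $\xi_1/\xi_2$ factors through $N_{E/F}$, so the two inductions differ by a twist) is also the paper's.

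Where you genuinely diverge is the existence step. The paper splits into cases: for $p$ odd it writes down an explicit tame character $\xi$ of the unramified quadratic $E^*$, and for $p=2$ it invokes the complex case from Bushnell--Henniart \S 41. You instead give a uniform construction: set $\eta=\eta_{E'}\circ N_{E/F}$, observe $\eta|_{F^*}=1$ because $N_{E/F}$ is squaring on $F^*$ and $\eta_{E'}^2=1$, use Hilbert 90 to define $\xi$ on $\Ker N_{E/F}$ by $\xi(x^\tau x^{-1})=\eta(x)$, and extend to $E^*$ by divisibility of $R^*$ (with the standard device of first extending trivially across an open subgroup to keep smoothness). The verification $\xi^\tau/\xi=\eta$ is then immediate from $\xi(x^\tau x^{-1})=\xi^\tau(x)\xi(x)^{-1}$. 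This is correct, works for all residue characteristics at once, and avoids the external citation; the only input of $\charf_R\neq 2$ is the existence of the order-$2$ character $\eta_{E'}$ itself, which is consistent with the paper's footnote that the statement fails when $\charf_R=2$. The paper's explicit $\xi$ for $p$ odd has the minor advantage of exhibiting a tame, level-zero example, which is used elsewhere, but as a proof of the proposition your construction is cleaner and more self-contained.
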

 \begin{proof}  $\chi \in X_{\sigma(E,\xi)}\Leftrightarrow   (\chi \circ \alpha_F )\otimes \ind_{W_E}^{W_F}( \xi \circ \alpha_E) \simeq \ind_{W_E}^{W_F}( \xi \circ \alpha_E)\Leftrightarrow  \xi (\chi\circ N_{E/F}) =\xi $ or $\xi^\tau$.
 
  $\xi (\chi\circ N_{E/F}) =\xi \Leftrightarrow  \chi$ is trivial on $N_{E/F}(E^*)$, so $\chi=1$ or $\eta_E$. 
  
    $\xi (\chi\circ N_{E/F}) =\xi ^\tau \Leftrightarrow \chi=\eta_{E'}$  for a quadratic separable extension $E'\neq E$  of $F$, as $\chi^2=1$.
    
    \noindent  If $\chi$ satisfies $\xi (\chi\circ N_{E/F}) =\xi ^\tau$, the order of  $ \xi ^\tau/\xi  $ is $2$, $\xi ^\tau/\xi  $ is fixed by $\tau$  and determines  $\chi$ up to multiplication by $\eta_E$. Let $K/F$ be the biquadratic extension  generated by $E$ and $E'$ and $E''/F$  the third quadratic extension  contained in  $K/F$.  We have $\eta_E \eta_{E'}=\eta_{E''}$. Hence the first assertion.
    
The unicity in  the second assertion follows from the fact that  for two smooth $R$-characters $\xi_1, \xi_2$ of $E^*$,
   $\xi_1 ^\tau/\xi _1 = \xi_2 ^\tau/\xi_2  \Leftrightarrow \xi_1=\xi _2(\chi \circ N_{E/F})$ for a smooth  $R$-character $\chi $ of $F^*$. 

The existence in the second assertion is as follows. When $p$ is odd, there is a unique biquadratic extension $K/F$ of $F$. Let $E/F$ be the  unramified quadratic extension.
We take $\sigma=\sigma(E, \xi)$  where    $\xi$ is  the character of $E^*$ trivial  on $1+p_FO_E$, $\xi(p_F)= -1$ and $\xi (x)=x^{(q+1)/2}$ if  $x^{q^2-1}=1$,  satisfies $\xi ^\tau/\xi \neq 1$ and $(\xi ^\tau/\xi)^2 =1$ hence  $\xi ^\tau/\xi =\eta_{E'}\circ  N_{E/F} =\eta_E \eta_{E'}\circ  N_{E/F}$ for $E'/F$ ramified.    When $p=2$, given two different quadratic separable extensions $ E'/F$  and $E/F$, there exists a smooth $R$-character $\xi$ of $E^*$ such that $\xi ^\tau/\xi =\eta_{E'}\circ  N_{E/F}=\eta_E \eta_{E'}\circ  N_{E/F}$, because  $\charf_R\neq 2$, and this is known when $R=\mathbb C$ (\cite[\S41]{BH06} when $ p\neq 2$, but the proof does not use $p\neq 2$)\footnote{We  gave a direct proof when $p$ is odd,  this was unnecessary}. \footnote{When $p$ is odd and $\charf_R=2$, there no $\xi$ such that $\sigma(E,\xi)$ is induced from a character of $W_{E'}$ for a quadratic  extension $E'/F$  distinct from $E/F$.}
   \end{proof}
 \begin{remark}   Let  $\Pi$ be a supercuspidal $R$-representation of $G$. Then $\Pi$ has level $0$ (resp.   $L(\Pi)$ has level $0$),  if and only if $\sigma_\Pi  = \ind_{W_E}^{W_F}(\xi \circ \alpha_E)$
where $E/F $ is quadratic unramified and $\xi$ is a tame character of $E^*$ (resp. $\xi ^\tau/\xi $ is a tame character of $E^*$ where $\tau$ is the non-trivial element of $\Gal(E/F)$).
\end{remark}

 \begin{remark} \label{Hs}  Assume $\charf_R\neq 2$. Let $\sigma=\chi_1\circ \alpha_F \oplus \chi_2\circ \alpha_F$  be a  reducible two dimensional  semi-simple smooth $R$-representation of $W_F$.
 Then   $\chi \circ \alpha_F\in X_\sigma \Leftrightarrow   \{ \chi \chi_1, \chi \chi_2\}=\{\chi_1, \chi_2 \} \Leftrightarrow  \chi=1$ or   $\chi \chi_1=\chi_2,  \chi \chi_2=\chi_1 \Leftrightarrow \chi=1$ or $\chi=\chi_2 \chi_1^{-1} , \chi^2=1$.
If  $\chi_1 \chi_2^{-1}= \eta_E $ for a  quadratic separable extension $ E/F$, then  $ X_\sigma  =\{1,\eta_E\}$. Otherwise, $ X_\sigma =\{1\}$.
   \end{remark}

\subsubsection{}Application to the cuspidal $L$-packets.

For  a two dimensional  Weil-Deligne $R$-representation $(\sigma,N) $ of $W_F$, put 
 $X_{(\sigma, N)}$ for the group of $\chi\in X_\sigma$ such that there exists an isomorphism of $\chi\otimes \sigma$ onto $\sigma$ preserving $N$.
 For any irreducible  $R$-representation $\Pi$ of $G$,   applying 
  the formulas \eqref{eq:LL}, \eqref{eq:LLR} 
  and   \eqref{eq:LPi}  we obtain :
   \begin{flalign}\label{eq:LLr}X_\Pi= \{ \chi \circ \det \ | \ \chi\in X_{(\sigma_\Pi , N_\Pi)}\}. \  \  \ \ \ \ \ \   \ \ \ \ \ \ \ \ \ \ \ \ \ \ \ \ \ \ \ \ \ \ \ \ \ \  \\
    \label{eq:LLr2} \text{When $\charf_R\neq 2$, the cardinality of the $L$-packet $L(\Pi)$ is $|X_{\sigma_\Pi }|$}.  \end{flalign}
     
\begin{proposition}   \label{cor:bq1} 1) When $\charf_R\neq 2$, we have:
\begin{itemize}
\item The cardinality of a cuspidal  $L$-packet is $1,2$ or $4$.
\item The map $L(\Pi)\mapsto E_\Pi$ is a bijection from the cuspidal  $L$-packets of size $4$   to the biquadratic separable extensions of $F$.
\end{itemize}

2) There is a bijection   from the cuspidal  $L$-packets of size $4$   to the biquadratic separable extensions of $F$, 
sending  the unique cuspidal  $L$-packet of size $4$ to the  unique biquadratic separable extension of $F$  when $\charf_R= 2$, and equal to the map  $L(\Pi)\mapsto E_\Pi$  when $\charf_R\neq 2$.\
   \end{proposition}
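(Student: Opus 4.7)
The plan is to pass through the local Langlands correspondence \eqref{eq:LL} and exploit Proposition~\ref{le:Hs}, which describes the self-twist group $X_\sigma$ of an irreducible two-dimensional smooth $R$-representation $\sigma$ of $W_F$ when $\charf_R\neq 2$, in order to identify the size-$4$ cuspidal $L$-packets with biquadratic separable extensions of $F$.

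Assume first $\charf_R\neq 2$ and let $\Pi$ be a cuspidal $R$-representation of $G$. If $\Pi$ is cuspidal non-supercuspidal, then $|L(\Pi)|=2$ by Proposition~\ref{prop:ncl}. If $\Pi$ is supercuspidal, Proposition~\ref{ex:1}(C) gives $\sigma_\Pi$ irreducible with $N_\Pi=0$, so the formula \eqref{eq:LLr2} yields $|L(\Pi)|=|X_{\sigma_\Pi}|$, which lies in $\{1,2,4\}$ by Proposition~\ref{le:Hs}. More precisely, $|L(\Pi)|=4$ happens exactly when $\sigma_\Pi\simeq\ind_{W_E}^{W_F}(\xi\circ\alpha_E)$ with $(\xi/\xi^\tau)^2=1$, and in that case
\[
X_{\sigma_\Pi}=\{1,\eta_E,\eta_{E'},\eta_{E''}\},
\]
where $E,E',E''$ are the three quadratic subextensions of a biquadratic separable extension $K/F$.

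To complete part~1) I would then identify $E_\Pi$ with $K$ in the size-$4$ case. From \eqref{eq:LLr} combined with \eqref{eq:LPi}, the smooth characters $\chi\circ\det$ for $\chi\in X_{\sigma_\Pi}$ are exactly those of $G$ trivial on $G_\Pi$, hence
\[
\det(G_\Pi)=\bigcap_{F'\in\{E,E',E''\}}N_{F'/F}(F'^*).
\]
By local class field theory this triple intersection equals $N_{K/F}(K^*)$, because the three index-$2$ subgroups of $\Gal(K/F)\simeq(\mathbb Z/2\mathbb Z)^2$ intersect trivially. Hence $E_\Pi=K$ by Definition~\ref{def43}. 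The converse is handled by the existence statement in Proposition~\ref{le:Hs}, which produces, for each biquadratic separable $K/F$, a two-dimensional irreducible $\sigma$ with $X_\sigma=\{1,\eta_E,\eta_{E'},\eta_{E''}\}$ (unique up to twist), and hence a supercuspidal $\Pi$ with $|L(\Pi)|=4$ and $E_\Pi=K$. Injectivity of $L(\Pi)\mapsto E_\Pi$ on size-$4$ packets follows from \eqref{re:scL} and \eqref{eq:LLR}, since two parameters $\sigma_{\Pi_1},\sigma_{\Pi_2}$ giving the same $K$ differ by a character twist.

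For part~2), assume $\charf_R=2$, so the hypothesis $\charf_R\neq p$ forces $p$ odd. Then $F$ has a unique biquadratic separable extension, and by Corollary~\ref{cor:2} there is a unique cuspidal $L$-packet of size $4$; the matching of the singletons gives the bijection. I expect the main technical step in the whole argument to be the class-field-theoretic identification $\det(G_\Pi)=N_{K/F}(K^*)$ in part~1): it forces one to thread together the Clifford description \eqref{eq:LPi} of $X_\Pi$, the Langlands compatibility \eqref{eq:LLr} between $X_\Pi$ and $X_{\sigma_\Pi}$, the explicit shape of $X_{\sigma_\Pi}$ from Proposition~\ref{le:Hs}, and the computation of intersections of norm subgroups in $F^*$.
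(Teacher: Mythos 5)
Your proof is correct and follows essentially the same route as the paper's: the local Langlands correspondence combined with Proposition~\ref{le:Hs} for $\charf_R\neq 2$, plus the counting argument (unique biquadratic extension, unique size-$4$ cuspidal packet from Corollary~\ref{cor:2}) when $p$ is odd or $\charf_R=2$. You supply somewhat more detail than the paper does --- notably the class-field-theoretic identification $\det(G_\Pi)=N_{K/F}(K^*)$ as the intersection of the three norm subgroups $N_{E/F}(E^*)\cap N_{E'/F}(E'^*)\cap N_{E''/F}(E''^*)$, and the injectivity/surjectivity of $L(\Pi)\mapsto E_\Pi$ via the uniqueness-up-to-twist in Proposition~\ref{le:Hs} --- steps the paper compresses into the phrase ``via the local Langlands correspondence.''
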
  
  \begin{proof} a) Assume $\charf_R\neq 2$.  If $ \Pi$ is cuspidal and $X_\Pi\neq \{1\}$ then  $\eta_E\in X_\Pi$ for some quadratic separable extension $E/F$, $\sigma_\Pi =\sigma(E,\xi)$ for some $\xi$ and  $|X_{\sigma(E,\xi)}|=2$ or $4$ by Proposition \ref{le:Hs}. 
 When $p=2$ then the map  is a bijection by Proposition \ref{le:Hs} via the local Langlands correspondence.
 
  b) Assume   $p$ is odd (and $\charf_R\neq p$). There is a unique  biquadratic separable extension of $F$ and an unique cuspidal  $L$-packet of size $4$ (Corollary \ref{cor:2}). 
 
 c) As $p$ is odd when $\charf_R= 2$,   the proposition follows from a) and b).
 \end{proof}

When $R=\mathbb F_\ell ^{ac}$ and $\ell\neq p$, it is well known 
 that  an irreducible smooth  $\mathbb F_\ell ^{ac}$-representation $\sigma$  of $W_F$  of dimension $2$  lifts to    an integral    irreducible smooth $\mathbb Q_\ell^{ac}$-representation $\tilde \sigma$ of $W_F$ \footnote{$\sigma $ extends to a $\mathbb F_\ell ^{ac}$-representation of the Galois group $\Gal_F$. As $\Gal_F$ is solvable this representation 
 lifts to a $\mathbb Q_\ell ^{ac}$-representation of $\Gal_F$ that one restricts to $W_F$ to get $\tilde \sigma$}. 
 The 
 order of $X_{\tilde \sigma}$ is  at most  to the order of $X_{ \sigma}$. We give now all the cases where the orders are different.
 
 \begin{theorem}  \label{th:liftw}  Assume    $\ell\neq 2$. 
 
 1) Let  $\tilde \sigma$ be a lift   to $\mathbb Q_\ell ^{ac}$ of a two-dimensional  irreducible   smooth $\mathbb F_\ell ^{ac}$-representation $\sigma$ of $W_F$.   The cardinalities of $X_\sigma$ and of  $X_{\tilde \sigma}$ are different if and only if $ |X_\sigma|=4,  |X_{\tilde \sigma}|=2 $, and this happens if and only if 
 $$ 
  p=2, \ \ell \ \text{divides} \ q+1, \ \tilde \sigma=\ind_{W_E}^{W_F}(\tilde \xi \circ  \alpha_E)$$
    where  
$E/F$ is a quadratic  unramified extension,  $\tilde \xi$ a  smooth $\mathbb Q_\ell^{ac}$-character of $E^*$ such that 
\begin{enumerate}
\item the order of $\tilde \xi ^\tau/\tilde \xi $ on $1+P_E$ is $2$ where $\Gal(E/F)=\{1,\tau\}$.
  
  \item
 $  \tilde \xi (b)\neq 1, \ \tilde \xi (b)^{\ell^{s}}=1$ 
 for  a root of unity $b\in E^*$ of order $q+1$, and $s$ is a positive integer such that  $\ell^s$ divides  $q+1$.
 \end{enumerate}

2)  Each  irreducible smooth $\mathbb F_\ell ^{ac}$-representation $\sigma$ of $W_F$ of dimension $2$ admits   a lift $\tilde \sigma$  to $\mathbb Q_\ell ^{ac}$  such that $|X_{\tilde \sigma}|  = |X_\sigma|$.
     \end{theorem}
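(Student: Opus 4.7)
The plan is to parameterise lifts of the inducing character via Teichm\"uller, reducing the problem to a pro-$\ell$ analysis on $E^*$ together with its $\tau$-action; all the arithmetic happens there.

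\textbf{Step 1. Setup and Teichm\"uller parameterisation.} For $\ell\neq 2$, reduction modulo $\ell$ is injective on smooth characters of order dividing $2$, so induces an injection $X_{\tilde\sigma}\hookrightarrow X_\sigma$ and in particular $|X_{\tilde\sigma}|\leq|X_\sigma|$. Since $\sigma$ is two-dimensional irreducible, $\sigma=\ind_{W_E}^{W_F}(\xi\circ\alpha_E)$ for some quadratic separable $E/F$, and any lift has the form $\tilde\sigma=\ind_{W_E}^{W_F}(\tilde\xi\circ\alpha_E)$, so $\eta_E\in X_\sigma\cap X_{\tilde\sigma}$. By Proposition~\ref{le:Hs} the orders lie in $\{1,2,4\}$, so any discrepancy must be $|X_\sigma|=4$, $|X_{\tilde\sigma}|=2$. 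Write $\tilde\xi=\omega(\xi)\mu$, where $\omega$ is the Teichm\"uller section and $\mu:E^*\to 1+\mathfrak m_{\bar{\mathbb Z}_\ell}$ is a smooth character, necessarily factoring through the maximal pro-$\ell$ quotient $E^{*,(\ell)}$. With $\eta=\xi/\xi^\tau$ and $\tilde\eta=\tilde\xi/\tilde\xi^\tau$ one has $\tilde\eta=\omega(\eta)\cdot(\mu/\mu^\tau)$; since $\omega(\eta)^2=\omega(\eta^2)=1$ (as $|X_\sigma|=4$ is equivalent to $\eta^2=1$ by Proposition~\ref{le:Hs}) and squaring is bijective on pro-$\ell$ groups for $\ell$ odd, we get
\[
\tilde\eta^2=1\iff\mu=\mu^\tau.
\]
Thus failure is detected by non-$\tau$-invariance of $\mu$ on $E^{*,(\ell)}$.

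\textbf{Step 2. The pro-$\ell$ quotient forces $E/F$ unramified and $\ell\mid q+1$.} Since $1+P_E$ is pro-$p$ with $\ell\neq p$, it disappears in $E^{*,(\ell)}$. For $E/F$ ramified, only $p_E^{\mathbb Z}$ and $\mu_{q-1}$ remain and give $E^{*,(\ell)}\cong\mathbb Z_\ell\times\mu_{q-1}^{(\ell)}$, with $\tau$ trivial on both factors after checking that the $\tau$-cocycle $\tau(p_E)/p_E$ is $\ell$-trivial (it has order $2$ when $p$ is odd and lies in $1+P_E$ when $p=2$, hence trivial in the pro-$\ell$ quotient). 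Hence $\mu=\mu^\tau$ automatically, and failure requires $E/F$ unramified. In that case $E^{*,(\ell)}=\mathbb Z_\ell\times\mu_{q-1}^{(\ell)}\times\mu_{q+1}^{(\ell)}$ with $\tau$ trivial on the first two factors and inverting on $\mu_{q+1}^{(\ell)}$; so $\mu\neq\mu^\tau$ iff $\mu$ is non-trivial on $\mu_{q+1}^{(\ell)}$, which requires $\ell\mid q+1$ and localises the defect at a generator $b$ of $\mu_{q+1}^{(\ell)}$.

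\textbf{Step 3. $p=2$ is forced, and conditions (i), (ii).} Assume $E/F$ unramified, $|X_\sigma|=4$, and $p$ odd, seeking a contradiction. The condition $\eta^2=1$ amounts to $\xi^2=\chi\circ N_{E/F}$ for some character $\chi$ of $F^*$; evaluating at $b\in\mu_{q+1}=\ker N_{E/F}\cap\mu_{q^2-1}$ (Hilbert 90) gives $\xi(b)^2=\chi(1)=1$, so $\eta|_{\mu_{q+1}}=1$. On $\mu_{q-1}$ and $p_F^{\mathbb Z}$ this is automatic from $\tau$-fixedness, and on $1+P_E$, a pro-$p$ group with $p$ odd, any character to $\{\pm 1\}$ is trivial. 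So $\eta=1$, contradicting irreducibility; hence $p=2$. For $p=2$ the same vanishings on $\mu_{q^2-1}$ and $p_F^{\mathbb Z}$ still hold, but now $\eta$ can (and must, by irreducibility) be non-trivial of order $2$ on $1+P_E$; since $\mu|_{1+P_E}=1$ this transfers via $\omega$ to condition (i). Finally the failure $\mu(b)\neq 1$, together with $\xi(b)=1$ (forced by $\xi(b)^2=1$ and $\xi(b)\in\mu_{q+1}$ with $q+1$ odd), unwinds into the statement $\tilde\xi(b)=\mu(b)\neq 1$ with $\tilde\xi(b)^{\ell^s}=1$ for $\ell^s=|\mu_{q+1}^{(\ell)}|$, which is condition (ii).

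\textbf{Step 4. Part 2 and the main obstacle.} For part 2, take the Teichm\"uller lift $\tilde\xi=\omega(\xi)$, i.e., $\mu=1$. Then $\mu=\mu^\tau$ trivially, so by Step 1 $\tilde\eta^2=1$ whenever $\eta^2=1$, giving $|X_{\tilde\sigma}|\geq|X_\sigma|$ and hence equality with Step 1. The main obstacle is the verification in Step 2 for wildly ramified $E/F$ when $p=2$: one must check that the $\tau$-cocycle $\tau(p_E)/p_E$ has trivial image in the pro-$\ell$ quotient $\mu_{q-1}^{(\ell)}$ independently of the choice of uniformizer $p_E$; this is a small but essential local computation.
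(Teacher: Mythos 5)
Your strategy---reduce everything to a Teichm\"uller-parameterised computation on $E^*$ with its $\Gal(E/F)$-action---is genuinely different from the paper's route (which argues on the automorphic side via the Langlands correspondence and Proposition \ref{prop:liftc}), but two steps do not hold up. The fatal one is Step 3. To conclude ``$\eta=1$, hence $p=2$'' you check triviality of $\eta=\xi/\xi^\tau$ on $\mu_{q+1}$, $\mu_{q-1}$, $p_F^{\mathbb Z}$ and $1+P_E$; for $q$ odd these subgroups do \emph{not} generate $E^*$, since $\mu_{q+1}\mu_{q-1}$ has index $2$ in $\mu_{q^2-1}$. Concretely, the character $\xi$ with $\xi(x)=x^{(q+1)/2}$ on $\mu_{q^2-1}$ used in the proof of Proposition \ref{le:Hs} has $\eta=\xi/\xi^\tau$ of order exactly $2$, trivial on all four of your subgroups yet nontrivial on $\mu_{q^2-1}$---this is precisely how the level-zero packet of size $4$ arises for $p$ odd. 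So no contradiction is reached and ``$p=2$'' is not established. Worse, once this is noticed, your own Step 2 mechanism applies verbatim for $p$ odd: when $\ell\mid q+1$ one may choose $\mu$ nontrivial on the $\ell$-part of $\mu_{q+1}$, getting $\mu\neq\mu^\tau$ and hence a drop from $|X_\sigma|=4$ to $|X_{\tilde\sigma}|=2$ with $E/F$ unramified and $p$ odd (on the finite-group side this is the reducible reduction of a $(q-1)$-dimensional cuspidal of $SL_2(k_F)$ when $\ell\mid q+1$). The implication ``discrepancy $\Rightarrow p=2$'' is therefore the real crux, and it is not a local computation you have merely left out: your Step 3 as written cannot deliver it.

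There is also a structural gap in Step 1: you assert that every lift of $\sigma=\ind_{W_E}^{W_F}(\xi\circ\alpha_E)$ has the form $\ind_{W_E}^{W_F}(\tilde\xi\circ\alpha_E)$, hence that $\eta_E\in X_{\tilde\sigma}$. This is exactly what is at stake in the theorem: in the $4$-to-$2$ situation $\sigma$ is induced from three quadratic extensions while $\tilde\sigma$ is induced from only one, so ``$\eta_E\in X_\sigma\Rightarrow\eta_E\in X_{\tilde\sigma}$'' fails for a general choice of $E$. You need (a) an argument that a lift of a dihedral $\sigma$ cannot be primitive, to exclude the discrepancy $|X_\sigma|=2$, $|X_{\tilde\sigma}|=1$, and (b) a common $E$ with $\tilde\sigma=\ind_{W_E}^{W_F}\tilde\xi$ and $r_\ell(\tilde\xi)=\xi$, before the parameterisation $\tilde\xi=\omega(\xi)\mu$ even makes sense; neither is supplied. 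By contrast, Step 4 (part 2) only uses the explicitly constructed lift $\ind_{W_E}^{W_F}(\omega(\xi))$ and is essentially sound for dihedral $\sigma$, though the primitive case $|X_\sigma|=1$ is not addressed.
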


\begin{proof} 1) Let $\Pi$ be  the  supercuspidal smooth $\mathbb F_\ell^{ac}$-representation  of $G$  and $\tilde \Pi$  the  integral  supercuspidal smooth $\mathbb Q_\ell^{ac}$-representation of $G$ lifting $\Pi$ such that
 $\sigma= \sigma_\Pi , \tilde \sigma = \sigma_{\tilde \Pi}$ by the Langlands correspondence \eqref{eq:LL}. We have $|X_{ \Pi}|=|X_{ \sigma}|, |X_{\tilde  \Pi}|=|X_{ \tilde \sigma}| $ \eqref{eq:LLr}.  
 By Proposition \ref{prop:liftc}, $|X_{ \sigma}|= |X_{\tilde \sigma}|$ or $2\,|X_{\tilde \sigma}|$, except may be when $p=2$ and $\tilde \Pi$ has positive level.  In this exceptional case,  $\eta_E\in X_{\tilde \Pi}$.  By Proposition  \ref{Hs}, $|X_{\sigma}|$ and $|X_{\tilde \sigma}| $ are equal to $1,2$ or $4$.
 Therefore,  $|X_{\sigma}|\neq |X_{\tilde \sigma}| $ is equivalent to $|X_\sigma|=4$ and $ |X_{\tilde \sigma}|=2$.  

When  $|X_\sigma|=4$ and $ |X_{\tilde \sigma}|=2$, 
   $\sigma=\ind_{W_E}^{W_F}\xi$, $\tilde \sigma=\ind_{W_E}^{W_F}\tilde \xi$   for a quadratic  unramified extension $E/F$, an integral   smooth $\mathbb Q_\ell^{ac}$-character $\tilde \xi$ of $E^*$,  of  reduction $\xi$ modulo $\ell$, with
 $\xi/\xi^\tau \neq 1$  where $\tau$ is the generator $\tau$ of $\Gal(E/F)$, 
and  $(\xi/\xi^\tau )^2=1$. This implies  $(\tilde \xi/\tilde \xi^\tau )^2=1$ on  $p_F^{\mathbb Z}(1+P_E)$  because $\ell\neq p$.
We have $E^*=p_F^{\mathbb Z}(1+P_E)\mu_E$ where $\mu_E=\{x\in E^* \ | \ x^{q^2-1}=1\}$.  We have $\tau(x)=x^q$ if $x\in \mu_E $.  The group $\{x^{q-1} \ | \ x\in \mu_E\} $ is generated by an arbitrary root of unity $b \in E^*  $ of  order $q+1$. So     $$ (\tilde \xi/\tilde \xi^\tau )^2=1 \Leftrightarrow \tilde \xi (b)^{2 }=1 \Leftrightarrow |X_{\tilde \sigma}|=4, \ \  \ \  (\tilde \xi/\tilde \xi^\tau )^2\neq 1 \Leftrightarrow \tilde \xi (b)^{2 }\neq 1 \Leftrightarrow |X_{\tilde \sigma}|=2.$$  
In the exceptional case, $p=2$ hence $\ell$ is odd and $ \xi (b)^{2 }=1 $  implies $\xi (b)=1$ (and conversely), or equivalently, the order of $\tilde \xi (b) $  is a power of $\ell$ dividing $q+1$.
 There exists  a lift  $\tilde \xi$ of $\xi$ such that   $\tilde \xi (b)\neq 1$  if and only if $\ell$ divides $q+1$. 
 
 2) Given a positive integer  $s$, each element $x\in  (\mathbb F_\ell ^{ac})^*, x\neq 1,$   is the reduction modulo $\ell$ of an element $\tilde x\in  (\mathbb Z_\ell ^{ac})^*$ such that $\tilde x^{\ell^s}\neq 1$.
\end{proof}  
  

 \begin{corollary} \label{prop:redu1}  
 1)  The reduction modulo $\ell$ of a   supercuspidal  $\mathbb Q_\ell^{ac}$-representation $ \tilde \pi$ of $G'$ has length $\leq 2$. It has length $2$ if and only if 
$$p=2, \ \ell \ \text{divides} \ q+1, \ \sigma_{\tilde \Pi} =\ind_{W_E}^{W_F}(\tilde \xi \circ \alpha_E), $$     where  $\tilde \pi\in L(\tilde \Pi)$, $E/F$ is  unramified, and $\tilde \xi $ is a smooth $\mathbb Q_\ell^{ac}$-character of  $E^*$  such that: 
 \begin{enumerate}

\item the order of $\tilde \xi ^\tau/\tilde \xi $ on $1+P_E$ is $2$ where $\Gal(E/F)=\{1,\tau\}$.
  
  \item
 $  \tilde \xi (b)\neq 1, \ \tilde \xi (b)^{\ell^{s}}=1$ 
 for  a root of unity $b\in E^*$ of order $q+1$, and  $\ell^s$ divides  $q+1$.
 \end{enumerate}

 2)  Each cuspidal   $\mathbb F_\ell ^{ac}$-representation $\pi$ of $G'$  is the reduction modulo $\ell$ of  an integral  supercuspidal  $\mathbb Q_\ell^{ac}$-representation  of $G'$. 
\end{corollary}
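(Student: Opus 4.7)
The strategy is to transfer both parts to the Galois side via the local Langlands correspondence and apply Theorem~\ref{th:liftw}. The common starting point for (1) is to choose a supercuspidal $\tilde\Pi$ of $G$ with $\tilde\pi \in L(\tilde\Pi)$ and, after twisting by a suitable smooth character, to assume $\tilde\Pi$ integral. Then $\Pi := r_\ell(\tilde\Pi)$ is irreducible supercuspidal by \cite{V01}, and since restriction from $G$ to $G'$ commutes with reduction modulo $\ell$, the identity \eqref{eq:case2} rearranges to
$$\lg(r_\ell(\tilde\pi)) = \frac{|L(\Pi)|}{|L(\tilde\Pi)|}.$$

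When $\ell = 2$, the assumption $\ell \neq p$ forces $p$ odd, so Proposition~\ref{prop:liftc} immediately gives $\lg(r_\ell(\tilde\pi)) = 1$; moreover the exceptional conditions of the corollary require $p = 2$ and are therefore vacuously unsatisfied, so the claimed equivalence holds trivially. When $\ell$ is odd, formula \eqref{eq:LLr2} rewrites the right-hand side as $|X_{\sigma_\Pi}|/|X_{\sigma_{\tilde\Pi}}|$, and \eqref{eq:LLrell} ensures $\sigma_\Pi = r_\ell(\sigma_{\tilde\Pi})$. Applying Theorem~\ref{th:liftw}(1) with $\tilde\sigma = \sigma_{\tilde\Pi}$ then shows the ratio is at most $2$, with equality precisely under the conditions listed there. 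Those conditions match verbatim with the ones stated in the corollary, yielding (1).

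For (2), Proposition~\ref{prop:liftc} already produces the desired lift outside a single residual case, namely when $p = 2$ and $\pi \in L(\Pi)$ with $\Pi$ minimal of positive level and $E_\Pi/F$ unramified. In this case I would invoke Theorem~\ref{th:liftw}(2) applied to $\sigma_\Pi$ to obtain a lift $\tilde\sigma$ with $|X_{\tilde\sigma}| = |X_{\sigma_\Pi}|$. Via the Langlands correspondence this $\tilde\sigma$ corresponds to an integral supercuspidal $\tilde\Pi$ of $G$ with $r_\ell(\tilde\Pi) \simeq \Pi$ and $|L(\tilde\Pi)| = |L(\Pi)|$. Part (1) applied to each $\tilde\pi' \in L(\tilde\Pi)$ then forces $\lg(r_\ell(\tilde\pi')) = 1$; since restriction commutes with reduction and $\Pi|_{G'}$ is multiplicity free, the map $\tilde\pi' \mapsto r_\ell(\tilde\pi')$ is a bijection $L(\tilde\Pi) \to L(\Pi)$, and in particular $\pi$ is a reduction.

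The main obstacle lies in the bookkeeping for (2): one must ensure that the Galois-side lift produced by Theorem~\ref{th:liftw}(2) really corresponds, via $LL_{\mathbb Q_\ell^{ac}}$, to an \emph{integral} supercuspidal of $G$ whose reduction is $\Pi$, and not merely to a supercuspidal sharing the same semi-simple parameter. This relies on the compatibility \eqref{eq:LLrell} of the correspondence with reduction modulo $\ell$ together with the vanishing of the monodromy part $N$ on supercuspidal representations. Part (1) itself, once phrased Galois-theoretically, is a direct translation of Theorem~\ref{th:liftw}(1) and presents no further difficulty.
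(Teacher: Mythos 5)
Your proposal is correct and follows essentially the same route as the paper: part (1) reduces to Proposition \ref{prop:liftc} when $\ell=2$ and to Theorem \ref{th:liftw}(1) via the length formula and the Langlands correspondence when $\ell$ is odd, while part (2) handles the single exceptional case ($p=2$, positive level, $E_\Pi/F$ unramified) by Theorem \ref{th:liftw}(2) and the rest by Proposition \ref{prop:liftc}(2). The extra bookkeeping you supply (that the Galois-side lift corresponds to an integral supercuspidal reducing to $\Pi$, via \eqref{eq:LLrell} and the vanishing of $N$) is exactly what the paper's terse citation of "the local Langlands correspondence" is meant to cover.
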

\begin{proof}   1) follows from  
 
 -  if $\ell\neq 2$,  Theorem \ref{th:liftw} 1),   the formula \eqref{eq:red'}, and the local Langlands correspondence,

- if $\ell=2$, Proposition \ref{prop:liftc}  1).

2) follows from 

- if  $p=2$ and   $\pi$ is   in an $L$-packet $L(\Pi)$ with $\Pi$ minimal of positive level (hence $\pi$ is supercuspidal, see Corollary \ref{cor:v}) with $E_\Pi/F$  unramified, then $\pi$ lifts to $\mathbb Q_\ell^{ac}$ by
Theorem \ref{th:liftw} 2),   the formula \eqref{eq:red'}, and the local Langlands correspondence.

-  otherwise, Proposition \ref{prop:liftc} 2). 

 \end{proof}
  \begin{remark}

Assume $p\neq 2$.
A pair $(E,\theta)$ where  $E /F$ is a quadratic 
extension of $F$  and $\theta$ is a smooth $R$-character of $E^*$,  is called  {\sl admissible} (\cite{BH06} \S 18.2) if :  

1) $\theta$ does not factorize through $N_{E/F}$ (equivalently is regular with respect to $\Gal(E/F)$) 

2)  If $\theta|_{ 1+P_E }$ does factorize through $N_{E/F}$ (equivalently is invariant under $\Gal(E/F)$), 
then $E/F$ is unramified. 

To an admissible pair $(E,\theta)$ is associated  the two-dimensional  irreducible  $R$-representation $\sigma (E,\theta)=\ind_{W_E}^{W_F} (\theta \circ \alpha_E)$ of $W_F$, and when $R=\mathbb C$
 an explicitly constructed  supercuspidal representation $\pi (E,\theta)$   of $G$  (loc.cit. \S 19). 
 Isomorphism classes of   supercuspidal  complex representations of 
$G$, are parametrized by isomorphism classes of admissible pairs $(E,\theta)$  (loc.cit.\S 20.2).
  The Langlands local  correspondence  sends  $\pi  (E,\theta)$  to  $\sigma (E, \mu \theta) $ where the explicit  ``rectifyer''  $\mu$   is a tame character of $E^*$  depending only on  $\theta|_{1+P_E}$. As the  Langlands correspondence is compatible with automorphisms of $\mathbb C$ preserving $\sqrt  q$, 
 the previous classification in terms of admissible pairs transfers to $R$-representations 
where $R$ is an algebraically closed field of characteristic 0 (given a choice of square root of $q$ in $R$). 
The classification and correspondence for $ R=\mathbb Q_\ell^{ac}$ reduce modulo $\ell \neq p$ (the integrality property for a pair 
$(E,\theta)$ is that $\theta$ takes integral values) to get a similar classification of supercuspidal $\mathbb F_\ell^{ac}$-representations 
  in terms of admissible pairs. The  integral admissible pairs   over $\mathbb Q_\ell^{ac}$ 
that do not reduce to admissible pairs over $\mathbb F_\ell^{ac}$,  yield under reduction  cuspidal but not supercuspidal 
$\mathbb F_\ell^{ac}$-representations. 
\end{remark}

 \subsection{Principal series}\label{S:434}

Notations of \S \ref{SL2}.   We identify     a  smooth $R$-character $\eta$ of $T'$  with  a $R$-character  of $F^*$ and   of $T $ by:
\begin{equation}\label{eq:434}
\eta(\diag(a,d)) = \eta(\diag(a,a^{-1})) = \eta(a)   \ \ \ (a,d\in F^*).
\end{equation}
Proposition \ref{prop:spG} describes $i_{B}^ {G} (\eta)$.  The transfer of the properties (i) to (iv) to   $$i_{B'}^ {G'} (\eta)= (i_{B}^ {G} (\eta))|_{G'}$$is easy and gives:
 
\begin{enumerate}
\item  For  smooth $R$-characters $\eta, \eta'$ of $F^*$,  $[i_{B'}^{G'}(\eta)] $  and  $[i_{B'}^{G'}(\eta')] $  are disjoint   if $ \eta'\neq \eta^{\pm 1}$, and equal if $ \eta'=\eta^{\pm 1}$.
  \item  The smooth dual of   $i_{B'}^{G'}(\eta)$  is   $i_{B'}^{G'}(\eta^{-1})$.
\item    $(i_{B'}^{G'}(\eta ))_U$ has dimension $2$, contains $\eta^{-1} $ and $\eta$ is a quotient.
 \item    $\dim W_Y(i_{B'}^{G'}(\eta))=1$  for all   $ Y\neq 0$. 
\end{enumerate} 
The transfer  of   the  properties  (v)  and (vi) is harder. 

\begin{proposition} \label{v} \begin{enumerate}
  \item $ i_{B'}^{G'} (\eta)$ is reducible if and only if 
  $\eta=q^{\pm \val}$, or $\eta\neq 1$ and $ \eta^2=1$.
   \item  When  $\charf_R\neq 2$, $ i_{B'}^{G'} (\eta_E) $ is  semi-simple of length  $2$, when $E/F$ is a quadratic separable extension,   which is ramified if $q+1=0$ in $R$.
\item When  $\charf_R=2$,      the only reducible   principal series is  $ i_{B'}^{G'} (1)=\ind_{B'}^{G'}(1)$. 
\item  The length of $ i_{B'}^{G'}(q^{-\val})$ and of  $ i_{B'}^{G'}(q^{\val})=\ind_{B'}^{G'}( 1)$  is
$$\lg(\ind_{B'}^{G'} 1)=\begin{cases} 2 \ &\text{if $q+1\neq 0$ in $R$}   \cr
 4  \ &\text{if $q+1= 0$ in $R$ and $\charf_R\neq 2$} \cr
 6 \ &\text{if $\charf_R=2$}
 \end{cases}.
 $$  
\end{enumerate} 
 \end{proposition}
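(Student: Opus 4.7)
The plan is to deduce everything from the identification $i_{B'}^{G'}(\eta) = (i_B^G(\chi))|_{G'}$ in \eqref{eq:i}, where $\chi(\diag(a,d))=\eta(a)$, combined with Proposition~\ref{prop:spG} (which describes $i_B^G(\chi)$) and the $L$-packet machinery developed in \S\ref{S4.2} and \S\ref{pas3}. A principal series $i_{B'}^{G'}(\eta)$ can become reducible for two distinct reasons: either $i_B^G(\chi)$ itself is already reducible on $G$, or $i_B^G(\chi)$ is irreducible but its $L$-packet has size $>1$. The first source yields $\chi_1\chi_2^{-1}=\eta=q^{\pm\val}$ by Proposition~\ref{prop:spG}(v). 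For the second source, when $\charf_R\neq 2$ the Langlands parameter of $i_B^G(\chi)$ is $\sigma=(\eta\oplus 1)\circ\alpha_F$ (Proposition~\ref{ex:1}B), and by \eqref{eq:LLr2} the length of the restriction equals $|X_\sigma|$; Remark~\ref{Hs} shows $X_\sigma\neq\{1\}$ precisely when $\eta=\eta_E$ for some quadratic separable $E/F$, i.e.\ $\eta\neq 1$ and $\eta^2=1$, in which case $|X_\sigma|=2$. This proves (i) and (ii) simultaneously (semisimplicity follows from \eqref{ssfl1}), with the caveat in (ii) that when $q+1=0$ in $R$, Notation~\ref{not:etaE} identifies $\eta_E=q^{\pm\val}$ in the unramified case, which forces $E/F$ ramified to remain outside the excluded principal--series case.

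For (iii), I would use that $\charf_R=2$ forces $p$ odd, hence $q$ odd, so the image of $q$ in $R$ is $1$; thus $q^{\val}=1$ and the condition $\eta=q^{\pm\val}$ becomes $\eta=1$. Moreover in characteristic $2$ the only root of $X^2-1$ in $R$ is $1$, so the condition $\eta^2=1,\eta\neq 1$ is vacuous. Even though the formula $|L(\Pi)|=|X_{\sigma_\Pi}|$ is unavailable in characteristic $2$, Theorem~\ref{thm:LLSL2} tells us that $L$-packets of non-cuspidal irreducible representations are singletons, so irreducible principal series restrict irreducibly to $G'$. Combining these observations, the only reducible case left is $\ind_{B'}^{G'}(1)$.

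For (iv), the composition series of $\ind_B^G(1)$ from Proposition~\ref{prop:spG}(vi) determines the length on $G'$ once one knows the $L$-packet sizes of the composition factors. When $q+1\neq 0$ in $R$, the factors are $1$ and $\St$; the trivial character is its own $L$-packet, and for $\St$ the Langlands parameter has character ratio $q^{\val}\neq \eta_E$ (since $E/F$ unramified with $q+1=0$ is excluded), hence $L(\St)$ is a singleton by Remark~\ref{Hs}, giving total length $2$. When $q+1=0$ in $R$ and $\charf_R\neq 2$, the three composition factors are $1$, $\Pi_0$, and $(-1)^{\val}\circ\det$; the two characters restrict trivially to $G'$ (length $1$ each), while Proposition~\ref{prop:ncl} gives $|L(\Pi_0)|=2$, for a total of $4$. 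When $\charf_R=2$ we are again in the case $q+1=0$ with the same three factors, but now Theorem~\ref{thm:LLSL2} gives $|L(\Pi_0)|=4$ since $\Pi_0$ is cuspidal and not supercuspidal, yielding total length $1+4+1=6$.

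The main technical point is to check that all invocations of $L$-packet cardinalities are legitimate in the respective characteristics, in particular that the exclusion ``$\eta_E = q^{\pm\val}$ iff $E/F$ unramified and $q+1=0$'' is correctly accounted for when delineating parts (ii) and (iv), and that in characteristic $2$ one uses Theorem~\ref{thm:LLSL2} rather than the formula $|L(\Pi)|=|X_{\sigma_\Pi}|$. Once these are in place, the four statements follow at once from the behaviour on $G$ together with the $L$-packet cardinalities.
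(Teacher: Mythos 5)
Your treatment of (i) and (ii) in characteristic different from $2$ is sound and coincides with the paper's route (reducibility of $i_B^G(\eta)$ via Proposition \ref{prop:spG}(v), then $|X_{\sigma}|$ via the Langlands correspondence and Remark \ref{Hs}; the paper also records a direct footnote argument avoiding the correspondence). Your argument for $\St$ when $q+1\neq 0$ is also fine, and slightly different from the paper's, which instead observes that a reducible $\St|_{G'}$ would contain a cuspidal constituent, impossible since $\dim_R\St_U=1$. The genuine problem is circularity in every characteristic-$2$ case. Theorem \ref{thm:LLSL2} cannot be invoked to prove (iii) (nor, therefore, the ``only if'' direction of (i) when $\charf_R=2$): its proof, in cases B and C, takes the lengths of $\Pi|_{G'}$ for non-supercuspidal $\Pi$ as \emph{input} from \S\ref{S:434}, i.e.\ precisely from Proposition \ref{v}. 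The same objection applies to your use of Theorem \ref{thm:LLSL2} for $|L(\Pi_0)|=4$ in (iv), and to your citation of Proposition \ref{prop:ncl} for $|L(\Pi_0)|=2$: that proposition is deduced from Corollary \ref{cor:v}, itself a consequence of Proposition \ref{v}. (The last sub-case is easily repaired without circularity, e.g.\ via \eqref{eq:LLr2} since $\sigma_{\Pi_0}$ has character ratio $q^{\val}=\eta_E$ for $E/F$ unramified when $q+1=0$.)

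What is actually needed, and what the paper supplies, is the following. For (iii): the irreducible components of $i_{B'}^{G'}(\eta)$ are $B$-conjugate and hence partition the set of irreducible constituents of $(i_{B'}^{G'}(\eta))|_{B'}$ into $B$-orbits; since $\charf_R=2$ and $\eta\neq 1$ force $\eta\neq\eta^{-1}$, the character $\eta$ occurs with multiplicity one in this restriction and is fixed by $B$-conjugation, so the partition is trivial and the principal series is irreducible. For the remaining cases of (iv) (where $q+1=0$), the non-circular input is the level-zero cuspidal formula \eqref{eq:case1}, which gives $\lg(\Pi_0|_{G'})=2\,\lg(\sigma_0|_{SL_2(k_F)})$, combined with the appendix computation (Lemma \ref{le:53}, Remark \ref{re:redf}) that $\sigma_0|_{SL_2(k_F)}$ is irreducible when $\charf_R\neq 2$ and has length $2$ when $\charf_R=2$. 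Without these two ingredients your proof of (iii) and of the last two lines of (iv) does not stand.
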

 Note that $\charf_R=2$ implies $q+1= 0$ in $R$.
 \begin{proof}  We show (i)  (ii) and (iii). 

 If $i_B^G( \eta)$ is reducible, then  its restriction 
$ i_{B'}^{G'} (\eta)$ to $G'$ is reducible.   By Proposition \ref{prop:spG},   $i_B^G( \eta)$ is reducible if and only if $\eta =q^{\pm \val}$.  
 
 Assume $i_B^G( \eta)$ irreducible, i.e.  $\eta \neq q^{\pm \val}$. If  $\charf_R\neq 2$,   we have  $X_{ i_{B}^{G }(\eta)}=2 $   if and only  
 $\eta\neq 1$  and $ \eta^2= 1$ by the Langlands correspondence and  Remark  \ref{Hs}\footnote{or directly  because for a  smooth $R$-character $\chi$ of $F^*$,  the property (i) in Proposition \ref{prop:spG} implies     $
(\chi \circ \det)\otimes i_B^G (\eta) \simeq i_B^G (\eta) \Leftrightarrow \chi \eta =\eta \text{ or }  \eta^{-1} \Leftrightarrow  \chi=1 \text{ or } \chi=\eta  \text{ and }  \eta^{2}=1.$}. We have $\eta\neq 1, \eta^2=1$ if and only if $\eta=\eta_E$ for a quadratic separable extension $E/F$, which is ramified if $q+1=0$ in $R$ (notation \ref{not:etaE}) as $\eta \neq q^{\pm \val}$.
 If  $\charf_R=2$, then $p$ is odd, $\eta\neq 1$, and   $i_{B'}^{G '}(\eta) $ is irreducible. Indeed, 
the irreducible components of $i_{B'}^{G '}(\eta) $ are $B$-conjugate  (\S \ref{mrho}). They  give a partition of the  set of irreducible components of $(i_{B'}^{G '}(\eta))|_{B'}$. The character $\eta$ appears with multiplicity $1$ as $\eta \neq \eta^{-1}$, but as it is fixed by $B$,  the partition is trivial, i.e. $i_{B'}^{G '}(\eta) $ is irreducible. 

 (iv) \cite[Example 3.11 Method 2]{Cui23}. We give a proof for the convenience of the reader. When $q+1\neq 0$ in $R$,  the restriction to $G'$ of the Steinberg representation $\St$ of $G$  is irreducible, otherwise it would contain a cuspidal representation as $\dim_R \St_U=1$ which is impossible by \eqref{eq:ss}. When $q+1=0$ in $R$,  the cuspidal $R$-representation $\Pi_0$ (see Proposition \ref{prop:spG})  is   induced from the inflation to $Z GL_2(O_F)$ of a cuspidal  $R$-representation $\sigma_0$ of $GL_2(k_F)$. By \eqref{eq:case1},  $\lg(\Pi_0|_{G'})=2 \, \lg (\sigma_0|_{SL_2 ( k_F)} )$.  The representation  $\sigma_0|_{SL_2 ( k_F)} $ is 
 irreducible if $ \charf_R\neq 2$, and  has length   $2$ if  $ \charf_R= 2$ (Appendix). 
\end{proof}

 \begin{corollary}\label{cor:v} The non-supercuspidal smooth $R$-representations of $G'$ are:

  The trivial character. 

If $q+1\neq 0$ in $R$, the Steinberg $R$-representation $\st=\St|_{G'}$.

The principal series $i_{B'}^{G'}(\eta)$  for the  smooth $R$-characters $\eta$ of $F^*$ with $\eta \neq q^{\pm \val}$ and $\eta\neq \eta_E$  for any quadratic  separable extension $E/F$.

If  $\charf_R\neq 2$,  the two irreducible   components  $\pi_E^{\pm}$ of $i_{B'}^{G'}(\eta_E)$ for a quadratic separable  extension $E/F$, which is ramified if $q+1=0$ in $R$.

If  $\charf_R\neq 2$ and $q+1=0$ in $R$,  the two irreducible components  of $\Pi_0|_{G'}$. 

If $\charf_R=2$,  the four irreducible components  of   $\Pi_0|_{G'}$.

\noindent The only isomorphisms between those representations are $i_{B'}^{G'}(\eta) \simeq i_{B'}^{G'}(\eta^{-1})$ for the irreducible principal series.
 \end{corollary}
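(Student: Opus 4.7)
My plan is to deduce the classification for $G'$ from Corollary \ref{cor:spG} on the $G$-side by restriction, using \eqref{eq:ss} to guarantee that every non-supercuspidal irreducible smooth $R$-representation $\pi$ of $G'$ appears inside $\Pi|_{G'}$ for some non-supercuspidal irreducible smooth $R$-representation $\Pi$ of $G$, and \eqref{re:scL} to reduce to one representative of each twist class $\{(\chi\circ\det)\otimes\Pi\}$ from Corollary \ref{cor:spG}. The content is then a case-by-case description of the constituents of $\Pi|_{G'}$ in each such class, carried out using Proposition \ref{v} and facts already established in its proof.

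The enumeration proceeds as follows. The finite-dimensional twists $\chi\circ\det$ restrict to the trivial character of $G'$. When $q+1\neq 0$ in $R$, every twist of the Steinberg restricts to the same irreducible representation $\st=\St|_{G'}$; its irreducibility was shown in the proof of Proposition \ref{v}(iv) (any cuspidal constituent would contradict $\dim_R\St_U=1$). When $q+1=0$ in $R$, every twist of $\Pi_0$ restricts to $\Pi_0|_{G'}$, whose length is computed from \eqref{eq:case1} as $2\,\lg(\sigma_0|_{SL_2(k_F)})$, yielding $2$ irreducible components if $\charf_R\neq 2$ and $4$ if $\charf_R=2$ by the appendix. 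Finally, for an irreducible principal series $i_B^G(\chi)$ (i.e.\ $\chi_1\chi_2^{-1}\neq q^{\pm\val}$), the isomorphism \eqref{eq:i} gives $i_B^G(\chi)|_{G'}\simeq i_{B'}^{G'}(\eta)$ with $\eta=\chi_1\chi_2^{-1}$, and as $\chi$ ranges over the irreducible parameters $\eta$ ranges over every smooth $R$-character of $F^*$ with $\eta\neq q^{\pm\val}$; by Proposition \ref{v}(i)--(iii), this restricted principal series is irreducible unless $\eta=\eta_E$ for a quadratic separable extension $E/F$ (necessarily ramified when $q+1=0$ in $R$), in which case it splits semisimply into two irreducible pieces $\pi_E^{\pm}$, a phenomenon confined to $\charf_R\neq 2$.

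For the uniqueness clause, two representations in the displayed list coming from different $L$-packets of $G$ have disjoint $G'$-constituents because every irreducible smooth $R$-representation of $G'$ lies in a unique $L$-packet. Within one $L$-packet, the symmetries listed at the end of Corollary \ref{cor:spG}, namely $i_B^G(\chi)\simeq i_B^G(\chi^w)$ and $(\chi\circ\det)\otimes\Pi_0\simeq((-1)^{\val}\chi\circ\det)\otimes\Pi_0$, translate on $G'$ precisely into $i_{B'}^{G'}(\eta)\simeq i_{B'}^{G'}(\eta^{-1})$ for irreducible principal series (the $\Pi_0$-symmetry becoming tautological after restriction). The only real bookkeeping obstacle will be to keep the subcases $\charf_R=2$ vs.\ $\charf_R\neq 2$ and $q+1=0$ vs.\ $q+1\neq 0$ in $R$ properly separated, but since all the nontrivial decompositions are already encoded in Proposition \ref{v} and the appendix, the proof amounts to a careful collation rather than new analysis.
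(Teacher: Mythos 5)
Your proposal is correct and follows exactly the route the paper intends: the corollary is stated without separate proof because it is precisely the collation of Corollary \ref{cor:spG} with the restriction isomorphism \eqref{eq:i}, the cuspidality transfer \eqref{eq:ss}, the decomposition results of Proposition \ref{v} (including the length computation for $\Pi_0|_{G'}$ via \eqref{eq:case1} and the appendix), and the disjointness of distinct $L$-packets. Your handling of the edge cases (ramifiedness of $E/F$ when $q+1=0$, absence of quadratic characters when $\charf_R=2$, and the tautological $\Pi_0$-symmetry) matches the paper's bookkeeping.
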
  
 
We get for non supercuspidal $L$-packets:

\begin{proposition}\label{pro:cnsc} 
  When  $q+1=0$ in $R$, there is 
a unique cuspidal non-supercuspidal  $L$-packet. Its  size is 
$\begin{cases} 2  \ & \text{if} \ \charf_R\neq 2 \\  4 \ & \text{if} \ \charf_R= 2 \end{cases} $.
 
 When  $\charf_R= 2$,  every non-cuspidal $L$-packet is a singleton.

  When  $\charf_R\neq 2$, the non-cuspidal $L$-packets are singletons or of size   $2$. Those  of size $2$ are in bijection with the isomorphism classes of  the quadratic  separable extensions of $F$.
   \end{proposition}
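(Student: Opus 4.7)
The plan is to read off all three assertions from Corollary \ref{cor:spG} (classification of non-cuspidal irreducible $R$-representations of $G$), Corollary \ref{cor:v} (classification for $G'$), and the criterion \eqref{re:scL} that two irreducible $R$-representations of $G$ share an $L$-packet iff they differ by a character twist.

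First I would establish the uniqueness and size of the cuspidal non-supercuspidal $L$-packet. By Corollary \ref{cor:spG} a cuspidal non-supercuspidal irreducible $R$-representation of $G$ exists precisely when $q+1=0$ in $R$, and it is then a character twist of $\Pi_0$; all such twists share the single $L$-packet $L(\Pi_0)$ by \eqref{re:scL}, giving uniqueness. The size of $L(\Pi_0)$ is $\lg(\Pi_0|_{G'})$, computed in the proof of Proposition \ref{v}(iv) as $2\lg(\sigma_0|_{SL_2(k_F)})$; by the appendix this is $2$ when $\charf_R\neq 2$ and $4$ when $\charf_R=2$.

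Next, the $\charf_R=2$ case. Here $p$ is odd, so $q+1=0$ in $R$, and $R^*$ has no non-trivial element of order $2$; hence no non-trivial character $\eta$ of $F^*$ satisfies $\eta^2=1$. Proposition \ref{v}(iii) then says the only reducible principal series of $G'$ is $\ind_{B'}^{G'}(1)$, whose non-trivial subquotients lie in the cuspidal $L(\Pi_0)$; together with Corollary \ref{cor:v}, every non-cuspidal irreducible $R$-representation of $G'$ is the full restriction of an irreducible $R$-representation of $G$, hence a singleton $L$-packet. For $\charf_R\neq 2$ I would run through Corollary \ref{cor:v}: the trivial character, the Steinberg $\st$ (when $q+1\neq 0$) and the irreducible principal series $i_{B'}^{G'}(\eta)$ are full restrictions, giving singletons, while each pair $\pi_E^\pm$ for a quadratic separable $E/F$ (ramified when $q+1=0$) forms an $L$-packet of size $2$, namely $L(i_B^G(\chi))$ with $\chi_1/\chi_2=\eta_E$. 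Injectivity of $E\mapsto L(i_B^G(\chi))$ follows because a character twist of $i_B^G(\chi)$ can only permute $\chi_1,\chi_2$ and hence fixes the unordered ratio $\eta_E$; equivalently one can invoke \eqref{eq:LLr2} together with Remark \ref{Hs} to see that $\eta_E$ is the unique non-trivial element of $X_{\sigma_\Pi}$.

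The main obstacle is the edge case $\charf_R\neq 2$ with $q+1=0$: the unramified quadratic separable extension $E_0/F$ satisfies $\eta_{E_0}=q^{val}$, so $i_B^G(\chi)$ with ratio $\eta_{E_0}$ is reducible on the $G$-side, and the only irreducible $R$-representation of $G$ with semisimple Langlands parameter $(q^{1/2})^{-val}\oplus(q^{1/2})^{val}$ and $N=0$ is the cuspidal $\Pi_0$ from step~1. Thus strictly speaking, the non-cuspidal $L$-packets of size $2$ are in bijection with \emph{all} quadratic separable extensions when $q+1\neq 0$, but only with the \emph{ramified} ones when $q+1=0$, the unramified extension being realized by the cuspidal $L(\Pi_0)$ instead. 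Writing the bijection uniformly across both cases requires this asymmetry to be handled explicitly.
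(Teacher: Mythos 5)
Your proof is correct and takes essentially the same route as the paper, which derives the proposition directly from Proposition \ref{v} and Corollary \ref{cor:v} (the paper offers no separate argument beyond ``we get for non supercuspidal $L$-packets''). Your observation about the edge case is also well taken and matches the paper's own classification: Corollary \ref{cor:v} and Proposition \ref{v}(ii) explicitly restrict the size-$2$ principal-series packets $\{\pi_E^{\pm}\}$ to \emph{ramified} $E$ when $q+1=0$ in $R$ and $\charf_R\neq 2$, since $\eta_{E_0}=q^{\pm\val}$ for the unramified $E_0$ makes $i_B^G(\chi)$ reducible and hands that parameter to the cuspidal packet $L(\Pi_0)$ (whose $X_{\sigma_{\Pi_0}}=\{1,(-1)^{\val}\}$, i.e.\ $E_{\Pi_0}=E_0$). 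So the final bijection assertion of the proposition, read literally, does need the qualification you give; your handling of it is the careful version of what the paper states.
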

This proposition and Corollary \ref{cor:2} imply :

\begin{corollary}\label{cor:tout}  The $L$-packets of size $4$ are cuspidal. \end{corollary}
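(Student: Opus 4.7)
The plan is to argue by contrapositive: show that every non-cuspidal $L$-packet has cardinality at most $2$, so that no $L$-packet of size $4$ can be non-cuspidal. The key ingredient is Proposition \ref{pro:cnsc}, whose classification of non-cuspidal $L$-packets already supplies the required bound on sizes. Since $\charf_R\neq p$, the two cases $\charf_R=2$ (forcing $p$ odd) and $\charf_R\neq 2$ (with $p$ arbitrary) cover all possibilities, so a case split on $\charf_R$ will be exhaustive.

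In the first case, $\charf_R=2$, Proposition \ref{pro:cnsc} states that every non-cuspidal $L$-packet is a singleton, which immediately rules out size $4$. In the second case, $\charf_R\neq 2$, the same proposition asserts that each non-cuspidal $L$-packet has size $1$ or $2$ (with those of size $2$ in bijection with the quadratic separable extensions of $F$), again ruling out size $4$. Thus if $L(\Pi)$ has size $4$, then $\Pi$ cannot be non-cuspidal; by \eqref{eq:ss}, cuspidality is preserved under the $L$-packet relation, so every member of $L(\Pi)$ is cuspidal.

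Corollary \ref{cor:2} is not strictly needed for the deduction, but provides a consistency check in the special case $p$ odd: there is a unique cuspidal $L$-packet of size $4$ (of level $0$, with $E_\Pi/F$ the biquadratic extension), and all other cuspidal packets have size $2$. There is no genuine obstacle here; the statement is a direct reading of the classification assembled in Proposition \ref{pro:cnsc} together with Corollary \ref{cor:v}, so the whole argument is essentially a one-line invocation of the preceding results.
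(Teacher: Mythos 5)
Your proposal is correct and follows essentially the same route as the paper, which derives the corollary directly from Proposition \ref{pro:cnsc} (the paper also cites Corollary \ref{cor:2}, but as you observe, the bound on sizes of non-cuspidal packets already suffices). The case split on $\charf_R$ is exhaustive and the contrapositive reading of Proposition \ref{pro:cnsc} is exactly the intended one-line argument.
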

 
  We consider now the reduction modulo  a prime number $\ell  \neq p$. A 
 non-cuspidal irreducible  $\mathbb Q_\ell^{ac}$-representation   $\tilde \pi$  of $G'$     is integral  except when $\tilde \pi \simeq i_{B'}^{G'}(\tilde \eta)$  for a non-integral smooth $\mathbb Q_\ell^{ac}$-character $\tilde \eta$ of $F^*$.
When $\tilde \pi$ is integral, we deduce from Corollary \ref{cor:v} the length of  the reduction 
 $r_\ell(\tilde \pi)$ modulo $\ell$ of $\tilde \pi$.
\begin{proposition}\label{prop:nc1} 1) The reduction $r_\ell(\tilde \pi)$ modulo $\ell$ of $\tilde \pi$ irreducible non-cuspidal and   integral  is irreducible with the exceptions:
 
\medskip  If $\ell=2$,   $\lg (r_\ell (\tilde st)) =5$, $\lg (r_\ell (\tilde \pi_E^\pm)) =3$, $\lg (r_\ell (i_{B'}^{G'}(\tilde \eta))) =6$   for $\tilde \eta$ of order  a finite power of $\ell$.
 
If  $\ell\neq 2$ and $\ell$ divides $q+1$,    $\lg (r_\ell (\tilde st)) =3$,
$\lg(r_\ell (i_{B'}^{G'}(\tilde \eta))) =4$  for $\tilde \eta$ of order  a finite power of $\ell$,  $\lg(r_\ell (i_{B'}^{G'}(\tilde \eta))) =2$ if $\tilde \eta = \tilde  \eta_{E} \tilde \xi$, for a  ramified quadratic separable extension $E/F$ and a character $\tilde \xi$ of order a power of $\ell$.

\medskip  2)  Each non-cuspidal  irreducible  $\mathbb F_\ell ^{ac}$-representation  of $G'$  is the reduction modulo $\ell$ of  an integral  non-cuspidal irreducible  $\mathbb Q_\ell^{ac}$-representation  of $G'$. 
\end{proposition}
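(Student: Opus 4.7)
The plan is to reduce both parts to explicit computations on the list of non-cuspidal representations supplied by Corollary~\ref{cor:v}, applied to $R=\mathbb Q_\ell^{ac}$ for part~(1) and to $R=\mathbb F_\ell^{ac}$ for part~(2), using that reduction modulo $\ell$ commutes with parabolic induction of integral smooth $F^*$-characters and with restriction from $G$ to $G'$.

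For part~(1), treat the four types of integral irreducible non-cuspidal $\tilde\pi$ of $G'$ separately. The trivial character is self-reducing. For $\tilde{st}$, reducing the exact sequence $0\to\tilde 1\to\ind_{B'}^{G'}\tilde 1\to\tilde{st}\to 0$ modulo $\ell$ gives $0\to 1\to\ind_{B'}^{G'}1\to r_\ell(\tilde{st})\to 0$, and Proposition~\ref{v}(iv) yields the Steinberg lengths $1$, $3$, $5$ in the three regimes $\ell\nmid q+1$, $\ell\mid q+1$ with $\ell$ odd, and $\ell=2$. For an irreducible principal series $i_{B'}^{G'}(\tilde\eta)$, commutation gives $r_\ell(i_{B'}^{G'}(\tilde\eta))=i_{B'}^{G'}(r_\ell(\tilde\eta))$, and Proposition~\ref{v}(i)--(iv) shows the reduction is irreducible except when $r_\ell(\tilde\eta)=q^{\pm\val}$ (the case of an $\ell$-power twist of $\tilde q^{\pm\val}$, reducing to $\ind_{B'}^{G'}1$ of length $4$ or $6$) or, for $\ell$ odd, $r_\ell(\tilde\eta)=\eta_E$ with $E/F$ ramified (the case $\tilde\eta=\tilde\eta_E\tilde\xi$, reducing to $i_{B'}^{G'}(\eta_E)$ of length $2$). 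For $\tilde\pi_E^\pm$, reduce the split decomposition $i_{B'}^{G'}(\tilde\eta_E)=\tilde\pi_E^+\oplus\tilde\pi_E^-$: $r_\ell(\tilde\eta_E)$ equals $\eta_E$ when $\ell$ is odd and $1$ when $\ell=2$ (since $-1=1$ in $\mathbb F_2^{ac}$), so by Proposition~\ref{v}(ii),(iv) the total reduction has length $2$ or $6$; conjugation by any element of $G\setminus G_{\tilde\Pi}$ interchanges the two summands and commutes with reduction, so each reduces with equal length $1$ or $3$ respectively.

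For part~(2), run through Corollary~\ref{cor:v} applied to $R=\mathbb F_\ell^{ac}$ and exhibit an integral lift in each case. The trivial lifts tautologically; the Steinberg (present only when $\ell\nmid q+1$) lifts via $\tilde{st}$, whose reduction is irreducible by part~(1); an irreducible principal series $i_{B'}^{G'}(\eta)$ lifts by taking a Teichm\"uller-type lift $\tilde\eta$ of $\eta$ with values in prime-to-$\ell$ roots of unity. Such a $\tilde\eta$ has finite prime-to-$\ell$ order, so $\tilde\eta\neq\tilde q^{\pm\val}$ automatically (the latter has infinite order), and $\tilde\eta^2\neq 1$ unless $\tilde\eta\in\{1,\tilde\eta_E\}$; the latter would force $\eta\in\{1,\eta_E\}$, contradicting the irreducibility of $i_{B'}^{G'}(\eta)$ in the generic range. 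When $\ell\neq 2$, each $\pi_E^\pm$ lifts via the corresponding summand of $i_{B'}^{G'}(\tilde\eta_E)$, with the bijection $\tilde\pi_E^\pm\leftrightarrow\pi_E^\pm$ fixed by the Whittaker labeling of \S\ref{ss:Wh}: Whittaker functionals are defined by integral pairings, hence their (non)vanishing passes unambiguously to the reduction.

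The principal technical delicacy is the bookkeeping in part~(1)(iii): pinning down which integral lifts $\tilde\eta$ produce the degenerate reductions $q^{\pm\val}$ (captured by the phrase ``$\tilde\eta$ of order a finite power of $\ell$'', understood up to the twist by $\tilde q^{\pm\val}$) versus $\eta_E$ with $E/F$ ramified, and verifying in part~(2) that Teichm\"uller-type lifts steer clear of these reducibility loci so that the resulting characteristic-zero principal series is irreducible.
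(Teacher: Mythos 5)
Your approach --- running through the classification of Corollary~\ref{cor:v}, using that $r_\ell$ commutes with parabolic induction of integral characters and with restriction from $G$ to $G'$, and reading off lengths from Proposition~\ref{v} --- is exactly the deduction the paper intends (its ``proof'' is the one-line remark preceding the statement). Most of your case analysis is right: the Steinberg count via the exact sequence, the Teichm\"uller-lift argument for part~(2), and your correct decoding of the phrase ``$\tilde\eta$ of order a finite power of $\ell$'' as the condition $r_\ell(\tilde\eta)=q^{\pm\val}$.

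One step, however, fails as written. For $\tilde\pi_E^\pm$ with $\ell$ odd you claim the total reduction $i_{B'}^{G'}(\eta_E)$ has length $2$ by Proposition~\ref{v}(ii); but that statement explicitly requires $E/F$ ramified when $q+1=0$ in $R$. When $\ell$ is odd, $\ell\mid q+1$ and $E/F$ is unramified, one has $\eta_E=(-1)^{\val}=q^{\val}$ in $\mathbb F_\ell^{ac}$, so $r_\ell(i_{B'}^{G'}(\tilde\eta_E))=\ind_{B'}^{G'}1$ has length $4$ by Proposition~\ref{v}(iv), and each of $\tilde\pi_{E}^{\pm}$ then reduces with length $2$ --- an exception recorded neither in your proof nor, for that matter, in the statement being proved. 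A related completeness issue affects your principal-series case: for $\ell$ odd with $\ell\nmid q+1$, an irreducible $i_{B'}^{G'}(\tilde\eta_E\tilde\xi)$ with $\tilde\xi\neq 1$ of $\ell$-power order is integral and reduces to $i_{B'}^{G'}(\eta_E)$, of length $2$, so the exceptional locus is not confined to the headers $\ell=2$ and $\ell\mid q+1$, and your restriction of the length-$2$ criterion to ramified $E$ is not forced by Proposition~\ref{v}(ii) in that regime. None of this affects part~(2), but part~(1) needs these extra entries (or an argument excluding them, which Proposition~\ref{v} does not provide).
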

 
 \section{Local Langlands $R$-correspondence for $SL_2(F)$ } 
\subsubsection{} If $(\sigma, N)$  is  a two-dimensional  Deligne $R$-representation  of the Weil group $W_F$  (\S \ref{S:2d}),  a choice of  a basis of the space of $\sigma$ gives a Deligne morphism of $W_F $ into $GL_2(R)$ \footnote{We use the same notation $(\sigma, N)$ for the Deligne morphism of $W_F $ into $GL_2(R)$}. In this way equivalence classes of two-dimensional  Deligne $R$-representation  of $W_F$ identify  with Deligne morphism of $W_F $ into $GL_2(R)$, up to $GL_2(R)$-conjugacy.

 A Deligne morphism of $W_F $ into $PGL_2(R)$ is a  pair $(\sigma,N)$  where $\sigma:W_F\to PGL_2(R)$ is a smooth morphism, semisimple in the sense that if  $\sigma (W_F)$ is in a parabolic subgroup $ P$ then 
it is in a Levi of $P$, and $N$ is a nilpotent\footnote{$N$ is nilpotent in $\Lie(PGL_2(R))$ if  the Zariski closure of the $PGL_2(R)$-orbit of  $N$  contains   $0$} element in $\Lie(PGL_2(R))$ with the usual 
requirement \eqref{requi}. We say that $(\sigma,N)$  is irreducible if $\sigma (W_F)$    is not contained in a 
proper parabolic subgroup (that means that $N=0$  and the inverse image of $\sigma(W_F)$ 
in $GL_2(R)$  acts irreducibly on $R^2)$. 
The question arises whether  a  Deligne morphism $(\sigma ,N) $ of $W_F $ into $PGL_2(R)$  lifts to a two-dimensional Weil-Deligne $R$-representation. 

When $(\sigma ,N)$ is reducible, we may assume 
that  $\sigma$ takes value in the diagonal torus of $PGL_2(R)$, and that $N$ is upper triangular.  The map  $x\to \diag(x,1) $ modulo scalars is an isomorphism from $R^*$ to this torus, so $\sigma$
comes from an $R$-character $\chi$  of $W_F$, and $\sigma$ lifts to the two-dimensional $\chi \oplus 1$. That deals 
with the case where $N=0$. When $N \neq 0$, then  $(\sigma ,N)$ lifts 
to  $(q^{-\val}\oplus 1,N)$.  

The following lemma answers  the question, more generally for irreducible Deligne morphisms 
of $W_F$ into $PGL_n(R)$    for  integers $n\geq 2$ (the definitions in the first alinea  for $n=2$ generalize to $n>2$). 

\begin{lemma} Any irreducible smooth morphism $\rho:W_F \to PGL_n(R)$   has 
finite image and its natural extension to $\Gal_F$ lifts to an irreducible smooth $R$-representation 
of $\Gal_F$ of dimension $n$. 
\end{lemma}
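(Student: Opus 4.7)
The proof splits into two independent parts: (a) $\rho(W_F)$ is finite, and (b) the natural extension $\bar\rho\colon\Gal_F\to PGL_n(R)$ lifts to a smooth linear representation.

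For (a), smoothness of $\rho$ applied to the profinite group $I_F$ gives that $H:=\rho(I_F)$ is finite, so one reduces to showing that $g:=\rho(\Fr)$ has finite order modulo $H$. Since $I_F\triangleleft W_F$, $g$ acts by conjugation on $H$, and finiteness of $\Aut(H)$ forces $g^k\in H\cdot Z_{PGL_n(R)}(H)$ for some $k\geq 1$; thus modulo $H$, $g^k$ lies in the centralizer $C:=Z_{PGL_n(R)}(H)$. One then shows $C$ is small enough to force $g^k$ to be of finite order, using irreducibility. Let $\wt G,\wt H\subset GL_n(R)$ be the preimages of $\rho(W_F)$ and $H$; by hypothesis $\wt G$ acts irreducibly on $R^n$. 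By Clifford theory, $R^n$ decomposes into $\wt H$-isotypic blocks permuted by $\wt G$; $g$ acts through a finite permutation of these blocks, and on each stabilized block Schur's lemma applied to the $\wt H$-action shows that a power of $g$ becomes scalar in $GL_n(R)$, hence trivial in $PGL_n(R)$. In the case $n=2$ needed by the paper, the argument unfolds explicitly as a dichotomy: either $\wt H$ acts irreducibly on $R^2$, whence Schur gives $C=\{1\}$ directly; or $\wt H$ is diagonalizable on two lines $\ell_1,\ell_2$, in which case $g$ either preserves each line (contradicting irreducibility of $\rho$) or exchanges them via an antidiagonal matrix whose square is a scalar multiple of the identity, so $g$ has order $2$ in $PGL_2(R)$.

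For (b), once $\rho(W_F)$ is finite, $\ker(\rho)$ is open of finite index in $W_F$ and its closure in $\Gal_F$ is open of the same finite index; hence $\rho$ factors naturally through $\bar\rho\colon\Gal_F\to PGL_n(R)$ with the same finite image $G$. To lift $\bar\rho$ to $\wt\rho\colon\Gal_F\to GL_n(R)$, I invoke Tate's theorem on lifting continuous projective representations of Galois groups of local fields to linear representations; the key inputs are pro-solvability of $\Gal_F$ (Iwasawa) and the vanishing of the relevant cohomological obstruction in $H^2(\Gal_F,R^*)$ coming from the finite Schur multiplier of $G$. The resulting lift $\wt\rho$ is $n$-dimensional by construction and irreducible as a linear representation precisely because $\rho$ is projectively irreducible.

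The main obstacle is carrying out the Clifford-theoretic step cleanly for general $n$; for $n=2$, the explicit dichotomy above suffices, and this is the only case actually needed in the paper.
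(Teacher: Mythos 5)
Your overall strategy for part (a) coincides with the paper's --- smoothness makes $H=\rho(I_F)$ finite, and everything reduces to showing that the image of a Frobenius has finite order, using irreducibility --- and part (b) is exactly the paper's step (the proof of Tate's theorem transported from $\mathbb C$ to $R$). But the Clifford-theoretic step you propose for general $n$ has a genuine gap. On a stabilized $\wt H$-isotypic block $V_i\cong W_i^{\oplus r_i}$, an $\wt H$-equivariant endomorphism lies in $\End_{\wt H}(V_i)\cong M_{r_i}(R)$, not in $R$, so Schur's lemma yields a scalar only when $r_i=1$; and even if a power of $g$ were scalar on each block, the scalars could differ between blocks, and a block-scalar matrix such as $\diag(\lambda,1)$ with $\lambda$ not a root of unity has infinite order in $PGL_n(R)$. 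So neither scalarity in $GL_n(R)$ nor finite order in $PGL_n(R)$ follows from what you establish. (A secondary loose end: $Z_{PGL_n(R)}(H)$ need not be finite in general, and even for $n=2$ with $\wt H$ acting irreducibly it need not be trivial --- the Klein four-group is its own centralizer in $PGL_2(R)$; what is true, and what you implicitly use, is that the obstruction to honest commutation is a character of the finite group $H$.)

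The repair is the paper's argument, and it is close to what you already have: after arranging that $g^d$ centralizes $H$ in $PGL_n(R)$ \emph{and} all of $\rho(W_F)$, pick a lift $A\in GL_n(R)$ of $g^d$. The commutator of $A$ with a lift of $\rho(i)$ is a scalar $\mu(i)$, and $i\mapsto\mu(i)$ is a character of $I_F$ factoring through the finite group $H$, hence of some finite order $e$. Then $A^e$ commutes honestly with the entire preimage $J$ of $\rho(W_F)$ in $GL_n(R)$ (with the scalars, with the lifts of $H$, and with the chosen lift of $\Fr$). Since $J$ acts irreducibly on $R^n$ --- this is where irreducibility must be invoked for the full group rather than for $H$ alone --- Schur's lemma forces $A^e$ to be scalar, so $g$ has finite order. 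Your explicit $n=2$ dichotomy does work (after disposing of the degenerate case $H=\{1\}$, where $\rho(W_F)$ would be cyclic, hence triangulable, hence reducible), but the lemma is stated and proved in the paper for all $n\geq 2$, so the general case cannot simply be set aside.
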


\begin{proof}  Because the inertia group $I_F$ of $W_F$ is profinite and $\rho$  is smooth, $\rho(I_F)$ is finite.  
Let $\phi$ be a Frobenius element in $W_F$. If  the order of $\rho(\phi)$  is finite, then  $\rho(W_F)$ is finite, so $\rho$  extends by continuity to a smooth $R$-representation $\rho'$ of $\Gal_F$. 
  The proof of Tate's theorem (\cite{S77} \S 6.5) applies with $R$ instead of  $\mathbb C$ and that 
shows that $\rho'$ lifts to a smooth $R$-representation of $\Gal_F$.  
Let us show that  $\rho(\phi)$ has finite order. Since  $\rho(\phi)$   acts by conjugation on $\rho(I_F)$ which
  is finite, a power $\rho(\phi^d)$  for some positive $d$ acts trivially  on $\rho(I_F)$. 
But it also acts trivially on $\rho(\phi)$, hence on all of $\rho(W_F)$. Let $A\in GL_n(R) $ be 
a lift of $\rho(\phi^d)$. For $B\in GL_n(R)$, the commutator $(A,B)$ depends only 
on the image of $B$ in $PGL_n(R)$, and if $B$ has image $\rho(i)$ for   $i\in  I_F$, then 
$(A,B)$ is a scalar $\mu(i)$. If $B' \in GL_n(R)$ has image $\rho(i')$ for  $i'\in  I_F$, then 
$A(BB')A^{-1}= ABA^{-1}AB'A^{-1}$, giving $\mu(ii')=\mu(i)\mu(i')$, so conjugation by $A$ induces 
a morphism  $\mu: I_F\to R^*$. Since  $\rho(I_F)$ is finite,  a power $A^e$ for some positive $e$ 
commutes with the inverse image $J$ in $GL_n(R)$ of $\rho(W_F)$. Let $V$ be an eigenspace of $A^e$. 
It is stable under  $J$. If $V\neq R^n$, that yields a proper parabolic subgroup 
$P$ (the image in $ PGL_n(R)$ of the stabilizer of $V$) of $PGL_n(R)$ which contains $\rho(W_F)$, contrary 
to  the hypothesis. So $A^e$ is scalar, which implies that $\rho(\phi)$  has finite order dividing $de$. 
\end{proof}
 Two  $2$-dimensional Deligne $R$-representations  of $W_F$ in $GL_2(R)$   are twists  of each other by a smooth $R$-character of $W_F$ if and only if they give the same Deligne morphism of $W_F$ in  $PGL_2(R)$ 
if and only if the two corresponding irreducible smooth $R$-representations $\Pi, \Pi'$ of $G$   are twists  of each other by a smooth $R$-character of $G$  \eqref{eq:LLR} if and only if 
$\Pi$ and $\Pi'$ define the same $L$-packet $L(\Pi)=L(\Pi')$ of  irreducible smooth $R$-representations 
of $G'$ \eqref{re:scL}.
  
\subsubsection{}   From the above the local Langlands correspondence for $G$ induces  a bijection between $L$-packets of  irreducible smooth $R$-representations 
of $G'$ and   Deligne morphisms of  $W_F$ in $ PGL_2(R)$ up to $PGL_2(R)$-conjugacy. 
We would like to understand the internal structure of a given packet 
  in terms of an  associated Deligne morphism $W_F\to PGL_2(R)$ (called its   $L$-parameter).
  
   Let $\Pi$ be an  irreducible smooth $R$-representation of $G$.
The $L$-packet $L(\Pi)$ is a principal homogeneous space of $G/G_\Pi$.
 The packet containing the trivial representation of $G'$ is a singleton, so the parametrization is trivial. 
When $L(\Pi)$ is a packet of infinite-dimensional representations of $G'$ we take as a base point in $L(\Pi)$ the element   with non-zero Whittaker model
with respect to the character $\psi$ of $F$ (that is, $\theta_0$  of $U$) fixed in \S \ref{ss:Wh}. 
Let $C_\Pi$ denote  the centralizer  of the   image    in   $PGL_2(R)$ of a  Deligne morphism $(\sigma_\Pi , N_\Pi)$ of $W_F$  in   $GL_2(R)$ associated to $\Pi$,  and $S_\Pi $ the component group  of  $C_\Pi$. We shall compute $C_\Pi$ and $S_\Pi$, and when $\charf_R\neq 2$ we shall construct a canonical isomorphism from $G/G_\pi$ onto the  $R$-characters of $S_\Pi$. In this  way
 we get  an enhanced local 
Langlands correspondence  for $SL_2(F)$ in the sense of \cite{ABPS16}, \cite{AMPS17} if   $\charf_R\neq 2$ but not if  $\charf_R=2$.  
 J.-F. Dat tells us that  our results for  $\charf_R=2$ should still be compatible with the stacky approach
of Fargues and  Scholze to the semisimple Langlands correspondence. For example, for a supercuspidal $R$-representation $\Pi$ 
of $G$, the two components of  $\Pi|_{G'}$ should be indexed by the two irreducible $R$-representations
of the group scheme $\mu_2$.

 The group of  $R$-characters of $G/G_\Pi$ is $X_\Pi$, and $X_\Pi= \{ \chi \circ \det \ | \ \chi\in X_{(\sigma_\Pi , N_\Pi)}\}$ \eqref{eq:LLr}. We now construct a homomorphism $\varphi: X_{(\sigma_\Pi , N_\Pi)} \to S_\Pi$.
Let $\chi\in X_{(\sigma_\Pi , N_\Pi)} $. By definition, there exists $A\in GL_2(R)$ such that $AN_\Pi= N_\Pi$ and $A \,\sigma_\Pi (w) \,A^{-1}=\chi(w) \,\sigma_\Pi (w) $ for $w\in W_F$. The image $\overline A$ of $A$ in $PGL_2(R)$ belongs to $C_\Pi$ and we shall show that its image $\varphi (\chi)$ in $S_\Pi$ does not depend on the choice of $A$.

 \begin{theorem} \label{prop:LLSL2} The map  $\varphi: X_{(\sigma_\Pi , N_\Pi)} \to S_\Pi$ is a group isomorphism, and   $S_\Pi=\{1\}, \mathbb Z/ 2 \mathbb Z$ or $ \mathbb Z/ 2 \mathbb Z\times \mathbb Z/ 2 \mathbb Z$.
 
  When $\charf_R=2$, $S_\Pi=\{1\}$ for each $\Pi$,  but the length of   $\Pi|_{G'}$ is 

$1$ if $\Pi$ is not cuspidal,

$2$ if $\Pi$ is supercuspidal,

$4$ if $\Pi$ is   cuspidal not supercuspidal. \end{theorem}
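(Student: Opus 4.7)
The plan is to interpret $\varphi$ via a short exact sequence and reduce the whole statement to a case analysis of the Deligne parameter $(\sigma_\Pi, N_\Pi)$. Given $A \in GL_2(R)$ whose image $\overline{A} \in PGL_2(R)$ lies in $C_\Pi$, the conditions defining $C_\Pi$ translate to the existence of a smooth $R$-character $\chi_A$ of $W_F$ with $A\sigma_\Pi(w)A^{-1} = \chi_A(w)\sigma_\Pi(w)$ together with $AN_\Pi A^{-1} = N_\Pi$ (a potential scalar additive shift from passing to $\Lie(PGL_2)$ is ruled out because a nonzero scalar matrix cannot be added to a nilpotent and remain nilpotent). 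The assignment $A \mapsto \chi_A$ is a homomorphism whose image is $X_{(\sigma_\Pi,N_\Pi)}$ and whose kernel is the common centralizer $Z_\Pi := Z_{GL_2(R)}(\sigma_\Pi(W_F), N_\Pi)$. Passing to the quotient by the scalars $R^* \subset Z_\Pi$ yields the exact sequence
\[ 1 \longrightarrow Z_\Pi/R^* \longrightarrow C_\Pi \longrightarrow X_{(\sigma_\Pi,N_\Pi)} \longrightarrow 1, \]
and the finiteness of $X_{\sigma_\Pi}$ (Proposition~\ref{le:Hs}, Remark~\ref{Hs}) shows that $Z_\Pi/R^*$ has finite index in $C_\Pi$, hence contains $C_\Pi^0$. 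Bijectivity of $\varphi$ thus reduces to the single assertion that $Z_\Pi/R^*$ is connected, i.e.\ $Z_\Pi/R^* = C_\Pi^0$.

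I would verify the connectedness of $Z_\Pi/R^*$ by splitting on $\sigma_\Pi$. If $\sigma_\Pi$ is irreducible, Schur's lemma gives $Z_\Pi = R^*$, so $Z_\Pi/R^* = 1$. If $\sigma_\Pi = \chi \oplus \chi$ is scalar, \eqref{requi} forces $N_\Pi = 0$, so $Z_\Pi/R^* = PGL_2(R)$. If $\sigma_\Pi = \chi_1 \oplus \chi_2$ with $\chi_1 \neq \chi_2$, diagonalise: $Z_{GL_2(R)}(\sigma_\Pi(W_F))$ is the diagonal torus $T$; when $N_\Pi = 0$ this yields $Z_\Pi/R^*$ equal to the diagonal torus of $PGL_2(R)$, and when $N_\Pi \neq 0$ (Steinberg-type) $N_\Pi$ is strictly triangular in this basis, its stabiliser in $T$ reduces to scalars, and again $Z_\Pi/R^* = 1$. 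In every case $Z_\Pi/R^*$ is connected, so $\varphi : X_{(\sigma_\Pi,N_\Pi)} \xrightarrow{\sim} S_\Pi$. The structural claim $S_\Pi \in \{1, \mathbb Z/2\mathbb Z, (\mathbb Z/2\mathbb Z)^2\}$ then follows by reading off $X_{(\sigma_\Pi,N_\Pi)}$ in each case: Proposition~\ref{le:Hs} for irreducible $\sigma_\Pi$ (the only possibility of order $4$ being biquadratic and $(\mathbb Z/2\mathbb Z)^2$), Remark~\ref{Hs} for reducible $\sigma_\Pi$ with $N_\Pi = 0$ (order at most $2$), and a direct check for $N_\Pi \neq 0$ that anti-diagonal matrices fail to commute with a strictly triangular $N_\Pi$, forcing $X_{(\sigma_\Pi,N_\Pi)} = \{1\}$.

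For the $\charf_R = 2$ assertion, the key observation is that $R^*$ has no element of order $2$, so every character $\chi$ of $W_F$ satisfying $\chi^2 = 1$ is trivial; hence $X_{(\sigma_\Pi,N_\Pi)} = \{1\}$ for every $\Pi$, and the isomorphism $\varphi$ forces $S_\Pi = \{1\}$ unconditionally. The asserted cardinalities of $L(\Pi)$ cannot be read off $\varphi$ in this setting and must instead be collected from earlier results: non-cuspidal $L$-packets are singletons by Corollary~\ref{cor:v}; supercuspidal $L$-packets have size $2$ by Propositions~\ref{prop:ty1} and~\ref{prop:3.12} (note that $\charf_R = 2$ forces $p$ odd); and the unique cuspidal non-supercuspidal $L$-packet has size $4$ by Proposition~\ref{pro:cnsc}, which applies because $q + 1 = 0$ in $R$ automatically when $\charf_R = 2$ and $p$ is odd. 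The principal technical subtlety is the connectedness claim $Z_\Pi/R^* = C_\Pi^0$, which \emph{a priori} could fail with $Z_\Pi/R^*$ a proper finite-index subgroup of $C_\Pi^0$; the basis-dependent case analysis above sidesteps this by exhibiting $Z_\Pi/R^*$ explicitly as either trivial, a torus, or all of $PGL_2(R)$.
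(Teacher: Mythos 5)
Your route is genuinely different from the paper's, and in my view cleaner. The paper proves the theorem by running through the representations of $G$ one family at a time (supercuspidal, where it imports the isomorphism $X_{\sigma_\Pi}\to C_\Pi$ from \cite[Proposition 6.4]{CLL23}; irreducible principal series; subquotients of $\ind_B^G 1$), computing $C_\Pi$ explicitly in each case and checking $\varphi$ there. You instead package everything into the exact sequence $1\to Z_\Pi/R^*\to C_\Pi\to X_{(\sigma_\Pi,N_\Pi)}\to 1$ and reduce the whole first assertion to the connectedness of $Z_\Pi/R^*$, which simultaneously gives well-definedness and bijectivity of $\varphi$ and removes the dependence on \cite{CLL23}. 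The case analysis then lives entirely on the Galois side. The $\charf_R=2$ bookkeeping at the end is assembled from the same earlier results the paper uses, and is fine.

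There is, however, one concrete false step. You claim that if $\sigma_\Pi=\chi\oplus\chi$ is scalar then \eqref{requi} forces $N_\Pi=0$. Condition \eqref{requi} for scalar $\sigma$ reads $(1-|\alpha_F(w)|_F)N=0$, i.e.\ $(1-q)N=0$; this kills $N$ only when $q\neq 1$ in $R$. But the scalar case occurs precisely for subquotients of twists of $\ind_B^G 1$ with $q=1$ in $R$ (then $(q^{1/2})^{\pm\val}$ are equal), and there $N_\Pi$ can be, and for the Steinberg representation is, nonzero --- this is exactly the subcase $q-1=0$, $N_\Pi\neq 0$ of the paper's part C. So your trichotomy on $(\sigma_\Pi,N_\Pi)$ misses the configuration ``$\sigma_\Pi$ scalar, $N_\Pi\neq 0$''. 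The gap is easily closed: in that configuration $Z_{GL_2(R)}(\sigma_\Pi(W_F))=GL_2(R)$, and the centralizer of a nonzero nilpotent $N_\Pi$ in $GL_2(R)$ is $\{aI+bN_\Pi: a\in R^*, b\in R\}$, whose image in $PGL_2(R)$ is isomorphic to the additive group, hence connected; moreover $X_{(\sigma_\Pi,N_\Pi)}=\{1\}$ since $\chi\otimes\sigma_\Pi\simeq\sigma_\Pi$ already forces $\chi=1$ for scalar $\sigma_\Pi$. With that case added, your argument is complete. (Be aware also that the paper's explicit $C_\Pi$ in the $N_\Pi\neq 0$ cases --- the diagonal torus, resp.\ the upper triangular subgroup --- only stabilizes the line $RN_\Pi$ rather than fixing $N_\Pi$, so its normalization of the condition on $N$ differs from yours; this does not affect $S_\Pi$, which is trivial in all those cases under either convention, but you should fix one convention and state it.)
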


   \begin{proof} A) Let $\Pi$ be a  supercuspidal  $R$-representation of $G$. Then $\sigma_\Pi  $ is irreducible and $N_\Pi=0$ (Proposition \ref{ex:1}).

When $\charf_R\neq 2$,   in \cite[Proposition 6.4]{CLL23},  an isomorphism $\phi:X_{\sigma_\Pi}\to C_\Pi$ is constructed when  $\charf_F\neq 2 $, but the proof does not use this hypothesis.  
 This implies $C_\Pi=S_\Pi$. One checks that $\varphi (\chi)=\phi(\chi)$ for $\chi \in X_{\sigma_\Pi }$, 
an isomorphism. 

When $\charf_R=2$, $p$ is odd,  the  cardinality of   $L(\Pi)$ is $2$   or $4$  (Propositions \ref{prop:ty1}, \ref{prop:3.12}),   $\sigma_\Pi =\ind_{W_E}^{W_F}(\theta) $ where $E/F$ is a quadratic separable extension and $\theta$
 a smooth $R$-character of $W_E$ (or equivalently of $E^*$) different from its conjugate $\theta^\tau$ by a generator $\tau$ of $\Gal(E/F)$.   The character  $\theta^\tau/\theta$ has finite odd order, say $m$, and
   $\sigma_\Pi (W_F)\subset GL_2(R) $  is a dihedral group of order $2m$, generated by a matrix
$ \begin{pmatrix} a &0\\0&a^{-1}
\end{pmatrix}$ of order $m$ and  $\begin{pmatrix} 0 &1\\1&0
\end{pmatrix}$ 
modulo conjugation in $GL_2(R)$. So $C_\Pi =\{1\}$ and there is no enhanced correspondence.

\medskip B)   Let $\Pi=i_B^G(\eta)$ be an irreducible normalized principal series  with the notation of \eqref{eq:434}, with  $\eta\neq q^{\pm \val} $. The cardinality of $L(\Pi)$  is $2$ if $\eta\neq 1, \eta^2=1$, and 
$L(\Pi)$ is a singleton otherwise. We have 
 $\sigma_{\Pi}=(\eta \oplus 1)\circ \alpha_F$, $N_{\Pi}=0$ (Proposition \ref{ex:1})  and we easily see that  $C_{\Pi}$  is:

 $PGL_2(R)$ when $\eta =1$, so $S_\Pi=\{1\}$.

 The diagonal torus when $\eta\neq 1, \eta^2\neq 1$, $S_\Pi=\{1\}$.

The normalizer of the trivial torus when  $\eta\neq 1, \eta^2=1$, so $\charf_R\neq 2$ and  $S_\Pi=  \mathbb Z/ 2 \mathbb Z$. We have $X_\Pi=\{1, \eta \circ \det\}$  (Remark \ref{Hs}) and $\varphi(\eta)$ is not trivial, so $\varphi: X_\Pi \to S_\Pi$ is an isomorphism.

\medskip  C) Let $\Pi$ be   an irreducible subquotient of $\ind_{B}^{G}1$.  The   length of $\Pi|_{G'}$ is   (\S \ref{S:434}):
  
 $1$   when $\Pi=1 , q^{val} \circ \det$ or $\St$, 
 
 $2$ when   $\Pi=\Pi_0$ if $\charf_R\neq 2$ and $q+1=0$ in $R$,
 
  $4$ when   $\Pi=\Pi_0$  if $\charf_R=2$.

\noindent  We have $\sigma_\Pi =((q^{1/2})^{\val } \oplus (q^{-1/2})^{\val} )\circ \alpha_F$   (formula \eqref{eq:LL}, Proposition \ref{ex:1}). 
The centralizer $C'_\Pi$ of the image of $\sigma_\Pi (W_F)$ in $PGL_2(R)$  is the image in $PGL_2(R)$ of 
  $$ \{A \in GL_2(R) \ | \ A \diag( q, 1)A^{-1} \in R^* \diag( q, 1)\}=$$ 
$$\{A =\begin{pmatrix} x &y\\z&t
\end{pmatrix} \in GL_2(R) \ |  \ \begin{pmatrix} xq &y\\zq&t
\end{pmatrix}  = u \begin{pmatrix} xq&yq\\z&t
\end{pmatrix} \ \text{for some } u\in R^*\}.$$

\noindent If $x\neq 0$ or $t\neq 0$  then $u=1$, and if $y\neq0$ then $qu=1$. If $z\neq 0$ then $u=q$. 
 So, $C'_\Pi$ is: 

  $PGL_2(R)$ if  $q-1=0$ in $R$.

 The  diagonal torus when $q-1 \neq 0$ in $R$ and $q+1\neq 0$ in $R$.
  
 The  centralizer of the diagonal torus  if  $q-1\neq 0$ in $R$ and $q+1= 0$ in $R$.
 
\noindent We have $N_\Pi= 0$, hence $C_\Pi= C'_\Pi$, when:

  $\Pi=1$   when $q+1\neq 0$ in $R$, hence  $C_1=PGL_2(R)$ if $q+1\neq 0, q-1=0$ in $R$  (so $\charf_R\neq 2$) 
 and $C_1$ is the diagonal torus  if $q+1\neq 0,q-1\neq 0$ in $R$.
In both cases $S_1=\{1\}$.

  $\Pi=\Pi_0$ cuspidal when $q+1=0$ in $R$.  Recalling \S \ref{S:434},
  when  $\charf_R\neq 2$, $\lg (\Pi_0|_{G'})=2$ and $C_{\Pi_0}$ is the normalizer of the diagonal torus and  $S_\Pi=  \mathbb Z/ 2 \mathbb Z$. We have $X_{\sigma_{\Pi_0}}=\{1, (-1)^{val}\}$ (Corollary \ref{cor:spG}). As in B), $\varphi((-1)^{val})$ is not trivial, so $\varphi: X_\Pi \to S_\Pi$ is an isomorphism.

    But
when  $\charf_R=2$,  then $q-1=0$ in $R$ and $C_{\Pi_0}=PGL_2(R)$. As $S_{\Pi_0}=\{1\}$ and $\lg (\Pi_0|_{G'})=4$,  there is no enhanced correspondence.

 We suppose now   $N_\Pi\neq 0$. Then (Proposition \ref{ex:1}) $\Pi=\St$   when $q+1\neq 0$ in $R$, $\Pi$ is a character when $q+1=0$ in $R$. In both cases $ \Pi|_{G'}$ is irreducible  (Corollary \ref{cor:v}).  We can suppose that $N_\Pi$ is  
  a non-trivial upper triangular matrix. A similar analysis gives that $C_\Pi$ is 

   the diagonal torus  if if  $q-1\neq 0$ in $R$,
   
     the upper triangular subgroup if  $q-1=0$  in $R$. 
  
 \noindent In both cases $S_\Pi=\{1\}$.
  \end{proof}
\begin{remark}  We computed  the centralizer  $C_\Pi \subset PGL_2(R)$:

 $C_\Pi  $ is finite if and only if $\Pi$ is supercuspidal.

When  $C_\Pi$ is connected, it is isomorphic to $PGL_2(R)$, the upper triangular subgroup, the diagonal subgroup,  or  $\{1\}$.
 
When $C_\Pi$ has two connected components it is isomorphic to   the normalizer of the diagonal subgroup or to $ \mathbb Z/ 2 \mathbb Z$.

 When $C_\Pi$ has four connected components, it is isomorphic to the  Klein group $ \mathbb Z/ 2 \mathbb Z\times \mathbb Z/ 2 \mathbb Z$.
\end{remark}

\subsubsection{}\label{S:464} Assume $\charf_R=2$. A kind of lifting has been introduced by   \cite{TV16} and  generalized in \cite{F23}.
They consider a (connected) split reductive $F$-group $\underline H$, equipped with an involution $\iota$ such that the group
of fixed points $ \underline H ^\iota$ is (connected) split reductive. They set up a correspondence, called linkage, 
between $\iota$-invariant irreducible smooth $R$-representations $\Pi$  of $H=\underline H(F)$ and irreducible smooth
$R$-representations of $H^\iota= \underline H ^\iota(F)$. More precisely they show that there is a unique isomorphism $\iota_\Pi$
from $\Pi$  to  its conjugate $\Pi^\iota$ by $\iota$,  which has trivial square.  They say that an irreducible smooth $R$-representation $\pi$ of $H^\iota$ is linked with $\Pi$  if
the Frobenius twist of $\pi$ occurs as a subquotient of the representation $T(\Pi) = \Ker(1+\iota_\Pi) )/ \im(1+\iota_\Pi) ) $ of 
$ H^\iota$.  They  ask for an interpretation of 
linkage in terms of dual groups.

Let us consider the special case  where $\underline H= GL_2$ and $\iota(g)=  g/\det(g)$ \footnote{$\iota(g)$ is conjugate to the transpose of the inverse of $g$}.
 Then  $\underline H^\iota=
 SL_2$, so $H=G, H^\iota=G'$.
Let $\Pi$ be an irreducible smooth $R$-representation of $G$ of central character $\omega_\Pi $. It is invariant under $\iota$ if and only if  $\Pi \simeq \Pi \otimes (\omega_\Pi  \circ \det)$.
 This implies that $\omega_\Pi $ has  trivial square, so is trivial because
$\charf_R=2$. In other words, $\Pi$ is $\iota$-invariant  if and only if $\Pi$  factors to a representation of
$PGL_2(F)$. It follows that then $\iota_\Pi $ is identity,  and $T(\Pi)$ is simply the 
restriction of $\Pi$ to $G'$, which we have throroughly investigated. In particular $T(\Pi)$ has finite length,
as expected.
The dual group of $\underline H$ over $R$ is $GL_2(R)$, that of $\underline H^\iota$  is $PGL_2(R).$
They ask for an interpretation of linkage in terms of a natural homomorphism from $ PGL_2(R)$ to $GL_2(R)$. 

 Let  $\sigma_\Pi :W_F\to GL_2(R)$ be  the semi-simple $L$-parameter of $\Pi$. The map $\varphi^{-1}(\sigma_\Pi) : W_F\to GL_2(R)$ followed by the quotient map $GL_2(R)\to PGL_2(R)$,  is the  semi-simple $L$-parameter  $\rho_\Pi: W_F\to PGL_2(R)$  of the Frobenius twist of any  constituent $\pi$ of $\Pi|_{G'}$. 

The map $ \Psi(g)= \varphi(g)/\det(g)$ for $g\in GL_2(R)$ where  $ \varphi :x\to x^2$ is the   Frobenius map of $R$, is trivial on scalar matrices, hence  factors through a homomorphism $\Psi:PGL_2(R) \to GL_2(R)$. The homomorphism $\Psi$  is injective of image   $SL_2(R)$.  Now if $\Pi$ is $\iota$-invariant, the determinant  of  $\sigma_\Pi  $   is trivial so $\sigma_\Pi=\Psi \circ \rho_\Pi$ and the conjectures of  \cite[\S 6.3]{TV16} are indeed true in our special case.

   \section{Representations of $SL_2(F)$ near the identity}\label{S5}
   
    \subsection{}  Assume  $
 \charf_F=0$ and $R=\mathbb C$. Let  $H$ be  the group of $ F$-points of a connected reductive group over $F$. We denote by $C_c^\infty(X;\mathbb C)$  the space of smooth complex functions with compact support on  a locally profinite space $X$.  The exponential map $\exp$ from $ \Lie(H)$  to $H$ induces an $H$-equivariant bijection between a neighbourhood of $0$ in $\Lie(H)$ and a neighbourhood of $1$ in $H$. So a  function $ f \in C_c^\infty(H;\mathbb C)$  with support small enough around $1$ gives a smooth function $f \circ \exp$ around $0$ in $C_c^\infty(\Lie(H);\mathbb C)$. Also there are only finitely many nilpotent orbits of $H$ in $\Lie(H)$, for the adjoint action. For each such orbit $\mathfrak O$, there is an $H$-invariant measure on $\mathfrak O$, and a  function $\varphi \in C_c^\infty(\Lie(H); \mathbb C) $ can be integrated along $\mathfrak O$ with respect to that measure, yielding an orbital integral $I_{\mathfrak O}(\varphi)$.  Choosing a non-degenerate invariant  bilinear form on $\Lie (H)$, a non-trivial character of $\Lie(H)$ and a Haar measure on
 $\Lie(H)$  yields a Fourier transform $\hat {\varphi}$ for a   function  $\varphi \in C_c^\infty(\Lie(H); \mathbb C) $.  Fix also a Haar measure $dh$ on $H$. 
 
 \begin{theorem} \label{th:1}Let $\Pi$ be a smooth complex representation of $H$ with finite length.
Then there is an open neighbourhood $ V(\Pi) $ of $1$ in $H$ and for each nilpotent orbit $\mathfrak O$ a unique complex number $c_{\mathfrak O}=c_{\mathfrak O}(\Pi)$  such that if $ f \in C_c^\infty(H;\mathbb C)$ has compact support in $V(\Pi) $ then the trace  $\tr _\Pi (f)$ of the linear endomorphism $\int_H f(h)\,  \Pi(h) \, dh$ is equal to \begin{equation}\label{gex}\tr _\Pi (f)=\sum_{\mathfrak O} c_{\mathfrak O}(\Pi) I_{\mathfrak O}(\hat \varphi) \ \ \text{ where } \ \varphi=f \circ  \exp.
\end{equation}
\end{theorem}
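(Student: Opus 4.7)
The plan is to follow the Harish-Chandra--Howe approach to the local character expansion, which proceeds in four steps: transfer to the Lie algebra, prove that the Fourier transform of the trace distribution is supported on the nilpotent cone near $0$, invoke Harish-Chandra's structure theorem for invariant distributions supported on $\boldN$, and then apply Fourier inversion.

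First I would note that since $\charf_F=0$ and $\Pi$ has finite length, $\Pi$ is admissible (Jacquet), so the linear functional $f\mapsto\tr_\Pi(f)=\tr\bigl(\int_H f(h)\Pi(h)\,dh\bigr)$ makes sense on $C_c^\infty(H;\mathbb{C})$ and defines an $H$-invariant distribution $\Theta_\Pi$ on $H$. Using that $\exp$ is an $H$-equivariant analytic diffeomorphism between an open neighborhood of $0$ in $\Lie(H)$ and an open neighborhood $V(\Pi)$ of $1$ in $H$, one pulls back to obtain an $\Ad(H)$-invariant distribution $\theta_\Pi := \exp^*\Theta_\Pi$ on a neighborhood of $0$ in $\Lie(H)$ (the analytic Jacobian of $\exp$ is $1$ at $0$ and may be absorbed after shrinking). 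The question becomes: is $\theta_\Pi = \sum_{\mathfrak O} c_{\mathfrak O}\,\widehat{I_{\mathfrak O}}$ on some neighborhood of $0$?

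Next, and this is the main analytic step, I would take the Fourier transform and argue that $\hat\theta_\Pi$, restricted to a suitable neighborhood of $0$, is supported on the nilpotent cone $\boldN\subset\Lie(H)$. The tool here is Howe's conjecture for $p$-adic groups (proved by Clozel in characteristic zero): for any compact open subgroup $K$ and any bounded $\Ad(H)$-invariant subset $\Omega\subset\Lie(H)$, the space of $\Ad(H)$-invariant distributions arising as traces of admissible representations, restricted to $\Omega$ and paired against $C_c^\infty(\Omega)^K$, is finite-dimensional. Combining Howe's finiteness with the dilation behavior $\theta_\Pi(\varphi_t) = |t|^{-\dim H}\theta_\Pi(\varphi)$ (where $\varphi_t(X)=\varphi(t^{-1}X)$), standard homogeneity arguments force the support of $\hat\theta_\Pi$ near $0$ to be $\Ad(H)$-stable and stable under dilations, whence it must lie in $\boldN$. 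This is the step I expect to be the main obstacle: it relies on the deep Howe conjecture and careful control of how $\exp$ interacts with the $F^*$-dilation on $\Lie(H)$.

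Then I would invoke Harish-Chandra's theorem classifying $\Ad(H)$-invariant distributions supported on $\boldN$: every such distribution is a finite linear combination $\sum_{\mathfrak O} c_{\mathfrak O}\,I_{\mathfrak O}$ over the finitely many nilpotent orbits $\mathfrak O$, with $c_{\mathfrak O}\in\mathbb{C}$ uniquely determined by the linear independence of the $I_{\mathfrak O}$ near $0$. Applied to $\hat\theta_\Pi$ restricted to a small neighborhood $V$ of $0$, this yields
\[
\hat\theta_\Pi\big|_V \;=\; \sum_{\mathfrak O} c_{\mathfrak O}(\Pi)\, I_{\mathfrak O}.
\]
Finally, Fourier inversion (using that the chosen bilinear form and character identify $\Lie(H)$ with its dual and that $\widehat{\widehat{\,\cdot\,}}$ is the identity up to a sign/normalization already fixed) gives $\theta_\Pi(\varphi) = \sum_{\mathfrak O} c_{\mathfrak O}(\Pi)\,I_{\mathfrak O}(\hat\varphi)$ for every $\varphi\in C_c^\infty(\Lie(H);\mathbb{C})$ with support in $\hat V:=\exp^{-1}(V(\Pi))$. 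Translating back through $\exp$ produces the asserted identity \eqref{gex} for all $f\in C_c^\infty(H;\mathbb{C})$ supported in $V(\Pi)$, and uniqueness of the $c_{\mathfrak O}(\Pi)$ is inherited from the linear independence of the $I_{\mathfrak O}$.
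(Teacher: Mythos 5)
The paper does not prove this theorem at all: it is quoted as a known result, ``first proved by Roger Howe when $H=GL_n(F)$, and the general case is due to Harish-Chandra,'' so there is no in-paper argument to compare against. Your outline is a faithful reconstruction of that classical proof architecture: admissibility of finite-length representations so that $\Theta_\Pi$ is a well-defined invariant distribution, transfer to $\Lie(H)$ via $\exp$, Howe's finiteness conjecture (Howe for $GL_n$, Clozel in general in characteristic zero), the reduction of the germ at $0$ to invariant distributions supported on the nilpotent cone, Harish-Chandra's theorem that these are spanned by the finitely many (convergent, linearly independent) nilpotent orbital integrals, and Fourier inversion. Since each of these ingredients is itself a substantial theorem, your proposal is a correct roadmap rather than a proof, which is exactly the level at which the paper treats the statement. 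One local inaccuracy worth fixing: the character distribution $\theta_\Pi$ does \emph{not} satisfy the dilation identity $\theta_\Pi(\varphi_t)=|t|^{-\dim H}\theta_\Pi(\varphi)$ you assert; homogeneity is a property of the Fourier transforms of the nilpotent orbital integrals (with exponent governed by $\dim\mathfrak O$), and the argument runs by placing $\hat\theta_\Pi$ in the finite-dimensional Howe space spanned by orbital integrals and then using the homogeneity of those basis elements, together with the asymptotic (Jacquet-module) behaviour of the character, to isolate the nilpotent contribution near $0$.
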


This was first proved by Roger Howe when $H=GL_n(F)$, and the general case is due to Harish-Chandra. 
 
 As is usual, we say that a nilpotent orbit $\mathfrak O' $ is smaller than  a nilpotent orbit  $\mathfrak  O $ if $\mathfrak  O'$ is contained in the closure of $\mathfrak O$. With the normalizations   as in  \cite{Va14} we have:
 
 \begin{theorem} \label{th:2}Let $\Pi$ be a smooth complex representation of $H$ with finite length. When $\mathfrak O$ is maximal among the orbits with $c_{\mathfrak O}(\Pi)\neq 0$, then $c_{\mathfrak O}(\Pi)$ is equal to the dimension of generalized Whittaker spaces for $\Pi$  attached to $\mathfrak O$.
\end{theorem}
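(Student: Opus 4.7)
The plan is to follow the Moeglin--Waldspurger--Varma strategy. Fix $N \in \mathfrak{O}$ and, via the Jacobson--Morozov theorem, embed $N$ in an $\mathfrak{sl}_2$-triple $(N, H_0, N^+)$ in $\Lie(H)$. This produces a $\mathbb{Z}$-grading $\Lie(H) = \bigoplus_i \mathfrak{g}_i$ where $\mathfrak{g}_i$ is the $i$-eigenspace of $\ad(H_0)$, and the pairing $(X, Y) \mapsto B(N, [X,Y])$ is a non-degenerate symplectic form on $\mathfrak{g}_1$. Fix a Lagrangian $\mathfrak{l} \subset \mathfrak{g}_1$, set $\mathfrak{u}_N = \mathfrak{l} \oplus \bigoplus_{i \geq 2} \mathfrak{g}_i$, and let $U_N = \exp(\mathfrak{u}_N)$. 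The linear functional $X \mapsto B(N, X)$ on $\mathfrak{u}_N$ exponentiates to a character $\psi_N : U_N \to \mathbb{C}^*$. The generalized Whittaker space of $\Pi$ attached to $\mathfrak{O}$ is then the space of $(U_N, \psi_N)$-coinvariants $\Pi_{U_N, \psi_N}$; its dimension is independent of $\mathfrak{l}$ by a Stone--von Neumann argument.

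First, I would manufacture a family of test functions $f_K \in C_c^\infty(H; \mathbb{C})$, indexed by shrinking compact open subgroups $K \subset H$, whose pullbacks $\varphi_K = f_K \circ \exp$ have Fourier transforms $\widehat{\varphi_K}$ concentrated near $\mathfrak{O}$. Concretely, $\varphi_K$ is assembled from the characteristic function of a small neighbourhood of $N$ inside $\mathfrak{g}_{\geq 2}$ combined with a Schr\"odinger--Weil model on $\mathfrak{g}_1$ adapted to the symplectic form $B(N, [\cdot, \cdot])$. Using $\Ad$-conjugation by the one-parameter subgroup through $H_0$ to rescale each $\mathfrak{g}_i$ and the known homogeneity of nilpotent orbital integrals, one shows that, as $K$ shrinks, $I_{\mathfrak{O}'}(\widehat{\varphi_K}) \to 0$ for every nilpotent orbit $\mathfrak{O}'$ strictly smaller than $\mathfrak{O}$, while $I_{\mathfrak{O}}(\widehat{\varphi_K})$ stabilizes to a nonzero constant $c(K)$. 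Orbits $\mathfrak{O}'$ incomparable to $\mathfrak{O}$ are excluded because, by the maximality hypothesis on $\mathfrak{O}$ in the germ expansion of $\Pi$, one has $c_{\mathfrak{O}'}(\Pi) = 0$ for every such $\mathfrak{O}'$ of dimension at least $\dim \mathfrak{O}$. The expansion \eqref{gex} therefore collapses to $\tr_\Pi(f_K) = c_{\mathfrak{O}}(\Pi) \cdot c(K)$.

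Second, I would evaluate $\tr_\Pi(f_K)$ spectrally. The decisive point is that $f_K$ can be designed so that $\Pi(f_K)$ is the projector onto the $(U_N, \psi_N)$-coinvariants, multiplied by exactly the same Gaussian normalizing constant $c(K)$ that arose on the geometric side, obtained by integrating $\psi_N$ against the Weil-theoretic Gaussian on $\mathfrak{g}_1$. Taking traces yields $\tr_\Pi(f_K) = c(K) \cdot \dim \Pi_{U_N, \psi_N}$. Comparing the two expressions and cancelling $c(K)$ gives the identity $c_{\mathfrak{O}}(\Pi) = \dim \Pi_{U_N, \psi_N}$.

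The main obstacle is the second step: engineering $f_K$ so that the Fourier-analytic normalization on the geometric side coincides with the projector normalization on the spectral side. In \cite{MW87} this matching is carried out by a careful double integration over the graded pieces of $\Lie(H)$, exploiting the $\Ad(H_0)$-equivariance of $\widehat{\varphi_K}$ and a change of variables through $\exp$. Varma \cite{Va14} streamlines the argument using compact open subgroups adapted to a Moy--Prasad filtration at the semisimple element $H_0$, which is available because $\charf_F = 0$. A secondary technical point is to check that the construction of $\psi_N$, hence the Jacquet module $\Pi_{U_N, \psi_N}$, is insensitive to the choice of Lagrangian $\mathfrak{l}$; this follows from the Stone--von Neumann theorem applied to the Heisenberg group built from $\mathfrak{g}_1$ with the symplectic form $B(N, [\cdot, \cdot])$. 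Once these two inputs are in place, the asserted identity drops out of the two evaluations of $\tr_\Pi(f_K)$.
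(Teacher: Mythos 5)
The paper does not actually prove this statement: it is quoted from Moeglin--Waldspurger \cite{MW87} (for $p$ odd) and Varma \cite{Va14} (for $p=2$), with the regular-orbit case attributed to Rodier \cite{R74}. Your proposal is therefore not competing with a proof in the paper but is an outline of the cited argument, and it identifies the right skeleton: define the generalized Whittaker space via a Jacobson--Morozov grading and a polarization of $\mathfrak g_1$, build test functions whose Fourier transforms concentrate near the orbit, evaluate the trace once by the germ expansion and once spectrally as a constant times the dimension of the coinvariants, and cancel the constant. The closest thing in the paper to an actual proof is Theorem \ref{Ro} together with \S\ref{S:Ro}, where the authors carry out exactly this scheme in the special case of $H$ open normal in $GL_2(F)$ and $\mathfrak O$ regular (so $\mathfrak g_1=0$ and no Lagrangian or Weil-representation issues arise): the test function is $\chi_i^{-1}$ on $K_i=1+M_2(P_F^i)$, the geometric side is handled by Lemma \ref{le:3}, and the spectral side by Lemmas \ref{le:2}--\ref{le:5}.

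One step of your reduction of the geometric side is wrong as stated. You claim that orbits $\mathfrak O'$ incomparable to $\mathfrak O$ are excluded because maximality forces $c_{\mathfrak O'}(\Pi)=0$ for every such orbit of dimension at least $\dim\mathfrak O$. Maximality of $\mathfrak O$ among orbits with nonzero coefficient only forces $c_{\mathfrak O'}(\Pi)=0$ when $\mathfrak O\subsetneq\overline{\mathfrak O'}$; an incomparable orbit of equal or larger dimension may well carry a nonzero coefficient (already for $SL_2$ the several regular orbits are mutually incomparable, and a representation can be generic for more than one of them). What actually eliminates incomparable and smaller orbits is the support condition: $\widehat{\varphi_K}$ is supported in a small neighbourhood of a rescaling of $N$, and an orbit $\mathfrak O'$ meets every such neighbourhood only if $N\in\overline{\mathfrak O'}$, i.e.\ only if $\mathfrak O'$ dominates $\mathfrak O$ in the closure order; this is precisely the content of Lemma \ref{le:3} in the paper's rank-one argument. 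With that correction the geometric side collapses as you claim. The remaining and genuinely hard point --- arranging that the normalizing constant produced by the Fourier computation on $\mathfrak g_1\oplus\bigoplus_{i\geq 2}\mathfrak g_i$ equals the one in front of the projector onto the $(U_N,\psi_N)$-coinvariants --- is exactly what \cite{MW87} and \cite{Va14} establish, and your proposal defers it to them rather than proving it. Since the paper itself only cites those references for this theorem, that is consistent with the paper's treatment, but it means your text is a correct roadmap rather than a self-contained proof.
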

 
 The result  when $p$ is odd  due to \cite{MW87} is extended to $p=2$  in  \cite{Va14} in general).  When $\mathfrak O$ is a regular nilpotent orbit, the generalized Whittaker model is the usual one, and the result then goes back to Rodier \cite{R74}. Varma actually proves that with that normalization all coefficients $c_{\mathfrak O}(\Pi)$ are rational \cite{Va14}.

\subsection{} \label{S:dg} Assume   $R=\mathbb C$.  For any $F$, when  $H$  is an open normal subgroup of $GL_r(D)$  where $D$ is a finite dimensional central division $F$-algebra, Theorem \ref{th:1}  still holds, with the exponential map replaced by the map $X\mapsto 1+X$  \cite{L04}. In the special case where $H=GL_r(D)$, Theorem \ref{th:2} also holds, at least for the natural generalized Whittaker space attached to each nilpotent orbit \cite{HV23}.

 \subsubsection{}\label{mrho} 
We use the notations and definitions introduced in \S \ref{ss:Wh}. 
 Let $H$ be  an open normal subgroup of $G=GL_2(F)$ containing  $ZG'$. The index of $H$ in $G$ is finite as   $H/ZG'$ is open in the compact group  $G/ZG'$.
 Put \begin{equation} \label{eq:d}V_H = F^*/\det (H), \ \  \dim_{\mathbb F_2}V_H=d, \ \   |G/H|=2^d.
 \end{equation}
 A nilpotent matrix    can be conjugated in a lower triangular nilpotent matrix $Y$  by an element of $G'$.  
 Two such matrices $Y$ and $Y'$ are  $H$-conjugate if and only if their bottom left coefficients differ by multiplication by an element of $\det (H)$.  
  \begin{equation}\label{eq:nild} \text{  The number of $H$-orbits in the nilpotent matrices in  $  M_2(F)$ is  $1+ 2^d$}.
\end{equation} The $0$-matrix forms the smallest nilpotent $H$-orbit (the ``trivial'' one). The non trivial nilpotent $H$-orbits are maximal, and parametrized by  $V_H $ via their bottom left coefficient.

 \medskip    With the same arguments  as those given for $ZG'$  in  \S \ref{ss:Wh}, any irreducible smooth $R$-representation $\pi$ of $H$  appears in the restriction to $H$ of an irreducible smooth representation $\Pi$ of $G$, unique 
  modulo torsion  by a smooth $R$-character of $G$. 
   The  irreducible components $\pi$ of $\Pi|_H$ are $G$-conjugate (even $B$-conjugate) and the $G$-stabilizer  of  $\pi$   does not depend on the choice of $\pi$ in $\Pi|_H$, and denoted by $ G_{\Pi|_H}$. 
      The   representation $\Pi|_H$ is semi-simple of multiplicity $1$ with  length  
      \begin{equation}\label{eq:lgen}\text{$\lg(\Pi|_H)= |G/G_{\Pi|_H}|$
 dividing  $\lg(\Pi|_{ZG'})= |G/G_\Pi|=|L(\Pi)|$, }
 \end{equation}
 hence  equal to $1,2$ or $4$ by 
   Theorem \ref{lpi}. The  representation $\pi|_{G'}$   is  semi-simple of multiplicity $1$ with  length  $\lg (\pi|_{G'})= \lg(\Pi|_{G'})/\lg(\Pi|_H)= |G_{\Pi|_H}/G_\Pi|$.

     For a lower triangular matrix $Y\neq 0$, we have:
 $$\sum_{\pi\subset  \Pi|_H}\dim_R W_Y(\pi)= \dim_R W_Y(\Pi) =1.$$  
 There is a single irreducible $\pi$ in $\Pi|_H$ with $W_Y(\pi)\neq 0$, and  $ \dim_R W_Y(\pi) \neq 0 \Leftrightarrow  \dim_R W_Y(\pi) =1$. If  $W_Y(\pi)\neq 0$ then $W_{Y'}(\pi)\neq 0$  when   $Y'$  and $Y$ are $H$-conjugate.
        We consider  $\dim_R W_Y(\pi)$   as a function $m_\pi$  on 
 $V_H$. 
  Because $\pi$ extends to $G_{\Pi|_H}$,   $m_\pi$ is invariant under translations by  $$W_{\Pi|_H} =\det (G_{\Pi|_H})/\det(H).$$  It follows that $m_\pi $ is the characteristic function of an affine subspace $A_\pi$   of $V_H$ with direction $W _{\Pi|_H}$, each such affine subspace being obtained  exactly for one   $\pi\subset \Pi|_H$.   For $g\in G$ we  denote $\pi^g(x)=\pi(g xg^{-1})$ for $g\in G, x\in H$, so $\pi^{gh}= (\pi^g)^h$  for $g,h\in G$. We  have $A_{\pi^g}=\det(g) A_\pi$.
We have a disjoint union (the Whittaker decomposition):
 \begin{equation}\label{eq:W} V_H  =\sqcup _{\pi\subset \Pi|_H} \,  A_\pi. \end{equation}
    If  $\lg(\Pi|_H)=1$, $m_\pi$ is  the constant function on $V_H$ with value $1$. 
If  $\lg(\Pi|_H)=2$,   the  two irreducible components of $\Pi|_H$ yield the characteristic functions of two affine hyperplanes of $V_H$ with the same direction.
Finally for $\lg(\Pi|_H)=4$, we get the characteristic functions of four affine subspaces  of codimension $2$ in $V_H$ with the same direction. 
In particular when $p$ is odd and $\lg(\Pi|_H)=4$, then  $H=ZG'$ and  $m_\pi$ is a non-zero delta function on $V_H=F^*/(F^*)^2$.

 \medskip Let    $C(V_H;\mathbb Z)$ denote the $\mathbb Z$-module of functions $f:V_H\mapsto \mathbb Z$. For an integer $0\leq r<d $,  let  $I_r$ denote  the $\mathbb Z$-submodule of $C(V_H;\mathbb Z)$ generated by  the characteristic functions of the $r$-dimensional affine subspaces of $V_H$. We have $I_0=C(V_H;\mathbb Z)$.

\begin{lemma}\label{le:d}  When $0< r<d$, $2I_{r-1}$ is included in $I_{r}$ and  the exponent of $I_0/I_r$ is $2^r$. \end{lemma}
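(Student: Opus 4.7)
The plan is to establish the inclusion $2I_{r-1}\subset I_r$ by an explicit three-term identity, deduce by iteration that $2^r I_0\subset I_r$, and then use characters of $V_H$ (viewed as a finite $\mathbb{F}_2$-vector space) to show that this bound is sharp.

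First I will produce the identity. Let $A=a_0+V$ be an $(r-1)$-dimensional affine subspace of $V_H$, with linear direction $V\subset V_H$ of $\mathbb{F}_2$-dimension $r-1$. Since $r<d$, the quotient $V_H/V$ has $\mathbb{F}_2$-dimension at least $2$, so I can pick $v,w\in V_H$ whose images in $V_H/V$ are nonzero and distinct; then $v$, $w$ and $v+w$ all lie outside $V$. The three sets
\[
B_1=A\cup(A+v),\qquad B_2=A\cup(A+w),\qquad B_3=(A+v)\cup(A+w)
\]
are then $r$-dimensional affine subspaces of $V_H$, with linear directions $V+\mathbb{F}_2 v$, $V+\mathbb{F}_2 w$ and $V+\mathbb{F}_2(v+w)$ respectively. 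Writing the characteristic functions of $B_1,B_2,B_3$ as sums of two characteristic functions of $(r-1)$-dimensional affine translates of $A$, the cancellation
\[
\mathbf{1}_{B_1}+\mathbf{1}_{B_2}-\mathbf{1}_{B_3}=2\,\mathbf{1}_A
\]
exhibits $2\,\mathbf{1}_A\in I_r$, so $2I_{r-1}\subset I_r$. Applying this successively for $s=1,\dots,r$ (each permissible since $r<d$) gives $2^r I_0\subset I_r$, so the exponent of $I_0/I_r$ divides $2^r$.

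For the matching lower bound, I use characters. Each group character $\chi:V_H\to\{\pm1\}$ gives a $\mathbb{Z}$-linear form $f\mapsto\hat f(\chi)=\sum_{x\in V_H}f(x)\chi(x)$ on $C(V_H;\mathbb{Z})$. For an $s$-dimensional affine subspace $A=a_0+V'$, the elementary identity $\sum_{x\in A}\chi(x)=\chi(a_0)\sum_{v'\in V'}\chi(v')$ shows that $\hat{\mathbf{1}_A}(\chi)$ equals $\pm 2^s$ when $\chi|_{V'}=1$ and $0$ otherwise; in particular it always lies in $2^s\mathbb{Z}$. Hence for every $f\in I_r$ and every $\chi$ one has $\hat f(\chi)\in 2^r\mathbb{Z}$. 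The delta function $\delta_0\in I_0$ at the origin satisfies $\hat{\delta_0}(\chi)=1$ for every $\chi$, so $2^{r-1}\delta_0$ yields the forbidden value $2^{r-1}\notin 2^r\mathbb{Z}$ and cannot belong to $I_r$. Thus the class of $\delta_0$ in $I_0/I_r$ has order exactly $2^r$, and the exponent is precisely $2^r$.

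The only nontrivial step is finding the three-term identity producing $2\,\mathbf{1}_A$: it requires two sufficiently generic translation directions $v,w$ modulo $V$, which is exactly why the hypothesis $r<d$ (so that $V_H/V$ has dimension $\geq 2$) is needed. Once that identity is in place, both the divisibility statement and the Fourier-theoretic lower bound are immediate.
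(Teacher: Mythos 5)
Your proof is correct and follows essentially the same route as the paper: the three-term identity $\mathbf{1}_{B_1}+\mathbf{1}_{B_2}-\mathbf{1}_{B_3}=2\,\mathbf{1}_A$ is exactly the paper's $\chi_{A_e}+\chi_{A_f}-\chi_B=2\chi_A$, and your character-sum lower bound reduces, at the trivial character, to the paper's sum-of-coordinates map $s_r:C(V_H;\mathbb Z)\to\mathbb Z/2^r\mathbb Z$ applied to $\delta_0$.
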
 
\begin{proof} Let  $W$ be a $r-1$-dimensional vector subspace of $V_H$ and $\{0,e,f,e+f\}$ a supplementary plane. For an affine subspace $A$ of $V_H$ of direction $W$,  the affine subspaces $A_e=A\cup A+e, A_f=A\cup A+f$ and $B= A+e\cup A+f$ of $V_H$ are $r$-dimensional, and taking their charactersitic functions $\chi$, we get $\chi_{A_e}+\chi_{A_f}-\chi_B=2\chi_A$. Thus  $2I_{r-1}\subset I_r$.   
By induction $ 2^r I_0 \subset I_r$.   
 The  map $s_r:C(V_H;\mathbb Z)\mapsto \mathbb Z/ 2^r \mathbb Z$ given by the sum of coordinates is surjective and vanishes on $I_r$ but not on $I_{r-1}$. So the exponent of $I_0/I_r$ is $2^r$.
  \end{proof}

 \subsubsection{}Let us precise Theorem \ref{th:1}  for  an open normal subgroup  $H$ of $G=GL_2(F) $ as in  \S \ref{mrho}.

\begin{notation}\label{not:54} On $G$ (hence on $H$) we put a Haar measure $dg$, and on $\Lie G=\Lie H=M_2(F)$ we put the Haar  measure $dX$ such that $X\mapsto 1+X$ preserves measures near $0$. The invariant bilinear map $(X,X')\mapsto \tr(XX')$ on $ \Lie(H)$  is non-degenerate.  The Fourier transform $\varphi \mapsto \hat {\varphi}$ on  $ C_c^\infty(\Lie(H); \mathbb C) $ is taken with respect to the non-trivial character $\psi \circ \tr$ on $\Lie(H)$.   For each nilpotent $H$-orbit $\mathfrak O$ in $\Lie(H)$, we normalize the nilpotent orbital integral $I_{\mathfrak O}(\hat \varphi) $ \cite[Proposition 1.5]{L05} in the same way as \cite[\S3]{Va14}; that normalization is valid even when $\charf_F>0$.  
By Remark 2 of loc.cit., for large enough $i$,  $K_i=1+M_2(P_F^i)$ and a lower triangular nilpotent matrix $Y$, the measure of $\Ad(K_i)(Y)$ is $0$ if $Y=0$
and $q^{-2i}$ otherwise. In particular  $I_{0}(\hat \varphi)=\varphi(0) $ 
for  the nilpotent trivial orbit $0\in \Lie H$.
 \end{notation}

 \begin{theorem} \label{th:1'} 
 Let $\pi$ be a smooth complex representation of $H$ with finite length.
There is an open neighbourhood $ V(\pi) $ of $1$ in $H$ and for each nilpotent $H$-orbit $\mathfrak O$ a unique complex number $c_{\mathfrak O}=c_{\mathfrak O}(\pi)$  such that if $ f \in C_c^\infty(H;\mathbb C)$ has compact support in $V(\pi) $ then   \begin{equation}\label{ge}\tr _\pi (f)=c_0(\pi) f(1) +\sum_{\mathfrak O\neq 0} c_{\mathfrak O}(\pi) I_{\mathfrak O}(\hat \varphi) \ \ \text{ where $ \varphi(X)=f(1+X)$ for $1+X\in V(\pi)$}.
\end{equation}
\end{theorem}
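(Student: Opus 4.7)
The plan is to invoke Lemaire's germ expansion \cite{L04} directly. As recorded in \S\ref{S:dg}, for any non-archimedean local field $F$ and any open normal subgroup $H'$ of $GL_r(D)$ (with $D$ a finite-dimensional central division $F$-algebra), the Harish-Chandra--Howe expansion of Theorem~\ref{th:1} holds, with the exponential map replaced by $X\mapsto 1+X$. Our $H$ is open normal in $GL_2(F)$, so this is precisely the case $D=F$, $r=2$. Applied to the finite-length smooth $\mathbb C$-representation $\pi$ of $H$, it yields an open neighborhood $V(\pi)$ of $1$ in $H$ and, for each nilpotent $H$-orbit $\mathfrak O$ in $\Lie H = M_2(F)$, a unique complex number $c_{\mathfrak O}(\pi)$, such that for every $f\in C_c^\infty(H;\mathbb C)$ with support in $V(\pi)$,
\[
\tr_\pi(f)=\sum_{\mathfrak O} c_{\mathfrak O}(\pi)\,I_{\mathfrak O}(\hat\varphi),\qquad \varphi(X)=f(1+X).
\]

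To reach the form stated in Theorem~\ref{th:1'}, I would isolate the contribution of the trivial nilpotent orbit $\mathfrak O=\{0\}$. Under the normalizations of Haar measure on $H$ and $\Lie H$, of the Fourier transform (via $\psi\circ\tr$), and of the nilpotent orbital integrals following \cite{L05,Va14}, all fixed in Notation~\ref{not:54}, the orbital integral associated to the zero orbit satisfies $I_0(\hat\varphi)=\varphi(0)=f(1)$. Extracting that summand gives
\[
\tr_\pi(f)=c_0(\pi)f(1)+\sum_{\mathfrak O\neq 0}c_{\mathfrak O}(\pi)\,I_{\mathfrak O}(\hat\varphi),
\]
as required. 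Uniqueness of the coefficients is part of Lemaire's statement, and can also be recovered directly from the linear independence of the distributions $\varphi\mapsto I_{\mathfrak O}(\hat\varphi)$ when restricted to test functions $\varphi$ of arbitrarily small support around $0$ in $\Lie H$.

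The only real content beyond invoking \cite{L04} is the verification that the normalization conventions align so as to produce the explicit identity $I_0(\hat\varphi)=\varphi(0)$; since those conventions were set up in Notation~\ref{not:54} precisely to match \cite{L05,Va14} (and are valid also in positive residual characteristic and in positive characteristic for $F$), this is a bookkeeping check already carried out in the setup. Thus the main potential obstacle---ensuring the existence and uniqueness of the expansion when $\charf_F$ may be positive and when $H$ is not reductive but merely open normal in $G$---has been absorbed entirely into Lemaire's theorem, which we are free to cite as a black box.
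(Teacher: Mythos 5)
Your proposal is correct and follows exactly the route the paper takes: Theorem \ref{th:1'} is obtained there by citing Lemaire's germ expansion for open normal subgroups of $GL_r(D)$ as recorded in \S\ref{S:dg}, and then isolating the trivial orbit via the identity $I_0(\hat\varphi)=\varphi(0)=f(1)$ fixed in Notation \ref{not:54}. The paper gives no further argument, so your reconstruction, including the remark on uniqueness via linear independence of the nilpotent orbital integrals, matches its proof in substance and in the details checked.
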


We call   \eqref{ge} the germ expansion  and  
$c_0(\pi)$  the  constant coefficient of the trace of $\pi$ around $1$. A character twist of $\pi$ does not change $c_0(\pi)$. For $\pi$ irreducible,  $c_{\mathfrak O}(\pi) =0$ for all  $\mathfrak O\neq 0$ if and only if  $\pi $ is degenerate (by Theorem 
\ref{th:2})  if and only if $\dim_{\mathbb C} \pi=1$. In this case   $c_{0}(\pi) =1$.

 We can determine that constant coefficient   $c_0(\pi)$ for any irreducible smooth  representation $\pi$ of $H$ from the case of $G$, because $\pi$ appears in the restriction to $H$ of an irreducible smooth complex representation $\Pi$ of $G$.  The irreducible components  of $\Pi|_H$ being $G$-conjugate to $\pi$ have the same constant coefficient\footnote{by the linear independence of nilpotent orbital integrals}, and  
\begin{equation}\label{Pipi}c_0(\Pi)= \lg (\Pi|_H) \, c_0(\pi).
\end{equation}
 We have \cite{HV23}   :
 
  \noindent $c_0(1_G)=1$.
  
 \noindent When $\Pi$ is parabolically induced, for example when $\Pi$ is  tempered and  not  a discrete series, $$c_{0}(\Pi)=0.$$ 
When $\Pi$ is a discrete series  representation   of  formal degree $d(\Pi) $, $$c_{0}(\Pi) = - d(\Pi)/d(\St).$$
When  $\Pi$ is a supercuspidal  complex smooth representation of $G$ of minimal level $f_\Pi$ (the minimal level\footnote{The  level is  the normalized level of \cite{BH06} \S 12.6 and  the depth in the sense of Moy-Prasad.} of the character twists of $\Pi$), 
\begin{equation}\label{eq:fo} c_0(\Pi)=\begin{cases} -2 q ^{f_\Pi}\ \text{if $f_\Pi$ is an integer}\cr
-(q+1) q ^{f_\Pi -1/2}\ \text{if $f_\Pi$ is a half-integer  (not an integer) }
\end{cases}.
\end{equation}
When  $f_\Pi$ is a half integer  (not an integer), $\Pi$ has positive level (\S \ref{S:331}), $\Pi=\ind_J^G\Lambda$  where
$J=E^*(1+Q^{f_\Pi+1/2})$, where $E/F$ is ramified, $Q$ is the Jacobson radical of an Iwahori order in $M_2(F)$, and $\Lambda$  is trivial on $1+Q^{2f_\Pi+1}$ \cite[\S 15]{BH06}. Let $\chi\in X_\Pi \setminus\{1\}$.  Then $\chi$  is ramified  \cite[20.3 Lemma]{BH06}.
The level $r_\chi$ of $\chi$ is   the  largest positive integer $r$ such that $\chi$  is non-trivial on $1+P_F^r$ when $\chi$ is ramified. We have
\begin{equation} \label{fr}1\leq  r_\chi <f_\Pi.
\end{equation}
 Indeed,    if  $r_\chi >f_\Pi$ then $\chi\circ \det$
is non-trivial on $1+Q^{2r_\chi}$ (as $\det(1+Q^{2r}_\chi)=1+P_F^{r_\chi}$), and   $(\chi\circ \det)\otimes \Lambda$ would  be non trivial on $1+Q^{2r_\chi}$ implying that the level of
$(\chi\circ \det)\otimes \Pi$  is at least $r_\chi$ by (loc.cit.(15.8.1)), contrary to the assumption that $\chi\in X_\Pi$.  So   $f_\Pi<r _\chi$
 as $r_\chi$  is an integer but not $ f_\Pi$. 

\begin{lemma} \label{le:Pipi}  If $f_\Pi=1/2$ then $X_\Pi =\{1\}$. If $q=2$ and $f_\Pi=3/2$ then $X_\Pi$ cannot have $4$ elements.
\end{lemma}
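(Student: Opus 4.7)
The first claim follows at once from the inequality \eqref{fr}: any $\chi \in X_\Pi \setminus \{1\}$ satisfies $1 \leq r_\chi < f_\Pi$, so when $f_\Pi = 1/2$ the interval $[1, 1/2)$ is empty, forcing $X_\Pi = \{1\}$.

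For the second claim I would argue by contradiction: suppose $|X_\Pi| = 4$ with $q = 2$ and $f_\Pi = 3/2$. Then \eqref{fr} forces every $\chi \in X_\Pi \setminus \{1\}$ to have level $r_\chi = 1$, the unique integer in $[1, 3/2)$; equivalently each such $\chi$ is a smooth $R$-character of $F^*$ of order dividing $2$, trivial on $1 + P_F^2$ but not on $1 + P_F$. Since elements of $X_\Pi$ have trivial square (from the discussion after \eqref{def:XPi}), $X_\Pi$ is an elementary abelian $2$-group and $|X_\Pi|$ is a power of $2$. The goal is therefore to exhibit fewer than three such characters.

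The main step is the local computation of $|F^*/(F^*)^2(1 + P_F^2)|$. Since $q = 2$ we have $k_F^* = \{1\}$, hence $O_F^* = 1 + P_F$ and $F^* = p_F^{\mathbb{Z}} \times (1 + P_F)$, so $F^*/(1 + P_F^2)$ has structure $\mathbb{Z} \times (1 + P_F)/(1 + P_F^2)$ with the second factor $\cong \mathbb{F}_2$. The key input is $(1 + P_F)^2 \subset 1 + P_F^2$: in characteristic $2$ this is just $(1+x)^2 = 1 + x^2$, while in characteristic $0$ the relation $2 \in P_F^e$ with $e \geq 1$ gives $(1+x)^2 = 1 + 2x + x^2 \in 1 + P_F^{\min(1+e,2)} = 1 + P_F^2$ for all $x \in P_F$. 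Hence the image of $(F^*)^2$ in $F^*/(1 + P_F^2)$ is exactly $p_F^{2\mathbb{Z}}$, and $F^*/(F^*)^2(1 + P_F^2)$ has order $4$.

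So only four characters of $F^*$ of order dividing $2$ are trivial on $1 + P_F^2$. Among them the trivial character and the unique non-trivial unramified quadratic character $(-1)^{\val}$ have level strictly less than $1$ (the second being trivial on $1 + P_F$), leaving exactly two characters of level exactly $1$. Hence $|X_\Pi \setminus \{1\}| \leq 2$, so $|X_\Pi| \leq 3$; being a power of $2$, $|X_\Pi| \in \{1, 2\}$, contradicting $|X_\Pi| = 4$. The only real technical point, and thus the main obstacle, is the uniform treatment of the bound $(1 + P_F)^2 \subset 1 + P_F^2$ across $\charf F = 0$ and $\charf F = 2$; the rest of the argument is essentially a character count.
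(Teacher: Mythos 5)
Your proof is correct and follows the same route as the paper: the first claim is immediate from \eqref{fr}, and for the second the paper likewise reduces to counting quadratic characters of level $1$ when $q=2$, simply asserting that there are only two of them. Your contribution is to actually verify that count via $|F^*/(F^*)^2(1+P_F^2)|=4$ (using $(1+P_F)^2\subset 1+P_F^2$ in both residue characteristics), which is a worthwhile filling-in of a detail the paper leaves implicit.
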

 \begin{proof}   
 If $f_\Pi=1/2$,  then $X_\Pi$ is trivial by the formula \eqref{fr}. 
  If $f_\Pi=3/2$, then  $r_\chi = 1$,
and if $q=2$ there are only $2$ quadratic characters of level $1$. That implies that $X_\Pi$ cannot have 4 elements.
  \end{proof}  

\begin{proposition}\label{prop:constant} Let $\Pi$ be an  irreducible complex smooth representation of $G$ and  $\pi $ an irreducible representation of $H$ contained in $\Pi|_H$.  Then
$$c_0(\pi)=-1/2 \ \text{if $p$ is odd, $\Pi$ is cuspidal of minimal level $ 0$ and $L(\Pi)$ has $4$ elements}.$$

$c_0(\pi)$ is an integer otherwise.

 $c_0(\pi) =0$ if $\pi$ is a principal series, and $c_0(\pi) <0$  if $\pi$ is infinite dimensional and not a principal series. 
 \end{proposition}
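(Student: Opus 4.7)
The strategy is to combine the identity $c_0(\Pi) = \lg(\Pi|_H)\,c_0(\pi)$ from \eqref{Pipi} with the explicit formulas for $c_0(\Pi)$ recalled just before the statement. Since every $c_0(\Pi)$ in those formulas is an integer, it will suffice to examine, case by case on the type of $\Pi$, whether $c_0(\Pi)$ is divisible by $\lg(\Pi|_H)\in\{1,2,4\}$, and to isolate the unique case where $c_0(\pi)$ fails to be integral.

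The easy cases dispose quickly. If $\Pi$ is a character then $c_0(\Pi)=1$ and $\lg(\Pi|_H)=1$, so $c_0(\pi)=1$. If $\Pi$ is parabolically induced, in particular any irreducible principal series, then $c_0(\Pi)=0$ and hence $c_0(\pi)=0$; via \eqref{eq:i} every principal series component of $\Pi|_H$ arises this way, which settles the first assertion on principal series. If $\Pi$ is a twist of the Steinberg, then Proposition \ref{v} shows $\SSt|_{G'}$ is irreducible, forcing $\lg(\Pi|_H)=1$ and $c_0(\pi)=c_0(\Pi)=-1$.

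The substantive case is $\Pi$ supercuspidal, where I use formula \eqref{eq:fo} and split by $|L(\Pi)|$. For $|L(\Pi)|\in\{1,2\}$ one has $\lg(\Pi|_H)\le 2$, which divides $c_0(\Pi)$: the prefactor $2$ in $-2q^{f_\Pi}$ is enough in the integer-level case, while for $-(q+1)q^{f_\Pi-1/2}$ one uses that $q+1$ is even if $p$ is odd and that $q$ is even if $p=2$. For $|L(\Pi)|=4$ with $p$ odd, Corollary \ref{cor:2} forces the $L$-packet to have level $0$, so $f_\Pi=0$ and $c_0(\Pi)=-2$; consequently $c_0(\pi)=-2/\lg(\Pi|_H)$, which equals $-1/2$ precisely when $\lg(\Pi|_H)=4$ (necessarily $H=ZG'$) and is an integer otherwise.

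The hardest verification is $|L(\Pi)|=4$ with $p=2$, where I must prove $4\mid c_0(\Pi)$. By Proposition \ref{prop:ty1} the $L$-packet has positive level, and Lemma \ref{le:Pipi} excludes both $f_\Pi=1/2$ (where $X_\Pi=\{1\}$) and the pair $(q,f_\Pi)=(2,3/2)$ (where $|X_\Pi|\neq 4$). In the remaining configurations, if $f_\Pi$ is an integer then $f_\Pi\ge 1$ and $q$ is even, so $4\mid 2q^{f_\Pi}$; if $f_\Pi$ is a proper half-integer then $f_\Pi-1/2\ge 1$, and because the sole excluded pair is the unique remaining one with $q^{f_\Pi-1/2}<4$, one gets $4\mid q^{f_\Pi-1/2}$ and hence $4\mid (q+1)q^{f_\Pi-1/2}$. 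The final sign claim is immediate: for $\pi$ infinite-dimensional not a principal series, $\Pi$ is either a Steinberg twist or supercuspidal, and in both cases $c_0(\Pi)<0$ by the formulas above, so $c_0(\pi)=c_0(\Pi)/\lg(\Pi|_H)<0$. The main obstacle is thus the divisibility-by-$4$ bookkeeping in the $p=2$, $|L(\Pi)|=4$ case, which relies essentially on the low-level exclusions provided by Lemma \ref{le:Pipi}.
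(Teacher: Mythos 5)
Your proof is correct and follows essentially the same route as the paper: combine $c_0(\Pi)=\lg(\Pi|_H)\,c_0(\pi)$ from \eqref{Pipi} with the explicit values of $c_0(\Pi)$ in \eqref{eq:fo}, and use Lemma \ref{le:Pipi}, Proposition \ref{prop:ty1} and Corollary \ref{cor:2} to control $|L(\Pi)|$ in the low-level supercuspidal cases (you merely organize the case split by $|L(\Pi)|$ where the paper organizes it by the exceptional values of $f_\Pi$). One small imprecision: in the sub-case $|L(\Pi)|=2$, $p=2$, $f_\Pi=1/2$, your stated reason for $2\mid c_0(\Pi)$ (that $q$ is even) does not apply since $c_0(\Pi)=-(q+1)$ is odd; that sub-case is vacuous, however, because Lemma \ref{le:Pipi} gives $X_\Pi=\{1\}$ and hence $|L(\Pi)|=1$ there, which is exactly the exclusion you already invoke in the $|L(\Pi)|=4$ analysis.
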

 
\begin{proof} 
 By formulas \eqref{eq:lgen}, \eqref{Pipi},  \eqref{eq:fo}, we  have

$c_0(1_G)=1$, so $c_0(1_{H})=1$.

$c_0(\St)=-1$ so $c_0(\st_H)=-1$, since the restriction $\st_H$ of $\St$ to $H$ is irreducible as $\st=\St|_{G'}$ is irreducible.
 
$c_0(\Pi)=0$ so $c_0(\pi)=0$, when $\Pi$ is an irreducible principal series.

 $c_0(\Pi)<0$ so $c_0(\pi)<0$,   when $\Pi$  supercuspidal of level  $f_\Pi$ (the minimal level). 
 If $p $ is odd, then $c_0(\Pi)$  is an even integer by  \eqref{eq:fo},
so that $c_0(\pi)$ is an integer if $L(\Pi)$ has $1$ or $2$ elements by  \eqref{Pipi}; if $L(\Pi)$ has $4$  elements,  then $f_\Pi=0$  by Proposition \ref{prop:3.12} and
$c_0(\Pi)=-2$, so $c_0(\pi)=-1/2$.
If $p=2$, then  $c_0(\Pi)$ is a multiple of $4$ (so $c_0(\pi)$ is an
integer) by   \eqref{eq:fo} except when: 

(i) $f_\Pi=0 $, where $c_0(\Pi)=-2$. But $L(\Pi)$ has size $2 $ by Proposition \ref{prop:ty1}, so $c_0(\pi)=-1$.

 (ii) $f_\Pi=1/2$, where $c_0(\Pi)=-(q+1)$.  But $L(\Pi)$ has size $1$ by Lemma \ref{le:Pipi}, so $c_0(\pi)=-(q+1)$.
 
 (iii) $f_\Pi=3/2$ and $q=2$, where $c_0(\Pi)=-6$.  But  $L(\Pi)$ has size $1$ or $2$ by Lemma \ref{le:Pipi}, so $c_0(\pi)=-6$ or $-3$.
\end{proof}

 \begin{theorem} \label{Ro}   Let $\pi$ be a finite length complex representation of $H$, $Y\neq 0$  a lower triangular matrix in $ M_2(F)$ and $\mathfrak O$ its $H$-orbit.   Then $c_{\mathfrak O}(\pi)=\dim_{\mathbb C}W_Y(\pi)$.
\end{theorem}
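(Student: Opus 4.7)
The plan is to adapt to the subgroup $H$ the classical argument of Rodier \cite{R74}, in the form refined by Moeglin--Waldspurger \cite{MW87} and Varma \cite{Va14}. Lemaire's germ expansion \cite{L04} furnishes the identity \eqref{ge} for $H$, and the normalization conventions of Notation \ref{not:54} (those of Varma) remain meaningful without assuming $\charf_F=0$. By linearity we may assume $\pi$ irreducible, and by $G'$-conjugation in $H$ we may assume $Y=\bigl(\begin{smallmatrix}0&0\\ y&0\end{smallmatrix}\bigr)$ with $y\in F^*$.

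For each sufficiently large integer $n$, I would construct a Rodier-type test function $f_n\in C_c^\infty(H;\mathbb{C})$ supported in a small ``Bruhat box'' around $1$, adapted to the one-parameter cocharacter $t_n=\diag(p_F^{-n},p_F^n)$. Concretely, one takes $K_n\subset H$ whose Iwahori decomposition $K_n=(U\cap K_n)\cdot(T\cap K_n)\cdot(U^-\cap K_n)$ has $U\cap K_n$ very large (entries in $P_F^{-n}$) and $U^-\cap K_n$ very small (entries in $P_F^{n+\val(y)}$), and sets $f_n(u t u^-)=\vol(K_n)^{-1}\overline{\theta_Y(u)}$ on $K_n$ and zero elsewhere. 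Containment of this support in $H$ for $n\gg 0$ is assured because $1+M_2(P_F^n)\subset ZG'\subset H$ and the dilated upper-right entries can be absorbed by a $Z$-translate inside $ZG'\subset H$. On the spectral side, the standard calculation of \cite[\S IV]{MW87} shows that for $n$ large the operator $\pi(f_n)$ is a projector onto the $(U\cap K_n,\theta_Y|_{U\cap K_n})$-isotypic subspace, whose dimension stabilizes to $\dim_{\mathbb{C}} W_Y(\pi)$, yielding $\tr_\pi(f_n)=\dim_{\mathbb{C}} W_Y(\pi)$.

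On the geometric side, I would compute the Fourier transform of $\varphi_n(X)=f_n(1+X)$. Using the parametrization of non-trivial nilpotent $H$-orbits in $\Lie H$ by $V_H$ via the $(2,1)$-entry modulo $\det H$ (see \eqref{eq:nild} and the surrounding discussion), together with Varma's computation of the measure of $\Ad(K_n)(Y)$ recalled in Notation \ref{not:54}, one checks that $I_{\mathfrak{O}'}(\hat\varphi_n)=0$ for every non-trivial orbit $\mathfrak{O}'\neq\mathfrak{O}$, while $I_{\mathfrak{O}}(\hat\varphi_n)=1$; the trivial-orbit contribution $c_0(\pi)f_n(1)$ vanishes because $f_n(1)=0$. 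Inserting these values into \eqref{ge} gives $\tr_\pi(f_n)=c_{\mathfrak{O}}(\pi)$, and combining with the spectral computation yields $c_{\mathfrak{O}}(\pi)=\dim_{\mathbb{C}} W_Y(\pi)$.

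The main obstacle is to make the Fourier-transform computation go through when $\charf_F>0$, in particular when $\charf_F=2$, since the exponential map is then unavailable and the arguments of \cite{MW87}, \cite{Va14} use it. The remedy is to replace $\exp$ by the bijection $X\mapsto 1+X$ near the origin, as already done in \cite{L04} to obtain the germ expansion; Varma's orbital-integral normalizations are formulated directly on $\Lie H$ and her Remark~2 giving the measure of $\Ad(K_i)(Y)$ is characteristic-free, so they transport verbatim. A secondary technical point is the verification that the distinct maximal $H$-orbits in the nilpotent cone of $\Lie H$ are separated by the choice of $Y_{2,1}$ modulo $\det H$; this is exactly the content of \eqref{eq:nild}, so that the character $\theta_Y$ singles out a unique orbit $\mathfrak{O}$ on the geometric side.
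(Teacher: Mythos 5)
Your overall strategy is the paper's in outline (Rodier-type test functions; spectral side gives the Whittaker dimension, geometric side isolates one orbital integral), but the way you propose to carry out the geometric side has a genuine gap. You want to Fourier-transform $\varphi_n(X)=f_n(1+X)$ for the Bruhat-box function $f_n(utu^-)=\vol(K_n)^{-1}\overline{\theta_Y(u)}$ and read off the orbital integrals directly. The problem is that $K_n-1$ is not an $O_F$-lattice and, writing $1+X=utu^-$, one finds $u_{1,2}=X_{1,2}/(1+X_{2,2})$, so $\varphi_n$ depends on $X$ through $\psi\bigl(-yX_{1,2}/(1+X_{2,2})\bigr)$: this is not $\psi(\tr(\Gamma X))$ restricted to a lattice, so $\hat\varphi_n$ is not a translated characteristic function, and the claimed values $I_{\mathfrak O}(\hat\varphi_n)=1$, $I_{\mathfrak O'}(\hat\varphi_n)=0$ do not follow from the inputs you cite (Varma's normalization of the orbit measures and the parametrization \eqref{eq:nild} fix the measures, not the support and values of $\hat\varphi_n$). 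Making the direct computation work requires the approximation arguments of Rodier and \cite{MW87}, which are exactly the steps that are delicate when $p=2$ or $\charf_F>0$. The paper sidesteps this: it performs the Fourier transform only for the character $\chi_i(1+X)=\psi(\tr(p_F^{-2i}YX))$ of the genuine congruence subgroup $K_i=1+M_2(P_F^i)$, where $\delta_i(X)=\chi_i^{-1}(1+X)$ is literally $\psi(\tr(\cdot\,))$ on a lattice, so that $\hat\delta_i$ is the characteristic function of $-p_F^{-2i}Y+M_2(P_F^{-i})$ (formula \eqref{le:1}) and Lemma \ref{le:3} identifies the nilpotents in that set with a single $K_i$-orbit; the Bruhat box enters only as $H_i=d_i^{-1}K_id_i$ on the spectral side, via $\Hom_{H_i}(\theta_i,\pi)\simeq\Hom_{K_i}(\chi_i,\pi)$.

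Two further points. First, your claim that the trivial-orbit term vanishes ``because $f_n(1)=0$'' is false: by your own definition $f_n(1)=\vol(K_n)^{-1}\neq 0$. The term does vanish, but because $\hat\varphi_n(0)=\int_H f_n\,dh=0$, $\theta_Y$ being non-trivial on $U\cap K_n$; in the paper's setup this corresponds to $0\notin -p_F^{-2i}Y+M_2(P_F^{-i})$. Second, you quote the spectral stabilization $\dim_{\mathbb C}\Hom_{K_n}(\theta_Y,\pi)\to\dim_{\mathbb C}W_Y(\pi)$ from \cite{MW87}; since that reference assumes $\charf_F=0$ and treats the full group rather than $H$, this step needs its own (short) proof. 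The paper supplies it through Lemmas \ref{le:2}, \ref{le:4} and \ref{le:5}, the essential points being the computation of the intertwining of $\chi_i$ and the idempotent relation $e_ie_{i+1}e_i=q^{-1}e_i$, which yields the injectivity of the $\theta_i$-isotypic space into the $\theta$-coinvariants; without an argument of this kind your ``stabilizes to $\dim_{\mathbb C}W_Y(\pi)$'' is unproved in the generality of the theorem.
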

 \begin{proof} The proof uses the same idea as \cite{R74}. Remarking that the lower triangular group $ B^-$ of $G$ acts transitively on the lower triangular nilpotent matrices $Y$, and that  for $g\in  B^-$ we have  $c_{\mathfrak O}(\pi)=c_{\mathfrak O ^g}(\pi^g), \dim_{\mathbb C}(W_Y(\pi))=\dim_{\mathbb C}(W_{Y^g}(\pi^g))$,   it is enough to consider the case where $Y=\begin{pmatrix}0&0\cr1&0\end{pmatrix}$.
 We stick to that $Y$ (so $\theta_Y=\theta$ with the notation \ref{not:theta}).
 
 For each positive integer $i$, we define a character $\chi_i$ of  the pro-$p$ group $K_i= 1+ M_2(P_F^i)$, by the formula
\begin{equation}\chi_i(1+X)=\psi \circ \tr (p_F^{-2i}YX)=\psi(p_F^{-2i} X_{1,2}), \ \ X=\begin{pmatrix}X_{1,1}&X_{1,2}\cr X_{2,1}&X_{2,2}\end{pmatrix} \in M_2(P_F^i).\end{equation}
The character $\chi_i$ is trivial on $K_{2i}$. Conjugating by the diagonal matrix $d_i=\diag(p_F^{i} ,p_F^{-i} )$ we get a character $\theta_i$ on $H_i=d_i^{-1} K_i d_i=1 +  \begin{pmatrix} P_F^i & P_F^{-i}  \\  P_F^{3i}&  P_F^i\end{pmatrix} $ such that $\theta_i(1+X)=\psi(X_{1,2})$. The  limit of  
the groups    $H_i $ as $i\to\infty$ is the group $U$.
 We will prove that  the $\theta_i$ approximate  the  character $\theta_Y$ of $U$  
 in the sense that\begin{equation}\label{eq:Ro}
\lim_{i\to \infty} \dim_{\mathbb C} \Hom_{H_i}(\theta_i,\pi) = \dim_{\mathbb C} W_Y(\pi).
\end{equation}
On the other hand we will also prove  in  \S\ref{S:Ro}, following \cite{Va14}, that
\begin{equation} \label{eq:Ro'}\dim_{\mathbb C} \Hom_{K_i}(\chi_i,\pi) = c_{\mathfrak O}(\pi) \ \text{for large} \  i.
\end{equation} 
Since $ \dim_{\mathbb C} \Hom_{H_i}(\theta_i,\pi) =  \dim_{\mathbb C} \Hom_{K_i}(\chi_i,\pi) $, we shall get the result.
 \end{proof}
 
\subsubsection{}\label{S:Ro} Let us proceed to the proof of the formulas \eqref{eq:Ro} and \eqref{eq:Ro'}, through a sequence of lemmas, rather easy compared to the analogous statements in the more general cases treated by \cite{R74}, \cite{MW87} and \cite{Va14} when $\charf_F=0$, in \cite{HV23} for arbitrary $\charf_F$.

For $X\in M_2(F)$, put $\delta_i (X)=\chi_i^{-1}(1+X) $ if $X\in M_2(P_F^i)$ and $\delta_i(X)=0$ outside. With the notation \ref{not:54},  the Fourier transform $\hat \delta_i $ of $\delta_i$ is 

\begin{equation} \label{le:1} \hat \delta_i (X) = 
 \begin{cases} q^{-4i}\vol (M_2(O_F), dX)  \  & \text {if } \  X\in  p_F^{-2i}Y+M_2(P_F^{-i}) \\
0   \  &   \text {otherwise}
\end{cases}.
 \end{equation}
  
  \begin{lemma}\label{le:2} The $K_1$-normalizer of $\chi_i$   is $(ZU^-\cap K_1)K_i$. \end{lemma}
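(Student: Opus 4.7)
The plan is to translate the condition that $g\in K_1$ normalize $\chi_i$ into a lattice condition on $gYg^{-1}-Y$, reduce it to an explicit condition on the matrix entries of $g$, and then identify the resulting set with $(ZU^-\cap K_1)K_i$ via an explicit decomposition.

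First, since $K_i$ is normal in $K_1$, for $g\in K_1$ and $1+X\in K_i$ we have $g^{-1}(1+X)g=1+g^{-1}Xg\in K_i$. Using the cyclicity of the trace and the definition $\chi_i(1+X)=\psi(p_F^{-2i}\tr(YX))$, the condition $\chi_i(g^{-1}(1+X)g)=\chi_i(1+X)$ for all $X\in M_2(P_F^i)$ becomes $\psi(p_F^{-2i}\tr((gYg^{-1}-Y)X))=1$ for all such $X$. Since $\psi$ has conductor $O_F$, the pairing $(A,B)\mapsto\psi(\tr(AB))$ places $M_2(P_F^i)$ in perfect duality with $M_2(P_F^{-i})$, so this is equivalent to $gYg^{-1}-Y\in M_2(P_F^i)$, and since $g\in GL_2(O_F)$ this is the same as $gY-Yg\in M_2(P_F^i)$.

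Writing $g=\begin{pmatrix}a&b\\c&d\end{pmatrix}\in K_1$ (so $a,d\in 1+P_F$ and $b,c\in P_F$), a direct computation gives $gY-Yg=\begin{pmatrix}b&0\\d-a&-b\end{pmatrix}$, so the normalizing condition reduces to the two requirements $b\in P_F^i$ and $d-a\in P_F^i$.

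Finally, to identify the set of such $g$ with $(ZU^-\cap K_1)K_i$: the inclusion $\supseteq$ follows from direct matrix multiplication, since a product of an element $\begin{pmatrix}\lambda&0\\\nu&\lambda\end{pmatrix}\in ZU^-\cap K_1$ with an element of $K_i$ has upper-right entry in $P_F^i$ and diagonal difference in $P_F^i$. For $\subseteq$, given $g$ satisfying the two conditions, set $g_0=\begin{pmatrix}a&0\\c&a\end{pmatrix}$, which lies in $ZU^-\cap K_1$ because $a\in 1+P_F$ and $c\in P_F$. A direct computation gives $g_0^{-1}g=\begin{pmatrix}1&b/a\\0&\det(g)/a^2\end{pmatrix}$; the off-diagonal entry $b/a$ lies in $P_F^i$ by hypothesis, and the identity $\det(g)/a^2-1=(a(d-a)-bc)/a^2$, combined with $a(d-a)\in P_F^i$ and $bc\in P_F^{i+1}$, shows $g_0^{-1}g\in K_i$, yielding the desired decomposition $g=g_0\cdot(g_0^{-1}g)$. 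The main obstacle is not conceptual but careful bookkeeping of the lattice conditions; the underlying computation is elementary.
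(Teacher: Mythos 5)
Your proof is correct, but it takes a genuinely different route from the paper's. You linearize the normalizing condition via the trace pairing to get $gY-Yg\in M_2(P_F^i)$, compute the commutator explicitly as $\begin{pmatrix} b&0\\ d-a&-b\end{pmatrix}$, and then exhibit the factorization $g=g_0\cdot(g_0^{-1}g)$ with $g_0\in ZU^-\cap K_1$ and $g_0^{-1}g\in K_i$; each step checks out, including the determinant bookkeeping $\det(g)/a^2-1=(a(d-a)-bc)/a^2\in P_F^i$. The paper instead argues by induction on $j$, proving the finer statement that the $K_1$-normalizer of the \emph{restriction} of $\chi_i$ to $K_{2i-j}$ is $(ZU^-\cap K_1)K_j$: at each stage one writes $g=1+X$ with $X\in M_2(P_F^j)$, observes that the condition forces $p_F^{-j}X$ to commute with $Y$ modulo $P_F$, and invokes the fact that the commutant of $Y$ in $M_2(k_F)$ consists of lower triangular matrices with equal diagonal entries. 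Your argument is shorter and completely explicit, at the price of being tied to the $2\times 2$ case where $gY-Yg$ has a clean closed form; the paper's successive-approximation scheme yields the whole descending chain of normalizers and is the pattern that survives in higher rank (cf.\ Rodier, Moeglin--Waldspurger, Varma). For the lemma as stated, and for its use in Lemma \ref{le:5} (where the intertwining element lies in a group normalizing $K_i$, so intertwining coincides with normalizing $\chi_i$ on all of $K_i$), your version is entirely sufficient.
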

 \begin{proof} For a positive integer $j\leq i$, we  prove that the $K_1$-normalizer  of the restriction of  $\chi_i$ to $K_{2i-j}$ is $(ZU^-\cap K_1)K_j$  by induction on $j$. This is clear for $j=1$ and the case $j=i$ gives what we want. Assume that the claim is true for $j<i$ and let us prove it for $j+1$. Let $g\in K_1$ normalizing the  restriction of  $\chi_i$ to $K_{2i-j-1}$. By induction $g\in (ZU^-\cap K_1)K_j$ and we may assume $g\in K_j$. Write $g=1+X$ with $ X\in M_2(P_F^j)$. Then $g^{-1}Yg\equiv Y+YX-XY$ modulo $ M_2(P_F^{j+1})$
  and the hypothesis on $g$ means that $YX-XY \equiv 0$ modulo $ M_2(P_F^{j+1})$, which gives that $p_F^{-j}X$ commutes with $Y$ modulo $P_F$. But the commutant of $Y$ modulo $P_F$ in $M_2(k_F)$ is made out of   lower triangular matrices with the same diagonal elements.  Consequently $g\in (ZU^-\cap K_1)K_{j+1}$ as claimed.
  \end{proof} 
  
  \begin{lemma}\label{le:3} The $K_i$-orbit of $Y$ is the set of nilpotent matrices in $Y+M_2(P_F^i)$.
 \end{lemma}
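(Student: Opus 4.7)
The plan is to establish the two inclusions separately, with the non-trivial direction handled by a Hensel-type iteration.

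For the easy inclusion, I would observe that for $g = 1 + X \in K_i$ with $X \in M_2(P_F^i)$, expanding $g^{-1} = 1 - X + X^2 - \cdots \in 1 + M_2(P_F^i)$ gives $gYg^{-1} \in Y + M_2(P_F^i)$, and of course $gYg^{-1}$ is nilpotent because $Y$ is.

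For the reverse inclusion, the main calculation is an iterative approximation. Given $Y' = Y + Z$ nilpotent with $Z \in M_2(P_F^j)$ for some $j \geq i \geq 1$, I would extract two constraints from nilpotency: the trace condition $Z_{11} + Z_{22} = 0$ and the determinant condition $Z_{12}(1+Z_{21}) = Z_{11}Z_{22}$, from which $Z_{12} = Z_{11}Z_{22}/(1+Z_{21}) \in P_F^{2j} \subset P_F^{j+1}$. The key algebraic input is the direct computation
\[
[X,Y] = \begin{pmatrix} X_{12} & 0 \\ X_{22}-X_{11} & -X_{12} \end{pmatrix},
\]
showing that the image of $\ad(Y)\colon M_2(F) \to M_2(F)$ consists precisely of the trace-zero matrices whose $(1,2)$-entry vanishes. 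Thanks to the nilpotency constraints just derived, the residue of $Z$ modulo $M_2(P_F^{j+1})$ lies in this image, so I can solve $[X_j,Y] \equiv Z \pmod{M_2(P_F^{j+1})}$ for some $X_j \in M_2(P_F^j)$ (e.g.\ $X_j = \begin{pmatrix} 0 & Z_{11} \\ 0 & Z_{21}\end{pmatrix}$).

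Setting $g_j = 1 + X_j \in K_j$, a direct check shows that $g_j^{-1} Y' g_j \equiv Y \pmod{M_2(P_F^{j+1})}$ and is still nilpotent. Iterating this construction from $j = i$ upward produces elements $g_i, g_{i+1}, \ldots$ with $g_n \in K_n$, and the partial products $h_n = g_i g_{i+1} \cdots g_n$ form a Cauchy sequence in the complete group $K_i$. Their limit $h \in K_i$ satisfies $h^{-1} Y' h = Y$, proving the inclusion.

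The only mildly delicate point is that the successive approximation requires $2j \geq j+1$, which fails at $j = 0$; since $K_i$ with $i \geq 1$ is what we use, this is harmless, but I would flag it explicitly. Convergence of the infinite product is automatic from the filtration property of $\{K_j\}$.
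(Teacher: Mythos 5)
Your proof is correct and follows essentially the same successive-approximation argument as the paper: the nilpotency (trace and determinant) conditions force the residue of $Z$ into the image of $\ad(Y)$, so one can conjugate by an element of $K_j$ to improve the approximation by one step. The only cosmetic difference is that you conclude via a convergent infinite product in the complete group $K_i$, whereas the paper invokes the closedness of the $K_i$-orbit; both are standard and your version usefully makes explicit the point (``as we can'') that the paper leaves implicit.
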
 
 \begin{proof} It is clear that $gYg^{-1}$ is a nilpotent element in $Y+M_2(P_F^i)$ for $g\in K_i$. Conversely let $Y+p_F^i Z$ nilpotent (hence of trace $0$) with $Z\in M_2(O_F)$. If $g=1+p_F^i X$ with $X\in M_2(O_F)$, then $g(Y+p_F^i Z)g^{-1}\equiv Y+p_F^i(YX-XY+Z)$ modulo $ M_2(P_F^{i+1})$. We choose $X$, as we can, so that 
 $ YX-XY+Z\equiv 0$ modulo $P_F$. So $g(Y+p_F^i Z)g^{-1}\in  Y +M_2(P_F^{i+1})$. The $K_i$-orbit of $Y$ is closed in $M_2(F)$. 
 We finish the proof by successive approximations.
  \end{proof}

Let  $\pi$ be a smooth representation of $H$ on a complex vector space $V$,  and $\phi:V\to V_\theta$ be the quotient map from $V$ to the $\theta$-coinvariants $V_\theta$ of $V$.  For   large enough $i$ such that $H_i \subset H$ let $V_i$
be the $\theta_i$-isotypic component of $V$. 

 \begin{lemma}\label{le:4}  For large enough $i$, $\phi(V_i)=V_\theta$.
 \end{lemma}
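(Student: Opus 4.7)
The plan is to construct, for each $v \in V$, a preimage in $V_i$ of $\phi(v)$ via the $\theta_i$-isotypic projector, and then combine this with the finite-dimensionality of $V_\theta$ to conclude.

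First I would observe that $V_\theta$ is finite-dimensional. Indeed, $\pi$ has finite length as an $H$-representation and embeds in the restriction to $H$ of some finite length smooth complex representation of $G$; by the right exactness of the $\theta$-coinvariants functor and the uniqueness of Whittaker models for irreducible smooth complex representations of $G$ recalled in \S\ref{ss:Wh}, $\dim_{\mathbb C} V_\theta$ is then finite.

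Next I would fix $v\in V$, choose a compact open subgroup $K\subset H$ fixing $v$, and exploit the Iwahori decomposition $H_i = M_i\cdot (H_i\cap U)$ (a set-theoretic bijection with compatible Haar measures), where $M_i := (H_i\cap U^-)(H_i\cap T)$. Since $M_i$ shrinks as $i$ grows, for $i$ large enough one has $M_i\subset K$, hence $\pi(m)v=v$ for $m\in M_i$. Two facts are immediate from the formula $\theta_i(1+X)=\psi(X_{1,2})$: the character $\theta_i$ is trivial on $M_i$ (where the $(1,2)$-entry vanishes), and $\theta_i$ agrees with $\theta$ on $H_i\cap U$. Writing $h = um$ with $u\in H_i\cap U$ and $m\in M_i$, the $\theta_i$-isotypic projector
\[
P_i v := \frac{1}{\vol(H_i)}\int_{H_i}\theta_i(h)^{-1}\pi(h)v\,dh
\]
collapses to $\vol(H_i\cap U)^{-1}\int_{H_i\cap U}\theta(u)^{-1}\pi(u)v\,du$, which lies in $V_i$, and whose image in $V_\theta$ is simply $\phi(v)$, because $\phi(\pi(u)v)=\theta(u)\phi(v)$ and the two characters cancel.

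To finish, I would lift a basis of the finite-dimensional space $V_\theta$ to vectors $v_1,\ldots,v_r\in V$ and take $i$ large enough to handle all the $v_j$ simultaneously; then $\phi(V_i)\supseteq V_\theta$, and the reverse inclusion is automatic. I do not foresee any serious obstacle: the argument is standard Rodier-style averaging, and the only delicate point is to use the decomposition $h=um$ rather than $h=mu$, so that $\pi(m)v=v$ can be applied \emph{before} the $\pi(u)$-twist, in order for the integral to collapse cleanly.
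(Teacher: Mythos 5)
Your argument is correct and is exactly the standard Rodier-type averaging that the paper invokes by citing \cite[Lemma 8.7]{HV23}: the Iwahori factorization $H_i=(H_i\cap U)M_i$ with $M_i$ shrinking, the triviality of $\theta_i$ on $M_i$ and its agreement with $\theta$ on $H_i\cap U$, the collapse of the projector to an average over $H_i\cap U$, and finite-dimensionality of $V_\theta$ to choose $i$ uniformly. No gaps; this matches the paper's approach.
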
 \begin{proof} It is the same as that of Lemma 8.7 in \cite{HV23}.
    \end{proof}
 We have  $$H_{i+1}= (H_{i+1}\cap H_i)(H_{i+1}\cap U), \ [H_{i+1}:(H_{i+1}\cap H_i)]=[(H_{i+1}\cap U): (H_{i}\cap U)]=q^{-1},$$ 
and $\theta_{i+1}=\theta_i$ on  $H_{i+1}\cap H_i$.   Let $e_i= f_i  dg$ where $dg$ is the Haar measure on $H$ giving the volume $1$ to $H_i$ and $f_i$ is  the function on $G$ with support $H_i$ and value $\theta_i^{-1}$ on $H_i$.

     \begin{lemma} \label{le:5} We have $e_i e_{i+1} e_i= q^{-1} e_i$ when $i>1$ and $H_i\subset H$. In particular,   
the map $v\to \pi(e_{i+1})v:V_i\to V_{i+1}$ is injective.
 \end{lemma}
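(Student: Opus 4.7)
My plan is to establish the convolution identity $e_i e_{i+1} e_i = q^{-1} e_i$ first, and read off the injectivity as an immediate formal consequence. Since $e_i$ is by construction the normalised idempotent for the character $\theta_i$ of $H_i$, the operator $\pi(e_i)$ is the projector from $V$ onto the $(H_i,\theta_i)$-isotypic subspace $V_i$, so $\pi(e_i)w=w$ for every $w\in V_i$. The identity therefore becomes $\pi(e_i)\pi(e_{i+1})v = q^{-1}v$ for all $v\in V_i$. Since $\pi(e_{i+1})$ lands in $V_{i+1}$, the map $v\mapsto \pi(e_{i+1})v$ is well-defined $V_i\to V_{i+1}$; if $v\in V_i$ is in its kernel then $q^{-1}v=\pi(e_i)\cdot 0=0$, whence $v=0$.

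To compute $\pi(e_i)\pi(e_{i+1})v$ for $v\in V_i$, I would expand $\pi(e_{i+1})v$ using the decomposition $H_{i+1}=(H_{i+1}\cap H_i)(H_{i+1}\cap U)$ stated just above the lemma together with $\theta_{i+1}=\theta_i$ on $H_{i+1}\cap H_i$. Parameterising $h=h'u$ with $h'\in H_{i+1}\cap H_i$ and $u$ running over the $q$ coset representatives of $H_i\cap U$ in $H_{i+1}\cap U$, the integration over $h'$ collapses to a constant because $\pi(h')v=\theta_i(h')v$, producing
\[
\pi(e_{i+1})v \;=\; \frac{1}{q}\sum_{u}\theta_{i+1}^{-1}(u)\,\pi(u)v.
\]
The coset $u=1$ contributes $q^{-1}v$, so the identity reduces to the vanishing $\pi(e_i)\pi(u)v=0$ for every representative $u\in (H_{i+1}\cap U)\setminus H_i$.

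The vanishing rests on a character-orthogonality argument. A direct check shows that $\pi(u)v$ is $(K,\theta_i^{u^{-1}})$-equivariant for the subgroup $K=H_i\cap uH_iu^{-1}$, where $\theta_i^{u^{-1}}(k)=\theta_i(u^{-1}ku)$. Decomposing $H_i$ into right cosets of $K$, the averaging in $\pi(e_i)\pi(u)v$ is proportional to the character sum $\int_K \theta_i^{-1}(k)\theta_i^{u^{-1}}(k)\,dk$, which vanishes by orthogonality as soon as $\theta_i^{u^{-1}}\neq \theta_i$ on $K$. I would then produce an explicit witness: the diagonal element $k=\diag(1+p_F^i,1)\in H_i$ lies in $K$ and satisfies $\theta_i(k)=1$ and $(u^{-1}ku-1)_{1,2}=-p_F^i\,u_{1,2}$. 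Since $u_{1,2}\in P_F^{-i-1}\setminus P_F^{-i}$, this entry lies in $P_F^{-1}\setminus O_F$; because $\psi$ is trivial on $O_F$ but non-trivial on $P_F^{-1}$, we conclude $\theta_i^{u^{-1}}(k)\neq 1=\theta_i(k)$, as required. This is essentially the $(\theta_i,H_i)$-analogue of Lemma~\ref{le:2}, transferred by conjugation by $d_i$.

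The main obstacle in the argument is the bookkeeping of the normalising factor $q$ (coming from the index $[H_{i+1}:H_{i+1}\cap H_i]=q$ and the chosen Haar measures), and verifying that the same diagonal witness works for every non-trivial $u\in (H_{i+1}\cap U)\setminus H_i$. Both points are uniform in the residual characteristic because the witness only uses the universal non-triviality of $\psi$ on $P_F^{-1}$, so the proof goes through without any parity-of-$p$ case split.
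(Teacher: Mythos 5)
Your overall strategy coincides with the paper's: you use the same decomposition $H_{i+1}=(H_{i+1}\cap H_i)(H_{i+1}\cap U)$ to reduce the identity $e_ie_{i+1}e_i=q^{-1}e_i$ to the vanishing of $\pi(e_i)\pi(u)v$ for $v\in V_i$ and $u\in (H_{i+1}\cap U)\setminus H_i$, and both arguments kill these terms by showing that such a $u$ fails to intertwine $\theta_i$. The paper does this by conjugating back to $\chi_i$ and citing Lemma \ref{le:2} (which is where the hypothesis $i>1$ enters); you replace that citation by a direct character-orthogonality computation on $K=H_i\cap uH_iu^{-1}$, which is legitimate and, as you note, amounts to reproving the relevant case of Lemma \ref{le:2} by hand.

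There is, however, a genuine error in your witness step. From $p_F^iu_{1,2}\in P_F^{-1}\setminus O_F$ you conclude $\psi(p_F^iu_{1,2})\neq 1$ ``because $\psi$ is trivial on $O_F$ but non-trivial on $P_F^{-1}$''. That inference is valid only when $q=p$: the restriction of $\psi$ to $P_F^{-1}$ factors through $P_F^{-1}/O_F\cong k_F$, and a non-trivial character of $k_F$ has non-zero elements in its kernel as soon as $q>p$. So for $q>p$ there exist $u\in(H_{i+1}\cap U)\setminus H_i$ for which your single witness $k=\diag(1+p_F^i,1)$ gives $\theta_i^{u^{-1}}(k)=1=\theta_i(k)$ and detects nothing; your closing claim that the witness works ``uniformly'' rests on this false premise. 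The repair is immediate: let the witness range over all $k_a=\diag(1+a,1)$ with $a\in P_F^i$ (each lies in $K$ by the same computation, since $au_{1,2}\in P_F^{-1}\subset P_F^{-i}$). Then $\theta_i^{u^{-1}}(k_a)=\psi(au_{1,2})$, and $\psi(au_{1,2})=1$ for every $a\in P_F^i$ if and only if $u_{1,2}\in P_F^{-i}$, i.e. $u\in H_i$, because $\psi$ has conductor exactly $O_F$. With that one change your orthogonality argument is complete and the lemma follows.
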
 \begin{proof}  
 The lemma  is equivalent to  $\pi (e_i e_{i+1} e_i )v= q^{-1}\pi (e_i)v$ for all  $v\in V$ and $(\pi,V)$  as above. The projector $V\to V_i$ is $\pi(e_i)$ and 
$$\pi(e_i e_{i+1} e_i)v= q^{-1}\sum_{u\in (H_{i+1}\cap U)/(H_{i}\cap U)}\pi(e_i \theta_{i+1}(u)^{-1} u e_i)v$$
If  $\pi(e_i  u e_i)v\neq 0$ for  $u\in H_{i+1}\cap U$, then $u$ intertwines $\theta_i$.
To interpret that condition  we conjugate $\theta_i$ back to $\chi_i$.  Then $H_i$ is sent to $K_i$ and $H_{i+1}$ is sent  to  $d_1^{-1}K_{i+1}d_1$ which, we remark, is contained in $K_{i-1}$.  By Lemma  \ref{le:2}, $u\in H_{i+1}\cap U$ conjugates to an element in $(Z U^- \cap K_1)K_i$, so that $u\in H_i\cap U$. We deduce that $\pi (e_i e_{i+1} e_i )v= q^{-1}\pi (e_i)v$ as claimed. 
  \end{proof}

 {\sl Proof  of the formula  \eqref{eq:Ro}}

  Fix a large integer $i$ such that the lemmas apply.
The  projector $\pi(e_i):V\to V_i$ can be obtained by first projecting onto $V^{H_i \cap  B^-}$, and then applying the projector $\pi (e_{i,U}) $ where $e_{i,U}=f_i|_{H_i\cap U} du$ for  
 the Haar measure on $H\cap U$ giving the volume $1$ to $H_i\cap U$. As $V_i\subset V^{H_{i +1}\cap   B^-}$, we have $\pi(e_{i+1}) = \pi(e_{i+1, U})$ on  $ V_i$. It follows that for $v\in V_i$ and  $v_1=\pi(e_{i+1}) v=\pi(e_{i+1,U}) v$  have  the same image $\phi(v_1)=\phi(v)$   in $V_\theta$. Iterating the process we get for positive integers  $k$,  vectors $v_k = \pi(e_{j+k})v_{k-1}= \pi(e_{j+k,U})v_{k-1}$ with  $\phi(v_k)=\phi(v)$. As $e_{i+1,U}e_{i,U}= e_{i+1,U}$ we have $v_k = \pi(e_{i+k,U})v$.
But $\phi(v)=0$ is equivalent to $ \pi(e_{i+k,U})v=0$ for large $k$. As $v_k=0$ implies $v_{k-1}=0$ by Lemma \ref{le:5}, we get that $\phi$ is injective on $V_i$. Since it is also surjective by Lemma \ref{le:4}, we deduce that  it gives an isomorphism $V_i\simeq V_\theta$.   This ends the proof of
 \eqref{eq:Ro}.

\medskip {\sl Proof  of the formula  \eqref{eq:Ro'}} 

Fix  an integer $i$ such that $K_i\subset H$. We have $\dim_{\mathbb C}(\Hom_{K_i} \chi_i, \pi) =\tr \pi(e'_i)$ where  $e'_i= f'_i  dg$ where $dg$ is the Haar measure on $H$ giving the volume $1$ to $K_i$ and $f'_i$ is  the function on $G$ with support $K_i$ and value $\chi_i^{-1}$ on $K_i$.  We have, $f'_i(1+X) =\delta_i(X)$. To prove \eqref{eq:Ro'}, it suffices to  apply the germ expansion \eqref{ge} to $\tr_\pi$ and to show that for large $i$, $I_{\mathfrak O}(\hat \delta_i)=1$ whereas  $I_{\mathfrak O'}(\hat \delta_i)=0$ for any nilpotent orbit $\mathfrak O'\neq \mathfrak O$.  
 From  the formula  \eqref{le:1}, $\hat \delta_i$ is  a multiple of  the characteristic function of $-p_F^{-2i}Y+M_2(P_F^{-i})$ and from   Lemma \ref{le:3} the nilpotent elements there form the $K_i$-orbit of $p_F^{-2i}Y$. It follows that $I_{\mathfrak O'}(\hat \delta_i)=0$ if $\mathfrak O'\neq \mathfrak O$.  That  $I_{\mathfrak O}(\hat \delta_i)=1$ is proved exactly  as in the proof of Lemma 7 in \cite{Va14}.
 
\subsubsection{}

For a locally profinite space  $X$, $x\in X$,  and a field $C$, two linear  forms $f,f'$ on  $C_c^\infty(V;C)$ for some  open neighbourhood  $ V$ of $x$ in $X$, are called equivalent     if  their  restrictions   to $C_c^\infty( W;C)$ for some  open neighbourhood  $ W$ of $x$ contained in $V$ are equal.  The equivalence class of $f$    is called its germ    $\tilde f$ at $x$. Denote  $\mathfrak G_x(X)$ the space of the germs  at $x$.

For a locally profinite space $X'$, an open   subset  $W$  in $X$ and an  open  subset $ W'$ in $X'$, a homeomorphism $j: W\to W'$ gives by functoriality an isomorphism $C_c^\infty( W';C)\to C_c^\infty( W;C)$ and an isomorphism $\mathfrak G_{j(x)}(X') \to \mathfrak G_x(X)$ from the space of  the germs of $X'$ at $j(x)$ to the space of the germs of $X$
 at $x\in W$.

\medskip 
 The nilpotent orbital integrals  $\mathcal F_{\mathfrak O}:\varphi\mapsto I_{\mathfrak O}(\hat \varphi)$  for $\varphi \in C_c^\infty(\Lie H;\mathbb C)$ and the nilpotent $H$-orbits $\mathfrak O$ in $\Lie(H)$,  are linearly independent $H$-equivariant  linear forms on  $C_c^\infty(\Lie H;\mathbb C)$ \cite[page 79]{L05}. They  form a basis of a $\mathbb Z$-module $I_H$ with rank $1+ 2^d$  \eqref{eq:nild}.  For each $H$-equivariant open neighborhood $V$ of $0$ in $\Lie H$,  the $\mathcal F_{\mathfrak O}$ remain independent as linear forms on $C_c^\infty( V;\mathbb C)$.  
The germs $\tilde{ \mathcal F}_{\mathfrak O}$  form a basis of the $\mathbb Z$-module $\tilde I_H$  of germs of  elements of $I_H$. Denote by  $ I_H^{Wh}$  the $\mathbb Z$-submodule of $I_H$ of basis  $\mathcal F_{\mathfrak O}$ for $\mathfrak O\neq 0$.

 Theorems \ref{th:1'} and \ref{Ro} say   that  the germ at $1$ of  the trace of  an irreducible   complex  smooth representation $\pi$  of $H$ identifies via the map $X\to 1+X$ with the germ at $0$ of a  unique element 
  $T_\pi= c_0(\pi) \mathcal F_{0}+ T_\pi^{Wh}$ where $c_0(\pi) \in \mathbb Q$, and $ T_\pi ^{Wh}
 \in  I_H^{Wh}$ is  determined by the non-degenerate Whittaker models  of $\pi$.  
 Note that $T_\pi^{Wh}=0$ if and only if  $\dim_{\mathbb C}\pi=1$.

 Denote by  $T_H^{Wh}$   the $\mathbb Z$-submodule  of $I_H^{Wh}$ generated by  respectively the  $T_\pi^{Wh}$, for  all  irreducible complex  smooth representations $\pi$  of $H$. 
Write $ \tilde I_H^{Wh},    \tilde T_H^{Wh}$  for the space of   germs at $0$ of $ I_H^{Wh}, T_H^{Wh}$.

\begin{theorem}\label{th:3}  We have $\tilde T_H=\tilde I_H$ when  $d=0,1$.
  
The   $\mathbb Z$-submodule $\tilde T_H^{Wh}$ is a submodule of   $\tilde I_H^{Wh}$
 of finite index.  The exponent of $\tilde I_H^{Wh}/\tilde T_H^{Wh}$ is $2^{d-2}$  when $d\geq 2$.
 \end{theorem}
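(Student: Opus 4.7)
The plan is to identify $\tilde I_H^{Wh}$ with $C(V_H;\mathbb Z)$ via the parameterization of non-trivial nilpotent $H$-orbits by points of $V_H$ (\S \ref{mrho}), and then to show that $\tilde T_H^{Wh}$ corresponds precisely to $I_{d-2}$ when $d\geq 2$. The theorem will then follow from Lemma \ref{le:d}.

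The first step is the easy inclusion $\tilde T_H^{Wh}\subseteq I_{d-2}$. By Theorem \ref{Ro}, for each irreducible $\pi\subset\Pi|_H$, the coefficient $c_{\mathfrak O}(\pi)$ equals $\dim_{\mathbb C} W_Y(\pi)=m_\pi(y)$, where $y\in V_H$ parameterizes $\mathfrak O$. Thus $T_\pi^{Wh}$ corresponds to $m_\pi=\chi_{A_\pi}$, the characteristic function of the affine subspace $A_\pi$ appearing in \eqref{eq:W}; its direction $W_{\Pi|_H}$ has dimension $d-\log_2\lg(\Pi|_H)\in\{d,d-1,d-2\}$. Using the inclusions $I_r\subseteq I_{r-1}$ (an $r$-dimensional affine subspace splits as the disjoint union of two parallel $(r-1)$-dimensional ones), this gives $T_\pi^{Wh}\in I_{d-2}$ for every $\pi$.

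The harder inclusion $I_{d-2}\subseteq \tilde T_H^{Wh}$ amounts to realizing every characteristic function of a $(d-2)$-dimensional affine subspace of $V_H$ as some $m_\pi$. Given a codimension-$2$ subspace $W\subseteq V_H$, its preimage in $F^*$ is an open index-$4$ subgroup containing $\det(H)\supseteq (F^*)^2$, hence by class field theory equals $N_{K/F}(K^*)$ for a unique biquadratic separable extension $K/F$. Proposition \ref{cor:bq} supplies a cuspidal $\Pi$ with $|L(\Pi)|=4$ and $\det(G_\Pi)=N_{K/F}(K^*)$, whence $G_\Pi\supseteq H$. The main technical point is then that $G_\Pi\supseteq H$ forces $\lg(\Pi|_H)=4$: every $\pi'\in L(\Pi)$ is $H$-stable, so by Clifford's theory each irreducible component $\pi_i\subset\Pi|_H$ restricts on $ZG'$ to an isotypic $(\pi'_{j(i)})^{\oplus m}$; the multiplicity one of $\Pi|_{ZG'}$ (\S\ref{S4.2}) forces $m=1$ and $i\mapsto j(i)$ to be a bijection onto $L(\Pi)$. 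Consequently $W_{\Pi|_H}=W$, and the four components $\pi_i$ exhaust all cosets of $W$ via the Whittaker decomposition \eqref{eq:W}. Letting $W$ vary covers every codimension-$2$ direction.

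Combining the inclusions yields $\tilde T_H^{Wh}=I_{d-2}$, and Lemma \ref{le:d} with $r=d-2$ gives that $I_0/I_{d-2}=\tilde I_H^{Wh}/\tilde T_H^{Wh}$ has exponent $2^{d-2}$, in particular finite. For $d\in\{0,1\}$, analogous realizations exhibit every $\mathcal F_\mathfrak O$ with $\mathfrak O\neq 0$ inside $\tilde T_H^{Wh}$ (take the Steinberg representation when $d=0$, and additionally $i_B^G(\eta_E)$ for the unique quadratic $E/F$ with $N_{E/F}(E^*)=\det(H)$ when $d=1$); combined with $c_0(1_H)=1$ for the trivial character, which contributes $\mathcal F_0$, these generators span all of $\tilde I_H$. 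The principal obstacle is the Clifford-theoretic step showing $\lg(\Pi|_H)=|L(\Pi)|$ under the hypothesis $H\subseteq G_\Pi$: a priori $\lg(\Pi|_H)$ could be a proper divisor of $|L(\Pi)|$, but $H$-stability of each $\pi'\in L(\Pi)$ together with multiplicity one rules that out.
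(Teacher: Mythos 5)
Your proof is correct and follows the same route as the paper: for $d=0,1$ you exhibit the same explicit generators (the trivial character, the Steinberg, and for $d=1$ the two constituents of $i_B^G(\eta_E)|_H$), and for $d\geq 2$ you reduce to Lemma \ref{le:d} exactly as the paper does. The paper's proof of the $d\geq 2$ case is a bare citation of Lemma \ref{le:d}; your identification $\tilde T_H^{Wh}=I_{d-2}$ — the upper bound via Theorem \ref{Ro} and the Whittaker decomposition \eqref{eq:W}, the lower bound via the size-$4$ packets attached to biquadratic extensions (Proposition \ref{cor:bq1}) together with the Clifford-theoretic step $H\subseteq G_\Pi\Rightarrow\lg(\Pi|_H)=|L(\Pi)|$ — is precisely the content that argument presupposes, and both inclusions are indeed needed to pin the exponent at $2^{d-2}$ rather than merely bounding it.
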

 \begin{proof} 
    
 When $d=0$, then   $I_H$ has $\mathbb Z$-rank $2$, and  the germs  of the traces of the trivial representation $1$ and of the Steinberg representation $\st_H$ form a   $\mathbb Z$-basis  $\{\tilde\tr_1,  \tilde \tr_{\st_H}\}$ of  $\tilde I_H$.

When  $d=1$, then  $I_H$ has $\mathbb Z$-rank $3$, $\det H=N_{E/F}(E^*)$ for a quadratic separable extension $E/F$, the principal series 
$(i_B^G \eta_E)|_H$  is semi-simple of length $2$ and multiplicity free (Lemma \ref{le:2.4}  and footnote in the proof of Proposition   \ref{v}), and 
  the germs  of the  traces of the trivial representation $1$ and of the   two components $\pi_E^+, \pi_E^-$ of  
$(i_B^G \eta_E)|_H$ form  a $\mathbb Z$-basis 
$\{\tilde\tr_1,  \tilde \tr_{\pi_E^+},  \tilde \tr_{\pi_E^-}\}$  of  $\tilde I_H$.

 When $d\geq 2$, the theorem   follows from Lemma \ref{le:d}.
   \end{proof}
   
 Theorem \ref{th:3} can be equally well expressed in terms of the Grothendieck group  $ \Gr_{ R}(H)$.  This is  under this form that the theorem extends to $R$-representations.
    For  an open compact subgroup $K$  of $H$, and $\pi$ a finite length smooth  complex representation 
$\pi$ of $H$, $\pi|_K$ is semi-simple wifh finite multiplicities, and is determined by the restriction of the trace of $\pi$ to $C^\infty_c(K,\mathbb C)$. 
         
\begin{corollary} There  are  $2^d$ virtual finite length smooth complex representations   $\pi_1 , \ldots , \pi_{2^d}$ of $H$ with the following property:  
for any finite length smooth complex representation $\pi$ of $H$, there are unique integers $a_0(\pi),a_1(\pi), \ldots, a_{2^d}(\pi)$,   such that  on some   compact open subgroup $K=K(\pi) $ of $H$, $$\pi \simeq a_0(\pi)  1 + \sum _{i=1}^{2^d} a_i(\pi)  \pi_i.$$
  \end{corollary}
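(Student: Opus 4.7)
The plan is to exploit the germ expansion of the trace of representations of $H$ around $1$, transported to $\Lie H=M_2(F)$ via $X\mapsto 1+X$, together with Theorem~\ref{th:3}, to produce a $\mathbb{Z}$-basis of the lattice of germs generated by all finite length smooth complex representations of $H$. By Theorems~\ref{th:1'} and~\ref{Ro} the trace germ at $1$ of such a representation $\pi$ decomposes in $\tilde I_H^{\mathbb Q}=\tilde I_H\otimes_{\mathbb Z}\mathbb Q$ as $T_\pi=c_0(\pi)\mathcal F_0+T_\pi^{Wh}$ with $T_\pi^{Wh}\in I_H^{Wh}$. By Proposition~\ref{prop:constant} $c_0(\pi)$ is always an integer except in the exceptional case, where $p$ is odd, $\pi\subset\Pi|_H$ for $\Pi$ cuspidal of minimal level~$0$ with $|L(\Pi)|=4$, and $c_0(\pi)=-1/2$; that case forces $H=ZG'$ and $d=2$.

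I choose the $\pi_i$'s in cases. When $d=0$ take $\pi_1=\st_H$, and when $d=1$ take $\pi_1=\pi_E^+$, $\pi_2=\pi_E^-$ from the proof of Theorem~\ref{th:3}. When $d\ge 2$ and the exceptional case does not arise (either $p=2$, or $H\supsetneq ZG'$, or $d\ge 3$), pick virtual representations $\pi_1,\ldots,\pi_{2^d}$ whose Whittaker germs $T_{\pi_i}^{Wh}$ form a $\mathbb Z$-basis of $\tilde T_H^{Wh}$, which is free of rank $2^d$ by Theorem~\ref{th:3}. In the exceptional case $d=2$, $p$ odd, $H=ZG'$, take $\pi_1,\pi_2,\pi_3,\pi_4$ the four components of $\Pi|_H$ for the unique cuspidal level-$0$ $L$-packet of size~$4$: each $\pi_i$ has $c_0(\pi_i)=-1/2$, and by the Whittaker decomposition~\eqref{eq:W} with $|A_{\pi_i}|=1$, one has $T_{\pi_i}^{Wh}=\mathcal F_{\mathfrak O_i}$ for a single non-trivial orbit $\mathfrak O_i$, so the $T_{\pi_i}^{Wh}$ form a $\mathbb Z$-basis of $\tilde I_H^{Wh}=\tilde T_H^{Wh}$ (equality by Theorem~\ref{th:3} since $d=2$).

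For any finite length $\pi$ of $H$, $T_\pi^{Wh}\in\tilde T_H^{Wh}$ expands uniquely as $\sum_{i\ge 1}a_i(\pi)T_{\pi_i}^{Wh}$ with $a_i(\pi)\in\mathbb Z$, and I set $a_0(\pi)=c_0(\pi)-\sum_{i\ge 1}a_i(\pi)c_0(\pi_i)$. Integrality of $a_0(\pi)$ is immediate outside the exceptional case. In the exceptional case $a_i(\pi)=\dim W_{Y_i}(\pi)$ for a representative $Y_i\in\mathfrak O_i$, so that $\sum_{i=1}^{4}a_i(\pi)=|A_\pi|=4/|L(\Pi)|$ with $\pi\in L(\Pi)$; a case check on $|L(\Pi)|\in\{1,2,4\}$ shows this sum has the parity that makes $a_0(\pi)=c_0(\pi)+\frac{1}{2}\sum_i a_i(\pi)$ an integer, matching the $1/2$ fractional part of $c_0(\pi)$ iff $\pi$ sits in the size-$4$ packet. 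Uniqueness of the $a_j(\pi)$'s follows from the $\mathbb Q$-linear independence in $\tilde I_H^{\mathbb Q}$ of $\tilde\tr_1,\tilde\tr_{\pi_1},\ldots,\tilde\tr_{\pi_{2^d}}$, immediate from the choice of the $T_{\pi_i}^{Wh}$.

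Equality of germs at $1$ then means the traces of $\pi$ and of $a_0(\pi)\cdot 1+\sum_ia_i(\pi)\pi_i$ agree on $C_c^\infty(V;\mathbb C)$ for some open neighborhood $V$ of $1$ in $H$. Any compact open subgroup $K(\pi)\subset V$ then satisfies the isomorphism $\pi\simeq a_0(\pi)\cdot 1+\sum_ia_i(\pi)\pi_i$ of smooth $K(\pi)$-representations, by linear independence of characters. The main obstacle I anticipate is precisely the integrality of $a_0(\pi)$ in the exceptional case $d=2$, $p$ odd: it rests on the parity bookkeeping afforded by the Whittaker decomposition~\eqref{eq:W} together with the explicit values $c_0(\pi_i)=-1/2$ from formula~\eqref{eq:fo}.
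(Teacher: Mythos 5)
Your proposal is correct, and its skeleton (a $\mathbb Z$-basis $\{\tilde T_{\pi_i}^{Wh}\}$ of $\tilde T_H^{Wh}$ supplied by Theorem \ref{th:3}, the germ expansion of Theorem \ref{th:1'}, and linear independence of the nilpotent orbital integrals for uniqueness) is the same as the paper's. Where you diverge is on the only delicate point, the integrality of $a_0(\pi)$. You isolate the exceptional case $d=2$, $p$ odd, $H=ZG'$ where $c_0$ can equal $-1/2$, make a specific choice of basis there (the four members of the size-$4$ cuspidal level-$0$ packet, each with $c_0=-1/2$ and $T^{Wh}=\mathcal F_{\mathfrak O_i}$), and verify integrality of $a_0(\pi)=c_0(\pi)-\sum_i a_i(\pi)c_0(\pi_i)$ by the parity bookkeeping $\sum_i a_i(\pi)=4/|L(\Pi)|$ from the Whittaker decomposition \eqref{eq:W}; this computation is correct (it gives $a_0=c_0+1/2, c_0+1, c_0+2$ in the cases $r=4,2,1$, an integer each time by Proposition \ref{prop:constant}). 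The paper avoids all of this with a one-line trick: once the germ identity $\tr_\pi=a_0(\pi)\tr_1+\sum_i a_i(\pi)\tr_{\pi_i}$ holds on $C_c^\infty(K(\pi);\mathbb C)$, evaluating on the characteristic function of $K(\pi)$ gives $a_0(\pi)=\dim_{\mathbb C}\pi^{K(\pi)}-\sum_i a_i(\pi)\dim_{\mathbb C}\pi_i^{K(\pi)}$, which is manifestly an integer for \emph{any} choice of basis $\pi_1,\dots,\pi_{2^d}$. So your argument buys an explicit, Whittaker-theoretic description of the basis and of the coefficients in the exceptional case (which is in the spirit of Theorem \ref{th:podd}), at the cost of a case analysis that the paper's dimension-count renders unnecessary.
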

 
\begin{proof} By Theorem \ref{th:3}, the $\mathbb Z$-module $\tilde T_H^{Wh}$ has
 a basis  $\{\tilde T_{\pi_1}^{Wh} , \ldots , \tilde T_{\pi_{2^d}}^{Wh}\}$ where $\pi_1 , \ldots , \pi_{2^d}$  are
 virtual finite length smooth representations   of $H$. By Theorem \ref{th:1'}, for any finite length smooth representation $\pi$ of $H$   there  exist a unique rational number $a_0(\pi) $ and unique integers $a_1(\pi), \ldots, a_{2^d}(\pi)$,  such that     $$\tr_\pi= a_0(\pi) \tr_1 + \sum _{i=1}^{2^d} a_i(\pi) \tr_{\pi_i}$$ 
 on restriction to $C^\infty_c(K(\pi) ,\mathbb C)$ for   some  compact open subgroup $K(\pi) $ of $H$. As $a_0(\pi)=\dim _{\mathbb C}\pi^{K(\pi)}-   \sum _{i=1}^{2^d} a_i(\pi) \dim_{\mathbb C}\pi_i^{K(\pi)},$  we see that  $a_0(\pi)$  is an integer.  Equivalently,  on restriction to $K(\pi)$, $$\pi \simeq  a_0(\pi)  1 + \sum _{i=1}^{2^d} a_i(\pi)  \pi_i.$$
        \end{proof}


\subsubsection{}
This has consequences for  the representations of $G'$. 

An irreducible complex representation of $G'$ extends to $ZG'$, and  we can apply 
 Theorem \ref{th:1'} to $H=ZG'$ when  $\charf_F\neq 2$.
  When   $p$ is odd,  there is an unique $L$-packet $\tau_1,\tau_2,\tau_3,\tau_4$  of $G'$ with $4$ elements (Proposition \ref{cor:bq1}).  One can enumerate the $4$ non-trivial nilpotent $G'$-orbits $\mathfrak O_1,\ldots, \mathfrak O_4$ such that 
$c_{\mathfrak O_i}(\tau_j)=\begin{cases} 1 \ \text{if} \ i=j\cr 0 \ \text{if} \ i\neq j\end{cases}$. For $i=1, \ldots, 4$ we choose a lower triangular element $Y_i\in \mathfrak O_i$.
 \begin{theorem}\label{th:podd} ($p$ odd, $R=\mathbb C$) Let $\pi$ be a finite length  smooth complex representation of $G'$. On restriction to a small enough compact open subgroup $K(\pi)$ of $G'$, we have 
\begin{equation}\label{eq:a0}
\pi \simeq a_0(\pi)1+\sum_{i=1}^4 c_{\mathfrak O_i}(\pi)\tau_i,  \ \  c_{\mathfrak O_i}(\pi)=\dim _{\mathbb C}W_{Y_i}(\pi),
\end{equation}
where  $a_0(\pi) = \dim _{\mathbb C}\pi^{K(\pi)} - \sum_{i=1}^4 c_{\mathfrak O_i}(\pi)\dim _{\mathbb C}\tau_i^{K(\pi)}$.  The constant term in Theorem \eqref{th:1'} is  $$c_0(\pi)=a_0(\pi)- (\sum_{i=1}^4 c_{\mathfrak O_i}(\pi))/2.$$ 
\end{theorem}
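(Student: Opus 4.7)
The strategy is to compare the germ expansion at $1$ of the trace of $\pi$ with those of the trivial representation and of the four elements $\tau_1,\ldots,\tau_4$ of the unique $L$-packet of size $4$. Since $p$ is odd we have $\charf_F \neq 2$, so we may work directly on $G'$: for each finite length smooth complex representation $\sigma$ of $G'$, Theorems \ref{th:1'} and \ref{Ro} provide an open neighbourhood $V(\sigma)$ of $1$ such that, for $f \in C_c^\infty(G';\mathbb{C})$ supported in $V(\sigma)$,
$$\tr_\sigma(f) = c_0(\sigma)\, f(1) + \sum_{i=1}^{4} \dim_{\mathbb{C}} W_{Y_i}(\sigma)\, I_{\mathfrak{O}_i}(\hat\varphi), \qquad \varphi(X) = f(1+X).$$
Applied to the trivial character this collapses to $\tr_1(f) = f(1)$. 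Applied to each $\tau_i$, Proposition \ref{prop:constant} gives $c_0(\tau_i) = -1/2$, and the chosen enumeration satisfies $\dim_{\mathbb{C}} W_{Y_j}(\tau_i) = \delta_{ij}$, whence
$$\tr_{\tau_i}(f) = -\tfrac{1}{2} f(1) + I_{\mathfrak{O}_i}(\hat\varphi).$$

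Next I form the virtual distribution
$$T := \tr_\pi - \sum_{i=1}^{4} c_{\mathfrak{O}_i}(\pi)\, \tr_{\tau_i}, \qquad c_{\mathfrak{O}_i}(\pi) := \dim_{\mathbb{C}} W_{Y_i}(\pi).$$
By construction the four nonzero nilpotent orbital contributions cancel exactly, leaving
$$T(f) = \Bigl(c_0(\pi) + \tfrac{1}{2}\sum_{i=1}^{4} c_{\mathfrak{O}_i}(\pi)\Bigr) f(1) = a_0(\pi)\, \tr_1(f)$$
for all $f$ supported in the common neighbourhood $V(\pi) \cap \bigcap_i V(\tau_i)$, where $a_0(\pi) := c_0(\pi) + \tfrac{1}{2}\sum_i c_{\mathfrak{O}_i}(\pi)$. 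This already establishes the claimed formula $c_0(\pi) = a_0(\pi) - \tfrac{1}{2}\sum_i c_{\mathfrak{O}_i}(\pi)$.

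To upgrade this distributional identity to an isomorphism of $K$-modules, I choose $K = K(\pi)$ to be an open pro-$p$ subgroup of $G'$ contained in the common neighbourhood above. Smooth complex representations of a pro-$p$ group are semisimple and their traces are linearly independent, so the identity of distributions restricted to $C_c^\infty(K,\mathbb{C})$ forces
$$\pi|_K \ \simeq\ a_0(\pi)\cdot 1|_K + \sum_{i=1}^{4} c_{\mathfrak{O}_i}(\pi)\, \tau_i|_K.$$
Taking $K$-invariants gives $a_0(\pi) = \dim_{\mathbb{C}} \pi^{K(\pi)} - \sum_i c_{\mathfrak{O}_i}(\pi) \dim_{\mathbb{C}} \tau_i^{K(\pi)}$, which in particular shows $a_0(\pi) \in \mathbb{Z}$. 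The only delicate input is the compatibility of the enumeration $\mathfrak{O}_i \leftrightarrow \tau_i$: this is ensured by the Whittaker decomposition \eqref{eq:W}, since for the size-$4$ $L$-packet each Whittaker function $m_{\tau_i}$ is a delta function on $F^*/(F^*)^2$ and the four such deltas exactly exhaust the four non-trivial nilpotent $G'$-orbits.
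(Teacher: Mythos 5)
Your proof is correct and follows essentially the same route as the paper: apply the germ expansion (Theorem \ref{th:1'}) and Theorem \ref{Ro} together with $c_0(\tau_i)=-1/2$ from Proposition \ref{prop:constant} and the dual-basis property $c_{\mathfrak O_i}(\tau_j)=\delta_{ij}$, cancel the nilpotent terms, and convert the resulting trace identity into an isomorphism on a small compact open subgroup exactly as in the Corollary to Theorem \ref{th:3}. The only cosmetic point is that Theorems \ref{th:1'} and \ref{Ro} are stated for $H=ZG'$ rather than $G'$ itself, so one should (as the paper does) first extend the constituents of $\pi$ to $ZG'$ before invoking them.
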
 
The constant term $c_0(\pi)$ can be computed using \eqref{Pipi} and \eqref{eq:fo}.
   
 \begin{remark} When $\charf_F=0, p$ odd and $ R=\mathbb C$, the theorem was  known (\cite{As94} and the last section of \cite{N23}). 
  \end{remark}

\subsubsection{}  For any $p$, let $\pi$ be an  irreducible smooth complex representation  of $G'$ and $r$ the cardinality of  the $L$-packet  of $\pi$.
 
For any $L$-packet $\{\tau_1, \tau_2, \tau_3, \tau_4\}$ of size   $4$, there exist  integers $a_0,b_0$ such that on a small enough compact open subgroup  of $G'$ we have 
 \begin{equation}\label{eq:1L}
 \ind_{B'}^{G'} 1 \simeq b_0T_1+ \sum_{i=1}^4 \tau_i ,\ \ \ \text{and if  $r=1$, \ }  \ \pi\simeq a_0T_1+ \sum_{i=1}^4 \tau_i .\end{equation}  
 
 If  $r=2$, then $\det (G_\pi)=N_{E/F}(E^*/F)$ for  a quadratic separable extension $E/F$. Choose  a bi-quadratic separable extension  of $F$ containing $E$.  There exist   $\tau_1$ and $\tau_2$ in the associated $L$-packet of size $4$  (Proposition \ref{cor:bq1}) and an integer $a_0$ such that  on a small enough compact open subgroup $K $ of $G'$ we have 
\begin{equation}\label{eq:2L}\
\pi\simeq a_0T_1+ \sum_{i=1}^2 \tau_i.
\end{equation}  

Therefore,  when $R=\mathbb C$ we have:

\begin{theorem} \label{th:anyp1} Let $\pi$ be an irreducible smooth  $R$-representation of $G'$.
There   are  an   integer $a_0 $ and  irreducible smooth  $R$-representations  $ \{\tau_1, \tau_2, \tau_3, \tau_4 \}$  of $G'$ forming an $L$-packet,  such that on a small enough compact open subgroup $K $ of $G'$ we have 
$$\pi \simeq a_0 1 + \sum_{ i=1} ^{4/r} \tau_i,$$
where $r$ is the  cardinality of the $L$-packet containing $\pi$.
\end{theorem}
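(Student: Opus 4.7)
The plan is to bootstrap from the complex case, which is a direct consequence of the preceding germ--expansion discussion, to arbitrary algebraically closed $R$ with $\charf_R\neq p$ via abstract field isomorphisms in characteristic $0$ and the modular lifting theory developed earlier in the paper. For $R=\mathbb{C}$, the statement is already delivered by the displays \eqref{eq:1L} and \eqref{eq:2L}: if $|L(\pi)|=4$ the statement is vacuous with $a_0=0$ and $\{\tau_i\}=L(\pi)$; if $|L(\pi)|=2$ one invokes \eqref{eq:2L} after choosing a biquadratic extension of $F$ containing the quadratic extension $E_\pi/F$ attached to $L(\pi)$; if $|L(\pi)|=1$ one invokes \eqref{eq:1L} together with the existence of an $L$-packet of size $4$ guaranteed by Proposition \ref{cor:bq1}.

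Transfer to an arbitrary algebraically closed $R$ of characteristic $0$ is routine once a compatible choice of $\sqrt{q}$ is fixed. Any such $R$ is abstractly isomorphic to $\mathbb{C}$ on the countable subfield over which everything in sight is defined; under such an isomorphism the local Langlands correspondence, the classification of irreducible smooth representations of $G$ and $G'$, and the $L$-packet structure all transport. The theorem asserts a decomposition in the Grothendieck group of $K$-representations for a small enough compact open $K\subset G'$, a purely algebraic statement about finite-dimensional representations of $K$, so it passes through the isomorphism. In particular this settles the theorem for $R=\mathbb{Q}_\ell^{\mathrm{ac}}$ for every prime $\ell\neq p$.

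For $R=\mathbb{F}_\ell^{\mathrm{ac}}$, given $\pi$ irreducible, Proposition \ref{prop:nc} furnishes an integral lift $\tilde\pi$ to $\mathbb{Q}_\ell^{\mathrm{ac}}$; by the equivalent formulation given there one may moreover choose $\tilde\pi\in L(\tilde\Pi)$ with $|L(\tilde\Pi)|=|L(\Pi)|$. Applying the characteristic--$0$ theorem to $\tilde\pi$ yields $\tilde\pi\simeq\tilde a_0\cdot 1+\sum_{i=1}^{4/r}\tilde\tau_i$ on a small enough $K$, where $\{\tilde\tau_1,\dots,\tilde\tau_4\}$ is an $L$-packet of size $4$ over $\mathbb{Q}_\ell^{\mathrm{ac}}$ and $r=|L(\tilde\pi)|=|L(\pi)|$. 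Fixing integral models for the $\tilde\tau_i$ and reducing modulo $\ell$, each $r_\ell(\tilde\tau_i)$ is $G$-conjugate to $r_\ell(\tilde\pi)=\pi$ and hence semisimple with all irreducible constituents lying in $L(\pi)$. Selecting one irreducible $\tau_i$ from each $r_\ell(\tilde\tau_i)$, the $\{\tau_i\}$ are $G$-conjugate of $\pi$ and assemble into an $L$-packet (of size $4$ by Corollary \ref{cor:tout}, up to discarding duplicates); the remaining multiplicities are absorbed into $a_0$, which remains an integer because $\dim_R\pi^K-\sum_i\dim_R\tau_i^K$ is. Finally, for an arbitrary algebraically closed $R$ of characteristic $\ell$, scalar extension from an $\mathbb{F}_\ell^{\mathrm{ac}}$-model preserves lengths, Whittaker dimensions, $L$-packets, and the $K$-decomposition, completing the proof.

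The main obstacle is Step 3, because the reduction mod $\ell$ of an irreducible $\mathbb{Q}_\ell^{\mathrm{ac}}$-representation of $G'$ can be reducible of length $2$ in the exceptional case of Corollary \ref{prop:redu1}, so one cannot transport the characteristic--$0$ decomposition coefficient-by-coefficient. The remedy is the choice $|L(\tilde\Pi)|=|L(\Pi)|$ afforded by Proposition \ref{prop:nc}: this forces the reductions $r_\ell(\tilde\tau_i)$ to behave coherently under $G$-conjugation, so that an $L$-packet of the required size emerges on the $\mathbb{F}_\ell^{\mathrm{ac}}$-side and the decomposition descends.
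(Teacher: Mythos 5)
Your overall strategy is the paper's: settle $R=\mathbb C$ via the germ expansion (displays \eqref{eq:1L}, \eqref{eq:2L}), transfer to characteristic $0$ by descent to a countable subfield, and handle $\charf_R=\ell$ by lifting to $\mathbb Q_\ell^{ac}$ with matching packet size (Proposition \ref{prop:nc}) and reducing modulo $\ell$. The characteristic-$0$ part is acceptable, if loosely phrased (a general algebraically closed $R$ of characteristic $0$ need not be isomorphic to $\mathbb C$; one descends $\pi$ and the size-$4$ packet to a countable algebraically closed subfield, embeds that in $\mathbb C$, and extends scalars back, which is what the paper does via $R_c=\mathbb Q^{ac}$ together with the observation \eqref{eq:ps} for principal series).

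The genuine problem is in your reduction step. You assert that ``each $r_\ell(\tilde\tau_i)$ is $G$-conjugate to $r_\ell(\tilde\pi)=\pi$ and hence \dots\ with all irreducible constituents lying in $L(\pi)$.'' This is false whenever $r<4$: the $\tilde\tau_i$ form a \emph{cuspidal} $L$-packet of size $4$, which is a different packet from $L(\tilde\pi)$ (e.g.\ $\tilde\pi$ may be a principal series), so their reductions are not conjugate to $\pi$ and their constituents do not lie in $L(\pi)$. Worse, your fallback of ``selecting one irreducible constituent from each $r_\ell(\tilde\tau_i)$'' and ``absorbing the remaining multiplicities into $a_0$'' cannot work: a discarded non-trivial irreducible constituent is not a multiple of the trivial character, so the identity $\pi\simeq a_0 1+\sum\tau_i$ on $K$ would simply fail. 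Your closing paragraph compounds this by claiming that the choice $|L(\tilde\Pi)|=|L(\Pi)|$ controls the reductions $r_\ell(\tilde\tau_i)$; it does not --- it only guarantees that $r_\ell(\tilde\pi)=\pi$ is irreducible and that the count $4/r$ is correct. The correct argument, which the paper uses implicitly, is that the $\tilde\tau_i$ constitute $L(\tilde T)$ for a supercuspidal (hence integral) $\tilde T$ of $G$ with $|L(\tilde T)|=4$; since $|L(r_\ell(\tilde T))|=|L(\tilde T)|\cdot\lg\bigl(r_\ell(\tilde\tau_i)\bigr)$ by \eqref{eq:case2} and no packet exceeds size $4$ (Theorem \ref{lpi}), each $r_\ell(\tilde\tau_i)$ is automatically irreducible, and the four reductions are pairwise distinct (multiplicity one) and exhaust the size-$4$ packet $L(r_\ell(\tilde T))$. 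With that substitution your proof closes; as written, the step is broken.
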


\subsubsection{} \label{sss} Let us prove  Theorem \ref{th:anyp1}   for any $R$.
 
Let $R_c$ be the algebraic closure in $R$ of the prime field of $R$. Write    $R_c=\mathbb Q^{ac}$ when $\charf_R=0$ and $R_c=\mathbb F_\ell^{ac}$ when  $\charf_R=\ell >0$.

a) We show first that  Theorem \ref{th:anyp1} for  $R_c$ extends to  $R$. 
 A cuspidal   $R$-representation of $G'$ is  the scalar extension  $ \pi_R=R\otimes_{R_c}\pi$ to $R$ of  a  cuspidal  
  $R_c$-representation $\pi$ of $G'$ \cite{V96} and the $L$-packets of size $4$ are cuspidal. The scalar extension from $R_c$ to $R$ respects irreducibility, 
 identifies  the $L$-packets of  size $4$ over $R_c$  with those over $R$ and sends   the $L$-packets of  size $r$ over $R_c$ to $L$-packets of  size $r$ over $R$.
  Theorem   \ref{th:anyp1} for  $R_c$-representations imply Theorem   \ref{th:anyp1} extends  for $R$-representations  which are  scalar extensions of  $R_c$-representations: 
  $$ \pi\simeq  a_0 1 + \sum_{ i=1} ^{4/r}  \tau_i \ \  \text{implies by scalar extension} \ \   \pi_R\simeq  a_0 1 + \sum_{ i=1} ^{4/r} \tau_{i,R}.$$
  The only  irreducible  smooth $R$-representations ot $G'$ which are not scalar extensions of  $R_c$-representations,  
are  principal series $i_{B'}^{G'} (\eta) $.  But   
\begin{equation}\label{eq:ps}i_{B'}^{G'} (\eta) \simeq \ind_{B'}^{G'} (1) \ \ \text{on  some small open compact subgroup $K$ of $G'$},
\end{equation}
and  we have \eqref{eq:1L} for the $R_c$-representation $ \ind_{B'}^{G'} (1) $. 

Therefore, for any $L$-packet $\{ \tau_{1,R}, \tau_{2,R},   \tau_{3,R}, \tau_{4,R} \}$ of size $4$, there is an integer $a_0$ such that 
  $$ \ind_{B'}^{G'} (1) \simeq  a_0 1 + \sum_{ i=1} ^4 \tau_{i,R} \ \ \ \text{on  some small open compact subgroup $K$ of $G'$}.$$

b)  
 Theorem   \ref{th:anyp1}  for   $\mathbb C$ extends to   $\mathbb Q^{ac}$  because  the scalar extension from $\mathbb Q^{ac}$ to $\mathbb C$  respects irreducibility, representations in an $L$-packet  of size $4$  are cuspidal, and cuspidal  representations complex representations of $G'$ are defined over $\mathbb Q^{ac}$.
 
c)   
 Via an  isomorphism $\mathbb C\simeq \mathbb Q_\ell^{ac}$, Theorem \ref{th:anyp1} for $\mathbb C$ extends to  $\mathbb Q_\ell^{ac}$. 
  Theorem \ref{th:anyp1} for $\mathbb Q_\ell^{ac}$ extends to  $\mathbb F_\ell^{ac}$-representations. Indeed, from Proposition \ref{prop:nc1} 
an irreducible smooth  $\mathbb F_\ell^{ac}$-representation $\pi$ of $G'$ in an $L$-packet of size $r$, 
  lifts to an integral  irreducible smooth  $\mathbb Q_\ell^{ac}$-representation $\tilde \pi$ of $G'$ in an $L$-packet of  size $r$ (Proposition \ref{prop:nc}). From   Theorem \ref{th:anyp1}  for  $\mathbb Q_\ell^{ac}$,
there  is an $L$-packet  $ \{\tilde \tau_1, \tilde \tau_2, \tilde \tau_3, \tilde  \tau_4 \}$ of irreducible smooth $\mathbb Q_\ell^{ac}$-representations of $G'$ and an  integer $a_0, $  such that on a small enough compact open subgroup $K $ of $G'$ we have 
 $$\tilde \pi\simeq  a_0 1 + \sum_{ i=1} ^{4/r} \tilde \tau_i.  \ \  \text{By reduction modulo $\ell$} \ \  \pi\simeq  a_0 1 + \sum_{ i=1} ^{4/r}\tau_i,$$
 where  the reduction $\{\tau_1, \tau_2,   \tau_3, \tau_4 \}$  modulo $\ell$ of $ \{\tilde \tau_1, \tilde \tau_2, \tilde \tau_3, \tilde  \tau_4 \}$  forms an  $L$-packet of irreducible smooth $\mathbb F_\ell^{ac}$-representations of $G'$.
 This ends the proof of    Theorem \ref{th:anyp1}.

\begin{remark}\label{re6.18} The formula  \eqref{Pipi}, \eqref{eq:1L} and \eqref{eq:2L}
remain valid for $R$.
\end{remark}

\subsubsection{}\label{S:Nevins}  For an irreducible infinite dimensional complex representation $\Pi$ of $G$ with conductor $c$, Casselman had already described
the restriction of $\Pi$ to $K_0$ as the direct sum of the fixed points under $K_{c-1}$ and a complement depending only
on the central character of $\Pi$.  

Similarly, when $p$ is odd, and $\pi$ is an irreducible infinite dimensional complex
representation of $G'$, Nevins  \cite{N05},  \cite{N13} described explicitly the restriction of $\pi$ to $K_0'$  as a finite-dimensional part specific to
$\pi$, and a complement depending only on the central character of $\pi$. More recently,  Nevins  \cite{N23}  defined for any vertex $x$ of the Bruhat-Tits building of $G'$, admissible complex representations $\tau_{x,1},\ldots \tau_{x,5}$ of the maximal open compact subgroup $G'_x$ fixing $x$ such that  the following is true. Let  $\delta_\pi$ be the depth of $\pi$ in the sense of Moy-Prasad. Then, 
there are integers $a_{\pi,1},\ldots,  a_{\pi,5}$ such that on restriction to $G'_{x, \delta_{\pi +}}$,
$$\pi \simeq \sum_{i=1}^5 a_{\pi,i} \tau_{x,i}.$$

Now  allow any $R$ with $\charf_R\neq p$ (still assuming  $p$ odd). The representations $\tau_{x,i}$ of Nevins  transfered to $\mathbb Q_\ell^{ac}$ are integral, defined over $\mathbb Q^{ac}$ and can be transfered to $R$-representations  $\tau_{x,i,R}$.
The proof in \S \ref{sss} applies and shows that the above result is also valid  over $R$ with 
$\tau_{x,1,R},\ldots \tau_{x,5,R}$.

   \section{  Asymptotics  of invariant vectors by Moy-Prasad subgroups } 
 
 \subsection{}\label{7.1}  Notations as in \S \ref{S:2.3} \ref{SL2}. The Moy-Prasad subgroups  of $G' =SL_2(F)$ are the   intersections of the Moy-Prasad subgroups  of $G=GL_2(F)$  with $G'$ because  the Bruhat-Tits of  $G' $ and of $PGL_2(F)$  are the same. We write $K'=G'\cap K$ for a subgroup $K$ of $G$.

Let $\red: K_0=GL_2(O_F)\to GL_2(k_F) $ and  $\red': K'_0=SL_2(O_F)\to SL_2(k_F) $ 
denote  the usual quotient maps.
The parahoric subgroups of $G$ are  the $G$-conjugates of the maximal open compact subgroup $K_0 $ or of its  Iwahori subgroup  $I_0=\red^{-1}( B(k_F))$. Those of $G'$ are the $G'$-conjugates of the maximal open compact subgroup $K'_0$ or its  Iwahori subgroup $I'_0=\red'^{-1}( B'(( k_F))$, or of the maximal open subgroup $dK'_0d^{-1}= (d K_0 d^{-1})'$ where $d=\begin{pmatrix} 1 & 0\cr 0 & p_F \end{pmatrix}$ \cite[\S 3]{Abde}.

  The Moy-Prasad subgroups  of  $G$  are the $G$-conjugates of the  $j$-th congruence subgroups $K_j , I_j, I_{1/2+j} $  of   $K_0 ,I_0 $,  the pro-$p$ Iwahori subgroup $ I_{1/2}=\red^{-1}(U(k_F))$ of $I_0$, for any integer $j\geq 0$  \cite[\S 12]{HV23}.  The Moy-Prasad subgroups of $G'$ are the $G'$-conjugates of the $j$-th congruence subgroups
$K'_j , d K'_j d^{-1}, I'_j, I'_{1/2+j} $  for $j\geq 0$.   

Let $ \mathfrak j $ denote  the $O_F$-lattice of  matrices $(x_{i,j}) \in  M_2(O_F)$ with $x_{1,2}\in P_F$, and $ \mathfrak j_{1/2}$ the $O_F$-lattice of matrices $(x_{i,j})\in  \mathfrak  j$ with $x_{1,1}, x_{2,2}\in P_F$.
 We have: \begin{equation}K_0=M_2(O_F)^*, \ I_0=  \mathfrak j ^*, \   I_{1/2+j}= 1+p_F^j \, \mathfrak j_{1/2},   \  K_{1+j}= 1+p_F^{j}M_2(P_F ), \  I_{1+j}= 1+P_F^j \ \mathfrak j\end{equation} 
  for $j\geq 0$.
  Note that $I_0=K_0\cap d K_0 d^{-1}$ and the decreasing sequence   for  $H_j= K_j, d K_j d^{-1}$,
$$H_0\supset I_0 \supset I_{1/2} \supset  \ldots  \supset  H_j  \supset I_j \supset  I_{1/2+j} \supset H_{1+j} \supset I_{1+j} \supset \ldots$$
 The $G$-normalizer $ZK_0$ of the maximal compact subgroup $K_0$  normalizes all subgroups $K_j$ for $j\geq 0$.
The $G$-normalizer of the Iwahori group $I$ is generated by $I$ and $\begin{pmatrix} 0 & 1\cr p_F & 0 \end{pmatrix}$; it normalizes all subgroups $I_{1/2+j}, I_j$ for $j\geq 0$.
 \bigskip  Let $s=\begin{pmatrix} 0 & 1\cr 1 & 0 \end{pmatrix}$ and $\beta' =\begin{pmatrix} 0 & -p_F^{-1} \cr p_F & 0 \end{pmatrix}$. 

The Iwasawa decomposition of $G$ with respect to $(B,K_0)$ and the decomposition  of $G$ in double cosets  modulo  $(B,I_0)$ or  $(B,I_{1/2})$  are  \cite[\S 12]{HV23}:
 \begin{equation}\label{dcG}
 G=BK_0= BI_0 \sqcup Bs I_0 =  BI_{1/2} \sqcup Bs I _{1/2}.
 \end{equation}
 Note  that    $Bs I _{1/2}=B\beta' I _{1/2}$.
  The Iwasawa decomposition of $G'$ with respect to $(B',K'_0)$ or $(B',dK'_0d^{-1})$ and the decomposition of $G'$ in  double classes  modulo  $(B',I'_0)$ or  $(B',I'_{1/2})$  are \cite[Lemme 3.2.2, Lemme 3.2.8]{Abde}:
\begin{equation}\label{dcG'}G'=B'K'_0= B'dK'_0d^{-1}=B'I_0'\sqcup B'\beta' I_0'=  B'I'_{1/2} \sqcup B'\beta' I' _{1/2}.
\end{equation}

\subsection{}\label{7.2}   \begin{proposition} \label{pro:BK0}  The map  $B'\backslash G'/H_j'\to   B\backslash G/H_j$  induced by the inclusion  $G'\subset G$  is  bijective, 
 for any  $j$-th congruence subgroup  $H_j=K_j , d K_j d^{-1}, I_j, I_{1/2+j} $ and $j\geq 0$.   
  \end{proposition}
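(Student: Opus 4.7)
\emph{Proof plan.} The plan is to verify surjectivity and injectivity separately, using the decompositions collected in \eqref{dcG} and \eqref{dcG'} together with the Iwahori factorization of $H_j$.

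Surjectivity is immediate: since the diagonal torus $T\subset B$ surjects onto $F^*=G/G'$ under the determinant, $G=BG'$, so every $B$-$H_j$ double coset in $G$ already meets $G'$.

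For injectivity it suffices to show $Bg_1H_j\cap G'=B'g_1H'_j$ for every $g_1\in G'$, which I would do by reducing $g_1$ modulo left multiplication by $B'$ to a representative that normalizes $H_j$. The appropriate decomposition in \eqref{dcG'} provides a representative $g_1\in K'_0$ when $H_j=K_j$ (since $K_j$ is normal in $K_0$), $g_1\in dK'_0d^{-1}$ when $H_j=dK_jd^{-1}$, and $g_1\in I'_0\cup\beta'I'_0$ when $H_j\in\{I_j,I_{1/2+j}\}$ (both being normal in $I_0$). In all cases except the $\beta'$-coset of the last one, the reduced $g_1$ normalizes $H_j$, so $Bg_1H_j=BH_jg_1$. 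Applying the Iwahori factorization $H_j=(H_j\cap U^-)(H_j\cap B)$ collapses $BH_j$ to $B(H_j\cap U^-)$; since $H_j\cap U^-\subset G'$, the condition $bvg_1\in G'$ forces $\det b=1$, that is $b\in B'$, and the corresponding Iwahori factorization of $H'_j$ rewrites the intersection as $B'(H'_j\cap U^-)g_1=B'g_1H'_j$.

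The main obstacle is the remaining case $g_1\in\beta'I'_0$ with $H_j\in\{I_j,I_{1/2+j}\}$, since $\beta'$ does not normalize $H_j$. The resolution is to use the factorization $\beta'=\diag(-p_F^{-1},1)\cdot\begin{pmatrix}0&1\\p_F&0\end{pmatrix}$: a direct computation gives $\beta'B\beta'^{-1}=B^-$, hence $B\beta'=\beta'B^-$. Writing $g_1=\beta'k$ with $k\in I'_0$ (which does normalize $H_j$), one obtains $Bg_1H_j=\beta'B^-H_jk$, and the opposite Iwahori factorization $B^-H_j=B^-(H_j\cap U)$ together with $H_j\cap U\subset G'$ forces the $B^-$-component of any $G'$-element in this set to lie in $B'^-$. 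Reassembling via the symmetric identity $\beta'B'^-=B'\beta'$ then yields $Bg_1H_j\cap G'=B'\beta'kH'_j=B'g_1H'_j$, completing injectivity.
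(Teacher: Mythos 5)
Your proof is correct and follows essentially the same route as the paper's: reduce the representative $g_1$ via the decompositions \eqref{dcG'}, use the Iwahori factorization of $H_j$ together with the determinant to force the $B$-component into $B'$, and handle the coset $B'\beta'I'_0$ by conjugating $B$ to $B^-$ (the paper phrases this as modifying $b$ by an element $c\in B\cap H_j$ with $\det c=\det h$, but it is the same mechanism). The one point to patch is $j=0$ with $H_0=K_0$ or $dK_0d^{-1}$, where the Iwahori factorization you invoke does not hold; the paper disposes of $j=0$ separately by comparing cardinalities in \eqref{dcG} and \eqref{dcG'}, and in your setup it suffices to note that both double-coset spaces are then singletons by the Iwasawa decomposition (also, for the collapse $BH_j=B(H_j\cap U^-)$ you want the factorization written as $(H_j\cap B)(H_j\cap U^-)$, which holds equally).
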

  \begin{proof} The  map $B'\backslash G'/H_j'\to   B\backslash G/H_j$ is surjective as $G=BG'$. 
When $j=0$, the map is  bijective  because  the two sets have the same cardinality   \eqref{dcG}, 
 \eqref{dcG'}.   
  
    Take $j>0$ and $g',g''$ in $G'$ such that $bg'h=g''$ with $b\in B, h\in H_j$. One wants to prove that 
   $b'g'h'=g''$ with $b'\in B', h'\in H'_j$. Multiplying  $g'$ on the left by an element of $B'$  one reduces to $g' \in H'_0$  if  $H_0=K_0, d H'_0 d^{-1}$,  and $g\in H'_0 \cup \beta' H'_0$ if  $H_0=I_0, I_{1/2}$  \eqref{dcG'}.  We have $\det(b) \det (h)=1$.   There exists  $c\in B\cap H_j$ such that $\det (c)=\det(h)$ by the Iwahori decomposition of the $j$-th congruence subgroup   $H_j=(B\cap H_j) (H_j\cap U^-)$  when  $j>0$. Three cases occur:
 
   1) $g'\in H'_0$.   Write  $(bc) g' (g'^{-1} c^{-1} g')h=g'' $ with 
   $b'=bc\in B'$,   $g'^{-1}c^{-1} g'\in H_j$  and 
    $h'= (g'^{-1}c^{-1} g' )h\in H'_j$.  
   
  2) $g'\in \beta' H'_0$ and $g''\in H'_0$.  Apply the same argument  to $g''$. 
 
3)  $g'$ and $g''$ are  in $\beta' H'_0$. Changing notations one wants to  prove that for $g'$ and $g''$   in $H'_0$  such that $b\beta' g'h=\beta g''$ with $b\in B, h\in H_j$, we have 
   $b'\beta g'h'=\beta g''$ with $b'\in B', h'\in H'_j$. Multiply on the left by $\beta^{-1}$. Noting that  $\beta^{-1}B \beta=B^- $, one is   reduced to prove that for  $g',g''\in H'_0$  such that  $ b g'h= g''$ with $ b\in B^-, h\in H_j$, we have  
   $b' g'h'= g''$ with $b' \in (B^-)', h'\in H'_j$. The argument used before with $B$ works also for $B^-$, because we have 
   the Iwahori decomposition  $H_j=(B^-\cap H_j) (H_j\cap U)$ when $j>0$. There exists     $c \in B^- \cap H_j$   
   such that $\det(c)=\det(h)$. Proceeding as in 1), one  writes $(bc) g' (g'^{-1} c^{-1} g')h=g'' $ with 
   $b'=bc\in (B^-)'$,   $g'^{-1}c^{-1} g'\in H_j$  and 
    $h'= (g'^{-1}c^{-1} g' )h\in H'_j$. 
        \end{proof}

 \bigskip Proposition \ref{pro:BK0} has important applications.  The cardinality of $B\backslash G/H_j$ is  computed in \cite[Proposition 11.2]{HV23} for $j\geq 0$. By Proposition \ref{pro:BK0}, $|B\backslash G/H_j|=|B'\backslash G'/H_j|$.    
   \begin{corollary} \label{cor8.4}
  The cardinality of  $B'\backslash G'/H'_j$ for $H'_j=K'_j , d K'_j d^{-1}, I'_j, I'_{1/2+j} $ and $j\geq 0$, is:  

   $|B'\backslash G'/K'_0|=|B'\backslash G'/dK'_0d^{-1}|= |B\backslash G/K_0|= 1$,  
   
\medskip
 $ |B'\backslash G'/K'_{1+j}|= |B'\backslash G'/dK'_{1+j}d^{-1}|= |B\backslash G/K_{1+j}|= (q+1)\, q^j,$

\medskip 
 $ |B'\backslash G'/I_{j}'|=|B'\backslash G'/I'_{1/2+j}|= |B\backslash G/I_{j}|=|B\backslash G/I_{1/2+j}|= 2 \, q^j.$
  \end{corollary}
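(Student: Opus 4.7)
The plan is to reduce the corollary immediately to the analogous statement for $G=GL_2(F)$, which is already known. Concretely, Proposition \ref{pro:BK0} establishes, for each of the Moy-Prasad subgroups $H_j = K_j, dK_jd^{-1}, I_j, I_{1/2+j}$ with $j\geq 0$, a canonical bijection
\[
B'\backslash G'/H'_j \;\xrightarrow{\sim}\; B\backslash G/H_j
\]
induced by the inclusion $G'\subset G$. Hence the first equality in each line of the corollary is free. What remains is to verify the numerical values on the $GL_2$ side, and this is exactly the content of \cite[Proposition 11.2]{HV23}, which the excerpt cites for the cardinalities of $B\backslash G/H_j$.

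For completeness I would still spot-check each of the three rows. For $j=0$, the Iwasawa decomposition $G=BK_0$ from \eqref{dcG} gives $|B\backslash G/K_0|=1$, and similarly $|B\backslash G/dK_0 d^{-1}|=1$ by conjugating. For the Iwahori cases at $j=0$, the two-cell Bruhat-like decomposition $G=BI_0\sqcup BsI_0 = BI_{1/2}\sqcup B\beta' I_{1/2}$ in \eqref{dcG} yields $|B\backslash G/I_0|=|B\backslash G/I_{1/2}|=2$. For general $j\geq 0$, one passes from level $0$ to level $j$ by refining each double coset: a set of representatives of $B\backslash BK_0/K_0$ can be lifted to representatives of $B\backslash BK_0/K_{1+j}$ by traversing $K_0/K_{1+j}$ modulo the action of $B\cap K_0$ via left multiplication, and an Iwahori-type factorization $K_{1+j} = (B\cap K_{1+j})(U^-\cap K_{1+j})$ (together with its conjugate) shows that the $(B,K_{1+j})$-double cosets inside $BK_0$ are parametrized by $(B\cap K_0)\backslash K_0/K_{1+j}$, of cardinality $|K_0/(B\cap K_0)K_{1+j}|=(q+1)q^j$. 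The Iwahori counts $|B\backslash G/I_j|=|B\backslash G/I_{1/2+j}|=2q^j$ follow in the same way from the Bruhat cells $BI_0$ and $BsI_0$, each contributing $q^j$.

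The main (and only) obstacle is bookkeeping: one needs the Iwahori decompositions of $K_{1+j}$, $I_j$ and $I_{1/2+j}$ with respect to both $B$ and $B^-$ so that the above reductions work uniformly (including for the case $dK_0d^{-1}$, which one handles by conjugating by $d$ and using the Iwasawa decomposition $G=BdK_0d^{-1}$ obtained from $G=BK_0$). Once the $GL_2$ count is in hand, however, Proposition \ref{pro:BK0} transports it directly to $G'$ without any further computation, so the corollary is essentially a corollary in the strict sense.
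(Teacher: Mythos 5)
Your proposal is correct and is exactly the paper's argument: the paper deduces Corollary \ref{cor8.4} by combining the bijection $B'\backslash G'/H'_j\to B\backslash G/H_j$ of Proposition \ref{pro:BK0} with the cardinalities of $B\backslash G/H_j$ computed in \cite[Proposition 11.2]{HV23}. Your additional spot-check of the $GL_2$ counts via the Iwasawa and Iwahori decompositions is consistent with that cited result but not needed.
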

 
Over any coefficient ring, the restriction to $G'$ of $\ind_B^G 1$ is $\ind_{B'}^{G'}1 $.
   The  vector spaces $(\ind_{B'}^{G'}1)^{H'_j}\supset ( \ind_B^G 1)^{H_j} $ have  the same dimension  by Proposition \ref{pro:BK0}, hence are equal.  
  \begin{corollary}  Over any coefficient ring, 
 any element in $\ind_B^G 1$  fixed by $H'_j$ is also fixed by $H_j$ for $j\geq 0$. 
\end{corollary}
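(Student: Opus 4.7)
The plan is to translate the statement into a comparison of double coset spaces and then invoke Proposition \ref{pro:BK0} together with Corollary \ref{cor8.4}. Over any coefficient ring $R$, the space $(\ind_B^G 1)^{H_j}$ of $H_j$-fixed vectors in the smooth induction is the free $R$-module with basis the characteristic functions of the (finitely many) double cosets in $B\backslash G/H_j$; likewise $(\ind_B^G 1)^{H'_j}$ is the free $R$-module on $B\backslash G/H'_j$. Since $H'_j\subseteq H_j$, there is a natural surjection
\[
p:\ B\backslash G/H'_j\ \twoheadrightarrow\ B\backslash G/H_j,
\]
and the containment $(\ind_B^G 1)^{H_j}\subseteq (\ind_B^G 1)^{H'_j}$ corresponds to pullback along $p$. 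Thus the statement is equivalent to the assertion that $p$ is a bijection.

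Next I would factor the bijection provided by Proposition \ref{pro:BK0}. The inclusion $G'\hookrightarrow G$ yields a map
\[
q:\ B'\backslash G'/H'_j\ \to\ B\backslash G/H'_j,
\]
which is surjective because $G=BG'$, so every $g=bg'$ with $b\in B,\, g'\in G'$ has $BgH'_j=Bg'H'_j$, the image of $B'g'H'_j$ under $q$. The composition $p\circ q$ is precisely the map $B'\backslash G'/H'_j\to B\backslash G/H_j$ of Proposition \ref{pro:BK0}, which is bijective. Two surjections whose composition is bijective are each bijective; in particular $p$ is a bijection, as required. (Equivalently, one can argue numerically: Proposition \ref{pro:BK0} combined with Corollary \ref{cor8.4} gives $|B'\backslash G'/H'_j|=|B\backslash G/H_j|$, so the surjection $q$ is a bijection, hence so is $p$.)

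There is no real obstacle here: the only nontrivial input is Proposition \ref{pro:BK0}, which has already been established via the Iwahori decomposition $H_j=(B\cap H_j)(H_j\cap U^-)$ used to adjust determinants. Once $p$ is bijective, each $H_j$-orbit on $B\backslash G$ coincides with a single $H'_j$-orbit, so every $f\in\ind_B^G 1$ that is constant on $H'_j$-orbits is automatically constant on $H_j$-orbits, proving the corollary over any coefficient ring.
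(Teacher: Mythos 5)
Your proof is correct and rests on the same key input as the paper's, namely Proposition \ref{pro:BK0}; the paper phrases the conclusion as an equality of the fixed-vector spaces $(\ind_{B'}^{G'}1)^{H'_j}\supset(\ind_B^G 1)^{H_j}$ because they have the same dimension. Your reformulation via the bijectivity of $p:B\backslash G/H'_j\to B\backslash G/H_j$ (deduced from the surjectivity of $q$ and the bijectivity of $p\circ q$) is a slightly more careful way to justify that step over an arbitrary coefficient ring, where a containment of free modules of equal rank is not automatically an equality, but it is essentially the same argument.
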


  It is known that any infinite dimensional irreducible smooth $R$-representation $\Pi$ of $G$  near the identity,  is isomorphic to $\ind_B^G 1$ modulo a multiple of the trivial representation  \cite{HV23}. There exist  integers $a_\Pi$ and $j_\Pi\geq 0$ such that  for $j\geq j_\Pi$
  \begin{equation}\label{cPi} \text{$\Pi \simeq a_{\Pi} 1 + \ind_B^G 1$ on 
  $I_{j} $}.
  \end{equation}

 \begin{corollary}\label{corcPi} For $j\geq j_\Pi$, any element in $\Pi$  fixed by $H'_j$ is also fixed by $H_j$.
 \end{corollary}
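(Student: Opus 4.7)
The strategy is to reduce the statement to the analogous result for $\ind_B^G 1$, which has just been established, using the asymptotic isomorphism \eqref{cPi}. First I would invoke \eqref{cPi} to obtain, for $j \geq j_\Pi$ (possibly after enlarging $j_\Pi$ depending on which Moy-Prasad subgroup $H_j$ is considered), an isomorphism
\[
\Pi \simeq a_\Pi\cdot 1 \oplus \ind_B^G 1
\]
as representations of $H_j$. Since $H'_j = H_j \cap G' \subset H_j$, this is automatically an isomorphism of $H'_j$-representations as well.

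Next, taking $H_j$-invariants and separately $H'_j$-invariants on both sides gives
\[
\Pi^{H_j} \simeq a_\Pi R \oplus (\ind_B^G 1)^{H_j}, \qquad \Pi^{H'_j} \simeq a_\Pi R \oplus (\ind_B^G 1)^{H'_j},
\]
since the trivial summand contributes an $a_\Pi$-dimensional fixed space under any subgroup. The preceding corollary applied to $\ind_B^G 1$ yields $(\ind_B^G 1)^{H_j} = (\ind_B^G 1)^{H'_j}$ (both as dimensions and as subspaces, once the inclusion $H'_j \subset H_j$ is used on the other side). Consequently $\dim_R \Pi^{H_j} = \dim_R \Pi^{H'_j}$. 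Combined with the tautological inclusion $\Pi^{H_j} \subset \Pi^{H'_j}$ that comes from $H'_j \subset H_j$, equal dimensions force $\Pi^{H_j} = \Pi^{H'_j}$, which is the assertion of the corollary.

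The one delicate point is to be sure that the isomorphism in \eqref{cPi}, written on $I_j$, actually restricts to or can be rewritten as an isomorphism of $H_j$-representations for each Moy-Prasad subgroup $H_j$ of interest. This should follow from the germ-expansion construction of \cite{HV23} underlying \eqref{cPi}: the isomorphism is a statement about representations near the identity, so for $j$ large enough every such $H_j$ (and a fortiori $H'_j$) sits inside a common neighborhood on which the identification is already in place. Once this technicality is handled, the argument above is purely formal.
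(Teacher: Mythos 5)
Your argument is correct and is essentially the paper's own (implicit) one: combine the asymptotic identity \eqref{cPi} with the preceding corollary on $\ind_B^G 1$, compare dimensions of invariants, and use the tautological inclusion $\Pi^{H_j}\subset\Pi^{H'_j}$ together with admissibility to conclude equality. The only cosmetic caveat is that $a_\Pi$ is in general negative (e.g.\ $a_\Pi=-1$ for Steinberg), so \eqref{cPi} is an identity in the Grothendieck group of $H_j$-representations rather than a literal direct-sum decomposition; your dimension count $\dim_R\Pi^{H_j}=a_\Pi+\dim_R(\ind_B^G1)^{H_j}$ is unaffected.
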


 \begin{proposition}\label{pro7.5} $a_\Pi=0$ if $\Pi$ is a principal series, 
 $a_\Pi=-1$ when $q+1 \neq 0$ in $R$ and $\Pi$ is the twist of the Steinberg representation by a character, and when $\Pi$ is cuspidal with minimal depth $\delta_\Pi$ under torsion by characters,
  $$a_\Pi=\begin{cases}-2 q^{\delta_\Pi }& \text{if  $\delta_\Pi$ is an integer}\\  - (q+1) q^{\delta_\Pi -1/2}&  \text{otherwise}
\end{cases} .$$
If $|L(\Pi)|=4$, then $a_\Pi=-2$ for $p$ odd and $a_\Pi$ is a multiple of $4$ if $p=2$.
 \end{proposition}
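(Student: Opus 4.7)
The plan is to compute $a_\Pi = \dim_R \Pi^{I_j} - \dim_R (\ind_B^G 1)^{I_j}$ for $j \geq j_\Pi$, which is legitimate by \eqref{cPi}. By Proposition \ref{pro:BK0} applied to $G$ (equivalently, by the Iwahori decomposition), $\dim_R (\ind_B^G 1)^{I_j} = |B \backslash G / I_j| = 2 q^j$ for large $j$, independently of the coefficient ring. So the proof reduces to computing $\dim_R \Pi^{I_j}$.

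For the non-cuspidal cases I would argue directly, for any $R$. If $\Pi = i_B^G(\chi)$ is irreducible, picking $j$ large enough that $\chi$ is trivial on $T \cap I_j$, the standard analysis of invariants of an induced representation via double cosets yields $\dim_R \Pi^{I_j} = |B \backslash G / I_j| = 2 q^j$, so $a_\Pi = 0$. If $q+1 \neq 0$ and $\Pi = (\chi \circ \det) \otimes \St$, the exact sequence $0 \to 1 \to \ind_B^G 1 \to \St \to 0$ gives $\dim_R \St^{I_j} = 2 q^j - 1$, and the twist by $\chi \circ \det$ does not affect the restriction to $I_j$ once $\chi$ is trivial on $\det(I_j) = 1 + P_F^j$; hence $a_\Pi = -1$.

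For cuspidal $\Pi$ I would first handle $R = \mathbb C$ using the germ expansion of Theorem \ref{th:1'} applied with $H = G$. Both $\Pi$ and $\ind_B^G 1$ are infinite dimensional with one-dimensional Whittaker space (Proposition \ref{prop:spG}(iv)), and $G$ acts transitively on non-zero nilpotent elements of $M_2(F)$, so by Theorem \ref{Ro} the coefficient attached to the unique non-trivial nilpotent orbit is $1$ in both expansions. Since $\ind_B^G 1$ has constituents $1$ and $\St$, additivity gives $c_0(\ind_B^G 1) = c_0(1) + c_0(\St) = 1 - 1 = 0$. Comparing the germ expansions of both sides of \eqref{cPi} and isolating the constant term then yields $a_\Pi = c_0(\Pi)$. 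The explicit formulas \eqref{eq:fo} for $c_0(\Pi)$ in terms of the minimal level $f_\Pi$, which equals the Moy--Prasad depth $\delta_\Pi$, give the stated values. For arbitrary $R$ with $\charf_R \neq p$, the integer $a_\Pi$ is determined by $\dim_R \Pi^{I_j}$, hence is preserved under lifting from $\mathbb F_\ell^{ac}$ to $\mathbb Q_\ell^{ac}$ via Proposition \ref{pr:ilft} and reduction modulo $\ell$ (the minimal depth being preserved), and under scalar extension from the prime field as in \S\ref{sss}.

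For the final clause, if $|L(\Pi)| = 4$ and $p$ is odd, Corollary \ref{cor:2} forces $\Pi$ to have level zero, so $\delta_\Pi = 0$ and $a_\Pi = -2 q^0 = -2$. If $p = 2$, Proposition \ref{prop:ty1} forces $\Pi$ to have positive level, and Lemma \ref{le:Pipi} excludes $f_\Pi = 1/2$ entirely, and $f_\Pi = 3/2$ when $q = 2$; a short case check on integer versus half-integer depth then shows $a_\Pi$ is a multiple of $4$, since in the integer case $a_\Pi = -2 q^{\delta_\Pi}$ with $q$ even and $\delta_\Pi \geq 1$, while in the half-integer case $-(q+1) q^{\delta_\Pi - 1/2}$ is divisible by $4$ once $\delta_\Pi \geq 3/2$ (and $\delta_\Pi \geq 5/2$ when $q = 2$). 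The main obstacle will be the clean identification $a_\Pi = c_0(\Pi)$ over $\mathbb C$; everything hinges on the uniqueness of the Whittaker model for both $\Pi$ and $\ind_B^G 1$ and on the fact that $G$ has only one non-trivial nilpotent orbit in $M_2(F)$, which together force the Whittaker contributions to cancel exactly in the comparison of germ expansions.
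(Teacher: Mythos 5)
Your proposal is correct and follows essentially the same route as the paper: for cuspidal $\Pi$ the identification $a_\Pi=c_0(\Pi)$ (valid because $c_0(\ind_B^G 1)=c_0(1)+c_0(\St)=0$ over $\mathbb C$) combined with \eqref{eq:fo}, the transfer to positive characteristic via lifting and the equality $a_\Pi=a_{\tilde\Pi}$, and for the last clause exactly the case analysis ($f_\Pi=0$ excluded by Proposition \ref{prop:ty1}, $f_\Pi=1/2$ and $q=2,f_\Pi=3/2$ excluded by Lemma \ref{le:Pipi}) that the paper carries out in the proof of Proposition \ref{prop:constant}. The only genuine difference is your treatment of the non-cuspidal cases by a direct count of $I_j$-fixed vectors, valid over any $R$ at once, where the paper instead quotes the values $c_0(\St)=-1$ and $c_0=0$ for induced representations from \cite{HV23}; your variant is slightly more self-contained but changes nothing of substance.
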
 
\begin{proof}  When $R=\mathbb C$, 
then $a_\Pi $ is 
 the constant term $c_0(\Pi)$ of the germ expansion  for $ \Pi$ because   the constant term $c_0( \ind_B^G 1)$ of the germ expansion of the trace of $ \ind_B^G 1$  around $1$ \eqref{ge} is $0$. 
 
 When $R=\mathbb F_\ell^{ac}$ and $\tilde \Pi$ 
 is a  $\mathbb Q_\ell^{ac}$-representation lifting $\Pi$,   $a_\Pi =a_{\tilde \Pi }$.
  When $\Pi$ is cuspidal, $\tilde \Pi $ is supercuspidal and the formula for  $a_\Pi $ follows from \eqref{eq:fo}. If $|L(\Pi)|=4$ the assertion on $a_\Pi$ follows from the proof of Proposition \ref{prop:constant}
 \end{proof}
 
 In the particular case where $\Pi|_{G'}=\pi$ is irreducible, we deduce that  for $j\geq j_\Pi$: $$\pi \simeq  a_\Pi  1 + \ind_{B'}^{G'} 1  \ \ \text{on  $ I'_{j}$}.$$ 
  For example,  an irreducible   principal series $\pi$ of $G' $  is the restriction to $G'$ of a   principal series $\Pi$ of $G$, and on $I'_{1/2+j} $  for $j \geq j_\Pi$ we have
   $\pi \simeq   \ind_{B'}^{G'} 1$.   

\bigskip   By \eqref{cPi} if $j\geq j_\Pi$,
  \begin{equation}\label{8.6} \dim _{\mathbb C} \Pi ^{H_j}=a_\Pi  +   |B \backslash G/H_0| \, q^j .
 \end{equation}  
  By Proposition \ref{pro:BK0}, $\Pi^{H_j}=\sum_{\pi\in L(\Pi) }\pi^{H'_j}$ for  $H_j= I_{1/2+j}, K_{1+j} , I_{1+j}$ and $j\geq 0$. 

 In particular, if  $\Pi|_{G'}=\pi$ is irreducible,  then  if $j\geq j_\Pi$,
$$\ \dim \pi^{H_j '}= a_\Pi  +   |B \backslash G/H_0| \, q^j .$$
In general, by Corollary \ref{cor8.4}  (loc.cit. \S 12.2),  for  $j$ large \footnote{$j\geq j_\Pi+1$ for $I_j, H_j$ and $j\geq j_\Pi$ for $I_{1/2+j}$}
\begin{equation}\label{7.5}
 \dim _{\mathbb C} \Pi ^{I_{j}} =\dim _{\mathbb C} \Pi ^{I_{1/2+j}} =  a_\Pi  + 2   \ q^j, \ \ 
   \dim _{\mathbb C} \Pi ^{K_{1+j}} =  a_\Pi  + (q+1)   \ q^j .  \end{equation}
    Let $\pi$ be  an infinite dimensional  irreducible smooth $R$-representation  of $G'$ contained in $ \Pi|_{G'}$. The Moy-Prasad filtration of the Iwahori subgroup  $I'$ of $G'$ is 
 $$I'=I'_0\supset I'_{1/2} \supset I'_1 \supset \ldots \supset I'_j \supset I'_{1/2+j} \supset I_{j+1}\supset \ldots .$$
   \begin{theorem}\label{th.pol}  With $a_\Pi$ as in  \eqref{cPi} and Proposition \ref{pro7.5},  we have for 
  $j$ large \footnote{$j\geq j_\Pi+1$ for $I_j$ and $j\geq j_\Pi$ for $I_{1/2+j}$},
 $$ \dim_R \pi^{I'_j}=  \dim_R \pi^{I'_{1/2+j}}= | L(\Pi)|^{-1} \, (a_\Pi + 2 q^j) .$$
   $ | L(\Pi)|^{-1} a_\Pi=-1/2$ if  $| L(\Pi)|=4$ and   $p$ is odd,  otherwise 
  $ | L(\Pi)|^{-1} a_\Pi$  is an integer. 
   \end{theorem}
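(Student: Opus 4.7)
The plan is to deduce the theorem from the already established asymptotic \eqref{7.5} on the $G$-side, together with the fact that all components of $L(\Pi)$ contribute equally to $\Pi^{I'_j}$. By Corollary \ref{corcPi}, applied to $H_j\in\{I_j, I_{1/2+j}\}$, one has $\Pi^{I'_j}=\Pi^{I_j}$ and $\Pi^{I'_{1/2+j}}=\Pi^{I_{1/2+j}}$ as soon as $j\ge j_\Pi$ (the reverse inclusion being trivial), so by \eqref{7.5} both dimensions equal $a_\Pi+2q^j$ for $j$ large. Using the multiplicity-one decomposition $\Pi|_{G'}=\bigoplus_{\pi'\in L(\Pi)}\pi'$ furnished by \eqref{ssfl1}, the theorem reduces to showing that $\dim_R(\pi')^{I'_j}$ does not depend on $\pi'\in L(\Pi)$, and likewise for $I'_{1/2+j}$.

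For this equidistribution step, I would write any $\pi'\in L(\Pi)$ as $\pi^g$ for some $g\in G$, so that $(\pi^g)^{I'_j}$ is the subspace of $gI'_jg^{-1}$-fixed vectors in the underlying space of $\pi$. Since $G'$ is normal in $G$, $gI'_jg^{-1}$ is the $j$-th congruence subgroup of the Iwahori $gI'g^{-1}$ of $G'$. The group $G'$ acts transitively on its Iwahori subgroups (equivalently, transitively on the chambers of the Bruhat-Tits building of $G'$), so one may write $gI'_jg^{-1}=hI'_jh^{-1}$ for some $h\in G'$, and translation by $\pi(h)$ identifies the two fixed-vector spaces. This shows $\dim_R(\pi^g)^{I'_j}=\dim_R\pi^{I'_j}$; the same reasoning works verbatim for $I'_{1/2+j}$. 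Dividing the total sum by $|L(\Pi)|$ yields the stated formula.

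The remaining assertion on $|L(\Pi)|^{-1}a_\Pi$ is a case check using Proposition \ref{pro7.5} and the classification of $L$-packet sizes. The case $|L(\Pi)|=1$ is trivial. When $|L(\Pi)|=2$, the possibilities for $a_\Pi$ given by Proposition \ref{pro7.5} are $0$ (principal series), $-2q^{\delta_\Pi}$ (cuspidal, integer depth), or $-(q+1)q^{\delta_\Pi-1/2}$ (cuspidal, half-integer depth); the last is still even because either $q+1$ is even when $p$ is odd, or, when $p=2$, Lemma \ref{le:Pipi} rules out $\delta_\Pi=1/2$ so that $q^{\delta_\Pi-1/2}$ itself is even. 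When $|L(\Pi)|=4$ and $p=2$, Proposition \ref{pro7.5} directly gives $4\mid a_\Pi$. When $|L(\Pi)|=4$ and $p$ is odd, Proposition \ref{prop:ty1} and Corollary \ref{cor:2} pin down the unique such packet as cuspidal of level zero, whence $\delta_\Pi=0$ and $a_\Pi=-2$, giving $|L(\Pi)|^{-1}a_\Pi=-1/2$. The main obstacle is the equidistribution of $I'_j$-invariants across $L(\Pi)$; once this is granted, the remainder is bookkeeping.
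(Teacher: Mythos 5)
Your proof is correct and follows essentially the same route as the paper: reduce to the $GL_2$-level asymptotic \eqref{7.5} via Proposition \ref{pro:BK0}/Corollary \ref{corcPi}, and show that $\dim_R(\pi')^{I'_j}$ is constant on the packet before dividing by $|L(\Pi)|$. The paper phrases the equidistribution step through the $G$-normalizer of the Iwahori (which has determinant $F^*$, hence acts transitively on $L(\Pi)$, and normalizes the whole filtration) rather than through transitivity of $G'$ on chambers, but these are the same fact in different clothing.
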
 
  \begin{proof} The determinant of the $G$-normalizer $N_G(I)$ of the Iwahori group $I$  is equal to $F^*$ (\S \ref{7.1}). Thus,  $N_G(I)$ acts  transtively on $L(\Pi)$ and as $N_G(I)$ normalizes  the Moy-Prasad filtration of $I$.
  The dimension of the invariants of $\pi$  by  $I'_{1/2+j}$ and $I'_j$  of $G'$ for $j\geq 0$, does not depend on the choice of $\pi$ in the $L$-packet $L(\Pi)$.  For these two groups $H'_j$ we have
$ \dim_R \pi^{H'_j}=  | L(\Pi)|^{-1} \, \dim_R \Pi^{H_j}$ for $j\geq j_\Pi$, by Proposition \ref{pro:BK0}. Apply now \eqref{7.5}. The assertion on $  | L(\Pi)|^{-1}  a_\Pi$ follows from Proposition \ref{pro7.5}.
\end{proof}
Let us now turn to the asymptotics for fixed points under congruence subgroups $K'_j$  of $K'_0=SL_2(O_F)$.  
 The $G$-normalizer $ZK_0$ of $K_0=GL_2(O_F)$ normalizes the $K'_j$. The subgroup $H=ZK_0G'$  of $G$ has index $2$ as $\det (H)=(F^*)^2 O_F^* $ has index $2$ in $F^*$.  The restriction of $\Pi$ to $H$ has length $1$ or $2$.  
All the elements $\pi$ of $ L(\Pi)$ in the same $H$-orbit share the same dimension $\dim_R \pi^{K'_j}$. 
With $a_\Pi, j_\Pi$ as in   \eqref{cPi}, we deduce from \eqref{7.5}:

  \begin{theorem}\label{th.pol1} When $ \Pi|_H$ is irreducible,  we have for 
  $j\geq j_\Pi$,
 $$ \dim_R \pi^{K'_{j+1}}=   | L(\Pi)|^{-1} \, (a_\Pi + (q+1) q^{j}) .$$
     \end{theorem}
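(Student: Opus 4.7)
The plan is to reduce the assertion for $\dim_R \pi^{K'_{j+1}}$ to the already established asymptotic \eqref{7.5} for $\dim_R \Pi^{K_{j+1}}$, exploiting that $K'_{j+1}\subset G'\subset H$ and that, since $\Pi|_H$ is irreducible, the $L$-packet $L(\Pi)$ forms a single $H$-orbit under conjugation.

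First, I would verify that $\dim_R \pi^{K'_{j+1}}$ is independent of the choice of $\pi \in L(\Pi)$. The subgroup $ZK_0$ normalizes every congruence subgroup $K_{j+1}$, hence also normalizes the intersection $K'_{j+1}=K_{j+1}\cap G'$; moreover, for any $g'\in G'$ the operator $\pi(g')$ furnishes a linear isomorphism $\pi^{K'_{j+1}}\congto \pi^{g'K'_{j+1}g'^{-1}}$ in the conjugate representation. Since $H=ZK_0G'$, conjugation by any $h\in H$ takes the $K'_{j+1}$-invariants of $\pi$ to a subspace of the same dimension in $\pi^h$. Because $\Pi|_H$ is irreducible, the decomposition $\Pi|_{G'}=\bigoplus_{\pi\in L(\Pi)}\pi$ of \S\ref{ss:rep} displays $L(\Pi)$ as a single $H$-orbit, so the dimension is indeed the same for every $\pi \in L(\Pi)$.

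Next, using that multiplicity-one decomposition together with $K'_{j+1}\subset G'$, I obtain
$$\Pi^{K'_{j+1}} \;=\; \bigoplus_{\pi\in L(\Pi)} \pi^{K'_{j+1}}, \qquad \text{hence}\qquad \dim_R\pi^{K'_{j+1}} \;=\; |L(\Pi)|^{-1}\,\dim_R\Pi^{K'_{j+1}}.$$
I would then invoke Corollary \ref{corcPi} with the Moy--Prasad subgroup $H_{j+1}=K_{j+1}$: for $j\geq j_\Pi$ (so $j+1\geq j_\Pi$) every vector of $\Pi$ fixed by $K'_{j+1}$ is automatically fixed by $K_{j+1}$, and the reverse inclusion is trivial, so $\Pi^{K'_{j+1}}=\Pi^{K_{j+1}}$. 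Substituting the asymptotic $\dim_R \Pi^{K_{1+j}}=a_\Pi+(q+1)q^j$ from \eqref{7.5} then yields the desired identity.

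The only delicate point is the equality $\Pi^{K'_{j+1}}=\Pi^{K_{j+1}}$: it rests ultimately on the near-identity isomorphism \eqref{cPi} together with the bijection $B\backslash G/K_{j+1}\congto B'\backslash G'/K'_{j+1}$ of Proposition \ref{pro:BK0}, which underlies Corollary \ref{corcPi}. This is precisely why the bound $j\geq j_\Pi$ is essential; beyond that, the argument is a direct piece of bookkeeping combining $H$-transitivity on $L(\Pi)$ with the known asymptotic for $\Pi$.
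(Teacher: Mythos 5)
Your proposal is correct and follows essentially the same route as the paper: the equality $\Pi^{K_{j+1}}=\bigoplus_{\pi\in L(\Pi)}\pi^{K'_{j+1}}$ via Proposition \ref{pro:BK0}/Corollary \ref{corcPi}, the equidistribution of $\dim_R\pi^{K'_{j+1}}$ over $L(\Pi)$ from the fact that $H=ZK_0G'$ normalizes $K'_{j+1}$ and acts transitively on $L(\Pi)$ when $\Pi|_H$ is irreducible, and then substitution of the asymptotic \eqref{7.5}. The only difference is that you spell out the intermediate steps that the paper leaves implicit.
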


\begin{proposition} \label{H2} The representation $\Pi|_H$ is reducible if and only if $\Pi$ is cuspidal induced from $ZK_0$ or $\charf_R\neq 2$  and $\Pi$ is  a principal series $\ind_B^G \chi$ where $\chi_1\chi_2^{-1}=(-1)^{val}$.
\end{proposition}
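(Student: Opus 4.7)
The plan is to reduce the statement to a question about $\det G_\Pi$ and then run through the classification of irreducible smooth $R$-representations of $G$ from Corollary \ref{cor:spG} together with the type-theoretic description of cuspidals in \S\ref{S:331}. Since $[G:H]=2$ and $L(\Pi) \cong G/G_\Pi$ as $G$-sets, the length of $\Pi|_H$ equals the number of $H$-orbits on $L(\Pi)$, which is $[G : HG_\Pi]$. As $G'\subset G_\Pi\cap H$ and $G/G'$ is abelian, this length is $2$ (the reducible case) precisely when $G_\Pi\subset H$, equivalently when $\det G_\Pi\subset\det H=(F^*)^2 O_F^*$, i.e. $\det G_\Pi$ consists of elements of even valuation. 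So the proof reduces to checking the inclusion $\det G_\Pi\subset(F^*)^2 O_F^*$ case by case.

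For the non-cuspidal cases, if $\Pi$ is $1$-dimensional or a twist of the Steinberg representation then $|L(\Pi)|=1$, so $G_\Pi=G$ and $\Pi|_H$ is irreducible. For an irreducible principal series $\Pi=i_B^G(\chi)$, Remark \ref{Hs} gives $X_\Pi=\{1\}$ unless $\chi_1\chi_2^{-1}=\eta_E$ for some quadratic separable $E/F$, in which case $\det G_\Pi=N_{E/F}(E^*)$; this is contained in $(F^*)^2 O_F^*$ precisely when $E/F$ is unramified, i.e.\ $\eta_E=(-1)^{\val}$. Since $(-1)^{\val}$ is nontrivial only in characteristic $\neq 2$, and Theorem \ref{prop:LLSL2} forces $|L(\Pi)|=1$ for non-cuspidal $\Pi$ when $\charf_R=2$, the principal series contribution is exactly as stated. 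Finally, for the cuspidal non-supercuspidal case $\Pi=\chi\otimes\Pi_0$ (which occurs when $q+1=0$ in $R$), the representation $\Pi_0=\ind_{ZK_0}^G\tilde\sigma_0$ is visibly induced from $ZK_0$, and $\det(ZK_0)=(F^*)^2 O_F^*=\det H$ gives reducibility.

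For the supercuspidal case, write $\Pi=\ind_J^G\Lambda$ with extended maximal simple type $(J,\Lambda)$ and use the Mackey decomposition \eqref{eq:JG'}, combined with Proposition \ref{prop:3.12} and Remark \ref{prop:3.13}, to identify $G_\Pi=JG'$. Then $\det G_\Pi=\det J$: for level $0$ one has $J=ZK_0$, hence reducibility; for positive level $J=E^*J^0$ one computes $\det J=N_{E/F}(E^*)\cdot\det J^0=N_{E/F}(E^*)\cdot O_F^*$, which lies in the even-valuation elements iff $E/F$ is unramified, equivalently iff $J\subset ZK_0$ (since then $E^*=F^*O_E^*\subset ZK_0$, whereas for ramified $E/F$ any uniformizer $\pi_E\in E^*$ has $\det(\pi_E)=N_{E/F}(\pi_E)$ of odd valuation). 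Thus reducibility in the cuspidal case is characterized exactly by $J\subset ZK_0$, i.e.\ $\Pi$ being compactly induced from $ZK_0$. The main obstacle is the $p=2$ wildly-ramified branch, where Proposition \ref{prop:3.12} is not directly available; there one must invoke Remark \ref{prop:3.13} (so that $\Lambda$ is a character and $J^0\cap G'$ is pro-$2$) to check both that $G_\Pi=JG'$ still holds and that $\det J=N_{E/F}(E^*)$ still contains odd-valuation elements, confirming the irreducibility of $\Pi|_H$ in this branch.
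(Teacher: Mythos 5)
Your reduction of the statement to the inclusion $G_\Pi\subset H$, i.e. $\det G_\Pi\subset (F^*)^2O_F^*$, is sound (it is the same mechanism as \S\ref{mrho} of the paper, with $\lg(\Pi|_H)=[G:HG_\Pi]$), and your treatment of the non-cuspidal cases, of the cuspidal non-supercuspidal case, and of the cuspidal cases with $J\subset ZK_0$ is correct. But the decisive case — $p=2$, positive level, $E/F$ (wildly) ramified — is exactly where your argument has a genuine gap, and you only gesture at it. First, the identification $G_\Pi=JG'$ that you propose to "check" is simply false in general: for the size-$4$ packets (which at $p=2$ are all of positive level, Proposition \ref{cor:bq1}), $[G:G_\Pi]=4$ while $[G:JG']\le 2$, so $G_\Pi\subsetneq JG'$; Lemma \ref{le:GL} only gives the inclusion $\det G_\Pi\subset\det J$. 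Second, knowing that $\det J$ contains elements of odd valuation does not give what you need, namely that $\det G_\Pi$ does: when $|L(\Pi)|=4$ one has $\det G_\Pi=N_{E_\Pi/F}(E_\Pi^*)$ for a biquadratic $E_\Pi$ containing the ramified $E$, and a priori $E_\Pi$ could also contain the unramified quadratic extension (such biquadratic extensions exist for $p=2$ since $\dim_{\mathbb F_2}F^*/(F^*)^2\ge 3$), in which case $\det G_\Pi$ would consist of even-valuation elements and $\Pi|_H$ would be reducible. Ruling this out is the actual content of the proposition in this branch, and Remark \ref{prop:3.13} does not do it. (There is also a computational slip: $\det J^0\ne O_F^*$ for $p$ odd ramified, and for ramified $E/F$ your formula $N_{E/F}(E^*)\cdot O_F^*$ equals all of $F^*$, which would contradict $|F^*/\det J|=2$.)

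The paper closes this gap by a different mechanism that bypasses $G_\Pi$ entirely: when $\Pi$ is not induced from $ZK_0$ one has $E/F$ ramified and $G=JH$, the $G$-intertwining of $\lambda_1=\Lambda|_{J^1}$ is $J$, hence its $H$-intertwining is $J\cap H$, and then $\Pi|_H=\ind_{J\cap H}^H\Lambda|_{J\cap H}$ is irreducible by \cite[Proposition 6.5 and Corollary 6.6]{HV22}. To complete your proof you would need either this intertwining argument or an independent proof that for $p=2$ and type field $E/F$ ramified the biquadratic extension $E_\Pi$ never contains the unramified quadratic extension; neither is supplied.
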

\begin{proof} When $\Pi|_{G'}$ is irreducible, then $\Pi|_H$ is irreducible. 
When $\Pi= i_B^G(\chi)$  is a principal series of reducible restriction to $G'$, then $\charf_R\neq 2$, and  $i_B^G(\chi)|_H$ is reducible if and only if $(-1)^{val} \circ \det \otimes  i_B^G(\chi) \simeq  i_B^G(\chi)$ if and only if $\chi_1\chi_2^{-1}= (-1)^{val}$ (notations of  \S \ref{S:iBG} and $\chi=\chi_1\otimes \chi_2$).

When $\Pi$ is cuspidal, if $\Pi= \ind_{ZK_0}^G \lambda$ is induced from $ZK_0$ then $\Pi|_H$ is reducible because $ZK_0 \subset H$ and $ (\ind_H^G( \ind_{ZK_0}^H \lambda))|_H $ contains but is different from $\ind_{ZK_0}^G \lambda $.
If $\Pi $ is not  induced from $ZK_0$, then with the notations of \S \ref{S:331}, $\Pi =\ind_J^G\lambda $ has positive level, $E/F$ is ramified, and $G=JH$. As $J^1\subset H$  and the intertwining of $\lambda_1=\lambda|_{J^1}$ in $G$ is $J$, then the    intertwining of $\lambda_1 $ in $H$ is $J\cap H$. The vectors $\lambda_1$-equivariant in $\Pi$ are the functions supported in $J$. Applying \cite[Proposition 6.5 and Corollary 6.6]{HV22},
 $\Pi|_H=\ind _{J\cap H}^H \lambda |_{J\cap H}$ is irreducible.
\end{proof}

Assume now that $\Pi|_{H}$ is reducible.  Let $\Pi^+$  be the component having a Whittaker model  with respect to  a character $\psi$ non-trivial on $O_F$ but trivial on  $P_F$,  and $\Pi^-$ the other one. 

\begin{theorem}\label{th.pol2} When $ \Pi|_H$ is reducible,  we have for large $j$,
  $$ \dim_R (\Pi^+)^{K'_{j}}=   \frac{a_\Pi}{ 2 }+  q^{2 m+1} , \ \ \  \text{when} \ j=2m+1, 2m+2,$$
    $$ \dim_R (\Pi^-)^{K'_{j}}=   \frac{a_\Pi}{ 2 }+   q^{2 m} , \ \ \  \text{when} \ j=2m, 2m+1.$$
    
     \end{theorem}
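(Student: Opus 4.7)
The plan is to reduce to a germ-expansion problem for the admissible $H$-representations $\Pi^\pm$ and then to evaluate two nilpotent orbital integrals that depend on the parity of $j$. First I apply Corollary~\ref{corcPi}: for $j \geq j_\Pi$ one has $\Pi^{K'_j} = \Pi^{K_j}$, and since $K'_j \subset G' \subset H$ the decomposition $\Pi|_H = \Pi^+\oplus \Pi^-$ respects both sets of invariants, so $(\Pi^\pm)^{K'_j} = (\Pi^\pm)^{K_j}$. Thus it suffices to compute $\dim_R (\Pi^\pm)^{K_j}$. Using the transfer strategy of \S\ref{sss} (scalar extension to $\mathbb Q^{ac}$, an isomorphism $\mathbb Q_\ell^{ac} \simeq \mathbb C$, and reduction modulo $\ell$), I may further assume $R = \mathbb C$.

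I then apply the germ expansion of Theorem~\ref{th:1'} to $\Pi^\pm$ as smooth admissible $\mathbb C$-representations of $H = ZK_0 G'$. Since $\det(H) = (F^*)^2 O_F^*$ we have $V_H = F^*/\det(H) \cong \mathbb Z/2$, so by \S\ref{mrho} there are exactly two non-trivial nilpotent $H$-orbits in $\Lie(H)$, distinguished by the parity of the valuation of the $(2,1)$-entry of a lower-triangular representative; call them $\mathfrak O_+$ (odd valuation) and $\mathfrak O_-$ (even valuation). The formula \eqref{Pipi} with $\lg(\Pi|_H) = 2$ gives $c_0(\Pi^+) = c_0(\Pi^-) = a_\Pi/2$. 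By Theorem~\ref{Ro}, the nilpotent coefficients of $\Pi^\pm$ coincide with the Whittaker dimensions for the characters attached to each orbit. The Whittaker characterization of $\Pi^+$ (a model with respect to a character of $F$ trivial on $P_F$ but non-trivial on $O_F$) forces its corresponding lower-triangular nilpotent to have $(2,1)$-entry of valuation $-1$, hence odd, so $c_{\mathfrak O_+}(\Pi^+) = 1$ and $c_{\mathfrak O_-}(\Pi^+) = 0$; symmetrically $c_{\mathfrak O_-}(\Pi^-) = 1$ and $c_{\mathfrak O_+}(\Pi^-) = 0$. Applying the germ expansion to the normalized characteristic function $\phi_{K_j}$ for $j$ sufficiently large, I obtain
\begin{equation*}
\dim(\Pi^\pm)^{K_j} \;=\; \frac{a_\Pi}{2} + I_{\mathfrak O_\pm}(\hat\phi_{K_j}),
\end{equation*}
whose sum recovers $\dim\Pi^{K_j} = a_\Pi + (q+1)q^{j-1}$ in accordance with Theorem~\ref{th.pol}.

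The main obstacle is the explicit evaluation of the two $H$-orbital integrals $I_{\mathfrak O_\pm}(\hat\phi_{K_j})$. Their sum is the $GL_2(F)$-orbital integral $(q+1)q^{j-1}$, but the individual values depend on $j$ modulo~$2$: the Fourier transform $\hat\phi_{K_j}$ is (up to a normalizing constant) the characteristic function of $M_2(P_F^{-j})$, and the intersection of this lattice with $\mathfrak O_+$ (resp.\ $\mathfrak O_-$) captures the mass coming from $(2,1)$-entries of odd (resp.\ even) valuation bounded below by $-j$. Since exactly one of the two parity classes contains $-j$ itself, each unit increment of $j$ enlarges only one of the two orbit intersections, which produces the alternation. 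A direct computation using the normalization $\mu(\Ad(K_i)Y) = q^{-2i}$ from Notation~\ref{not:54} and the parametrization of $\mathfrak O$ by an image line in $\mathbb P^1(F)$ together with a scaling parameter yields
\begin{equation*}
I_{\mathfrak O_+}(\hat\phi_{K_j}) = \begin{cases} q^{j} & j \text{ odd,} \\ q^{j-1} & j \text{ even,} \end{cases} \qquad I_{\mathfrak O_-}(\hat\phi_{K_j}) = \begin{cases} q^{j-1} & j \text{ odd,} \\ q^{j} & j \text{ even.} \end{cases}
\end{equation*}

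Substituting these values into the germ expansion produces the formulas stated in the theorem, and transferring the result from $\mathbb C$ back to an arbitrary coefficient field $R$ of characteristic different from $p$ proceeds as in \S\ref{sss}.
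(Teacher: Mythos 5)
Your framework is sound and, up to the last step, coincides with the paper's: you pass from $K'_j$ to $K_j$ via Corollary \ref{corcPi}, apply the germ expansion of Theorem \ref{th:1'} to $\Pi^{\pm}$ viewed as admissible representations of $H=ZK_0G'$ (which is indeed an open normal subgroup containing $ZG'$, with $V_H\simeq \mathbb Z/2\mathbb Z$ and hence exactly two non-trivial nilpotent $H$-orbits), identify the constant term as $a_\Pi/2$ via \eqref{Pipi}, and use Theorem \ref{Ro} together with the Whittaker decomposition \eqref{eq:W} to see that each of $\Pi^{\pm}$ has coefficient $1$ on exactly one parity class and $0$ on the other (and your parity bookkeeping for $\Pi^+$ is correct: a character trivial on $P_F$ and non-trivial on $O_F$ corresponds to $\val(Y_{2,1})=-1$). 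The gap is that the parity-dependent evaluation of $I_{\mathfrak O_{\pm}}(\hat\phi_{K_j})$ \emph{is} the content of the theorem, and you assert it (``a direct computation \dots yields'') rather than prove it. The heuristic that only one parity class reaches valuation $-j$ explains why the two integrals differ, but not why their values are exactly $q^{j}$ and $q^{j-1}$: one must compute, in the normalization of Notation \ref{not:54}, the orbital measure of each shell $\{\lambda vw^{T}\in\mathfrak O_{\pm}:\ \val(\lambda)=k\}$ of the two-dimensional orbit, sum the resulting geometric series, and control the convergent tail as $\lambda\to 0$; these shell measures are not uniform in $k$, so the exact constants do not follow from the counting picture you sketch. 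Your claimed values are correct — they are forced by the known total $(q+1)q^{j-1}$ from Theorem \ref{th.pol} together with at least one explicitly computed example — but no such verification appears in your argument.

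The paper circumvents this computation. Having observed, exactly as you do, that $\dim_R(\Pi^{+})^{K'_j}-a_\Pi/2$ depends (for large $j$) only on the set of Whittaker characters of $\Pi^{+}$, which is the same for every reducible $\Pi|_H$, it reduces to the single representation $\Pi=\ind_{ZK_0}^{G}\lambda$ of level $0$, for which $\Pi^{+}=\ind_{ZK_0}^{H}\lambda$, and counts $K'_j$-fixed vectors directly via the coset decomposition $H=\sqcup_{i\ge 0}ZK_0t^iK'_0$, using cuspidality of $\lambda_0$ to kill the cosets with $j\le 2i$; that count is elementary, characteristic-free, and delivers the exact constants. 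To close your proof you should either carry out the orbital-integral evaluation in full (including the normalization constant and the tail), or replace it by such an explicit count for one representation and invoke your own reduction.
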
 
     \begin{proof} When $R=\mathbb C$,  the constant term in the germ expansion of the trace of $\Pi^+$ around the identity is $a_\Pi/2$  by \eqref{Pipi} and Remark \ref{re6.18},  and $ \dim_R (\Pi^+)^{K'_{j}}- a_\Pi/2  $ for large $j$, depends only on the characters of $F$ for which $\Pi^+$ has a Whittaker model. This set does not depend on the choice of $\Pi$, as 
  $\Pi^+$ has a  Whittaker model  only with respect to the characters   $\psi_{t_1 t_2^{-1}} $ for  $\diag (t_1,t_2)\in T\cap H$, that is,  $\psi_a$ for $a\in \det (H)$  where  $\psi_a(x)=\psi(ax)$  for  $x\in F$.  
   By the usual arguments, the same is true for any  $R$.
     It suffices to prove the theorem for  $\Pi=\ind_{ZK_0}^G \lambda$ where $\lambda|_{K_0}$ is the inflation of a cuspidal representation $\lambda_0$ of $GL_2(k_F)$ (Proposition \ref{H2}). In this special case we will  show \begin{equation}\label{Pi+} \dim_R (\Pi^+)^{K'_{j}}=-1 +    q^{2 m+1}  \ \ \text{for   } \  j=2m+1, 2m+2, \ j\geq 1,\end{equation}
\begin{equation}\label{Pi-}\dim_R (\Pi^-)^{K'_{j}}=-1 +    q^{2 m}  \ \ \text{for   } \  j=2m, 2m+1, \ j\geq 1.\end{equation}
  Note that $a_\Pi=- 2$ (Proposition \ref{pro7.5}) and that  \eqref{Pi+}  implies \eqref{Pi-}  for $j\geq j_\Pi +1$,  
as
 $$\text{$ \dim_R (\Pi^+)^{K'_{j}}+  \dim_R (\Pi^-)^{K'_{j}}=a_\Pi + (q+1) q^{j-1}$ for $j\geq j_\Pi +1$}.  $$
The representation $\lambda_0$ is generic,  and it follows that $\Pi^+=\ind_{ZK_0}^H \lambda$ \cite[Proposition 1.6]{BH98}. Let $t=\begin{pmatrix}p_F & 0 \cr 0 & p_F^{-1}\end{pmatrix}$. The group $H=ZK_0G'$ is the disjoint union 
 $$H= \sqcup_{i\geq 0}\,  ZK_0 t^i K_0'.$$
   For $i\geq 0$, $j>0$  and $k\in K_0'$, consider the representation of $K'_j$ on the functions in  $\ind_{ZK_0}^H \lambda$  supported on the coset $ZK_0 t^i  kK_j'$. That it contains non-zero $K'_j$-fixed vectors does not depend on the choice of $k\in K'_0$, and it happens if and only if $t^{i} K_j'  t^{-i} \cap Z K_0 $  has non-zero fixed vectors in $\lambda$. For $j\leq 2i$, $t^{i} K_j'  t^{-i} \cap Z K_0 $  contains the lower unipotent sugbgroup of $K_0$ and fixes no non-zero vector   in $\lambda_0$ which  is cuspidal. For $j>2i$, $t^{i} K_j' t^{-i}\subset K_1$ and $K_1$ acts trivially on $\lambda_0$. So the   space of functions  in $\ind_{ZK_0}^H$ supported in $ZK_0 t^i  kK_j'$ and fixed by $K'_j$ has dimension  $0$   if $j\leq 2i$ and $q-1=\dim_R \lambda_0$ if $j>2i$.  The number of cosets $ZK_0 t^i  kK_j'$ in $ZK_0 t^i  K_0$ is the index in $K'_0/K'_j$ of the image of $t^{-i} ZK_0 t^i \cap K'_0$ in  $K'_0/K'_j$. As    $K'_{2i} \subset t^{-i} Z K_0 t^i \cap K'_0$, this index does not depend on $j$ when $j>2i$. It is the index in $K'_0$ of $t^{-i} ZK_0 t^i \cap K'_0= \{\begin{pmatrix}a&b\cr c & d\end{pmatrix} \in K'_0, c\in P_F^{2i}\}$.   One computes its value to be $1$ if $i=0$ and $(q+1) q^{2i-1}$ if $i>0$. Consequently for $j>0$,
 $$\dim_R( \Pi^+)^{K'_{j}} = (q-1) [1+ \sum_{0<i<j/2}   (q+1) q^{2i-1}.$$
This is equal to  $q-1$ for $j=1,2$, to $(q-1)(q^2+q+1)= -1+ q^3$ for $j=3,4$, and by induction to $-1+ q^{2m+1}$ for $j= 2m+1, 2m+2$, implying   \eqref{Pi+},  
hence the theorem.

To prove  \eqref{Pi-}  for $j\geq 1$, one  can work in the same manner as above  using that $\Pi^-$ is the conjugate of $\Pi^+$ by $\begin{pmatrix}p_F &0\cr 0& 1\end{pmatrix}$. We find that $\dim_R( \Pi^-)^{K'_{j}} $  is equal to $0$ for $j=1$, to $-1+q^2$ for $j=2,3$, and to $-1+q^{2m}$ for $j=2m, 2m+1$,  implying   \eqref{Pi-}.
  \end{proof}
  
  \begin{corollary}\label{cor.pol} When $\Pi|_H$ is reducible, 
  we have   for  large $j$,
$$\dim_R \pi^{K'_j}=
 \begin{cases} |L(\Pi)|^{-1} (a_\Pi + 2 q^j)  &  \text{ for $j$ odd and $\pi \subset \Pi^+|_{G'}$, or $j$ even and $\pi \subset \Pi^-|_{G'}$}\\
L(\Pi)|^{-1} (a_\Pi + 2 q^{j-1}) &  \text{ otherwise}
 \end{cases}.$$
  \end{corollary}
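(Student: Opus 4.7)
The plan is to derive Corollary \ref{cor.pol} from Theorem \ref{th.pol2} via a Clifford-theoretic averaging: I will convert $\dim_R (\Pi^{\pm})^{K'_j}$ into $\dim_R \pi^{K'_j}$ by dividing by the length $|L(\Pi)|/2$ of $\Pi^{\pm}|_{G'}$, then insert the parity-dependent formulas already computed.

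To set up this reduction, I would first record that by Proposition \ref{H2} we have $\Pi|_H = \Pi^+ \oplus \Pi^-$ with $\Pi^{\pm}$ irreducible, and that the non-trivial element of $G/H$ swaps the two summands. Applying the Clifford-theoretic statements of \S\ref{S:226} to the normal inclusion $G' \subset H$, the irreducible constituents of each $\Pi^{\pm}|_{G'}$ form a single $H$-orbit of multiplicity-one representations; combining with the total length $|L(\Pi)|$ of $\Pi|_{G'}$ and the swap symmetry, each $\Pi^{\pm}|_{G'}$ has length exactly $|L(\Pi)|/2$. Since $H = ZK_0\, G'$ by definition, this $H$-orbit on isomorphism classes of $G'$-components coincides with the $ZK_0$-orbit, and $ZK_0$ normalizes every congruence subgroup $K'_j$ (as recalled in \S\ref{7.1}); so the function $\pi \mapsto \dim_R \pi^{K'_j}$ is constant along the orbit and
\[
\dim_R \pi^{K'_j} \;=\; \frac{2}{|L(\Pi)|}\,\dim_R (\Pi^{\pm})^{K'_j} \qquad \text{for } \pi \subset \Pi^{\pm}|_{G'}.
\]

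To finish, I would insert the parity-dependent expressions of Theorem \ref{th.pol2} and read off the two branches. For $j = 2m+1$ odd, one has $\dim_R (\Pi^+)^{K'_j} = a_\Pi/2 + q^{j}$ and $\dim_R (\Pi^-)^{K'_j} = a_\Pi/2 + q^{j-1}$, while for $j = 2m+2$ even the roles of $\Pi^+$ and $\Pi^-$ exchange. Multiplying by $2/|L(\Pi)|$ therefore yields $|L(\Pi)|^{-1}(a_\Pi + 2q^j)$ in the two matched cases (odd $j$ with $\pi \subset \Pi^+|_{G'}$, or even $j$ with $\pi \subset \Pi^-|_{G'}$) and $|L(\Pi)|^{-1}(a_\Pi + 2q^{j-1})$ in the two unmatched ones, which is the claim. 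I do not foresee a genuine obstacle: the corollary is essentially bookkeeping on top of Theorem \ref{th.pol2} and Proposition \ref{H2}, with the equality $H = ZK_0 G'$ (holding by definition) as the only structural input.
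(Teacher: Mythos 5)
Your proposal is correct and follows exactly the route the paper intends: the corollary is stated without proof as immediate bookkeeping from Theorem \ref{th.pol2}, using that each $\Pi^{\pm}|_{G'}$ has length $|L(\Pi)|/2$ and that the components in one $H$-orbit share the same dimension of $K'_j$-invariants (both facts recorded in \S\ref{mrho} and just before Theorem \ref{th.pol1}). Your averaging computation and the parity case-check match the statement.
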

 For the maximal compact group  $dK_0d^{-1}$ of $G'$, the two asymptotics are interchanged.

\bigskip   We find remarkable that the regularity is obtained when increasing the index $j$ by $2$, and not by $1$ as was the case for the Iwahori or the pro-$p$ Iwahori subgroups. But that could have been anticipated, given the homogeneity properties of the nilpotent orbital integrals in $H$.
  
  \begin{remark} It is likely that the asymtotics (Theorems \ref{th.pol} and  \ref{th.pol1}, Corollary \ref{cor.pol}) are valid when $2j\geq c$ where $c$ is the conductor of $\Pi$.  When $R=\mathbb C$ and  $\Pi$ is cuspidal, this is actually true for $\dim_{\mathbb C} \Pi^{K_j}$  and can be derived from the formulas in \cite{MY22}. When $p$ is odd, Nevins has completely analysed 
  the restriction to $K'_0$ of the irreducible smooth complex representations of $G'$, and we presume that the asymptotics (and for which $j$ it is valid) can be derived from her results  \cite{N05}, \cite{N13}.  \end{remark}

   \section{Appendix  -  The finite group $SL_2(\mathbb F_q)$}
Let   $k$ be a finite field  of characteristic $p$ with $q$ elements.
   In this appendix we classify irreducible  representations of $G=GL_2(k)$ and of $G' =SL_2(k)$ over  an algebraically closed field $R$ of characteristic $0$ or $\ell>0, \ \ell\neq p$.
We could use \cite{Bon11} for  $\charf_R\neq 2$ and 
\cite{KT09} for any $R$, but we prefer using the same methods as in the main text.

Note that  the irreducible $R$-representations of the finite groups $G$ and $G'$ are defined over the algebraic closure of the prime field, and we can freely pass from  $R $ to any other algebraically closed field of the same characteristic.
Thus it is enough to consider the cases where $R=\mathbb C$ or $R=\mathbb F_\ell^{ac}$.
We aim also to prove the following theorem.

 \begin{theorem} \label{th:fini}  Any irreducible $\mathbb F_\ell^{ac}$ representation $\sigma$ of $GL_2(k)$ is the reduction modulo $\ell$ of a $\mathbb Q_\ell^{ac}$-representation $\tilde \sigma$ of $GL_2(k)$ 
 such that  $\tilde \sigma|_{SL_2(k)}$ and 
$ \sigma|_{SL_2(k)}$ have the same length.

 Any irreducible $\mathbb F_\ell^{ac}$-representation of $SL_2(k)$  is the reduction modulo $\ell$ of a $\mathbb Q_\ell^{ac}$-representation  of $SL_2(k)$.
 \end{theorem}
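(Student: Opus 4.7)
The plan is to mimic the methods of the main text in the simpler finite-group setting. First I would classify the irreducible $\mathbb Q_\ell^{ac}$-representations of $G = GL_2(k)$ and their restrictions to $G' = SL_2(k)$. These representations fall into four families: the characters $\chi \circ \det$, the Steinberg twists $(\chi \circ \det) \otimes \St$, the irreducible principal series $i_B^G(\chi_1 \otimes \chi_2)$ with $\chi_1 \neq \chi_2$, and the cuspidal representations $\sigma_{\tilde\theta}$ attached to characters $\tilde\theta$ of the anisotropic torus $k_2^*$ with $\tilde\theta \neq \tilde\theta^q$. Since $G/G' \simeq k^*$ is cyclic, Lemma \ref{le:2.4} shows that $\tilde\sigma|_{G'}$ is semi-simple of multiplicity one and of length $|X_{\tilde\sigma}|$. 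A direct computation gives $|X_{\tilde\sigma}| = 1$ for characters and Steinberg twists, $|X_{\tilde\sigma}| \in \{1,2\}$ for a principal series according as $\chi_1/\chi_2$ has order $1$ or $2$, and $|X_{\tilde\sigma}| \in \{1,2\}$ for a cuspidal according as $\tilde\theta/\tilde\theta^q$ has order greater than $2$ or $\leq 2$.

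Next I would carry out the analogous classification over $\mathbb F_\ell^{ac}$. The picture is parallel, the only modifications occurring when $\ell \mid q-1$ (some principal series become reducible) or $\ell \mid q+1$ (some cuspidals become non-supercuspidal), exactly as in Proposition \ref{prop:spG}. Each irreducible $\mathbb F_\ell^{ac}$-representation $\sigma$ of $G$ then admits a lift $\tilde\sigma$ of the corresponding type in the classification over $\mathbb Q_\ell^{ac}$. The substantive issue is to arrange $|X_{\tilde\sigma}| = |X_\sigma|$, equivalently that the restriction length to $G'$ is preserved. For one-dimensional, Steinberg, and principal-series classes this is immediate from lifting the parameter characters of the torus. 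For a cuspidal $\sigma = \sigma_\theta$, one must choose a lift $\tilde\theta$ of $\theta: k_2^* \to (\mathbb F_\ell^{ac})^*$ such that $\tilde\theta/\tilde\theta^q$ has order $\leq 2$ if and only if $\theta/\theta^q$ does.

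The main obstacle is exactly this last point, and it is handled by the following algebraic argument. The character $\theta/\theta^q$ is $\tau$-equivariant in the sense that $\tau(\theta/\theta^q) = (\theta/\theta^q)^{-1}$, so requiring it to have order $\leq 2$ amounts to requiring it to be trivial on the subgroup $\mathrm{Ker}(N_{k_2/k}) \subset k_2^*$, which is cyclic of order $q+1$. Thus the question reduces to lifting a character of the cyclic group $k_2^*/(\ker\text{-condition})$ while controlling its restriction to a subgroup of order $q+1$; since the cyclic group $(\mathbb Z/(q^2-1))$ surjects onto $\mathbb Z/(q+1)$, any given order-$\leq 2$ behaviour of $\theta/\theta^q$ lifts to an order-$\leq 2$ behaviour of $\tilde\theta/\tilde\theta^q$ by choosing the unique order-$\leq 2$ lift of the relevant $\mathbb F_\ell^{ac}$-character of $\mathbb Z/(q+1)$. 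This completes the first assertion.

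For the second assertion, let $\pi$ be an irreducible $\mathbb F_\ell^{ac}$-representation of $G'$. Since $G/G'$ is finite, $\pi$ appears in the restriction of some irreducible $\mathbb F_\ell^{ac}$-representation $\sigma$ of $G$, as recalled in \S\ref{S1}. By the first part, $\sigma$ lifts to some $\tilde\sigma$ with $\tilde\sigma|_{G'}$ semisimple, multiplicity-free, and of the same length as $\sigma|_{G'}$. Reduction modulo $\ell$ therefore induces a bijection between the irreducible constituents of $\tilde\sigma|_{G'}$ and those of $\sigma|_{G'}$; the preimage of $\pi$ under this bijection is an irreducible $\mathbb Q_\ell^{ac}$-representation of $G'$ lifting $\pi$, proving the theorem.
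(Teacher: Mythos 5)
Your argument for odd $\ell$ is sound and is essentially the paper's: classify over $\mathbb Q_\ell^{ac}$ and $\mathbb F_\ell^{ac}$, and for a supercuspidal $\sigma(\theta)$ choose the Teichm\"uller lift $\tilde\theta$ so that the order of $\tilde\theta/\tilde\theta^q$ on $\Ker N_{k_2/k}$ (equivalently, the order of $\tilde\theta(b)$ for $b$ of order $q+1$) matches that of $\theta/\theta^q$. But there is a genuine gap at $\ell=2$. Your pivot is the equivalence ``$|X_{\tilde\sigma}|=|X_\sigma|$, equivalently the restriction length to $G'$ is preserved,'' and this equivalence is false in characteristic $2$: there are no nontrivial quadratic characters of $k^*$ over $\mathbb F_2^{ac}$, so $X_\sigma=\{1\}$ for every $\sigma$, yet $\sigma|_{G'}$ can have length $2$. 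Lemma \ref{le:2.4} only gives $\lg(\sigma|_{G'})=[G:G_\sigma]$ and $|X_\sigma|=[G:G_{\sigma,\ell}]$, and these differ precisely when $\ell=2$ divides $[G:G_\sigma]$. Concretely, the cuspidal non-supercuspidal $\sigma_0$ (which exists since $\charf_R=2$ forces $q+1=0$ in $R$) has $X_{\sigma_0}=\{1\}$ but $\lg(\sigma_0|_{G'})=2$. So for $\ell=2$ your method neither determines which $\mathbb F_2^{ac}$-representations of $G$ restrict reducibly, nor produces lifts of the two components of $\sigma_0|_{G'}$ — and both parts of the theorem are therefore unproven in that case.

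The paper fills this in with a different argument: writing $q-1=2^sm$ with $m$ odd, it counts irreducible $\mathbb F_2^{ac}$-representations of $G$ and of $ZG'$ via conjugacy classes of odd-order elements, showing their difference is $m$; it then exhibits the explicit lift $\sigma(\theta)$ with $\theta$ of order $2^{s+1}$, whose reduction is $\sigma_0$ and whose restriction to $G'$ has length $2$ over $\mathbb Q_2^{ac}$, forcing $\lg(\sigma_0|_{G'})=2$ and (by the count) that every other irreducible restricts irreducibly (Lemma \ref{le:53}). This simultaneously gives the length-preserving lift of $\sigma_0$ and the lifts of its two $G'$-components. To repair your proof you would need to replace the appeal to $|X_\sigma|$ in characteristic $2$ by some such direct determination of $\lg(\sigma|_{G'})$. (A second, minor point: your final step, that matching lengths forces reduction to induce a bijection on constituents of the restrictions, is correct but deserves the one-line justification that each constituent reduces with length at least one while the total lengths agree.)
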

 
 Write $Z$ for the centre of $G$, $B$ for the upper triangular subgroup of  $G$, and $U$ for its unipotent radical. 
  Let us first recall the known classification of the $R$-representations of $G$ (\cite{BH02} for $R=\mathbb C$  and  \cite{V88} for $R=\mathbb F_\ell^{ac}$). 

The parabolically induced representation $\ind_B^G(1) $ realised by the space of constant functions on $B\backslash G$,  contains  the trivial character.  It also has the trivial character as a quotient, given by the functional $\lambda$ which sums the values of functions on $B\backslash G$.
The map from the trivial subrepresentation to the trivial quotient is multiplication by $q+1$, so is an isomorphism if $\ell$ does not divide $q+1$, and is $0$ otherwise. In the first case the quotient $\St =\ind_B^G(1)/1$ is irreducible, in the second case $Ker(\lambda )/1$  is a  cuspidal but not supercuspidal representation $\sigma_0$ of $G$.

The irreducible (classes of) $R$-representations $\sigma$  of $G$ are :

1) The characters $\chi \circ\det$ where $\chi$ is an $R$-character of $ k^*$.

2) When $q+1\neq 0$ in $R$, the twists $(\chi \circ\det)\otimes \St$ of $\St $ by  the $R$-characters  $ \chi \circ\det$ of $G$.

2') When  $q+1=0$ in $R$, the twists  $(\chi \circ\det)\otimes \sigma_0$ of $\sigma_0$  
by  the $R$-characters  $ \chi \circ\det$ of $G$.

3) The irreducible principal series $\ind_B^G(\chi_1\otimes  \chi_2)$, where $\chi_1 $ and $\chi_2 $  are two distinct $R$-characters of $k^*$.

4) The supercuspidal representations $\sigma(\theta)$, where $\theta $ is an $R$-character of  $k_2^*$,  $\theta\neq \theta^q$, where $k_2/k$ is a quadratic extension.  

The only isomorphisms between those representations are given by exchanging  $\chi_1 $ and $\chi_2 $  in 3),  $\theta $  and  $\theta^q$ in 4).

Twisting by an $R$-character $\chi \circ \det$ of $G$  has the obvious effect, for example sending $\theta$ to $(\chi \circ  N )\theta$ where $N(x)=x^{q+1} $ for $x\in k_2^*$  in 4).

 Any irreducible $R$-representation $\tau$ of $G'$ is contained in the restriction  $\sigma|_{G'}$ to $G'$ of an  irreducible $R$-representation $\sigma$ of $G$.  The representation $\sigma|_{G'}$  is semi-simple of  multiplicity $1$ and its irreducible components are $G$-conjugate. The stabilizer of  $\tau$  contains $ZG'$ and $G/ZG'$ is  isomorphic to $k^*/ ( k^*)^2$. We have  $|k^*/ ( k^*)^2|=1$ when $p=2$ and $|k^*/ ( k^*)^2|=2$ when $p$ is odd. Therefore  $\sigma|_{G'}$  is irreducible when $p=2$ and $\sigma|_{G'}$ has length $1$ or $2$ when $p$ is odd.

When $\charf_R\neq 2$,  
 the length  $\lg (\sigma|_{G'})$ of $\sigma|_{G'}$ is  the number of $R$-characters 
 $\chi$ of $k^*$ such that $(\chi \circ \det)\otimes \sigma \simeq \sigma$, so 
\begin{equation}\label{lgq} \lg (\sigma|_{G'})= \begin{cases} 2 \ & \text{ in case 3)    if  $(\chi_1/\chi_2)^2=1$  and  in case 4)  if $(\theta^{q-1})^2 =1$} \\
 1 \ & \text{ otherwise}
 \end{cases}.
 \end{equation}
The restrictions $\sigma_1|_{G'}, \sigma_2|_{G'}$  of two irreducible representations $\sigma_1 ,\sigma_2$ of $G$ are isomorphic if and only $\sigma_1,  \sigma_2$ are twists of each other by an $R$-character of $G$. Otherwise $\sigma|_{G'}, \sigma_2|_{G'}$ are disjoint.
So, we have a classification of the (isomorphism classes of) irreducible representations of $G'$ when $\charf_R\neq 2$.

 \begin{remark}\label{re:62}The restriction to $B$ of a cuspidal representation of $G$ is the Kirillov representation $\kappa$ of $B$ (the irreducible $R$-representation of B induced by any non-trivial $R$-character of $U$). The restriction of $\kappa$ to $U$ is the direct sum of all non-trivial $R$-characters of $U$. The group $B$ acts transitively on such characters, whereas $B'=B \cap G'$ acts with two orbits. It follows that the restriction of $\kappa$   to $B'$ has two inequivalent irreductible components. Consequently a cuspidal representation of $G$ restricts to $G'$ with length $1$ or $2$. \end{remark}

 Let $\ell $ be an odd prime number different from $p$. Let us consider the reduction modulo $\ell$ of the previous irreducibles $\sigma$ over $\mathbb Q_\ell^{ac}$ (since $G$ is finite they  are integral).
For an integral  $\mathbb Q_\ell^{ac}$-character $\chi$ (with values in $\mathbb Z_\ell^{ac}$) let $\overline \chi$ denote its reduction modulo $\ell$. Reduction modulo $\ell$  is compatible with twisting by a $\mathbb Q_\ell^{ac}$-character $\chi \circ \det$ in the sense that the reduction of $(\chi \circ \det) \otimes \sigma$ is the twist by $\overline \chi  \circ \det$  of the reduction of $\sigma$.

1) The trivial $\mathbb Q_\ell^{ac}$-character of $G$ reduces to the trivial $\mathbb F_\ell^{ac}$-character. 

2) When $\ell$  does not divide $q+1$, the   Steinberg $\mathbb Q_\ell^{ac}$-representation reduces to the   Steinberg $\mathbb F_\ell^{ac}$-representation. 

2') When  $\ell$  divides $q+1$, the  Steinberg $\mathbb Q_\ell^{ac}$-representation reduces to a length $2$ representation with subrepresentation $\sigma_0$ and trivial quotient (for the natural integral structure). 

3) The irreducible principal series $\ind_B^G(\chi_1\otimes  \chi_2)$
reduces to the irreducible principal series $\ind_B^G(\overline \chi_1\otimes \overline\chi_2)$  when  $\overline \chi_1\neq \overline \chi_2$, and to $( \overline \chi_1\circ \det) \otimes \ind_B^G(1)$ (of length $2$ when $\ell$  does not divide $q+1$, and length $3 $ otherwise) when $\overline \chi_1= \overline \chi_2$  (for the natural integral structure).

4) The   supercuspidal  $\mathbb Q_\ell^{ac}$-representation  $\sigma(\theta)$, reduces to the supercuspidal  $\mathbb F_\ell^{ac}$-representation $\sigma(\overline \theta)$ if $\overline \theta \neq (\overline \theta)^q=
\overline {\theta^q}$,  and  otherwise
(which can happen only if $\ell$ divides $q+1$) to $(\eta \circ \det)\otimes \sigma_0$ where $\eta$ is the
 $\mathbb F_\ell^{ac}$-character of   $\mathbb F_{q}^*$ such that $\eta  \circ N=\overline \theta$.

A given  $\mathbb F_\ell^{ac}$-character  of $k^*$ or $k_2^*$ has a unique lift  to  a  $\mathbb Z_\ell^{ac}$-character of the same order, and from the above it is clear that any irreducible $\mathbb F_\ell^{ac}$-representation $\sigma$ of $G$ lifts to a  $\mathbb Q_\ell^{ac}$-representation. Moreover,  one can choose a lift of $\sigma$ with the same length on restriction to $G'$, thus proving the theorem when $\ell$ is odd.

\bigskip Let us finally  assume $\charf_R=2$. 
Then $p$ is odd and $q+1=0$ in $R$. Write $q-1=2^s m$ with a positive integer $s$ and an odd integer $m$. 
The number of irreducible   $R$-representations of $G$ (resp. $ZG'$) is the number of conjugacy classes in $G$  (resp. $ZG'$) of elements of odd order. 
Let  $g\in G$ of odd order. Then $\det (g)\in k^*$ has odd order so $\det (g)\in  (k^*)^2$  and $g\in  ZG'$. The $G$-conjugacy class of $g$ is equal to  its $ZG'$-conjugacy class  unless the $G$-centralizer of $g $ is entirely in $ZG'$. 
In that exceptional case, the $G$-equivalence class of $g$ is the union of two $ZG'$-equivalence classes. 
This    happens only when $g=zu $ where $z\in Z$ (of odd order) and $u\neq 1$ is unipotent. That shows that $m$ is the number of $ZG'$-conjugacy classes of elements of odd order  minus the number of $G$-conjugacy of such elements. Consequently $m$ is the number of irreducible $R$-representations of $ZG' $  minus the number of irreducible $R$-representations of $G$.
  
Consider first  $\sigma(\theta)$ for  a $\mathbb Q_2^{ac}$-character $\theta$ of $k_2^*$ of order $2^{s+1}$. Certainly $\overline \theta$  is trivial so that the reduction of $\sigma(\theta)$ modulo $2$  is  $\sigma_0$.
But $\ell ( \sigma(\theta)|_{G'})=2$ by \eqref{lgq}, from which it follows that $\ell(\sigma_0|_{G'})\geq 2$. We have seen however that $\ell(\sigma_0|_{G'})\leq 2$ (Remark \ref{re:62}), so  $\ell(\sigma_0|_{G'})=2$, and each irreducible component of $\sigma_0|_{G'}$  lifts to an irreducible component of  $\sigma(\theta)|_{G'}$.  The  $\mathbb F_2^{ac}$-characters $\chi$ of $k^*$ have an odd order, their number is $m$, and the  representations $(\chi\circ \det) \otimes \sigma_0$  are not equivalent (the order of $\chi$ is odd). We deduce:

\begin{lemma} \label{le:53} All irreducible $\mathbb F_2^{ac}$-representations of $G$ restrict  irreducibly to $G'$  except the twists of $\sigma_0$ by characters.

The reduction modulo $2$ of any supercuspidal $\mathbb Q_2^{ac}$-representation of $G'$ is irreducible.\end{lemma}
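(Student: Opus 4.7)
The plan for the first assertion is a counting argument that leverages the preceding observation that the number of irreducible $\mathbb{F}_2^{ac}$-representations of $ZG'$ exceeds that of $G$ by $m$, where $q - 1 = 2^s m$ with $m$ odd. First I would observe that for any irreducible $\mathbb{F}_2^{ac}$-representation $\sigma$ of $G$, the restrictions $\sigma|_{G'}$ and $\sigma|_{ZG'}$ have the same decomposition: because $Z$ acts by a scalar character, the $G'$- and $ZG'$-invariant subspaces coincide. By Clifford theory for the index-two inclusion $ZG' \subset G$, each $\sigma$ with reducible $ZG'$-restriction contributes two irreducibles of $ZG'$ while each irreducible-restriction $\sigma$ contributes one; the difference of $m$ in the counts therefore equals the number of irreducible $\sigma$ of $G$ with reducible restriction to $G'$.

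The plan is then to exhibit these $m$ exceptional $\sigma$ as the twists $(\chi \circ \det) \otimes \sigma_0$, where $\chi$ ranges over the $m$ characters of $k^*$ valued in $\mathbb{F}_2^{ac,*}$ (which all have odd order). To prove $\sigma_0|_{G'}$ itself is reducible, I would pick a $\mathbb{Q}_2^{ac}$-character $\tilde\theta$ of $k_2^*$ of order $2^{s+1}$: then $\tilde\theta^{q-1}$ has order exactly $2$, so $\tilde\theta \neq \tilde\theta^q$ and $(\tilde\theta^{q-1})^2 = 1$, whence \eqref{lgq} gives $\ell(\sigma(\tilde\theta)|_{G'}) = 2$. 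Since primitive $2$-power roots of unity reduce to $1$ in characteristic $2$, $\bar\theta$ is trivial, and the reduction table of case~(4) yields $r_2(\sigma(\tilde\theta)) = \sigma_0$; because reduction commutes with restriction, $\sigma_0|_{G'}$ has length at least $2$ after semisimplification. Remark~\ref{re:62} supplies the matching upper bound of $2$, so $\ell(\sigma_0|_{G'}) = 2$ and each irreducible component of $\sigma_0|_{G'}$ lifts to an irreducible component of $\sigma(\tilde\theta)|_{G'}$. The $m$ twists are pairwise inequivalent because any $\chi$ with $(\chi \circ \det) \otimes \sigma_0 \simeq \sigma_0$ would satisfy $\chi^2 = 1$, forcing $\chi = 1$ in characteristic~$2$. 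A count match identifies these twists with all $m$ reducible-restriction representations.

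For the second assertion, the plan is to trace a supercuspidal $\mathbb{Q}_2^{ac}$-representation $\tilde\pi$ of $G'$ back to its ambient $G$-representation: $\tilde\pi$ is a component of $\sigma(\tilde\theta)|_{G'}$ for some character $\tilde\theta$ of $k_2^*$ with $\tilde\theta \neq \tilde\theta^q$, and $r_2(\tilde\pi)$ embeds in $r_2(\sigma(\tilde\theta))|_{G'}$. I would split according to whether $\bar\theta \neq \bar\theta^q$ or not. In the first case, non-triviality of $\bar\theta^{q-1}$ forces $\tilde\theta^{q-1}$ to have non-$2$-power order, hence $(\tilde\theta^{q-1})^2 \neq 1$; \eqref{lgq} then gives that $\sigma(\tilde\theta)|_{G'}$ is irreducible, $\tilde\pi = \sigma(\tilde\theta)|_{G'}$, and $r_2(\tilde\pi) = \sigma(\bar\theta)|_{G'}$ is irreducible by the first assertion (since the supercuspidal $\sigma(\bar\theta)$ is not a twist of $\sigma_0$). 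In the remaining case, $r_2(\sigma(\tilde\theta)) = (\eta \circ \det) \otimes \sigma_0$ with $\bar\theta = \eta \circ N$, and I would conclude by dimension matching: each component of $\sigma(\tilde\theta)|_{G'}$ has dimension $(q-1)/2$ and reduces to one of the two components of $\sigma_0|_{G'}$, which by the first assertion are both irreducible of dimension $(q-1)/2$.

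The main obstacle lies in this final dimension-matching step in the second assertion: one must ensure the length of $\sigma(\tilde\theta)|_{G'}$ predicted by \eqref{lgq} is compatible with the length of $\sigma_0|_{G'}$ established in the first assertion, which requires careful tracking of the order of $\tilde\theta^{q-1}$ relative to the $2$-adic valuation of $q+1$ so that the reduction does not fragment further than one of the two irreducible components of $\sigma_0|_{G'}$. The rest of the proof is essentially bookkeeping around \eqref{lgq}, the case-(4) reduction table, Remark~\ref{re:62}, and the $m$-count recalled from the preceding paragraph.
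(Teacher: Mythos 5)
Your treatment of the first assertion is correct and follows the paper's own route: the class-counting identity producing exactly $m$ representations of $G$ with reducible restriction, the lift $\sigma(\tilde\theta)$ with $\tilde\theta$ of order $2^{s+1}$ (so that $\tilde\theta^{q-1}$ has order exactly $2$) to force $\lg(\sigma_0|_{G'})=2$ via \eqref{lgq} and Remark~\ref{re:62}, and the identification of the $m$ exceptions with the pairwise inequivalent twists $(\chi\circ\det)\otimes\sigma_0$. Your Clifford-theoretic bookkeeping for why the difference of class numbers counts precisely the representations with reducible restriction usefully fills in what the paper leaves implicit.

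The second assertion is where the problem lies, and the obstacle you flag at the end is not mere bookkeeping but fatal. In your case $\bar\theta=\bar\theta^q$ you assume $\sigma(\tilde\theta)|_{G'}$ has length $2$, so that each component has dimension $(q-1)/2$ and can only reduce to one component of $\sigma_0|_{G'}$. But $\bar\theta=\bar\theta^q$ only says that $\tilde\theta(b)$ has $2$-power order (where $b$ has order $q+1$), whereas \eqref{lgq} gives length $2$ only when $\tilde\theta(b)$ has order exactly $2$. Whenever $4$ divides $q+1$ one can choose $\tilde\theta$ with $\tilde\theta(b)$ of order $4$: then $\sigma(\tilde\theta)|_{G'}$ is irreducible of dimension $q-1$, so $\tilde\pi=\sigma(\tilde\theta)|_{G'}$ is a supercuspidal $\mathbb Q_2^{ac}$-representation of $G'$ whose reduction is $\bigl((\eta\circ\det)\otimes\sigma_0\bigr)|_{G'}$, of length $2$ by the first assertion. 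So in that case the conclusion itself fails, not just the argument. Concretely, for $q=3$ the group $SL_2(\mathbb F_3)$ has a normal $2$-Sylow subgroup isomorphic to the quaternion group, so every irreducible $\mathbb F_2^{ac}$-representation factors through $SL_2(\mathbb F_3)/Q_8\simeq \mathbb Z/3\mathbb Z$ and is a character; the $2$-dimensional supercuspidal of $SL_2(\mathbb F_3)$, namely $\sigma(\tilde\theta)|_{G'}$ for $\tilde\theta$ of order $8$, therefore reduces with length $2$. The paper's own proof elides this point (it only analyses the character of order $2^{s+1}$ and then says ``we deduce''), so you should not expect to close the gap as stated: the second assertion needs the extra hypothesis that $\tilde\theta(b)$ does not have $2$-power order $\geq 4$ (automatic when $q\equiv 1 \bmod 4$). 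Your case analysis is correct in all the remaining situations.
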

 
We deduce the classification of  irreducible $R$-representations of $G'$ when $\charf_R=2$ and  
Theorem \ref{th:fini}  when $\ell=2$.

 \begin{remark} \label{re:redf} For use in the main text we summarize:

a) When $q+1=0$ in $R$, then  $\sigma_0|_{SL_2( k)}$ is irreducible if $\charf_R\neq 2$,  and has length $2$
 if $ \charf_R= 2$.

b) In 4) let $b\in k_2$ be an element of order $q+1$. 
We have  $\theta \neq \theta^q  \Leftrightarrow \theta(b) \neq 1$ and 
 $\sigma(\theta)|_{SL_2( k)}$ is irreducible if  $\theta^2(b)\neq 1$, and has length $2$ if  $\theta^2(b)=1$.
 
 When $\charf_R=2$, or when $p=2$ hence $(2, q+1)=1$, we have $\theta(b) \neq 1 \Leftrightarrow \theta(b^2) \neq 1 $ 
 hence  $\sigma(\theta)|_{SL_2( k)}$ is irreducible for all $\theta \neq \theta^q$.
 
 When $\charf_R\neq 2$  and $p$ is odd,   there exists $\theta $ such that $\theta(b) \neq 1 , 
 \theta(b)^2 =1$, unique modulo the twist  by a character $\chi$ such that $\chi(b)=1$.
  The corresponding representations $\sigma(\theta)$ of $G$ are twists of each other by a character of $G$. 
Their restrictions to $SL_2( k)$ are isomorphic and reducible of length $2$.
\end{remark}

\end{document}